\newtheorem{theorem}{Theorem}[section]
\newtheorem{hypothesis}[theorem]{Hypothesis}
\newtheorem{definition}[theorem]{Definition}
\newtheorem{proposition}[theorem]{Proposition}
\newtheorem{question}[theorem]{Question}
\newtheorem{lemma}[theorem]{Lemma}
\newtheorem{fact}[theorem]{Fact}
\newtheorem{remark}[theorem]{Remark}
\newtheorem{corollary}[theorem]{Corollary}
\newtheorem{claim}[theorem]{Claim}
\newcommand{\fct}[2]{{}^{#1}#2}
\newcommand{\K}{\mathcal{K}}
\newcommand{\Ksatp}[1]{\K^{#1\text{-sat}}}
\newcommand{\rest}{\upharpoonright}
\newcommand{\Union}{\bigcup}
\DeclareMathOperator{\tp}{ga-tp}
\DeclareMathOperator{\id}{id}
\DeclareMathOperator{\Aut}{Aut}
\DeclareMathOperator{\cf}{cf}
\newcommand{\seq}[1]{\langle #1 \rangle}
\newcommand{\gaS}{\operatorname{ga-S}}
\newcommand{\gS}{\gaS}
\newcommand{\gtp}{\tp}
\DeclareMathOperator{\LS}{LS}
\newcommand{\T}{\mathcal{T}}
\def\rg{\operatorname{rg}}
\newcommand{\C}{\mathfrak{C}}
\newcommand{\sea}{\mathfrak{C}}
\newcommand{\hanf}[1]{h (#1)}
\newcommand{\ba}{\bar{a}}
\newcommand{\bb}{\bar{b}}
\newcommand{\bc}{\bar{c}}
\newbox\noforkbox \newdimen\forklinewidth
\noforkbox\hbox{\lower 2pt\box1\lower
2pt\box0\relax}
\def\unionstick{\mathop{\copy\noforkbox}\limits}
\newbox\doesforkbox
\doesforkbox\hbox{\lower 0pt\box1 \lower
2pt\box2\lower2pt\box0\relax}
\newcommand{\nf}{\unionstick}
\newcommand{\Sbs}{\gS^\text{bs}}
\newcommand{\s}{\mathfrak{s}}
\def\lta{<}
\def\lea{\le}
\def\gea{\ge}
\title[Symmetry in abstract elementary classes]{Symmetry in abstract elementary classes with amalgamation}
\date{\today\\
AMS 2010 Subject Classification: Primary 03C48. Secondary: 03C45, 03C52, 03C55, 03C75, 03E55.}
\keywords{Abstract elementary classes; Categoricity; Superstability; Tameness; Symmetry; Splitting; Good frame; Limit models; Saturated models}
\author{Monica M. VanDieren}
\address{Department of Mathematics\\
Robert Morris University\\
Moon Township Pennsylvania USA}
\email{vandieren@rmu.edu}
\urladdr{https://sites.google.com/a/rmu.edu/vandieren/home}
\author{Sebastien Vasey}
\address{Department of Mathematical Sciences, Carnegie Mellon University, Pittsburgh, Pennsylvania, USA}
\email{sebv@cmu.edu}
\urladdr{http://math.cmu.edu/\textasciitilde svasey/}
\thanks{This material is based upon work done while the second author was supported by the Swiss National Science Foundation under Grant No.\ 155136.}
\begin{document}

\begin{abstract}
  This paper is part of a program initiated by Saharon Shelah to extend the model theory of first order logic to the non-elementary setting of abstract elementary classes (AECs).  An abstract elementary class is a semantic generalization of the class of models of a complete first order theory with the elementary substructure relation.
 We examine the symmetry property of splitting (previously isolated by the first author) in AECs with amalgamation that satisfy a local definition of superstability.
 
 The key results are a downward transfer of symmetry and a deduction of symmetry from failure of the order property. 
  These results are then used to prove several structural properties in categorical AECs, improving classical results of Shelah who focused on the special case of categoricity in a successor cardinal.

  We also study the interaction of symmetry with tameness, a locality property for Galois (orbital) types. We show that superstability and tameness together imply symmetry. This sharpens previous work of Boney and the second author.
\end{abstract}

\maketitle

\tableofcontents

\section{Introduction}
The guiding conjecture for the classification of abstract elementary classes (AECs) is Shelah's categoricity conjecture.  For an introduction to AECs and Shelah's cateogicity conjecture, see \cite{baldwinbook09}.

Although most progress towards Shelah's categoricity conjecture has been made under the assumption that the categoricity cardinal is a successor, e.g.\ \cite{sh394, tamenessthree, tamelc-jsl}, recently, the second author has proved a categoricity transfer theorem without assuming that the categoricity cardinal is a successor, but assuming that the class is universal \cite{ap-universal-v10, categ-universal-2-v3-toappear} (other partial results not assuming categoricity in a successor cardinal are in \cite{indep-aec-apal} and \cite[Chapter IV]{shelahaecbook}). In this paper, we work in a more general framework than  \cite{ap-universal-v10, categ-universal-2-v3-toappear}.  We assume the amalgamation property and no maximal models and deduce new structural results without having to assume that the categoricity cardinal is a successor, or even has ``high-enough'' cofinality.

Beyond Shelah's categoricity conjecture, a major focus in developing a classification theory for AECs has been to find an appropriate generalization of first-order superstability. Approximations isolated in \cite{sh394} and \cite{shvi635} have provided a mechanism for proving categoricity transfer results (see also \cite{tamenessthree}, \cite{ap-universal-v10, categ-universal-2-v3-toappear}). In Chapter IV of \cite{shelahaecbook}, Shelah introduced \emph{solvability} and claims it should be the true definition of superstability in AECs (see Discussion 2.9 in the introduction to \cite{shelahaecbook}). It seems, however, that under the assumption that the class has amalgamation, a more natural definition is a version of ``$\kappa (T) = \aleph_0$'', first considered without the assumption of categoricity in \cite{gvv-mlq}. In \cite{gv-superstability-v5-toappear}, it is shown that this definition is equivalent to many others (including solvability and the existence of a \emph{good frame}, a local notion of independence), provided that the AEC satisfies a locality property for types called \emph{tameness} \cite{tamenessone}. 

Without tameness, progress has been made in the study of structural consequences of the Shelah-Villaveces definition of superstability such as the uniqueness of limit models (e.g. \cite{gvv-mlq}) or the property that the union of saturated models is saturated (\cite{bv-sat-v3,vandieren-chainsat-apal}). Recently in \cite{vandieren-symmetry-apal}, the first author isolated a symmetry property for splitting that turns out to be closely related to the uniqueness of limit models.

\subsection{Transferring symmetry}
In this paper we prove a downward transfer theorem for this symmetry property. This allows us to gain insight into all of the aspects of superstability mentioned above.

\begin{theorem}\label{transfer symmetry}
  Let $\K$ be an AEC. Suppose $\lambda$ and $\mu$ are cardinals so that $\lambda>\mu\geq\LS(\K)$ and $\K$ is superstable in every $\chi \in [\mu, \lambda]$. Then $\lambda$-symmetry implies $\mu$-symmetry.
\end{theorem}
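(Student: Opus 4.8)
The plan is to argue by ``lift, apply, reflect down'': given data witnessing the hypotheses of $\mu$-symmetry, I would inflate it to analogous data at the cardinal $\lambda$, apply $\lambda$-symmetry there, and then pull the resulting witness back down to a witness at $\mu$; equivalently one may read the whole thing as a contrapositive, turning a failure of $\mu$-symmetry into a failure of $\lambda$-symmetry. The engine for both cardinality-changing steps is the hypothesis that $\K$ is superstable in \emph{every} $\chi \in [\mu,\lambda]$, not merely at the two endpoints: this supplies limit models in each such $\chi$, uniqueness of non-splitting extensions over bases that are universal over their own base, and the no-long-splitting-chains behaviour, which is what keeps non-$\mu$-splitting and non-$\lambda$-splitting coherent along a tower that must thread all the intermediate cardinalities. (One could alternatively phrase the same argument through the first author's results relating $\chi$-symmetry to the uniqueness, or saturation, of limit models of cardinality $\chi$; the cardinality-changing difficulty is the same either way.)

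So suppose we are given $M_0 \prec M$ in $\K_\mu$ with $M$ universal over $M_0$, together with elements $a$ and $b$ such that $b \in |M|$, $\gtp(b/M_0)$ is nonalgebraic, and $\gtp(a/M)$ is nonalgebraic and does not $\mu$-split over $M_0$; we must produce a limit model $M^a \in \K_\mu$ over $M_0$ with $a \in |M^a|$ and $\gtp(b/M^a)$ not $\mu$-splitting over $M_0$. First I would build, by increasing continuous chains of length $\lambda$ in which every successor step is a limit (hence universal) extension and using superstability at the relevant intermediate $\chi$ at each step, a model $N_0 \succeq M_0$ in $\K_\lambda$ that avoids $b$, arranged so that some $b^\ast$ realizing the canonical non-splitting extension of $\gtp(b/M_0)$ has $\gtp(b^\ast/N_0)$ nonalgebraic and not $\mu$-splitting over $M_0$, and then a model $N \succeq N_0 \cup M$ in $\K_\lambda$, universal over $N_0$, with $b^\ast \in |N|$ and $\gtp(a/N)$ nonalgebraic and not $\lambda$-splitting over $N_0$ --- the last clause obtained by extending the type of $a$ along its canonical non-splitting extension at every step and invoking no-long-splitting-chains to promote non-$\mu$-splitting to non-$\lambda$-splitting. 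Since $\gtp(b^\ast/M_0) = \gtp(b/M_0)$, an automorphism of the monster model lets us rename $b^\ast$ back to $b$. Now $(N_0, N, a, b)$ satisfies the hypotheses of $\lambda$-symmetry, so we obtain a limit model $N^a \in \K_\lambda$ over $N_0$ with $a \in |N^a|$ and $\gtp(b/N^a)$ not $\lambda$-splitting over $N_0$; combining this with the non-$\mu$-splitting of $\gtp(b/N_0)$ over $M_0$ and transitivity of non-splitting (using that $N_0$ is universal over $M_0$), $\gtp(b/N^a)$ does not $\mu$-split over $M_0$.

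The remaining, and main, task is the reflection downward: I would resolve $N^a$ as an increasing continuous union passing through all the cardinalities in $[\mu,\lambda]$, chosen --- once more using superstability at every intermediate $\chi$, so that the resolving tower stays ``limit'' at every stage --- so that some size-$\mu$ member $M^a$ has $M_0 \prec M^a \prec N^a$, $a \in |M^a|$, and $M^a$ limit over $M_0$; then $\gtp(b/M^a)$, being a restriction of $\gtp(b/N^a)$, does not $\mu$-split over $M_0$, and $M^a$ is the required witness to $\mu$-symmetry. I expect this reflection to be where essentially all the work lies. A limit model of size $\lambda$ is assembled from a tower threading every cardinal strictly between $\mu$ and $\lambda$, and the obstacle is to carry out such a resolution while \emph{simultaneously} preserving ``limit over $M_0$'' and the splitting bookkeeping at every stage --- most delicately at limit stages of cofinality smaller than the current cardinality, and at the cardinality jumps themselves. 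This is exactly the point at which superstability is needed throughout the interval $[\mu,\lambda]$ rather than only at its endpoints.
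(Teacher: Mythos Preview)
Your high-level strategy---lift to $\lambda$, apply $\lambda$-symmetry, reflect down---is the right shape, but there is a genuine gap: the statement you set out to verify is not $\mu$-symmetry as defined in the paper (Definition~\ref{sym defn}). That definition involves \emph{three} models $N \lea M_0 \lea M$, with the hypothesis that $\gtp(a/M_0)$ does not $\mu$-split over the small base $N$, and the conclusion must be that $\gtp(a/M^b)$ does not $\mu$-split over that same $N$. Your setup drops $N$ entirely and only concludes non-splitting over $M_0$; in the terminology of Section~\ref{sym-props-sec} this is (at best) the \emph{non-uniform} variant, and the implication from non-uniform to uniform $\mu$-symmetry is open (see Question~\ref{good-frame-sym-q}). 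So even if every step worked, you would not have proved the theorem.

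Once you restore $N$, the real difficulty surfaces in your ``promote non-$\mu$-splitting to non-$\lambda$-splitting via no-long-splitting-chains'' move: that axiom only gives you non-$\lambda$-splitting over \emph{some} member of the chain, not over the one you chose in advance, so you cannot force the $\lambda$-level base to sit over $N$ in the way your argument needs. The paper sidesteps this by taking a genuinely different route. It uses the equivalence between $\mu$-symmetry and continuity of reduced towers (Fact~\ref{sym-reduced-tower}) and argues contrapositively: a reduced discontinuous tower in $\K^*_{\mu,\alpha}$ is extended upward through all intermediate cardinalities to a tower in $\K^*_{\lambda,\alpha,\mu}$ (Lemma~\ref{extension lemma}), then $\lambda$-symmetry gives a continuous reduced $\lambda$-extension $\T^*$. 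The step that replaces your problematic ``promote'' is Claim~\ref{non-split claim}: because $\lambda$-symmetry yields uniqueness of limit models in $\K_\lambda$, the $\lambda$-models $M^\lambda_\beta$ and $M^*_\beta$ are isomorphic over the post-hoc $\lambda$-base $N^\lambda_\beta$, and this isomorphism transports the original non-$\mu$-splitting over $N_\beta$ forward. The element $b$ then lands in some $M^*_\beta$ by continuity, and a downward L\"owenheim--Skolem argument produces a $\mu$-tower extending the original one with $b$ appearing too early, contradicting reducedness. The tower framework is what makes the $N$-bookkeeping tractable; a direct argument along your lines would have to reproduce the content of Claim~\ref{non-split claim} by hand.
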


Theorem \ref{transfer symmetry} (proven at the end of Section \ref{sym-transfer-sec}) improves Theorem 2 of \cite{vandieren-chainsat-apal} which transfers symmetry from $\mu^+$ to $\mu$. We also clarify the relationship between $\mu$-symmetry (as a property of $\mu$-splitting) and the symmetry property in good frames (see Section \ref{sym-props-sec}). The latter is older and has been studied in the literature: the work of Shelah in \cite{sh576} led to \cite[Theorem 3.7]{shelahaecbook}, which gives conditions under which a good frame (satisfying a version of symmetry) exists (but uses set-theoretic axioms beyond ZFC and categoricity in two successive cardinals). One should also mention \cite[Theorem IV.4.10]{shelahaecbook} which builds a good frame (in ZFC) from categoricity in a high-enough cardinal.  Note, however, the cardinal is very high and the underlying class of the frame is a smaller class of Ehrenfeucht-Mostowski models, although this can be fixed by taking an even larger cardinal.

It was observed in \cite[Theorem 5.14]{bgkv-apal} that Shelah's proof of symmetry of first-order forking generalizes naturally to give that the symmetry property of any reasonable global independence notion follows from the assumption of no order property. This is used in \cite{ss-tame-jsl} to build a good frame from tameness and categoricity (the results there are improved in \cite{indep-aec-apal, bv-sat-v3}). As for symmetry \emph{transfers}, Boney \cite{ext-frame-jml} has shown how to transfer symmetry of a good frame \emph{upward} using tameness for types of length two. This was later improved to tameness for types of length one with a more conceptual proof in \cite{tame-frames-revisited-v6-toappear}.

Theorem \ref{transfer symmetry} differs from these works in a few ways.  First, we do not assume tameness nor set-theoretic assumptions, and we do not work within the full strength of a frame or with categoricity (only with superstability). Also, we obtain a \emph{downward} and not an upward transfer. The arguments of this paper use \emph{towers} whereas the aforementioned result of Boney and the second author use independent sequences to transfer symmetry upward.

\subsection{Symmetry and superstability}
Another consequence of our work is a better understanding of the relationship between superstability and symmetry. It was claimed in an early version of \cite{gvv-mlq} that $\mu$-superstability directly implies the uniqueness of limit models of size $\mu$ but an error was later found in the proof. Here we show that this \emph{is} true provided we have enough instances of superstability:

\textbf{Theorem \ref{sym-from-superstab}.}
\textit{Let $\K$ be an AEC and let $\mu \ge \LS (\K)$. If $\K$ is superstable in all $\mu' \in [\mu, \beth_{\left(2^{\mu}\right)^+})$, then $\K$ has $\mu$-symmetry.}

The main idea is to imitate the proof of the aforementioned \cite[Theorem 5.14]{bgkv-apal} to get the order property from failure of symmetry. However we do not have as much global independence as there so the proof here is quite technical.

 \subsection{Implications in categorical AECs}
 
As a corollary of Theorem \ref{sym-from-superstab}, we obtain several applications to categorical AECs. A notable contribution of this paper is an improvement on a 1999 result of Shelah (see \cite[Theorem 6.5]{sh394}):

\begin{fact}
  Let $\K$ be an AEC with amalgamation and no maximal models. Let $\lambda$ and $\mu$ be cardinals such that $\cf(\lambda) > \mu > \LS (\K)$. If $\K$ is categorical in $\lambda$, then any limit model of size $\mu$ is saturated.
\end{fact}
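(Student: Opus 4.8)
The plan is to read this Fact off from Theorem~\ref{sym-from-superstab}: the categoricity hypothesis will be used only to manufacture enough superstability, and then $\mu$-symmetry together with the known structure theory of limit models finishes the job. So the argument breaks into three steps: (1) from categoricity in $\lambda$ deduce that $\K$ is superstable in the relevant cardinals; (2) apply Theorem~\ref{sym-from-superstab} to get $\mu$-symmetry; (3) conclude that every limit model of size $\mu$ is saturated.

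For step (1) the tool is the Shelah--Villaveces argument (as in \cite{shvi635} and the limit-model literature \cite{gvv-toappear-v1_2}). Amalgamation, no maximal models, and categoricity in $\lambda$ make the unique model $M_\lambda$ of size $\lambda$ universal; moreover, since $\cf(\lambda) > \mu$, for each $\chi < \cf(\lambda)$ one constructs a $\chi^+$-saturated model of size $\lambda$ (as a continuous $\prec$-increasing union of length $\cf(\lambda)$ of models of size $<\lambda$ with each step universal over the previous one) and transfers this, via categoricity, to $M_\lambda$: thus $M_\lambda$ realizes every Galois type over a $\prec$-submodel of size $\le \chi$. Were $\K$ not $\chi$-superstable for some $\chi \in [\LS(\K), \cf(\lambda))$, one could run a long $\chi$-splitting chain and continue it to a model of size $\lambda$ omitting a Galois type over a size-$\chi$ submodel, contradicting the saturation just established. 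Hence $\K$ is $\chi$-superstable for all $\chi \in [\LS(\K),\cf(\lambda))$, in particular at $\chi = \mu$; some further care---the bookkeeping step---is needed to secure superstability throughout the whole interval $[\mu, \beth_{\left(2^{\mu}\right)^+})$ that Theorem~\ref{sym-from-superstab} requires, either by arguing that superstability on a suitable tail of cardinals suffices or by using largeness of $\lambda$.

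Step (2) is then immediate: Theorem~\ref{sym-from-superstab} gives $\mu$-symmetry. (If $\mu$-superstability and $\lambda'$-symmetry happen to be available for some $\mu < \lambda' \le \lambda$, one could instead route through the downward transfer Theorem~\ref{transfer symmetry}, using the $\chi$-superstability of step (1) across $[\mu,\lambda']$.) For step (3) I would invoke the structure theory of limit models under symmetry from \cite{vandieren-symmetry-v1}, refined in \cite{bv-sat-v3, vandieren-chainsat-toappear-v2}: assuming $\mu$-superstability, $\mu$-symmetry yields uniqueness of limit models of size $\mu$ (limit models over a common base of different cofinalities are isomorphic over that base), equivalently that an increasing union of $\mu$-saturated models is $\mu$-saturated; from this a limit model of the appropriate cofinality is seen to be saturated, and by uniqueness so is every limit model of size $\mu$.

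The hard part is step (1): wringing superstability out of a single categoricity hypothesis with only $\cf(\lambda) > \mu$, and in particular covering the cardinal interval required by Theorem~\ref{sym-from-superstab} when $\lambda$ is singular or barely above $\mu$. Step (3) is routine modulo the cited work, though singular $\mu$ calls for the appropriate form of the limit-model machinery, and step (2) is free once step (1) is done.
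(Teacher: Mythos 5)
The statement you are proving is labeled as a \emph{Fact} in the paper because it is cited directly from Shelah \cite[Theorem 6.5]{sh394}; the paper does not reprove it, but rather strengthens it later in Corollary~\ref{categoricity uniqueness corollary}. Comparing your plan against how the paper derives that strengthening exposes a genuine gap in your argument.

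Your central step (2) routes through Theorem~\ref{sym-from-superstab}, which requires $\K$ to be superstable on an interval $[\mu, \lambda_0)$ for some $\lambda_0$ that a priori can be as large as (anything below) $\hanf{\mu} = \beth_{(2^\mu)^+}$. The Shelah--Villaveces argument you invoke in step (1) gives superstability in every $\chi \in [\LS(\K), \lambda)$ --- that is, strictly \emph{below} the categoricity cardinal --- and no further. The Fact only assumes $\cf(\lambda) > \mu$, which is perfectly compatible with $\lambda = \mu^+$, far below $\hanf{\mu}$. Your ``bookkeeping step'' to ``secure superstability throughout the whole interval $[\mu, \beth_{(2^{\mu})^+})$'' cannot be carried out: there is simply no source of superstability (or even stability) above $\lambda$ under these hypotheses, and you have no ``largeness of $\lambda$'' to lean on. Your parenthetical fallback via Theorem~\ref{transfer symmetry} needs $\lambda'$-symmetry for some $\lambda' \le \lambda$, which you have also not obtained. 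So steps (1) and (2) as you have laid them out do not go through when $\mu < \lambda < \hanf{\mu}$.

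The hypothesis $\cf(\lambda) > \mu$ should be used differently. Together with the stability in all $\chi \in [\LS(\K), \lambda)$ granted by Shelah--Villaveces, it implies that the unique model of size $\lambda$ is $\mu^+$-saturated (build a chain of length $\cf(\lambda)$ and transfer by categoricity, as you note). At that point the right tool is Theorem~\ref{categ-sym}: superstability in $[\mu, \lambda)$ plus $\mu^+$-saturation of the models of size $\lambda$ gives $\mu$-symmetry, with no Hanf-number requirement on $\lambda$ at all. This is exactly what the paper does via Corollary~\ref{categoricity symmetry corollary}(1) when it proves Corollary~\ref{categoricity uniqueness corollary}. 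From there, $\mu$-symmetry plus $\mu$-superstability give Fact~\ref{uq-limit} (uniqueness of limit models), and combining uniqueness with the standard observation that some limit model of size $\mu$ (of sufficiently high cofinality) is saturated yields saturation of all of them. Your route through Theorem~\ref{sym-from-superstab} is the correct one for the other case of Corollary~\ref{categoricity uniqueness corollary}, where $\lambda \ge \hanf{\mu}$ is assumed instead of $\cf(\lambda) > \mu$ --- but that is a different hypothesis than the one in front of you.
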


Shelah claims in a remark immediately following his result that this can be generalized to show that for $M_0, M_1, M_2 \in \K_\mu$, if $M_1$ and $M_2$ are limit over $M_0$, then $M_1 \cong_{M_0} M_2$ (that is, the isomorphism also fixes $M_0$). This is however not what his proof gives (see the discussion after Theorem 10.17 in \cite{baldwinbook09}). Here we finally prove this stronger statement. Moreover, we can replace the hypothesis that $\cf (\lambda) > \mu$ by $\lambda \ge \beth_{\left(2^{\mu}\right)^+}$. That is, it is enough to ask for $\lambda$ to be high-enough (but of arbitrary cofinality):

\textbf{Corollary \ref{categoricity uniqueness corollary}.}
\textit{Let $\K$ be an AEC with amalgamation and no maximal models. Let $\lambda$ and $\mu$ are cardinals so that $\lambda>\mu\geq\LS(\K)$ and assume that $\K$ is categorical in $\lambda$. If either $\cf(\lambda) > \mu$ or $\lambda \ge \beth_{\left(2^{\mu}\right)^+}$, then whenever $M_0,M_1,M_2\in\K_\mu$ are such that both $M_1$ and $M_2$ are limit models over $M_0$, we have that $M_1\cong_{M_0}M_2$.}

  This gives a proof (assuming amalgamation, no maximal models, and a high-enough categoricity cardinal) of the (in)famous \cite[Theorem 3.3.7]{shvi635}, where a gap was identified in the first author's Ph.D.\ thesis. The gap was fixed assuming categoricity in $\mu^+$ in \cite{vandierennomax, nomaxerrata} (see also the exposition in \cite{gvv-mlq}). In \cite[Corollary 6.18]{bg-v11-toappear}, this was improved to categoricity in an arbitrary $\lambda > \mu$ provided that $\mu$ is big-enough and the class satisfies strong locality assumptions (full tameness and shortness and the extension property for coheir). In \cite[Theorem 7.11]{ss-tame-jsl}, only tameness was required but the categoricity had to be in a $\lambda$ with $\cf (\lambda) > \mu$. Still assuming tameness, this is shown for categoricity in any $\lambda \ge \beth_{(2^{\mu})^+}$ in \cite[Theorem 7.1]{bv-sat-v3}. Here assuming tameness we will improve this to categoricity in any $\lambda > \LS (\K)$ (see Corollary \ref{categ-tameness}).

In general, we obtain that an AEC with amalgamation categorical in a high-enough cardinal has several structural properties that were previously only known for AECs categorical in a cardinal of high-enough \emph{cofinality}, or even just in a successor. 

\begin{corollary}\label{categority-saturated-model-cor}
  Let $\K$ be an AEC with amalgamation. Let $\lambda > \mu \ge \LS (\K)$ and assume that $\K$ is categorical in $\lambda$.  Let $\mu \ge \LS (\K)$. If $\K$ is categorical in a $\lambda > \mu$, then:

  \begin{enumerate}
    \item\label{abstract-2} (see Corollary \ref{cor-mu-sat}) If $\lambda \ge \beth_{\left(2^{\mu}\right)^+}$ and $\mu > \LS (\K)$, then the model of size $\lambda$ is $\mu$-saturated.
    \item\label{abstract-3} (see Corollary \ref{good-frame-categ}) If $\mu \ge \beth_{(2^{\LS (\K)})^+}$ and the model of size $\lambda$ is $\mu^+$-saturated, then there exists a type-full good $\mu$-frame with underlying class the saturated models in $\K_\mu$.
  \end{enumerate}
\end{corollary}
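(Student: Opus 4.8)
The plan is to obtain both parts as applications of Theorem \ref{sym-from-superstab}, combined with two results that are by now standard. First, an AEC with amalgamation and no maximal models that is categorical in some $\lambda \ge \beth_{\left(2^{\LS(\K)}\right)^+}$ is superstable in every $\chi \in [\LS(\K), \lambda)$ (see e.g.\ \cite{gv-superstability-v2, bv-sat-v3}); in both parts below this hypothesis on $\lambda$ is satisfied, so superstability is available throughout $[\LS(\K), \lambda)$. Second, $\chi$-superstability together with $\chi$-symmetry implies that the union of a $\lea$-increasing chain of $\chi$-saturated models is $\chi$-saturated, whence there is a $\chi$-saturated model in every cardinal $\ge \chi$, and every limit model of size $\chi$ is saturated (\cite{vandieren-symmetry-v1, vandieren-chainsat-toappear-v2, bv-sat-v3}); this is also what turns the class of $\chi$-saturated models into an AEC with the expected properties.

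For part (1): since $\lambda \ge \beth_{\left(2^{\mu}\right)^+}$ and $\K$ is categorical in $\lambda$, $\K$ is superstable in every $\chi \in [\LS(\K), \lambda)$ by the first fact. As $\beth_{\left(2^{\mu}\right)^+} \le \lambda$, the interval $[\mu, \beth_{\left(2^{\mu}\right)^+})$ is contained in $[\LS(\K), \lambda)$, so Theorem \ref{sym-from-superstab} applies at $\mu$ and yields $\mu$-symmetry. By the second fact there is then a $\mu$-saturated model of size $\lambda$; by categoricity in $\lambda$ it is isomorphic to the (unique) model of size $\lambda$, which is therefore $\mu$-saturated. (The assumption $\mu > \LS(\K)$ is what makes $\mu$-saturation nonvacuous and ensures that $\mu$-saturated models of size $\mu$ exist at all.)

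For part (2): here $\lambda$ is only assumed to be $> \mu$, so Theorem \ref{sym-from-superstab} is not directly available at $\mu$. Instead I would use the two extra hypotheses — $\mu \ge \beth_{\left(2^{\LS(\K)}\right)^+}$, and the model of size $\lambda$ being $\mu^+$-saturated — together with categoricity to extract, at the level $\mu$: (i) superstability in $\mu$; (ii) $\mu$-symmetry, argued via the close link, developed in the symmetry sections above and in \cite{vandieren-symmetry-v1}, between the saturation of the categorical model, uniqueness of limit models of size $\mu$, and $\mu$-symmetry; and hence (iii), by the second standard fact, that the class $\Ksatp{\mu}_\mu$ of saturated models of size $\mu$ is an AEC with amalgamation, joint embedding, no maximal models and L\"owenheim--Skolem number $\mu$. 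One then puts on this class the frame $\s$ whose basic types are all nonalgebraic Galois types (so $\s$ is type-full), with a type over $M_0$ declared not to $\s$-fork over $N_0 \lea M_0$ exactly when it does not $\LS(\K)$-split over some model of size $\LS(\K)$ inside $N_0$. The good $\mu$-frame axioms then follow: stability of $\s$ from stability of $\K$ in $\mu$; existence of $\s$-nonforking extensions, local character, transitivity and continuity from $\mu$-superstability (no long splitting chains); uniqueness of $\s$-nonforking extensions from the fact that the models of $\Ksatp{\mu}_\mu$ are saturated of size $\mu \ge \beth_{\left(2^{\LS(\K)}\right)^+}$ — enough homogeneity to pin down nonsplitting extensions, which is where that cardinal bound is actually needed; and the symmetry axiom is precisely $\mu$-symmetry.

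The hard part is in part (2): getting superstability in $\mu$ and, especially, $\mu$-symmetry out of the comparatively weak hypotheses — in particular when $\lambda$ is small relative to $\mu$, so that one genuinely cannot read superstability in $[\mu, \beth_{\left(2^{\mu}\right)^+})$ off categoricity and invoke Theorem \ref{sym-from-superstab} — and then verifying the uniqueness axiom, which has to exploit the saturation of the underlying models carefully. Part (1), by contrast, is mostly bookkeeping once Theorem \ref{sym-from-superstab} is in hand.
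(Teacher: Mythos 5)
Your treatment of part (1) is essentially the paper's: derive superstability throughout the relevant interval from categoricity via Shelah--Villaveces, get symmetry from Theorem \ref{sym-from-superstab}, and use the chain-of-saturated-models machinery plus categoricity in $\lambda$ to conclude the model of size $\lambda$ is $\mu$-saturated. Your bookkeeping is a little loose (the cited chain result, Fact \ref{union-sat-monica}, is stated for $\chi^+$-saturated models and needs $\chi^+$-symmetry and superstability at $\chi$ and $\chi^+$; the paper therefore iterates over all $\mu_0 < \mu$ and applies Corollary \ref{categoricity symmetry corollary} at each $\mu_0^+$, appealing to Lemma \ref{chainsat-lim} when $\mu$ is limit), but this is a routine repair.

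Part (2) has a real gap. You correctly identify that the work is in the uniqueness axiom, but you then attribute the role of the hypothesis $\mu \ge \beth_{\left(2^{\LS(\K)}\right)^+}$ to ``enough homogeneity to pin down nonsplitting extensions.'' Saturation of the models in $\Ksatp{\mu}_\mu$ does not by itself give uniqueness of non-$\LS(\K)$-splitting extensions over models of size $\mu$: two types over a saturated $M \in \K_\mu$ that do not $\LS(\K)$-split over the same small base and agree over that base could still differ over $M$, because non-$\LS(\K)$-splitting controls restrictions to models of size $\LS(\K)$, not types over $M$ itself. What closes this gap is \emph{weak tameness}: agreement of the two types on all small submodels is promoted to agreement over $M$. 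The bound $\mu \ge H_1 = \beth_{\left(2^{\LS(\K)}\right)^+}$ is used in the paper precisely to invoke Fact \ref{weak-tameness-from-categ-fact} (Shelah's Main Claim II.2.3 of \cite{sh394}), yielding $(\chi,\mu)$-weak tameness for some $\chi < H_1$; the paper then runs Theorem \ref{good-frame-weak-tameness} with $\chi$-splitting (not $\LS(\K)$-splitting) as the base notion, Corollary \ref{categoricity symmetry corollary} supplying superstability and symmetry at every level in $[\chi,\mu]$. Without deriving weak tameness, your construction cannot verify uniqueness, and the hypothesis $\mu \ge H_1$ is left unexplained. So the missing idea is: extract weak tameness from categoricity via Fact \ref{weak-tameness-from-categ-fact}, and build the frame using $\chi$-splitting for the $\chi$ that witnesses weak tameness rather than raw $\LS(\K)$-splitting.
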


  This improves several classical results from Shelah's milestone study of categorical AECs with amalgamation \cite{sh394}:

  \begin{itemize}
  \item Corollay \ref{categority-saturated-model-cor}.(\ref{abstract-2}) partially answers Baldwin \cite[Problem D.1.(2)]{baldwinbook09} which asked if in any AEC with amalgamation categorical in a high-enough cardinal, then the model in the categoricity cardinal is saturated.
  \item Corollay \ref{categority-saturated-model-cor}.(\ref{abstract-3}) partially answers the question in \cite[Remark 4.9.(1)]{sh394} of whether there is a parallel to forking in categorical AECs with amalgamation. It also improves on \cite[Theorem 7.4]{ss-tame-jsl}, which assumed categoricity in a successor (and a higher Hanf number bound).
  \item As part of the proof of Corollay \ref{categority-saturated-model-cor}.(\ref{abstract-3}), we derive \emph{weak tameness} (i.e.\ tameness over saturated models) from categoricity in a big-enough cardinal (this is Corollary \ref{weak-tameness-from-categ}). It was previously only known how to do so assuming that the categoricity cardinal has high-enough cofinality \cite[Main Claim II.2.3]{sh394}.
  \end{itemize}

We deduce a downward categoricity transfer in AECs with amalgamation (see also Corollary \ref{downward-categ}):

\textbf{Corollary \ref{fixed-point-downward}.}
\textit{Let $\K$ be an AEC with amalgamation. Let $\LS (\K) < \mu = \beth_\mu < \lambda$. If $\K$ is categorical in $\lambda$, then $\K$ is categorical in $\mu$.}

This improves on \cite[Theorem 10.16]{indep-aec-apal} where the result is stated with the additional assumption of $(<\mu)$-tameness.

\subsection{Implications in tame AECs}
This paper also combines our results with tameness: in Section \ref{sym-tame-sec}, we improve Hanf number bounds for several consequences of superstability. With Will Boney, the second author has shown \cite[Theorem 7.1]{bv-sat-v3} that $\mu$-superstability and $\mu$-tameness imply that for all high-enough $\lambda$, limit models of size $\lambda$ are unique (in the strong sense discussed above), unions of chains of $\lambda$-saturated models are saturated, and there exists a type-full good $\lambda$-frame. We transfer this behavior downward using our symmetry transfer theorem to get that the latter result is actually true starting from $\lambda = \mu^+$, and the former starting from $\lambda = \mu$:

\begin{corollary}\label{cor-tameness}
  Let $\mu \ge \LS (\K)$. If $\K$ is $\mu$-superstable and $\mu$-tame, then:

  \begin{enumerate}
    \item (see Corollary \ref{mu-symmetry-1}) If $M_0, M_1, M_2 \in \K_\mu$ are such that both $M_1$ and $M_2$ are limit models over $M_0$, then $M_1 \cong_{M_0} M_2$.
    \item (see Corollary \ref{chain-sat-1}) For any $\lambda > \mu$, the union of an increasing chain of $\lambda$-saturated models is $\lambda$-saturated.
    \item (see Corollary \ref{chain-sat-2}) There exists a type-full good $\mu^+$-frame with underlying class the saturated models in $\K_{\mu^+}$.
  \end{enumerate}
\end{corollary}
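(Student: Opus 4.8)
The plan is to derive all three parts from the single statement that $\K$ has $\chi$-symmetry for every $\chi \geq \mu$. Since Theorem \ref{sym-from-superstab} requires no tameness, the only role of $\mu$-tameness here is to propagate superstability upward: because $\K$ is $\mu$-superstable and $\mu$-tame, $\K$ is $\chi$-superstable for every $\chi \geq \mu$ (the upward transfer of superstability; see \cite{gv-superstability-v2}, and also the frame-theoretic results of \cite{bv-sat-v3}). Fix $\chi \geq \mu$. Then $\K$ is superstable in every $\mu' \in [\chi, \beth_{(2^{\chi})^+})$, so Theorem \ref{sym-from-superstab}, applied with $\chi$ in place of $\mu$, gives that $\K$ has $\chi$-symmetry. (One could instead obtain $\mu$-symmetry from $\beth_{(2^{\mu})^+}$-symmetry via the downward transfer Theorem \ref{transfer symmetry}, but this amounts to the same content.) Part (1) is now immediate: $\mu$-superstability together with $\mu$-symmetry yields the strong uniqueness of limit models in $\K_\mu$ --- if $M_0, M_1, M_2 \in \K_\mu$ with $M_1, M_2$ both limit over $M_0$, then $M_1 \cong_{M_0} M_2$ --- which is precisely the conclusion for which symmetry was isolated in \cite{vandieren-symmetry-v1}; this is Corollary \ref{mu-symmetry-1}.

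For part (2), recall from \cite{vandieren-chainsat-toappear-v2} that $\chi$-superstability and $\chi$-symmetry together imply that the union of any increasing chain of $\chi^+$-saturated models is $\chi^+$-saturated. Let $\lambda > \mu$, let $\seq{M_i : i < \delta}$ be an increasing chain of $\lambda$-saturated models, and set $M = \Union_{i < \delta} M_i$. A model is $\lambda$-saturated exactly when it is $\chi^+$-saturated for every $\chi < \lambda$. For each $\chi$ with $\mu \leq \chi < \lambda$ we have $\chi$-superstability and $\chi$-symmetry, so the cited fact gives that $M$ is $\chi^+$-saturated; applying it at $\chi = \mu$ in particular shows $M$ is $\mu^+$-saturated and hence also $\chi^+$-saturated for every $\chi \leq \mu$. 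Therefore $M$ is $\chi^+$-saturated for all $\chi < \lambda$, i.e.\ $\lambda$-saturated. (When $\lambda = \mu^+$ this collapses to the single instance of the cited fact at $\chi = \mu$.) This is Corollary \ref{chain-sat-1}.

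For part (3) we invoke the construction of a type-full good $\lambda$-frame whose underlying class is the saturated models of $\K_\lambda$, carried out in \cite{ss-tame-toappear-v3} and \cite{bv-sat-v3}, at $\lambda = \mu^+$. Its prerequisites are: $\mu$-superstability, which supplies the candidate independence relation ($\mu$-non-splitting) and stability in $\mu$; the closure of the saturated models in $\K_{\mu^+}$ under unions of increasing chains, so that this class satisfies the axioms required of the underlying class of a good $\mu^+$-frame --- this is part (2) with $\lambda = \mu^+$; $\mu$-tameness, which yields uniqueness, extension, and local character for the frame; and $\mu$-symmetry, which gives its symmetry axiom. All of these now hold, so the construction goes through and produces the desired good $\mu^+$-frame; this is Corollary \ref{chain-sat-2}.

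The substantive work lies entirely in Theorem \ref{sym-from-superstab}; what remains here is assembly. The point needing the most care is part (2) for limit $\lambda$: the chain union must be shown saturated at every level below $\lambda$, including levels below $\mu$ where the chain-of-saturated result is not directly available, and this is exactly why one reads off $\mu^+$-saturation from the $\chi = \mu$ instance. One should also check that each imported result --- upward transfer of superstability, symmetry implies uniqueness of limit models, symmetry implies unions of saturated models are saturated, and tameness plus superstability plus symmetry yields a good frame --- is applied with its cardinal hypotheses met, which is routine given that $\K$ is $\chi$-superstable and has $\chi$-symmetry for every $\chi \geq \mu$.
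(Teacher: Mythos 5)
Your proposal is correct and follows essentially the same route as the paper, which in fact gives this as its ``second proofs'': apply the upward transfer of superstability (Fact \ref{ss-transfer}), then Theorem \ref{sym-from-superstab} to obtain $\chi$-symmetry for every $\chi \ge \mu$, then Fact \ref{uq-limit} for part (1), Fact \ref{union-sat-monica} together with Lemma \ref{chainsat-lim} for part (2), and Fact \ref{good-frame-construct} for part (3). One small imprecision worth flagging: the result from \cite{vandieren-chainsat-toappear-v2} that you quote in part (2) actually requires $\chi$-superstability, $\chi^+$-superstability, \emph{and} $\chi^+$-symmetry (not merely $\chi$-symmetry) to conclude that unions of $\chi^+$-saturated chains are $\chi^+$-saturated, and similarly the symmetry axiom of the good $\mu^+$-frame in part (3) comes from $\mu^+$-symmetry; since you have established $\chi$-symmetry and $\chi$-superstability at \emph{every} $\chi \ge \mu$ these are available and nothing breaks, but the attribution as stated is slightly off.
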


In fact, $\mu$-tameness along with $\mu$-superstability already implies $\mu$-symmetry. Many assumptions weaker than tameness (such as the existence of a good $\mu^+$-frame, see Theorem \ref{sym-good-frame}) suffice to obtain such a conclusion.

\subsection{Notes} A word on the background needed to read this paper: It is assumed that the reader has a solid knowledge of AECs (including the material in \cite{baldwinbook09}). Some familiarity with good frames, in particular the material of \cite{ss-tame-jsl} would be very helpful. In addition to classical results, e.g.\ in \cite{sh394}, the paper uses heavily the results of \cite{vandieren-symmetry-apal, vandieren-chainsat-apal} on limit models and the symmetry property of splitting. It also relies on the construction of a good frame in \cite{ss-tame-jsl}. At one point we also use the canonicity theorem for good frames \cite[Theorem 9.7]{indep-aec-apal}.

This paper was written while the second author was working on a Ph.D.\ thesis under the direction of Rami Grossberg at Carnegie Mellon University and he would like to thank Professor Grossberg for his guidance and assistance in his research in general and in this work specifically. We also thank the referees for reports which helped improve the presentation of this paper.



\section{Background}

All throughout this paper, we assume the amalgamation property:

\begin{hypothesis}\label{ap-hyp}
$\K$ is an AEC with amalgamation.
\end{hypothesis}

For convenience, we fix a big-enough monster model $\sea$ and work inside $\sea$.   This is possible since by Remark \ref{jep remark}, we will have the joint embedding property in addition to the amalgamation property for models of the relevant cardinalities. At some point, we will also use the following fact whose proof is folklore (see e.g.\ \cite[Proposition 10.13]{indep-aec-apal})

\begin{fact}\label{jep-decomp}
  Assume that $\K$ has joint embedding in some $\lambda \ge \LS (\K)$. Then there exists $\chi < \beth_{\left(2^{\LS (\K)}\right)^+}$ and an AEC $\K^\ast$ such that:

    \begin{enumerate}
    \item $\K^\ast \subseteq \K$ and $\K^\ast$ has the same strong substructure relation as $\K$.
    \item $\LS (\K^\ast) = \LS (\K)$.
    \item $\K^\ast$ has amalgamation, joint embedding, and no maximal models.
    \item $\K_{\ge \min (\lambda, \chi)} = (\K^\ast)_{\ge \min (\lambda, \chi)}$.
  \end{enumerate}
\end{fact}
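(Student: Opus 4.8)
The plan is to take $\K^\ast$ to be a single ``joint-embedding component'' of $\K$, after discarding models below a Hanf number. First I would observe that, using amalgamation, the relation ``$M$ and $N$ embed into a common member of $\K$'' is an equivalence relation on $\K$: reflexivity and symmetry are clear, and transitivity is an amalgamation over the shared model. Every member of $\K$ has a $\lea$-substructure of size $\LS(\K)$ and there are at most $2^{\LS(\K)}$ isomorphism types of such, so this relation has at most $2^{\LS(\K)}$ classes $\seq{\K_i : i < i^\ast}$. A routine check --- closure under unions of chains and coherence, together with the remark that any amalgam or joint embedding of members of a fixed $\K_i$ formed inside $\K$ is $\lea$-above a member of $\K_i$ and hence again lies in $\K_i$ --- shows that each nonempty $\K_i$ is an AEC with $\LS(\K_i) = \LS(\K)$ which inherits amalgamation and joint embedding. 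Joint embedding in $\lambda$ means that all models of size $\lambda$ lie in a single class, say $\K_0$, and since any model of size $\geq \lambda$ has (by downward L\"owenheim--Skolem) a $\lea$-substructure of size $\lambda$ with which it trivially joint-embeds, in fact $(\K_0)_{\gea \lambda} = \K_{\gea \lambda}$.

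Next I would fix $\chi$. For $i \neq 0$, the class $\K_i$ has all of its models of size $< \lambda$ (otherwise it would contain a model of size $\lambda$, contradicting the choice of $\K_0$). I claim further that the sizes occurring in $\K_i$ are bounded strictly below $\chi_0 := \beth_{\left(2^{\LS(\K)}\right)^+}$: if $\K_i$ had models of unboundedly large size below $\chi_0$, then --- using joint embedding inside $\K_i$ to see that no member of $\K_i$ of size $< \chi_0$ is $\lea$-maximal --- one could build a $\lea$-increasing chain in $\K_i$ whose members' sizes are cofinal in $\chi_0$, whose union then has size $\geq \chi_0$; but by Shelah's presentation theorem and Morley's method a member of $\K_i$ of size $\geq \chi_0$ forces $\K_i$ to have arbitrarily large models, in particular one of size $\geq \lambda$, contradicting the previous sentence. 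So, writing $\mu_i < \chi_0$ for the supremum of the sizes occurring in $\K_i$ ($i \neq 0$), I set $\chi := \left(\sup_{i \neq 0} \mu_i\right)^+$ (with $\chi := \LS(\K)^+$ if $i^\ast = 1$), noting $\chi < \chi_0$ because $\cf(\chi_0) = \left(2^{\LS(\K)}\right)^+ > 2^{\LS(\K)} \geq i^\ast$; and I take $\K^\ast := \K_0$.

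It remains to check the four clauses. Clauses (1), (2), and the amalgamation and joint embedding of (3) are immediate from the first paragraph. For (4): if $M \in \K$ has $|M| \geq \min(\lambda, \chi)$ then $|M| \geq \lambda$ or $|M| \geq \chi > \mu_i$ for all $i \neq 0$, so in either case $M$ lies in no $\K_i$ with $i \neq 0$, i.e.\ $M \in \K_0 = \K^\ast$; hence $\K_{\gea \min(\lambda,\chi)} = (\K^\ast)_{\gea \min(\lambda,\chi)}$. For the ``no maximal models'' clause, note that $\K^\ast = \K_0$ contains every member of $\K$ of size $\geq \min(\lambda, \chi)$; so it is enough to know that $\K$ has arbitrarily large models, since then so does $\K^\ast$, and an AEC with amalgamation, joint embedding and arbitrarily large models has no maximal model (given $M$, embed it together with a strictly larger model into a common member). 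The hypothesis that $\K$ has arbitrarily large models is free when $\lambda \geq \chi_0$ (Hanf) and is in any case part of the ambient setting of this paper (it is what makes the monster model available).

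The step I expect to be the real obstacle is the bound on the stray components $\K_i$, $i \neq 0$, strictly below $\chi_0$: this is the one place where joint embedding (to build the cofinal chain) and the presentation theorem (to cap its union) are both genuinely used, and it is this balance that forces $\chi$ to be Hanf-number sized. Everything else --- that each $\K_i$ is an AEC inheriting amalgamation and joint embedding, and the translation between joint embedding in $\lambda$ and membership in $\K_0$ --- is a routine diagram chase.
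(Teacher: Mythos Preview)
The paper does not prove this statement: it records it as a folklore fact and refers the reader to \cite[Proposition 10.13]{indep-aec-v5}. Your argument is precisely the standard one underlying that reference --- decompose $\K$ into at most $2^{\LS(\K)}$ joint-embedding components using amalgamation, observe that all models of size $\ge \lambda$ fall into a single component $\K_0$, and use the Hanf-number argument (presentation theorem plus Morley's method) together with the cofinality $\cf\big(\beth_{(2^{\LS(\K)})^+}\big) = (2^{\LS(\K)})^+$ to bound the stray components strictly below $\beth_{(2^{\LS(\K)})^+}$ --- so there is nothing to compare beyond noting that your write-up is correct and matches the cited folklore proof. Your closing caveat about needing arbitrarily large models for the ``no maximal models'' clause is apt; in the paper this is implicit in the ambient monster-model setup following Hypothesis~\ref{ap-hyp}.
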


Many of the pre-requisite definitions and notations used in this paper can be found in \cite{gvv-mlq}.  Here we recall the more specialized concepts that we will be using explicitly.  

We write $\gtp (\ba / M)$ for the Galois type of the sequence $\ba$ over $M$ (and we write $\gS (M)$ for the set of all Galois types over $M$). While the reader can think of $\gtp (\ba / M)$ as the orbit of $\ba$ under the action of $\Aut_{M} (\sea)$, $\gtp (\ba / M)$ is really defined as the equivalence class of the triple $(\ba, M, \sea)$ under a certain equivalence relation (see for example \cite[Definition 6.4]{grossberg2002}). This allows us to define the restriction of a Galois type to any strong substructure of its domain, as well as its image under any automorphism of $\sea$ (and by extension any $\K$-embedding whose domain contains the domain of the type).

With that remark in mind, we can state the definition of non-splitting, a notion of independence from \cite[Definition 3.2]{sh394}. 

\begin{definition}\label{def:splitting}
A type $p \in \gS (N)$ does not $\mu$-split over $M$ if and only if for any $N_1, N_2 \in \K_\mu$ such that $M \lea N_\ell \lea N$ for $\ell = 1,2$, and any $f: N_1 \cong_M N_2$, we have $f (p \rest N_1) = p \rest N_2$
\end{definition}

We will use the definition of universality from \cite[Definition I.2.1]{vandierennomax}:

\begin{definition}
  Let $M, N \in \K$ be such that $M \lea N$. We say that $N$ is \emph{$\mu$-universal over $M$} if for any $M' \gea M$ with $\|M'\| \le \mu$, there exists $f: M' \xrightarrow[M]{} N$. We say that $N$ is \emph{universal over $M$} if $N$ is $\|M\|$-universal over $M$.
\end{definition}

A fundamental concept in the study of superstable AECs is the notion of a \emph{limit model}, first introduced in \cite{sh394}. We only give the definition here and refer the reader to \cite{gvv-mlq} for more history and motivation.

\begin{definition}
  Let $\mu \ge \LS (\K)$ and let $\alpha < \mu^+$ be a limit ordinal. Let $M \in \K_{\mu}$. We say that \emph{$N$ is $(\mu, \alpha)$-limit over $M$} (or \emph{a $(\mu, \alpha)$-limit model over $M$}) if there exists a strictly increasing continuous chain $\seq{M_i : i \le \alpha}$ in $\K_{\mu}$ such that $M_0 = M$, $M_\alpha = N$, and $M_{i + 1}$ is universal over $M_i$ for all $i < \alpha$. We say that $N$ is \emph{limit over $M$} (or \emph{a limit model over $M$}) if it is $(\mu, \beta)$-limit over $M$ for some $\beta < \mu^+$. Finally, we say that $N$ is \emph{limit} if it is limit over $N_0$ for some $N_0 \in \K_{\|N\|}$.
\end{definition}

Towers were introduced in Shelah and Villaveces \cite{shvi635} as a tool to prove the uniqueness of limit models. A tower is an increasing sequence of length $\alpha$ of  limit models, denoted by $\bar M=\langle M_i\in\K_\mu\mid i<\alpha\rangle$, along with a sequence of designated elements $\bar a=\langle a_{i}\in M_{i+1}\backslash M_i\mid i+1<\alpha\rangle$ and a sequence of designated submodels $\bar N=\langle N_{i}\mid i+1<\alpha\rangle$ for which
 $N_i\lea M_{i}$, $\tp(a_i/M_i)$ does not $\mu$-split over $N_i$, and $M_i$ is universal over $N_i$  (see Definition I.5.1 of \cite{vandierennomax}). 

Now we recall a bit of terminology regarding towers.  The collection of all towers $(\bar M,\bar a,\bar N)$ made up of models of cardinality $\mu$ and sequences indexed by $\alpha$ is denoted by $\K^*_{\mu,\alpha}$.  For $(\bar M,\bar a,\bar N)\in\K^*_{\mu,\alpha}$, if $\beta<\alpha$ then we write $(\bar M,\bar a,\bar N)\restriction\beta$ for the tower made of the subseqences $\bar M\restriction\beta=\langle M_i\mid i<\beta\rangle$, $\bar a\restriction\beta=\langle a_i\mid i+1<\beta\rangle$, and $\bar N\restriction\beta=\langle N_i\mid i+1<\beta\rangle$.  We sometimes abbreviate the tower $(\bar M,\bar a,\bar N)$ by $\T$.

\begin{definition}\label{def:mu extension}

For towers $(\bar M,\bar a,\bar N)$ and $(\bar M',\bar a',\bar N')$ in $\K^*_{\mu,\alpha}$, we say $$(\bar M,\bar a,\bar N)\leq (\bar M',\bar a',\bar N')$$ if for all $i<\alpha$, $M_i\lea M'_i$, $\bar a=\bar a'$, $\bar N=\bar N'$ and whenever $M'_i$ is a proper extension of $M_i$, then $M'_i$ is universal over $M_i$.  If for each $i<\alpha$,  $M'_i $ is universal over $M_i$ we will write $(\bar M,\bar a,\bar N)< (\bar M',\bar a',\bar N')$.
\end{definition}

In order to transfer symmetry from $\lambda$ to $\mu$ we will need to consider a generalization of these towers where the models $M_i$ and $N_i$ may have different cardinalities.  Fix $\lambda\geq\mu\geq\LS(\K)$ and $\alpha$ a limit ordinal $<\mu^+$.  We will write $\K^*_{\lambda,\alpha,\mu}$ for the collection of towers of the form $(\bar M,\bar a,\bar N)$ where $\bar M=\langle M_i\mid i<\alpha\rangle$ is a sequence of models each of cardinality $\lambda$ and $\bar N=\langle N_i\mid i+1<\alpha\rangle$ is a sequence of models of cardinality $\mu$.  We require that for $i<\alpha$, $M_i$ is $\mu$-universal over $N_i$ and $\tp(a_i/M_i)$ does not $\mu$-split over $N_i$.  

In a natural way we order these towers by the following adaptation of Definition \ref{def:mu extension}.
\begin{definition}\label{tower-limit-order-def}
Let $\lambda\geq\chi\geq\mu\geq\LS(\K)$ be cardinals and fix $\alpha<\mu^+$ an ordinal.
For towers $(\bar M,\bar a,\bar N)\in\K^*_{\lambda,\alpha,\mu}$ and $(\bar M',\bar a',\bar N')\in\K^*_{\chi,\alpha,\mu}$, we say $$(\bar M,\bar a,\bar N)<_\mu (\bar M',\bar a',\bar N')$$ if for all $i<\alpha$, $M_i\lea M'_i$, $\bar a=\bar a'$, $\bar N=\bar N'$, and there is $\theta<\lambda^+$ so that $M'_i$ is  a $(\lambda,\theta)$-limit model witnessed by a sequence $\langle M^\gamma_i\mid\gamma < \theta\rangle$ with $M_i \lta M^\gamma_0$.
\end{definition}

Note that Definition \ref{def:mu extension} is defined only on towers in $\K^*_{\mu,\alpha}$ and is slightly weaker from the ordering $<_\mu$ when restricted to $\K^*_{\mu,\alpha}$.  In particular, the models $M'_i$ in the tower $(\bar M',\bar a,\bar N)$ $<$-extending $(\bar M,\bar a,\bar N)$ are only required to be universal over $M_i$ and limit.  It is not necessary that $M'_i$ is limit over $M_i$ as we require if $(\bar M',\bar a,\bar N)<_\mu (\bar M,\bar a,\bar N)$.

Towers are particularly suited for superstable abstract elementary classes, in which they are known to exist and in which the union of an increasing chain of towers will be a tower. The definition below is already implicit in \cite{shvi635} and has since then been studied in many papers, e.g.\ \cite{vandierennomax, gvv-mlq, indep-aec-apal, bv-sat-v3, gv-superstability-v5-toappear}. We will use the definition from \cite[Definition 10.1]{indep-aec-apal}:

\begin{definition}\label{ss assm}
$\K$ is \emph{$\mu$-superstable} (or \emph{superstable in $\mu$})
if:

  \begin{enumerate}
    \item $\mu \ge \LS (\K)$.
    \item $\K_\mu$ is nonempty, has joint embedding, and no maximal models.
    \item $\K$ is stable in $\mu$.  That is, $|\gS (M)| \le \mu$ for all $M \in \K_\mu$. Some authors call this ``Galois-stable," and:
    \item$\mu$-splitting in $\K$ satisfies the ``no long splitting chains'' property:
  
      For any limit ordinal $\alpha < \mu^+$, for every sequence $\langle M_i\mid i<\alpha\rangle$ of
      models of cardinality $\mu$ with $M_{i+1}$ universal over $M_i$ and for every $p\in\gaS(\bigcup_{i < \alpha} M_i)$, there exists $i<\alpha$ such that $p$ does not $\mu$-split over $M_i$.
\end{enumerate}
\end{definition}
\begin{remark}\label{jep remark}
  By our global hypothesis of amalgamation (Hypothesis \ref{ap-hyp}), if $\K$ is $\mu$-superstable, then $\K_{\ge \mu}$ has joint embedding.
\end{remark}
\begin{remark}
  By the weak transitivity property of $\mu$-splitting \cite[Proposition 3.7]{ss-tame-jsl}, $\mu$-superstability implies the following continuity property (which is sometimes also stated as part of the definition): For any limit ordinal $\alpha < \mu^+$, for every sequence $\seq{M_i \mid i < \alpha}$ of models of cardinality $\mu$ with $M_{i + 1}$ universal over $M_i$ and for every $p \in \gS (\bigcup_{i < \alpha} M_i)$, if $p \rest M_i$ does not $\mu$-split over $M_0$ for all $i < \alpha$, then $p$ does not $\mu$-split over $M_0$. We will use this freely.
\end{remark}

The main results of this paper involve the concept of symmetry over limit models and its equivalent formulation involving towers which was identified in \cite{vandieren-symmetry-apal}:
\begin{definition}\label{sym defn}
We say that an abstract elementary class  exhibits \emph{symmetry for non-$\mu$-splitting} if  whenever models $M,M_0,N\in\K_\mu$ and elements $a$ and $b$  satisfy the conditions \ref{limit sym cond}-\ref{last} below, then there exists  $M^b$  a limit model over $M_0$, containing $b$, so that $\tp(a/M^b)$ does not $\mu$-split over $N$.  See Figure \ref{fig:sym}.
\begin{enumerate} 
\item\label{limit sym cond} $M$ is universal over $M_0$ and $M_0$ is a limit model over $N$.
\item\label{a cond}  $a\in M\backslash M_0$.
\item\label{a non-split} $\tp(a/M_0)$ is non-algebraic and does not $\mu$-split over $N$.
\item\label{last} $\tp(b/M)$ is non-algebraic and does not $\mu$-split over $M_0$. 
   
\end{enumerate}
\end{definition}

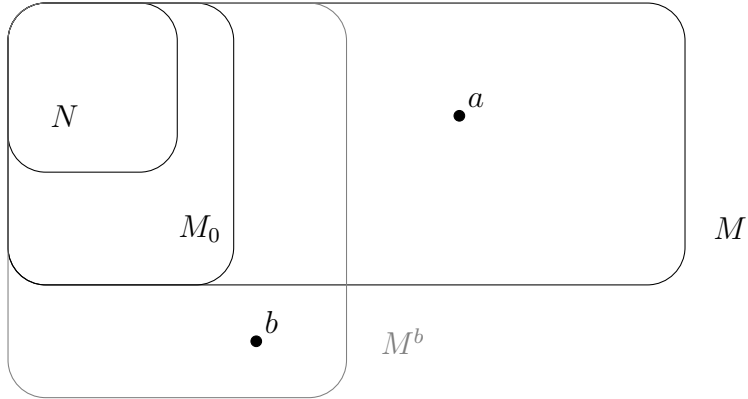
\begin{figure}[h]
\begin{tikzpicture}[rounded corners=5mm, scale=3,inner sep=.5mm]
\draw (0,1.25) rectangle (.75,.5);
\draw (.25,.75) node {$N$};
\draw (0,0) rectangle (3,1.25);
\draw (0,1.25) rectangle (1,0);
\draw (.85,.25) node {$M_0$};
\draw (3.2, .25) node {$M$};
\draw[color=gray] (0,1.25) rectangle (1.5, -.5);
\node at (1.1,-.25)[circle, fill, draw, label=45:$b$] {};
\node at (2,.75)[circle, fill, draw, label=45:$a$] {};
\draw[color=gray] (1.75,-.25) node {$M^{b}$};
\end{tikzpicture}
\caption{A diagram of the models and elements in the definition of symmetry. We assume the type $\tp(b/M)$ does not $\mu$-split over $M_0$ and $\tp(a/M_0)$ does not $\mu$-split over $N$.  Symmetry implies the existence of $M^b$ a limit model over $M_0$ containing $b$, so that $\tp(a/M^b)$  does not $\mu$-split over $N$.} \label{fig:sym}
\end{figure}

We end by recalling a few results of the first author showing the importance of the symmetry property:

\begin{fact}[Theorem 2 in \cite{vandieren-symmetry-apal}]\label{chainsat-sym}
  If $\K$ is $\mu$-superstable and the union of any chain (of length less than $\mu^{++}$) of saturated models of size $\mu^+$ is saturated, then $\K$ has $\mu$-symmetry.
\end{fact}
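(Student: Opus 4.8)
The statement to prove is Fact~\ref{chainsat-sym}: if $\K$ is $\mu$-superstable and the union of any chain (of length $<\mu^{++}$) of saturated models of size $\mu^+$ is saturated, then $\K$ has $\mu$-symmetry. Since this is quoted as a result of the first author (Theorem~4 of \cite{vandieren-symmetry-v1}), the proof below is really a reconstruction of the argument one would give. The plan is to argue by contraposition: assume $\mu$-symmetry fails, produce a witnessing configuration $M_0 \lea M, N, a, b$ as in Definition~\ref{sym defn}, and use it to build a chain of saturated models of size $\mu^+$ whose union is not saturated.

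First I would set up the machinery connecting symmetry to towers. By \cite{vandieren-symmetry-v1}, $\mu$-symmetry is equivalent to a statement about towers, roughly: any tower in $\K^*_{\mu,\alpha}$ can be extended (in the $\leq$ ordering of Definition~\ref{def:mu extension}) to a tower that is ``relatively full'' or continuous in an appropriate sense. So the first step is: assuming $\mu$-symmetry fails, fix a configuration $(M_0, M, N, a, b)$ violating Definition~\ref{sym defn} — that is, $M$ is universal over $M_0$, $M_0$ is limit over $N$, $a \in M \setminus M_0$ with $\tp(a/M_0)$ non-algebraic and non-$\mu$-splitting over $N$, and $\tp(b/M)$ non-algebraic and non-$\mu$-splitting over $M_0$, yet there is \emph{no} limit model $M^b$ over $M_0$ containing $b$ with $\tp(a/M^b)$ not $\mu$-splitting over $N$.

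Next I would iterate this failure to build a specific increasing continuous chain $\seq{M^i : i < \mu^+}$ of saturated models of size $\mu^+$, where at each stage the failure of symmetry is used to "push" the witnessing element $b$ (or a copy of it) so that it never gets absorbed into a limit submodel over which $a$ behaves non-splittingly. Concretely, using superstability one first notes saturated models of size $\mu^+$ exist (being $\mu^+$-limit, or unions of suitable chains); then one builds the $M^i$ so that each $M^{i+1}$ is saturated, contains an appropriate image of $b$, and the configuration defect persists. The key is to arrange that $\bigcup_{i<\mu^+} M^i$ realizes all types over its size-$\mu$ submodels \emph{except} one carefully engineered type — essentially the type of $a$ over a limit submodel built along the chain — so that the union fails to be $\mu^+$-saturated. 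This directly contradicts the hypothesis that unions of such chains are saturated.

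The main obstacle I expect is the bookkeeping in the middle step: one must simultaneously (i) keep all the intermediate models saturated of size $\mu^+$, which requires repeatedly invoking superstability and the existence/uniqueness-up-to-the-available-level of limit models; (ii) track how the non-$\mu$-splitting witnesses $N$ and $M_0$ transform under the embeddings used at each stage, using the monotonicity and invariance properties of non-$\mu$-splitting; and (iii) ensure the "bad" type genuinely survives to the union rather than getting realized at some stage $<\mu^+$, which is where the \emph{no long splitting chains} property (Definition~\ref{ss assm}(4)) and its continuity consequence enter — they guarantee that any type over the union is non-$\mu$-splitting over an initial segment, so the obstruction to realizing it is stable along the chain. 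Managing the interaction of these three requirements, and in particular choosing the cofinal sequence of stages at which one "activates" the symmetry failure so that the limiting configuration is exactly a non-realized type, is the technical heart of the proof.
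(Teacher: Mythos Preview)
The paper does not contain a proof of this statement: it is stated as Fact~\ref{chainsat-sym} with a citation to Theorem~4 of \cite{vandieren-symmetry-v1} and no argument given. So there is nothing in the present paper to compare your proposal against directly; what follows compares your sketch to the argument one infers from \cite{vandieren-symmetry-v1} and from the closely related proofs that \emph{are} in this paper (Theorems~\ref{transfer symmetry} and~\ref{categ-sym}).

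Your overall strategy---contrapositive: from failure of $\mu$-symmetry, build a chain of saturated models in $\K_{\mu^+}$ whose union is not saturated---is the right one, but your execution misses the key device. The argument does not work directly from a single raw configuration $(M_0, M, N, a, b)$ violating Definition~\ref{sym defn}. Instead it passes through the reduced-tower characterization recorded here as Fact~\ref{sym-reduced-tower}: failure of $\mu$-symmetry yields a reduced \emph{discontinuous} tower $\T = (\bar M, \bar a, \bar N) \in \K^*_{\mu, \delta+1}$ with a discontinuity witness $b \in M_\delta \setminus \bigcup_{\beta<\delta} M_\beta$. One then builds a $<$-increasing continuous chain of length $\mu^+$ of towers in $\K^*_{\mu,\delta}$ extending $\T \restriction \delta$. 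The union along this chain has size $\mu^+$ but cannot realize $\gtp(b / \bigcup_{\beta<\delta} M_\beta)$: if it did, the realizer would lie in some stage and (exactly as at the end of the proofs of Theorems~\ref{transfer symmetry} and~\ref{categ-sym}, via the downward L\"owenheim--Skolem property, $\mu$-stability, and monotonicity of non-splitting) one could manufacture a $<$-extension of $\T$ in $\K^*_{\mu,\delta+1}$ placing $b$ at an earlier index, contradicting that $\T$ is reduced. Interleaving saturation steps, this is packaged as a chain of saturated models in $\K_{\mu^+}$ whose union omits that type.

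The concrete gap in your sketch is twofold. First, you misidentify the omitted type: you say the union will fail to realize ``the type of $a$ over a limit submodel built along the chain'', but the omitted type is that of the discontinuity witness $b$ over $\bigcup_{\beta<\delta} M_\beta$, not a type of $a$. Second, and more seriously, your plan to iterate the bare configuration by ``pushing $b$'' at each stage gives no mechanism to ensure the bad type stays unrealized through a $\mu^+$-length construction. What propagates the single local symmetry failure globally is precisely the \emph{reducedness} of the tower $\T$: it is the statement that no $<$-extension of $\T$ can absorb $b$ earlier, and that is what blocks realization at every stage. Without invoking Fact~\ref{sym-reduced-tower} and the reduced-tower framework, the argument as you outline it does not close.
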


Many of the results on symmetry rely on the equivalent formulation of $\mu$-symmetry in terms of reduced towers.
 
\begin{definition}
A  tower $(\bar M,\bar a,\bar N)\in\K^*_{\mu,\alpha}$ is \emph{reduced} if it satisfies the condition that for every $<$-extension $(\bar M',\bar a,\bar N)\in\K^*_{\mu,\alpha}$ of $(\bar M,\bar a,\bar N)$ and for every $i<\alpha$, $M'_i\bigcap(\Union_{j<\alpha}M_j) = M_i$.
\end{definition}
\begin{definition}
  A tower $(\bar M, \bar a, \bar N) \in \K^*_{\mu, \alpha}$ is \emph{continuous} if for any limit $i < \alpha$, $M_i = \bigcup_{j < i} M_j$.
\end{definition}

\begin{fact}[Theorem 3 in \cite{vandieren-symmetry-apal}]\label{sym-reduced-tower}
  Assume $\K$ is $\mu$-superstable. The following are equivalent:

  \begin{enumerate}
    \item\label{symmetry item} $\K$ has $\mu$-symmetry.
    \item\label{reduced are continuous} Any reduced tower in $\K_{\mu, \alpha}^\ast$ is continuous.        \end{enumerate}
\end{fact}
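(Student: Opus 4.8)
The plan is to prove the two implications separately, each via its contrapositive, so that both reduce to relating the existence of a reduced, non-continuous tower to the failure of $\mu$-symmetry.

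\emph{(1) implies (2).} Assume $\K$ has $\mu$-symmetry and suppose, toward a contradiction, that $(\bar M,\bar a,\bar N)\in\K^*_{\mu,\alpha}$ is reduced but not continuous. Let $\delta<\alpha$ be least with $M_\delta\supsetneq M_\delta^-:=\bigcup_{i<\delta}M_i$, so the tower is continuous below $\delta$, and fix $b\in M_\delta\setminus M_\delta^-$. First I would invoke superstability (the ``no long splitting chains'' property, applied to the increasing chain $\seq{M_i:i<\delta}$) to find $\beta<\delta$ with $\tp(b/M_\delta^-)$ not $\mu$-splitting over $M_\beta$; note $b\notin M_{\beta+1}$ since $\beta+1<\delta$. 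Then $N_\beta\lea M_\beta$, $a_\beta\in M_{\beta+1}\setminus M_\beta$ with $\tp(a_\beta/M_\beta)$ non-algebraic and non-$\mu$-splitting over $N_\beta$, $M_\beta$ is (limit, hence) universal over $N_\beta$, and $M_\delta^-$ is universal over $M_\beta$, has size $\mu$, contains $a_\beta$, and satisfies that $\tp(b/M_\delta^-)$ is non-algebraic and does not $\mu$-split over $M_\beta$; that is, $(N_\beta,M_\beta,a_\beta,M_\delta^-,b)$ meets the hypotheses of Definition~\ref{sym defn}. So $\mu$-symmetry produces a limit model $M^b$ over $M_\beta$ with $b\in M^b$ and $\tp(a_\beta/M^b)$ not $\mu$-splitting over $N_\beta$. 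Taking $M_\beta':=M^b$ (universal over $M_\beta$) and using the extension property for towers together with the homogeneity of $\C$ to fill in the remaining models --- the $M_i'$ for $i<\beta$ inside $M^b$, and the $M_i'$ for $\beta<i<\alpha$ above $M^b$, at each stage realizing the canonical non-$\mu$-splitting extension over a universal extension of $N_i$ --- I obtain a $<$-extension $(\bar M',\bar a,\bar N)$ of the original tower with $b\in M_\beta'$. Since $b\in M_\delta\subseteq\bigcup_{j<\alpha}M_j$ but $b\notin M_\beta$, this contradicts reducedness, so the tower was continuous after all.

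\emph{(2) implies (1).} I argue the contrapositive: assuming $\mu$-symmetry fails, I build a reduced, non-continuous tower. Fix witnesses $M_0,M,N,a,b$ to the failure of symmetry. I would construct, by recursion on $n<\omega$, an increasing chain $\seq{M_n:n<\omega}$ of limit models in $\K_\mu$ (with $M_0$ the given one) together with $N_n,a_n$, so that $(\seq{M_n:n<\omega},\seq{a_n},\seq{N_n})$ is a tower, $N_0=N$ and $a_0=a$, at every stage $b\notin M_n$ while $\tp(b/M_n)$ is the non-$\mu$-splitting-over-$M_0$ extension of $\tp(b/M_0)$, and the construction is self-similar enough that the failure of symmetry at stage $0$ propagates to every stage (no limit model over $M_n$ containing $b$ is a non-$\mu$-splitting base for $a_n$ over $N_n$). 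Setting $M_\omega^-:=\bigcup_{n<\omega}M_n$ --- a limit model over $M_0$ with $b\notin M_\omega^-$ but, by the continuity of $\mu$-splitting, $\tp(b/M_\omega^-)$ still non-$\mu$-splitting over $M_0$ --- I then choose a limit model $M_\omega$ with $M_\omega^-\lea M_\omega$ and $b\in M_\omega$, so that $(\seq{M_i:i\lea\omega},\seq{a_i},\seq{N_i})\in\K^*_{\mu,\omega+1}$ fails continuity at $\omega$. The remaining, and main, task is to verify this tower is reduced: in any $<$-extension a failure of reducedness would place some element of $M_\omega\setminus M_k$ into $M_k'$ for some $k\lea\omega$, necessarily with $k<\omega$; the case in which the stolen element is $b$ is blocked by the propagated failure of symmetry at stage $k$ (a limit-over-$M_k$ submodel of $M_k'$ containing $b$ would contradict it), and the case of a stolen element of $M_\omega^-\setminus M_k$ is blocked by the choice of the $M_n$ to realize canonical non-$\mu$-splitting types, together with the uniqueness of such extensions over limit models.

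I expect the main obstacle to be this last verification --- that the tower built from the failure of symmetry is genuinely reduced --- since it requires making precise how the stage-$0$ symmetry failure forces symmetry-failure behavior at every stage while simultaneously controlling all $<$-extensions. A secondary, pervasive source of friction in both directions is the universality bookkeeping needed to line up the hypotheses and conclusion of Definition~\ref{sym defn} (that the large model be \emph{universal} over the small one while the small one is \emph{limit} over the base, and that the produced model $M^b$ be limit over the base), along with repeated appeals to the extension property for towers, the existence of non-$\mu$-splitting extensions over universal extensions, and their uniqueness over limit models.
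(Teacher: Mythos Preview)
First, note that the paper does not prove this statement at all: it is stated as a \emph{Fact} and attributed to Theorem~5 of \cite{vandieren-symmetry-v1}. So there is no in-paper proof to compare against; what follows compares your sketch to the argument in the cited source.

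Your (1)$\Rightarrow$(2) direction is essentially the argument in \cite{vandieren-symmetry-v1}: locate the least point $\delta$ of discontinuity, pick $b\in M_\delta\setminus\bigcup_{i<\delta}M_i$, use no-long-splitting-chains to drop to some $M_\beta$, apply $\mu$-symmetry to produce $M^b$, and then manufacture a $<$-extension of the tower in which $b$ lands at index $\beta$, contradicting reducedness. One small point to tighten: the symmetry definition asks that $M_0$ be a \emph{limit} model over $N$, whereas the tower axioms only give $M_\beta$ \emph{universal} over $N_\beta$; in the cited paper this is handled by interpolating a suitable limit witness between $N_\beta$ and $M_\beta$ (or by the formulation of symmetry used there), and you should make that adjustment explicit.

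Your (2)$\Rightarrow$(1) direction, however, departs from the cited proof and has a genuine gap. You try to build, directly from a counterexample $(N,M_0,a,M,b)$ to symmetry, a tower of length $\omega+1$ that is simultaneously discontinuous \emph{and} reduced. Verifying reducedness means controlling \emph{every} $<$-extension and \emph{every} element of $\bigcup_j M_j$, not just $b$; your sentence that a stolen element of $M_\omega^-\setminus M_k$ is ``blocked by the choice of the $M_n$ to realize canonical non-$\mu$-splitting types'' does not do this. There is no reason a generic element of $M_{k+5}\setminus M_k$ cannot appear in $M_k'$ in some extension, and uniqueness of non-splitting extensions constrains types of the designated $a_n$, not arbitrary elements of the $M_n$. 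The approach in \cite{vandieren-symmetry-v1} avoids this entirely: one first builds (from the symmetry failure) a discontinuous tower with $b\in M_\omega\setminus\bigcup_{n<\omega}M_n$, then applies the \emph{density of reduced towers} to pass to a reduced $<$-extension, and finally argues that this reduced extension is still discontinuous at $\omega$ --- because if $b$ were absorbed below $\omega$ in the extension, the extension itself would furnish the $M^b$ that the failure of symmetry forbids. That last step is exactly your ``$b$ cannot be stolen'' observation, but applied once to a tower already known to be reduced, rather than as part of a direct reducedness verification. Reorganizing your argument along these lines closes the gap.
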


In \cite{shvi635}, Shelah and Villaveces attempt to show that that the continuity of reduced towers gives the uniqueness of limit models using full towers.  Later Grossberg, VanDieren, and Villaveces verified  this statement through the introduction of relatively full towers \cite{gvv-mlq}:

\begin{fact}\label{uq-limit}
  Assume $\K$ is $\mu$-superstable. If any reduced tower in $\K_{\mu, \alpha}^\ast$ is continuous (or equivalently by Fact \ref{sym-reduced-tower} if $\K$ has $\mu$-symmetry), then for any $M_0, M_1, M_2 \in \K_\mu$, if $M_1$ and $M_2$ are limit over $M_0$, then $M_1 \cong_{M_0} M_2$.
\end{fact}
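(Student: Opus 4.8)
The plan is to follow Shelah--Villaveces \cite{shvi635} and Grossberg--VanDieren--Villaveces \cite{gvv-toappear-v1_2}. First note that a $(\mu,\alpha)$-limit model over a base $M$ is also a $(\mu,\cf(\alpha))$-limit over $M$: if $\seq{M_i : i \le \alpha}$ witnesses the former and $f : \cf(\alpha) \to \alpha$ is increasing, continuous, and cofinal with $f(0) = 0$, then $\seq{M_{f(j)} : j \le \cf(\alpha)}$ (with top model $M_\alpha$) witnesses the latter --- continuity at a limit $j$ holds since $f(j) = \sup_{k<j}f(k)$ is a limit ordinal at which the original chain is continuous, and universality at successors is inherited from $M_{f(j)+1} \lea M_{f(j+1)}$. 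Moreover, a routine back-and-forth --- at successor steps extend using the universality of the next model over the current one on both sides, at limit steps take unions --- shows that any two $(\mu,\kappa)$-limit models over a common base are isomorphic over that base, for each fixed ordinal $\kappa$. Combining these, it suffices to prove that for every regular $\theta < \mu^+$, every $(\mu,\theta)$-limit over $M_0$ is isomorphic over $M_0$ to a $(\mu,\omega)$-limit over $M_0$; and this is immediate when $\cf(\theta) = \omega$, so the real content is the case $\theta$ regular uncountable.

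Fix such a $\theta$ and let $M$ be a $(\mu,\theta)$-limit over $M_0$, witnessed by $\seq{M_i : i \le \theta}$ with successive models universal over one another. Re-choosing the chain at no cost --- requiring each $M_{i+1}$ to be limit (not just universal) over $M_i$, and, if necessary, replacing $M_0$ by $M_1$, noting $M_1$ is a limit model over $M_0$ and $M$ is still a $(\mu,\theta)$-limit over $M_1$ --- we may assume every $M_i$ is a limit model. Enrich the chain to a tower $(\bar M,\bar a,\bar N) \in \K^*_{\mu,\theta}$ whose successive models remain universal over their predecessors: for each $i$, pick $a_i \in M_{i+1} \setminus M_i$, and then, using that $\K$ is stable in $\mu$ together with the no-long-splitting-chains property (applied to a universal resolution of the limit model $M_i$, or to $\seq{M_j : j < i}$ when $i$ is a limit), pick $N_i \lea M_i$ in $\K_\mu$ with $M_i$ universal over $N_i$ and $\gtp(a_i/M_i)$ not $\mu$-splitting over $N_i$. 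Pass to a reduced tower $(\bar M',\bar a,\bar N)$ extending $(\bar M,\bar a,\bar N)$, still with successive models universal over their predecessors and the same base; by the hypothesis of the lemma this reduced tower is continuous, so $\seq{M_i' : i \le \theta}$ is an increasing continuous chain with successors universal, whence $\bigcup_{i<\theta} M_i'$ is a $(\mu,\theta)$-limit over $M_0$ and hence is isomorphic to $M$ over $M_0$ by the first paragraph.

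It remains to show that $\bigcup_{i<\theta}M_i'$ is isomorphic over $M_0$ to a $(\mu,\omega)$-limit over $M_0$. For this one runs a back-and-forth between the reduced continuous tower $(\bar M',\bar a,\bar N)$ and a reduced continuous tower of length $\omega$ over $M_0$ (which exists by $\mu$-superstability): at successor stages one uses the universality of a tower model over its predecessor; at limit stages one uses the continuity of the reduced towers to conclude that the partial isomorphism built so far already has domain and range equal to the relevant unions, so that it extends to the next level. The crucial point --- and the only place the hypothesis that reduced towers are continuous is used --- is that reducedness allows one to maintain throughout the construction the invariant that one sits on a model of the form $M_i'$ (respectively on a model of the length-$\omega$ tower), since it is precisely over such ``tight'' models that the needed universality is available, whereas in a general AEC a limit model need not be universal over an arbitrary submodel. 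This bookkeeping --- keeping the construction on tower models at every stage, and in particular at limit stages --- is the main obstacle, and it is carried out in detail in \cite{shvi635, gvv-toappear-v1_2}, to which we refer for the full argument.
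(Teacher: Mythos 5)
The paper does not prove this statement --- it is recorded as a \emph{Fact} with a citation to \cite{shvi635} and, more explicitly, to \cite{gvv-toappear-v1_2} --- so there is no internal argument to compare against. Your first paragraph (reduction to showing that a $(\mu,\theta)$-limit over $M_0$ with $\theta$ regular uncountable is $\cong_{M_0}$ to a $(\mu,\omega)$-limit over $M_0$, via cofinality and the standard back-and-forth for same-length limits) is correct and is the standard opening move. The rest of the sketch, however, has two gaps that make it diverge from what \cite{gvv-toappear-v1_2} actually does.

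First, in the second paragraph you pass to \emph{one} reduced $<$-extension $(\bar M',\bar a,\bar N)$ and assert it still has successive models universal over their predecessors. The tower ordering only guarantees that $M'_i$ is universal over $M_i$ for each fixed $i$; it gives no relation between $M'_i$ and $M'_{i+1}$ beyond $M'_i \lea M'_{i+1}$, so $M'_{i+1}$ need not be universal over $M'_i$, and $\bigcup_{i<\theta}M'_i$ need not be a $(\mu,\theta)$-limit on the basis of what you have written. Second, and more seriously, the proposed back-and-forth between a continuous tower of length $\theta$ and one of length $\omega$ cannot be carried out as described: any enumeration-style back-and-forth runs for $\ge\mu$ (or at least $\theta$) steps, and at the first limit stage $\delta$ of cofinality $\omega$ at which the $\omega$-side index has been pushed up cofinally, the range must be $N_\omega=\bigcup_n N_n$ itself, after which the needed universality for the next extension step is no longer available on that side. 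So ``keeping the construction on tower models'' is not a bookkeeping issue that can be absorbed into a remark --- a naive matching of tower indices of lengths $\theta$ and $\omega$ is structurally impossible. The argument in \cite{gvv-toappear-v1_2} avoids both problems at once by building an $\omega$-indexed $<$-increasing chain of towers $\seq{\T^n : n<\omega}$ of length $\theta$, each $\T^{n+1}$ reduced (hence, by hypothesis, continuous), and then working with the union tower $\T^\omega$: continuity of the $\T^{n}$'s (for $n\ge 1$) makes $\T^\omega$ continuous, the column $M^\omega_i=\bigcup_n M^n_i$ is a $(\mu,\omega)$-limit, and the full apparatus is used to exhibit $\bigcup_{i<\theta}M^\omega_i$ simultaneously as a $(\mu,\theta)$-limit and as a $(\mu,\omega)$-limit over $M_0$. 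You should replace the single-extension-plus-back-and-forth sketch with a description of this chain-of-towers construction, or simply cite the relevant theorem of \cite{gvv-toappear-v1_2} without attempting to sketch a different argument.
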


Symmetry also has implications to chains of saturated models. For $\lambda > \LS (\K)$, write $\Ksatp{\lambda}$ for the class of $\lambda$-saturated models in $\K_{\ge \lambda}$. We also define $\Ksatp{0} := \K$. Using this notation, we have:

\begin{fact}[Theorem 22 in \cite{vandieren-chainsat-apal}]\label{union-sat-monica}
  Assume $\K$ is $\mu$-superstable, $\mu^+$-superstable, and every limit model in $\K_{\mu^+}$ is saturated. Then $\Ksatp{\mu^+}$ is an AEC with $\LS (\Ksatp{\mu^+}) = \mu^+$.
\end{fact}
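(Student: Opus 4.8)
The plan is to verify the axioms of an AEC for $\Ksatp{\mu^+}$ with the strong substructure relation inherited from $\K$. Closure under isomorphism is immediate (being $\mu^+$-saturated and having size $\geq \mu^+$ are isomorphism-invariant), the axioms that $\lea$ is a partial order refining substructure, the coherence axiom, and the smoothness clause of the chain axiom are inherited from $\K$, and $\Ksatp{\mu^+}$ is non-empty since $\mu^+$-superstability yields a limit model of size $\mu^+$, which is saturated by hypothesis. Thus the two substantive points are: (a) the union of a $\lea$-increasing chain of $\mu^+$-saturated models is again $\mu^+$-saturated (this closes the chain axiom); and (b) the L\"owenheim--Skolem axiom holds with value $\mu^+$. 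I expect the subcase of (a) in which the chain has cofinality $\leq \mu$ to be the main obstacle: there one cannot localize a given type over a $\mu$-sized set to a single level of the chain, and one is forced to manufacture a saturated model of size $\mu^+$ inside the union --- which is exactly where the hypothesis that limit models in $\K_{\mu^+}$ are saturated is used.

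\textbf{An ``internal saturation'' lemma.} As a tool I would first show: if $M$ is $\mu^+$-saturated and $P \lea M$ with $\|P\| \leq \mu^+$, then there is a $\mu^+$-saturated $N$ with $P \lea N \lea M$ and $\|N\| = \|P\| + \mu^+$. To build $N$, take an increasing continuous chain $\seq{N_j : j < \mu^+}$ inside $M$ with $P \lea N_0$, each $\|N_j\| = \|P\| + \mu^+$, and $N_{j+1}$ realizing every type in $\gS(Q)$ for every $Q$ in a fixed cofinal family of $\lea$-submodels of $N_j$ of size $\leq \mu$. Since $\mu^+$ is regular, every $\leq\mu$-sized subset of $N_j$ lies in a bounded initial segment of an enumeration of $|N_j|$, so (using the L\"owenheim--Skolem property of $\K$ and coherence) such a cofinal family can be chosen of size $\mu^+$; by stability in $\mu$, $|\gS(Q)| \leq \mu$ for each $Q$; hence only $\mu^+$ types need to be realized at each step, and their realizations exist in $M$ by $\mu^+$-saturation. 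Then $N := \bigcup_{j<\mu^+} N_j$ is $\mu^+$-saturated, because any $\lea$-submodel of $N$ of size $\leq \mu$ lies inside some $N_j$, hence $\lea$ a member of the cofinal family for $N_j$, and thus has all its types realized in $N_{j+1}$.

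\textbf{The chain axiom.} Let $\seq{M_i : i < \delta}$ be $\lea$-increasing with each $M_i$ $\mu^+$-saturated. If $\cf(\delta) > \mu$, any $\lea$-submodel of $\bigcup_i M_i$ of size $\leq \mu$ sits in a single $M_i$, so the union is $\mu^+$-saturated at once. If $\cf(\delta) \leq \mu$, pass to a cofinal subchain of order type $\kappa := \cf(\delta)$ (regular, $\leq\mu$) with the same union $M_\kappa$. Given $Q \lea M_\kappa$ of size $\leq \mu$ and $p \in \gS(Q)$, one may harmlessly enlarge $Q$ and extend $p$ so that $Q = \bigcup_{\xi<\kappa} Q_\xi$ with $Q_\xi \lea M_\xi$ increasing continuous of size $\leq \mu$. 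Then build an increasing continuous $\seq{R_\xi : \xi < \kappa}$ with $R_\xi \lea M_\xi$, $Q_\xi \lea R_\xi$, $\|R_\xi\| = \mu^+$, and $R_{\xi+1}$ universal over $R_\xi$ in $\K_{\mu^+}$: at a successor step, use the internal saturation lemma inside $M_{\xi+1}$ to find a $\mu^+$-saturated $\widehat M_{\xi+1}$ of size $\mu^+$ with $R_\xi, Q_{\xi+1} \lea \widehat M_{\xi+1} \lea M_{\xi+1}$; by the hypothesis on limit models together with uniqueness of saturated models, saturated and limit models of size $\mu^+$ coincide, so $\widehat M_{\xi+1}$ is a limit model and hence universal over its $\mu^+$-sized submodel $R_\xi$; thus $\widehat M_{\xi+1}$ contains a copy over $R_\xi$ of a universal extension of $R_\xi$ (which exists by stability in $\mu^+$), and we let $R_{\xi+1}$ be the $\lea$-hull in $\widehat M_{\xi+1}$ of that copy together with $Q_{\xi+1}$. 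Then $R := \bigcup_{\xi<\kappa} R_\xi$ is a $(\mu^+,\kappa)$-limit model, hence saturated by hypothesis, and since $Q \lea R \lea M_\kappa$ with $\|Q\| < \mu^+$, the type $p$ is realized in $R$, so in $M_\kappa$. Therefore $M_\kappa$ is $\mu^+$-saturated (and of size $\geq \mu^+$), completing the chain axiom.

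\textbf{The L\"owenheim--Skolem axiom.} Clearly $\LS(\Ksatp{\mu^+}) \geq \mu^+$, since the class is non-empty with no model of size $< \mu^+$. For the upper bound, given $M \in \Ksatp{\mu^+}$ and $A \subseteq |M|$, I would produce a $\mu^+$-saturated $N$ with $A \subseteq |N| \lea M$ and $\|N\| \leq |A| + \mu^+$ by induction on $\chi := |A| + \mu^+$. The base case $\chi = \mu^+$ is the internal saturation lemma applied to a $\mu^+$-sized $\lea$-submodel of $M$ containing $A$ (or $N := M$ when $\|M\| = \mu^+$). For $\chi > \mu^+$, write $A = \bigcup_{\alpha<\cf(\chi)} A_\alpha$ with $|A_\alpha| < \chi$ and build an increasing continuous chain $\seq{N_\alpha : \alpha < \cf(\chi)}$ of $\mu^+$-saturated $\lea$-submodels of $M$, each of size $< \chi$, with $A \subseteq \bigcup_\alpha |N_\alpha|$: at successor stages apply the inductive hypothesis to $|N_\alpha| \cup A_{\alpha+1}$ (a set of size $< \chi$), and at limit stages take unions, which stay $\mu^+$-saturated by the chain axiom just established. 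The union $N := \bigcup_\alpha N_\alpha$ then has size $\chi$, is $\lea M$ by smoothness, contains $A$, and is $\mu^+$-saturated by a final application of the chain axiom. This establishes that $\Ksatp{\mu^+}$ is an AEC with $\LS(\Ksatp{\mu^+}) = \mu^+$.
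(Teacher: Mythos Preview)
First, note that the paper does not itself prove this statement: it is quoted as a Fact with a citation to \cite{vandieren-chainsat-toappear-v2}, so there is no in-paper proof to compare against.

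Your argument for the chain axiom has a genuine gap. In the successor step you assert that since $\widehat M_{\xi+1}$ is saturated of size $\mu^+$ (equivalently, by the hypothesis, a limit model), it is ``hence universal over its $\mu^+$-sized submodel $R_\xi$.'' This inference is invalid. A limit model is limit over \emph{some} base and universal over \emph{that} base; nothing makes it universal over an arbitrary $\lea$-submodel of the same cardinality. Under amalgamation, $\mu^+$-saturation is equivalent only to $\mu^+$-model-homogeneity, which yields universality over submodels of size $\le \mu$, not $\mu^+$ (and the degenerate case $R_\xi = \widehat M_{\xi+1}$ already shows the general claim fails). Without this step you cannot arrange $R_{\xi+1}$ universal over $R_\xi$, so $R = \bigcup_{\xi<\kappa} R_\xi$ is not known to be a limit model, and the hypothesis that limit models in $\K_{\mu^+}$ are saturated cannot be applied.

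A telling symptom is that you never use the no-long-splitting-chains clause of $\mu$-superstability, only stability in $\mu$ and in $\mu^+$. The fix is to carry out the bookkeeping at size $\mu$ rather than $\mu^+$: build a resolution $\seq{N_\xi \in \K_\mu : \xi < \kappa}$ with $N_\xi \lea M_\xi$ and $N_{\xi+1}$ universal over $N_\xi$ (here $\mu^+$-saturation of $M_{\xi+1}$ \emph{does} give what you need, since $\|N_\xi\| = \mu < \mu^+$), then invoke $\mu$-superstability to find $\xi_0$ with the target type not $\mu$-splitting over $N_{\xi_0}$, and use this together with the structure of limit models in $\K_{\mu^+}$ to realize the type inside the union. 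Your instinct to manufacture a saturated model inside $M_\kappa$ is right; the execution must go through $\mu$-sized pieces and the non-splitting machinery.
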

\begin{remark}\label{sym-chainsat-rmk}
  By Fact \ref{uq-limit}, the hypotheses of Fact \ref{union-sat-monica} hold if $\K$ is $\mu$-superstable, $\mu^+$-superstable, and has $\mu^+$-symmetry.
\end{remark}

We will also use the following easy lemma:

\begin{lemma}\label{chainsat-lim}
  Let $\lambda$ be a limit cardinal and let $\lambda_0 < \lambda$. Assume that for all $\mu \in [\lambda_0, \lambda)$, $\Ksatp{\mu}$ is an AEC with $\LS (\Ksatp{\mu}) = \mu$. Then $\Ksatp{\lambda}$ is an AEC with $\LS (\Ksatp{\lambda}) = \lambda$.
\end{lemma}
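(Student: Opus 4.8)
The plan is to equip $\K' := \Ksatp{\lambda}$ with the restriction of $\lea$ and deduce the AEC axioms for $\K'$ from those of the classes $\Ksatp{\mu}$, $\mu \in [\lambda_0,\lambda)$, using one structural observation: since $\lambda$ is a \emph{limit} cardinal, a model $M \in \K_{\ge \lambda}$ is $\lambda$-saturated if and only if it is $\mu$-saturated for every $\mu \in [\lambda_0,\lambda)$ — the point being that any $P \lea M$ with $\|P\| < \lambda$ has $\|P\| < \mu$ for some $\mu \in [\lambda_0, \lambda)$. Hence $\K' = \bigcap_{\mu \in [\lambda_0,\lambda)} \Ksatp{\mu}$, and, fixing an increasing sequence $\seq{\mu_j : j < \cf (\lambda)}$ cofinal in $\lambda$ with $\mu_0 \ge \lambda_0$, also $\K' = \bigcap_{j < \cf (\lambda)} \Ksatp{\mu_j}$; note moreover that the classes $\Ksatp{\mu}$ decrease as $\mu$ increases.

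With this in hand, closure under isomorphism and the coherence axiom are immediate, as they are inherited from $\K$ (because $\K' \subseteq \K$ and $\lea \rest \K'$ is the ordering). For the Tarski--Vaught chain axioms, let $\seq{M_i : i < \delta}$ be $\lea$-increasing in $\K'$; for each $\mu \in [\lambda_0,\lambda)$ this is a $\lea$-increasing chain in the AEC $\Ksatp{\mu}$, so $\bigcup_{i < \delta} M_i \in \Ksatp{\mu}$, and as this holds for every such $\mu$ we get $\bigcup_{i < \delta} M_i \in \K'$. That $M_i \lea \bigcup_{i < \delta} M_i$, and that $M_i \lea N$ for all $i$ (with $N \in \K'$) implies $\bigcup_{i < \delta} M_i \lea N$, then follow from the corresponding facts in $\K$ together with $\K' \subseteq \K$.

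The only real computation is the Löwenheim--Skolem axiom, i.e.\ $\LS (\K') = \lambda$; the content is the bound $\LS (\K') \le \lambda$. Given $M \in \K'$ and $A \subseteq |M|$, set $\kappa := |A| + \lambda$. I would build a $\lea$-increasing continuous chain $\seq{P_i : i \le \cf (\lambda)}$ of members of $\K$ with $P_i \lea M$ and $\|P_i\| \le \kappa$, with $A \subseteq |P_0|$ and with $P_{i+1} \in \Ksatp{\mu_i}$ and $|P_i| \subseteq |P_{i+1}|$ at successor stages; this last step uses $\LS (\Ksatp{\mu_i}) = \mu_i \le \kappa$ applied to $M \in \Ksatp{\mu_i}$ and the set $|P_i|$, followed by coherence in $\K$ to see $P_i \lea P_{i+1}$. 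Since the classes $\Ksatp{\mu}$ decrease and each $\Ksatp{\mu_j}$ is closed under unions of $\lea$-chains, one checks by induction on $i$ that $P_i \in \Ksatp{\mu_j}$ whenever $j < i$; hence $N := P_{\cf (\lambda)} = \bigcup_{i < \cf (\lambda)} P_i$ lies in $\Ksatp{\mu_j}$ for every $j < \cf (\lambda)$, being the union of a cofinal tail of a $\lea$-chain in that AEC, so $N \in \K'$. As $\|N\| \le \cf (\lambda) \cdot \kappa = \kappa = |A| + \lambda$, $A \subseteq |N|$, and $N \lea M$, this witnesses $\LS (\K') \le \lambda$; the reverse inequality holds because $\K'$ (nonempty, by the hypotheses) has no models of size below $\lambda$ and $\lambda \ge \LS (\K)$.

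The ``hard part'' here is purely bookkeeping: arranging the chain so that its union simultaneously retains $\mu_j$-saturation for all $j < \cf(\lambda)$, which is exactly why the chain has length $\cf (\lambda)$ and why the decreasing-chain structure of the $\Ksatp{\mu}$ is used. Everything else is a direct transcription of the hypothesis that each $\Ksatp{\mu}$ is an AEC with $\LS (\Ksatp{\mu}) = \mu$.
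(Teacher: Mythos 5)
Your proof is correct and follows essentially the same route as the paper's: build an increasing continuous chain of length $\cf(\lambda)$ inside $M$ containing $A$, arrange that the $(i+1)$-st model is $\mu_i$-saturated using $\LS(\Ksatp{\mu_i}) = \mu_i$ together with coherence, and conclude that the union is $\mu_j$-saturated for every $j$ (hence $\lambda$-saturated) because a cofinal tail of the chain lives in the AEC $\Ksatp{\mu_j}$. The only cosmetic differences are that the paper replaces $|A|$ by $\max(|A|, \lambda)$ up front and phrases the conclusion as ``$M_\delta$ is $\lambda_i^+$-saturated for all $i$'' without spelling out the tail-of-a-chain induction you wrote out, and that it dispatches the chain axioms with ``easy to check'' rather than your explicit verification.
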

\begin{proof}
  That $\Ksatp{\lambda}$ is closed under chains is easy to check. To see $\LS (\Ksatp{\lambda}) = \lambda$, let $M \in \Ksatp{\lambda}$ and let $A \subseteq |M|$. Without loss of generality, $\chi := |A| \ge \lambda$. Let $\delta := \cf(\lambda)$ and let $\seq{\lambda_i : i < \delta}$ be an increasing sequence of cardinals with limit $\lambda$. Build $\seq{M_i : i \le \delta}$ increasing continuous in $\K_{\chi}$ such that for all $i < \delta$, $M_{i + 1}$ is $\lambda_i^+$-saturated and $A \subseteq |M_0|$. This is possible by assumption. Then $M_\delta$ is $\lambda_i^+$-saturated for all $i < \delta$, hence is $\lambda$-saturated. Thus it is as needed.
\end{proof}

\section{Transferring symmetry}\label{sym-transfer-sec}
In this section we prove Theorem \ref{transfer symmetry} which is key to the results in the following sections.  We start with a few observations which will allow us to extend the tower machinery from \cite{gvv-mlq} and \cite{vandieren-chainsat-apal} to include towers composed of models of different cardinalities. In particular, we derive an extension property for towers of different cardinalities, Lemma \ref{extension lemma}.  This will allow us to adapt the arguments from \cite{vandieren-chainsat-apal} to prove Theorem \ref{transfer symmetry}.

We start with a study of chains where each model indexed by a successor is universal over its predecessor:

\begin{proposition}\label{limit model lemma}
Suppose that $\lambda \ge \LS (\K)$ is a cardinal. Assume that $\K$ is stable in $\lambda$ with no maximal models of cardinality $\lambda$. Let $\theta$ be a limit ordinal. Assume $\langle M_i \in\K_{\ge \LS (\K)}\mid i<\theta\rangle$ is a strictly increasing and continuous sequence of models so that for all $i < \theta$, $M_{i+1}$ is universal over $M_i$. If $M := \Union_{i<\theta}M_i$ has size $\lambda$, then $M$ is a $(\lambda,\theta)$-limit model over some model containing $M_0$.
\end{proposition}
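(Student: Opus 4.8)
The plan is to reduce the problem to the case where every model of the chain has cardinality exactly $\lambda$. Indeed, if $\langle N_i : i \le \theta\rangle$ is a strictly increasing continuous chain in $\K_\lambda$ with $N_{i+1}$ universal over $N_i$ for all $i < \theta$, with $M_0 \le N_0$ and $\bigcup_{i<\theta} N_i = M$, then by definition $M = N_\theta$ is a $(\lambda,\theta)$-limit model over $N_0$, a model containing $M_0$. So it suffices to build such a chain. The chain is automatically strictly increasing, since $\K_\lambda$ has no maximal models and hence no member of $\K_\lambda$ is universal over itself. Also, if $\LS(\K) = \lambda$ then every $M_i$ has cardinality in $[\LS(\K),\|M\|] = \{\lambda\}$, so the given chain already works; thus I may assume $\LS(\K) < \lambda$. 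I will use two folklore facts that follow from stability in $\lambda$ together with no maximal models in $\K_\lambda$ (see e.g.\ \cite{gvv-toappear-v1_2}): (a) every $N \in \K_{\le\lambda}$ has an extension in $\K_\lambda$ that is universal over it; (b) every $N \in \K_{\le\lambda}$ has a proper extension of cardinality $\lambda$.

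The chain $\langle N_i : i \le \theta\rangle$ I would build by recursion as an increasing continuous chain of strong submodels of $M$, each of size $\lambda$, with $M_i \le N_i$ for $i < \theta$. For $N_0$, take any strong submodel of $M$ of size $\lambda$ containing $M_0$. At a limit $i$, take $N_i := \bigcup_{j<i} N_j$: this is a strong submodel of $M$ of size $\lambda$ and contains $M_i = \bigcup_{j<i} M_j$ by continuity of $\langle M_j : j < \theta\rangle$. At a successor $i$, given $N_i \le M$ of size $\lambda$ with $M_i \le N_i$, the crucial point is that \emph{some $N_i^\ast \in \K_\lambda$ universal over $N_i$} — which exists by (a) — \emph{embeds into $M$ over $N_i$}. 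Granting this, I identify $N_i^\ast$ with its image (a strong submodel of $M$) and let $N_{i+1}$ be any strong submodel of $M$ of size $\lambda$ containing $N_i^\ast \cup M_{i+1}$; then $N_{i+1}$ is universal over $N_i$ (it contains the universal extension $N_i^\ast$) and contains $M_{i+1}$. At the end, $\bigcup_{i<\theta} N_i$ contains $\bigcup_{i<\theta} M_i = M$ and is contained in $M$, so it equals $M$, as needed.

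It remains to see that $N_i^\ast$ embeds into $M$ over $N_i$. After a routine closing-off, I may assume $N_i$ is closed with respect to the filtration $\langle M_j : j < \theta\rangle$, i.e.\ $N_i = \bigcup_{j<\theta}(N_i \cap M_j)$ with each $N_i \cap M_j$ a strong submodel of both $N_i$ and $M_j$. I would then construct, simultaneously by recursion on $j < \theta$, a compatible resolution $N_i^\ast = \bigcup_{j<\theta} N_j^\ast$ (with $N_j^\ast \cap N_i = N_i \cap M_j$) together with an increasing chain of embeddings $h_j : N_j^\ast \to M$ fixing $N_i \cap M_j$ and with $h_j[N_j^\ast]$ contained in $M_{g(j)}$ for a suitable increasing continuous $g : \theta \to \theta$; the union $\bigcup_{j<\theta} h_j$ is then an embedding of $N_i^\ast$ into $M$ over $N_i$. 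At the successor step of this recursion, one chooses the next piece $N_{j+1}^\ast$ of $N_i^\ast$ so as to realize over $N_j^\ast$ the ($h_j$-pullback of the) Galois type describing how $N_i \cap M_{j+1}$ sits over $h_j[N_j^\ast]$ inside $M$, and then extends $h_j$ to $h_{j+1}$ fixing $N_i \cap M_{j+1}$ by pushing the resulting small amalgam into the next level of the filtration, using that $M_{g(j)+1}$ is universal over $M_{g(j)}$.

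The step I expect to be hardest is exactly this last one — placing a universal extension of $N_i$ inside $M$. The difficulty is that $M$ itself is \emph{not} universal over $N_i$ (it has proper extensions of cardinality $\lambda$), so one cannot just quote a universality property of $M$; and the naive amalgamation at a successor stage controls $h_{j+1}$ only on $N_i \cap M_j$, not on the larger $N_i \cap M_{j+1}$. This is what forces building the resolution of $N_i^\ast$ hand in hand with the embeddings $h_j$ (so that the Galois type one needs is available to be realized) and genuinely using that each $M_{j+1}$ is \emph{universal} over $M_j$, not merely that $\K$ is stable. The remaining bookkeeping — the choice of $g$, and keeping the cardinalities of the pieces $N_j^\ast$ compatible with $\|M_j\|$ — depends on $\cf(\lambda)$ versus $\cf(\theta)$ and on whether some $M_j$ already has cardinality $\lambda$ (in which case a tail of the given chain already lies in $\K_\lambda$ and only an initial segment needs fattening); I expect this part to be routine once the embedding step is in place.
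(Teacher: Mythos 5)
Your approach is genuinely different from the paper's. The paper fixes an \emph{external} $(\lambda,\theta)$-limit model $M^\ast$, witnessed by $\langle M_i^\ast : i<\theta\rangle$ with $M_0\lea M_0^\ast$, and runs a back-and-forth between the chains $\langle M_i\rangle$ and $\langle M_i^\ast\rangle$ --- using that on \emph{both} sides each successor is universal over its predecessor --- to produce an isomorphism $f\colon M\cong M^\ast$ with $f\restriction M_0=\id_{M_0}$; then $M$ is $(\lambda,\theta)$-limit over $f^{-1}[M_0^\ast]$, which contains $M_0$. You instead try to exhibit a limit resolution \emph{internally}, as an increasing chain of $\lambda$-sized strong submodels $N_i\lea M$ with successors universal over predecessors.

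The step you correctly flag as hardest --- embedding some $N_i^\ast\in\K_\lambda$ universal over $N_i$ into $M$ over $N_i$ --- is where the proof breaks down, and the inner recursion you sketch does not close the gap. First, that step is literally the assertion that $M$ is $\lambda$-universal over $N_i$ (so the parenthetical reason you give for doubting that universality, that $M$ has proper extensions of size $\lambda$, is a non sequitur --- and your construction needs exactly that universality); since $N_0$ is an \emph{arbitrary} $\lambda$-sized strong submodel of $M$ containing $M_0$, you are aiming at something strictly stronger than the proposition, which asserts universality over only \emph{one} model containing $M_0$. Second, the successor step of the inner recursion is over-constrained: the requirements ``$N_{j+1}^\ast\cap N_i=N_i\cap M_{j+1}$'' and ``$h_{j+1}$ fixes $N_i\cap M_{j+1}$ pointwise'' force $N_{j+1}^\ast$ to contain the literal model $N_i\cap M_{j+1}\lea N_i\lea N_i^\ast$, so you are not actually free to ``choose'' $N_{j+1}^\ast$ to realize the $h_j$-pulled-back type. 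For an $h_{j+1}\supseteq h_j$ fixing $N_i\cap M_{j+1}$ to exist at all, the Galois type of $N_j^\ast$ over $N_i\cap M_{j+1}$ must be preserved by an extension of $h_j$, which you only know fixes the smaller $N_i\cap M_j$; that is a non-splitting-type demand that stability in $\lambda$ alone does not supply. Third, even discarding the literal-containment requirement, $N_i^\ast$ is only known to be universal over $N_i$, not over $N_j^\ast$ (which leaves $N_i$ in directions corresponding to $h_j^{-1}$ of a chunk of $M$), so there is no a priori reason the type you want is realized inside $N_i^\ast$. The paper's back-and-forth against an external $M^\ast$ side-steps all of this, since it never needs $M$ to be universal over any of its $\lambda$-sized strong submodels.
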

\begin{proof}
  By cardinality considerations, $\theta < \lambda^+$. Replacing $\theta$ by $\cf(\theta)$ if necessary, we can assume without loss of generality that $\theta$ is regular. By $\lambda$-stability and the assumption that $\K$ has no maximal models of cardinality $\lambda$, we can fix a $(\lambda, \theta)$-limit model $M^*$ witnessed by $\langle M^*_i\mid i<\theta\rangle$ with $M_0\lea M^*_0$. If there exists $i < \theta$ such that $M_i \in \K_\lambda$, then the sequence $\langle M_j \mid j \in [i, \theta) \rangle$ witnesses that $M$ is $(\lambda, \theta)$-limit and $M_0 \lea M_i$; so assume that $\lambda > \LS (\K)$ and $M_i \in K_{<\lambda}$ for all $i < \theta$. Then we must have that $\theta = \cf (\lambda)$. If $\lambda$ is a successor, we must have that $\theta = \lambda$ and we obtain the result from \cite[Proposition 14]{vandieren-chainsat-apal}; so assume $\lambda$ is limit. For $i < \theta$, let $\lambda_i := \|M_i\|$.

 Fix $\langle a_\alpha\mid \alpha<\lambda\rangle$ an enumeration of $M^*$.  Using the facts that $M_{i+1}$ is universal over $M_i$ and that $M^*_{i+1}$ is universal over $M^*_i$,  we can build an isomorphism $f:M\cong M^*$ inductively by defining an increasing and continuous sequence of $\K$-embeddings $f_i$ so that
 $f_i:M_i\rightarrow M^*_i$, $f_0=\id_{M_0}$,
and $\{a_\alpha\mid \alpha<\lambda_i\}\subseteq \rg(f_{i+1})$.
\end{proof}

We will use the following generalization of the weak transitivity property of $\mu$-splitting proven in \cite[Proposition 3.7]{ss-tame-jsl}. The difference here is that the models are allowed to be of size bigger than $\mu$.

\begin{proposition}\label{weak-trans}
  Let $\mu \ge \LS (\K)$ be such that $\K$ is stable in $\mu$. Let $M_0 \lea M_1 \lta M_1' \lea M_2$ all be in $\K_{\ge \mu}$. Assume that $M_1'$ is universal over $M_1$.
  Let $p \in \gS (M_2)$. If $p \rest M_1'$ does not $\mu$-split over $M_0$ and $p$ does not $\mu$-split over some $N \in \K_\mu$ with $N \lea M_1$, then $p$ does not $\mu$-split over $M_0$.
\end{proposition}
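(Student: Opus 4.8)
The strategy is to reduce the general configuration witnessing a potential $\mu$-split of $p$ over $M_0$ to the special case where the two small models involved already lie inside $M_1'$, in which case the conclusion is immediate from the hypothesis that $p \rest M_1'$ does not $\mu$-split over $M_0$. The bridge between the general case and this special case is supplied by non-$\mu$-splitting of $p$ over the small model $N$, together with the universality of $M_1'$ over $M_1$. Note first that we may assume $\|M_0\| = \mu$: since $M_0 \in \K_{\ge \mu}$, if $\|M_0\| > \mu$ then both the hypothesis ``$p \rest M_1'$ does not $\mu$-split over $M_0$'' and the desired conclusion are vacuous, as there is no $N' \in \K_\mu$ with $M_0 \lea N'$.

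\textbf{A preliminary observation.} I would first check that $M_1'$ is $\mu$-universal over every $N^\circ \in \K_\mu$ with $N^\circ \lea M_1$. Given $M' \gea N^\circ$ of size $\le \mu$, amalgamate $M'$ with $M_1$ over $N^\circ$ (using Hypothesis \ref{ap-hyp}) and take a submodel $M^\ast$ of the amalgam of size $\|M_1\|$ (recall $M_1 \in \K_{\ge\mu}$) containing $M_1$ together with the image of $M'$; thus $M^\ast \gea M_1$ and there is an embedding $M' \xrightarrow[N^\circ]{} M^\ast$. Composing with an embedding $M^\ast \xrightarrow[M_1]{} M_1'$, which exists because $M_1'$ is $\|M_1\|$-universal over $M_1$, places $M'$ inside $M_1'$ over $N^\circ$. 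This is the only place the universality hypothesis enters, and it is exactly here that we need its full strength ($\|M_1\|$-universality rather than $\mu$-universality), since $M_1$ may have size greater than $\mu$, so that $M^\ast$ need not have size $\mu$.

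\textbf{The main argument.} Then I would fix $N_1, N_2 \in \K_\mu$ with $M_0 \lea N_\ell \lea M_2$ and an isomorphism $f : N_1 \cong_{M_0} N_2$, and show $f(p \rest N_1) = p \rest N_2$. Using downward L\"owenheim--Skolem inside $M_1$, choose $N^\circ \in \K_\mu$ with $|M_0| \cup |N| \subseteq |N^\circ|$ and $N^\circ \lea M_1$, and then inside $M_2$ choose $N_\ell^+ \in \K_\mu$ with $N_\ell \lea N_\ell^+$ and $N^\circ \lea N_\ell^+ \lea M_2$. By the preliminary observation, fix embeddings $g_\ell : N_\ell^+ \xrightarrow[N^\circ]{} M_1'$ and set $\bar N_\ell := g_\ell[N_\ell] \lea M_1'$; each $g_\ell$ fixes $M_0$ and $N$, as both are contained in $N^\circ$. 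Since $p$ does not $\mu$-split over $N$ and $g_\ell \rest N = \id$, applying Definition \ref{def:splitting} to the isomorphism $g_\ell : N_\ell^+ \cong_N g_\ell[N_\ell^+]$ (both models lying between $N$ and $M_2$) and restricting gives $g_\ell(p \rest N_\ell) = p \rest \bar N_\ell$. Now $h := g_2 \circ f \circ g_1^{-1} : \bar N_1 \cong_{M_0} \bar N_2$ is an isomorphism fixing $M_0$ with $M_0 \lea \bar N_\ell \lea M_1'$, so the hypothesis that $p \rest M_1'$ does not $\mu$-split over $M_0$ gives $h(p \rest \bar N_1) = p \rest \bar N_2$. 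Substituting $p \rest \bar N_1 = g_1(p \rest N_1)$ and $p \rest \bar N_2 = g_2(p \rest N_2)$ into this equation yields $g_2(f(p \rest N_1)) = g_2(p \rest N_2)$, and applying $g_2^{-1}$ gives $f(p \rest N_1) = p \rest N_2$, as desired.

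\textbf{Main obstacle.} The crux is the transfer carried out in the main argument: relocating the arbitrarily placed witnesses $N_1, N_2 \lea M_2$ into $M_1'$ along embeddings that are simultaneously (i) invisible to $p$ --- which is exactly what non-$\mu$-splitting over $N$ buys us, once $N$ has been absorbed into the common base $N^\circ$ --- and (ii) compatible with the ambient isomorphism $f$, which is why $M_0$ must also be placed inside $N^\circ$ and the $g_\ell$ must fix $N^\circ$. Everything else --- the L\"owenheim--Skolem bookkeeping, and the routine passage between $\K$-embeddings and automorphisms of $\C$ when acting on Galois types --- is standard.
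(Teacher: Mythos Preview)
Your proof is correct and self-contained, but it takes a genuinely different route from the paper's argument. The paper proceeds by a sequence of reductions: first it observes $M_0 \in \K_\mu$, enlarges $N$ so that $M_0 \lea N$, then replaces $M_1$ by $N$ (using that $M_1'$ is universal over $N$), replaces $M_1'$ by a $\mu$-sized model universal over $M_1$ (this is where stability in $\mu$ is invoked), and finally replaces $M_2$ by an arbitrary $\mu$-sized submodel. Having reduced to the situation where every model has cardinality exactly $\mu$, the paper then simply cites the weak transitivity lemma for $\mu$-splitting from \cite[Proposition~3.7]{ss-tame-toappear-v3} as a black box.

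By contrast, you unfold the definition of non-$\mu$-splitting directly and supply the full relocation argument: you push the witnesses $N_1, N_2$ into $M_1'$ along embeddings fixing a common $\mu$-sized base $N^\circ \supseteq M_0 \cup N$, use non-splitting over $N$ to show these embeddings are invisible to $p$, and then invoke non-splitting of $p \rest M_1'$ over $M_0$ on the relocated copies. This has two advantages: it is self-contained (no appeal to an external lemma), and in fact it never uses the stability hypothesis---your preliminary observation needs only amalgamation and the universality of $M_1'$ over $M_1$. The paper's route is shorter on the page precisely because it outsources the core of the argument; your route makes the mechanism explicit and shows the hypothesis of stability in $\mu$ is not actually needed for this particular proposition.
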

\begin{proof}
  Note that by definition of $\mu$-splitting, $M_0 \in \K_\mu$. Thus by making $N$ larger if necessary we can assume that $M_0 \lea N$. By basic properties of universality we have that $M_1'$ is universal over $N$, hence without loss of generality $M_1 = N$. In particular, $M_1 \in \K_\mu$. By stability, build $M_1'' \in \K_\mu$ universal over $M_1$ such that $M_1 \lta M_1'' \lea M_1'$. By monotonicity, $p \rest M_1''$ does not $\mu$-split over $M_0$. Thus without loss of generality also $M_1' \in \K_\mu$. By definition of $\mu$-splitting, it is enough to check that $p \rest M_2'$ does not $\mu$-split over $M_0$ for all $M_2' \in \K_\mu$ with $M_2' \lea M_2$. Thus without loss of generality again $M_2 \in \K_\mu$. Now use the weak transitivity property of $\mu$-splitting \cite[Proposition 3.7]{ss-tame-jsl}.
\end{proof}

We use the previous proposition to extend the continuity property of $\mu$-splitting to models of size bigger than $\mu$. This is very similar to the argument in \cite[Claim II.2.11]{shelahaecbook}.

\begin{proposition}\label{limit splitting proposition}
  Let $\mu \ge \LS (\K)$ and assume that $\K$ is $\mu$-superstable. 

  Suppose $\langle M_i\in\K_{\ge \mu}\mid i< \delta\rangle$ is an increasing sequence of models so that, for all $i < \delta$, $M_{i+1}$ is universal over $M_i$. Let $p\in\gaS(\Union_{i<\delta}M_i)$.
  If $p\restriction M_i$ does not $\mu$-split over $M_0$ for each $i<\delta$, then $p$ does not $\mu$-split over $M_0$.
\end{proposition}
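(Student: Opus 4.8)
The plan is to reduce to the already-established finitary statement of $\mu$-superstability (item (4) of Definition \ref{ss assm}, together with the continuity remark following it) by a standard ``thinning and interleaving'' argument, using Proposition \ref{weak-trans} to handle the fact that our models have size $\geq \mu$. First I would observe that by cardinality considerations $\delta < \mu^+$ is harmless to assume (replacing $\delta$ by $\cf(\delta)$ changes nothing since the hypothesis $p \rest M_i$ does not $\mu$-split over $M_0$ is preserved along a cofinal subsequence, and the conclusion only concerns $p$ on the union, which is unchanged). So assume $\delta$ is a limit ordinal $< \mu^+$. By definition of $\mu$-splitting it suffices to show that $p \rest M'$ does not $\mu$-split over $M_0$ for every $M' \in \K_\mu$ with $M' \lea \Union_{i<\delta} M_i$; since $\|M'\| = \mu$ and the chain is increasing with unions, by the usual Löwenheim–Skolem and directedness argument we may absorb $M'$ into some $M_j$, so in fact it is enough to prove the statement when all $M_i \in \K_\mu$. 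But wait — $M_0$ need not be in $\K_\mu$ a priori; however by definition of $\mu$-splitting $M_0 \in \K_\mu$ automatically, so this is fine.

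The core step is then the following. With all $M_i \in \K_\mu$ and $M_{i+1}$ universal over $M_i$, the ``no long splitting chains'' property (Definition \ref{ss assm}(4)) gives some $i_0 < \delta$ such that $p$ does not $\mu$-split over $M_{i_0}$. Now I want to bootstrap this down to $M_0$ using the continuity/weak-transitivity machinery. Since $p \rest M_{i_0 + 1}$ does not $\mu$-split over $M_0$ by hypothesis, and $M_{i_0+1}$ is universal over $M_{i_0}$, and $p$ does not $\mu$-split over $M_{i_0} \in \K_\mu$, Proposition \ref{weak-trans} (applied with $M_0, M_1 := M_{i_0}, M_1' := M_{i_0+1}, M_2 := \Union_{i<\delta} M_i$, and $N := M_{i_0}$) yields that $p$ does not $\mu$-split over $M_0$, which is exactly the conclusion. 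Actually one should double-check the hypotheses of Proposition \ref{weak-trans}: we need $M_{i_0} \lta M_{i_0+1}$, which holds since the chain is strictly increasing (this is implicit — if some $M_{i+1} = M_i$ then universality forces the chain to stabilize and the statement is trivial, or one passes to the strictly increasing subsequence); we need $M_{i_0+1}$ universal over $M_{i_0}$, which is given; and we need $p \rest M_{i_0+1}$ to not $\mu$-split over $M_0$, which is one of the hypotheses.

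I expect the main subtlety — though not a deep obstacle — to be the reduction to the case where all models have cardinality exactly $\mu$, since one must be careful that ``no long splitting chains'' is stated only for chains of $\mu$-sized models universal over their predecessors, and that the universality in the reduced chain is genuinely inherited. An alternative, perhaps cleaner, route that avoids this reduction entirely is to work directly at size $\geq \mu$: build an auxiliary cofinal chain $\seq{N_i : i < \delta}$ inside $\seq{M_i : i < \delta}$ with each $N_i \in \K_\mu$, $N_i \lea M_i$, $N_{i+1}$ universal over $N_i$ in $\K_\mu$, and $M_i$ universal over $N_i$ — possible by $\mu$-stability — apply Definition \ref{ss assm}(4) to $\seq{N_i}$ to get non-$\mu$-splitting of $p \rest \Union_i N_i$ over some $N_{i_0}$, then note $p \rest \Union_i N_i$ determines $p$ over the whole union since $\Union_i N_i$... no, that last step fails in general. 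So the first route (reduce to $\K_\mu$, then apply \ref{ss assm}(4), then \ref{weak-trans}) is the one I would actually carry out, and I am fairly confident it goes through with only routine care.
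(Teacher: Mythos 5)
There is a genuine gap, and it is in the two reduction steps at the beginning, before you ever get to apply \ref{ss assm}(4) or \ref{weak-trans}. First, the claim that ``$\delta < \mu^+$ is harmless to assume'' is not justified: replacing $\delta$ by $\cf(\delta)$ only lets you assume $\delta$ is regular, and since the $M_i$ live in $\K_{\ge\mu}$ (not $\K_\mu$), there is nothing preventing $\cf(\delta) \ge \mu^+$. Second, the reduction to all $M_i \in \K_\mu$ via ``absorb $M'$ into some $M_j$'' is internally inconsistent with the previous step: absorbing a $\mu$-sized $M'$ into a single $M_j$ requires $\cf(\delta) > \mu$, which is exactly the case you have just discarded, and in that case the absorption already gives the result outright by monotonicity (no reduction to $\K_\mu$-chains, no further machinery needed). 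When $\cf(\delta) \le \mu$, no single $M_j$ contains $M'$ and the absorption argument simply does not apply. So your write-up conflates two cases that have to be handled separately, and the harder one ($\cf(\delta) \le \mu$) is left with no actual argument.

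The paper does make the case split explicit (WLOG $\delta$ regular, then $\delta > \mu$ versus $\delta \le \mu$), and the content is in the $\delta \le \mu$ case. The idea you need there, which is missing from your first route and almost present in your ``alternative route,'' is to \emph{fix the target first}: argue by contradiction, take a single $M^* \in \K_\mu$ over which $p$ is claimed to split, and only then build a cofinal $\K_\mu$-chain $\langle M^*_i : i < \delta\rangle$ with $M^*_i \lea M_i$, $M^*_{i+1}$ universal over $M^*_i$, and $\Union_i M^*_i \gea M^*$ (this uses $\cf(\delta) \le \mu$ together with the universality of each $M_{i+1}$ over $M_i$ and stability in $\mu$). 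Then monotonicity gives $p \rest M^*_i$ non-$\mu$-splitting over $M_0$ for each $i$, the continuity property of $\mu$-splitting (the remark after Definition \ref{ss assm}, which is exactly your \ref{ss assm}(4)-plus-\ref{weak-trans} step packaged) gives $p \rest M^*$ non-$\mu$-splitting over $M_0$, and that contradicts the choice of $M^*$. Your ``alternative route'' builds the auxiliary $\K_\mu$-chain without anchoring it to a specific witness, which is precisely why you could not close the argument at the end of that paragraph; anchoring it to $M^*$ is the fix, and that is the paper's proof.
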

\begin{proof}
  Without loss of generality, $\delta = \cf (\delta)$. Let $M_\delta := \bigcup_{i < \delta} M_i$. There are two cases to check.  If $\delta>\mu$, then by \cite[Claim 3.3]{sh394}, there exists $N \in \K_\mu$ with $N \lea M_\delta$ such that $p$ does not $\mu$-split over $N$. Pick $i < \delta$ such that $N \lea M_i$. Then $p$ does not $\mu$-split over $M_i$. By Proposition \ref{weak-trans} (where $(M_0, M_1, M_1', M_2, N)$ there stand for $(M_0, M_i, M_{i + 1}, M_\delta, N)$ here), $p$ does not $\mu$-split over $M_0$.

  Suppose then that $\delta\leq\mu$ and for sake of contradiction that $M^*$ of cardinality $\mu$ witnesses the splitting of $p$ over $M_0$, i.e.\ $p\restriction M^*$ $\mu$-splits over $M_0$.  We can find $\langle M^*_i\in\K_\mu\mid i <\delta\rangle$ an increasing resolution of $M^*$ so that 
$M^*_i \lea M_i$ for all $i < \delta$.  By monotonicity of splitting, stability in $\mu$, and the fact that each $M_{i+1}$ is universal over $M_i$, we can increase $M^*$, if necessary, to 
arrange that $M^*_{i+1}$ is universal over $M^*_i$.  Since $p\restriction M_i$ does not $\mu$-split over $M_0$, monotonicity of non-splitting implies that $p\restriction M^*_i$ does not $\mu$-split over $M_0$.  Then, by $\mu$-superstability $p\restriction M^*$-does not $\mu$-split over $M_0$.  This contradicts our choice of $M^*$.

\end{proof}

We adapt the proof of  the extension property for non-$\mu$-splitting (\cite[Theorem I.4.10]{vandierennomax}) to handle models of different sizes under the additional assumption of superstability in the size of the bigger model. The conclusion can also be achieved using the assumption of tameness instead of superstability (since $\mu$-splitting and $\lambda$-splitting coincide if $\K$ is $\mu$-tame and $\mu \le \lambda$, see \cite[Proposition 3.12]{bgkv-apal}).

\begin{proposition}\label{extension for non-splitting}
Fix cardinals $\lambda>\mu\geq\LS(\K)$. Suppose that $\K$ is $\mu$-stable and $\lambda$-superstable.

  Let $M \in \K_\mu$ and $M^\lambda, M' \in\K_\lambda$ be such that $M \lea M^\lambda \lea M'$ and $M^\lambda$ is limit over some model containing $M$. Let $p \in \gS (M^\lambda)$ be such that $p$ does not $\mu$-split over $M$. Then there exists $q \in \gS (M')$ extending $p$ so that $q$ does not $\mu$-split over $M$. Moreover $q$ is algebraic if and only if $p$ is.
\end{proposition}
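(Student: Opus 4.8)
The plan is to adapt the classical one‑step proof of the extension property for non‑splitting (\cite[Theorem I.4.10]{vandierennomax}), but to carry it out in cardinality $\lambda$, using $\lambda$-superstability to manufacture a $\lambda$-sized base over which $p$ does not $\lambda$-split. First I would use that $M^\lambda$ is limit over a model containing $M$ to fix a $(\lambda,\theta)$-limit witness $\seq{N^i : i \le \theta}$ with $M \lea N^0$ and $N^\theta = M^\lambda$. Applying the ``no long $\lambda$-splitting chains'' clause of $\lambda$-superstability to the chain $\seq{N^i : i < \theta}$, whose union is $M^\lambda$, and to $p$, there is $i^\ast < \theta$ such that $p$ does not $\lambda$-split over $N^\ast := N^{i^\ast}$. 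Then $M \lea N^\ast \lea M^\lambda$, and $M^\lambda$ remains a limit model over $N^\ast$; in particular $M^\lambda$ is $\lambda$-universal over $N^\ast$.

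Next, using $\lambda$-universality of $M^\lambda$ over $N^\ast$ and $\|M'\| = \lambda$, I would pick an embedding $f : M' \xrightarrow[N^\ast]{} M^\lambda$ and set $M^\ast := f[M']$, so that $N^\ast \lea M^\ast \lea M^\lambda$ and $f : M' \cong_{N^\ast} M^\ast$. Fixing $a \models p$ and an automorphism of $\C$ extending $f^{-1}$, let $a^\ast$ be the image of $a$ and $q := \gtp(a^\ast/M')$. To see $q$ extends $p$, unravel $\gtp(a^\ast/M^\lambda) = f^{-1}(\gtp(a/f[M^\lambda])) = f^{-1}(p \rest f[M^\lambda])$; now $f^{-1}\rest f[M^\lambda]$ is an isomorphism from $f[M^\lambda]$ onto $M^\lambda$ fixing $N^\ast$, with $N^\ast \lea f[M^\lambda] \lea M^\lambda$ and $f[M^\lambda], M^\lambda \in \K_\lambda$, so non-$\lambda$-splitting of $p$ over $N^\ast$ yields $f^{-1}(p \rest f[M^\lambda]) = p$, i.e.\ $q \rest M^\lambda = p$. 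To see $q$ does not $\mu$-split over $M$, note $q = f^{-1}(p \rest M^\ast)$, that $p \rest M^\ast$ does not $\mu$-split over $M$ by monotonicity, and that the automorphism used fixes $M$ (as $M \lea N^\ast$), so non-$\mu$-splitting over $M$ is preserved. For the algebraicity clause: if $p$ is algebraic, realized by $c \in M^\lambda$, then $q \rest M^\lambda = p$ together with the standard fact that two tuples with the same Galois type over a model, one of which lies in the model, must be equal, forces $a^\ast = c \in M^\lambda \lea M'$, so $q$ is algebraic; conversely, $q$ algebraic means $a^\ast \in M'$, equivalently $a \in M^\ast \lea M^\lambda$, so $p$ is algebraic.

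The only genuinely delicate point — and the sole place where $\lambda$-superstability rather than just $\mu$-superstability enters — is the first step: one has to pass to a \emph{$\lambda$}-sized base $N^\ast$ over which $p$ does not $\lambda$-split before running the classical argument, since the isomorphism $f^{-1}\rest f[M^\lambda]$ that must fix $p$ acts on models of cardinality $\lambda$, so non-$\mu$-splitting over the small model $M$ cannot control it, whereas non-$\lambda$-splitting over the $\lambda$-sized $N^\ast$ does; and because $N^\ast$ contains $M$, that same isomorphism — and the embedding $f^{-1}$ that transports $p \rest M^\ast$ down to $q$ — automatically preserves non-$\mu$-splitting over $M$. Under the tameness alternative mentioned in the statement this step is superfluous, since there $\mu$-splitting and $\lambda$-splitting coincide over $\mu$-sized bases (\cite[Proposition 3.12]{bgkv-v2}).
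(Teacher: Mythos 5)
Your proof is correct and follows essentially the same strategy as the paper: use $\lambda$-superstability on the witnessing chain to find a $\lambda$-sized base $N^\ast \gea M$ over which $p$ does not $\lambda$-split, embed $M'$ into $M^\lambda$ over $N^\ast$, and pull $p$ back along that embedding (the $\mu$-non-splitting over $M$ and the algebraicity clause then transfer automatically since the automorphism fixes $N^\ast \gea M$). The only minor difference is that you establish $q \rest M^\lambda = p$ by applying the definition of non-$\lambda$-splitting directly to the isomorphism $f^{-1}\rest f[M^\lambda] \colon f[M^\lambda] \cong_{N^\ast} M^\lambda$, whereas the paper embeds $M'$ over the next model $M^\lambda_{i+1}$ in the chain and then invokes the uniqueness of non-$\lambda$-splitting extensions; both versions work.
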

\begin{proof}
  Let $\theta < \lambda^+$ and $\langle M^\lambda_i\mid i<\theta\rangle$ witness that $M^\lambda$ is $(\lambda, \theta)$-limit with $M \lea M_0^\lambda$. Write $p:=\gtp(a/M^\lambda)$.
By $\lambda$-superstability there exists $i<\theta$ so that $p$ does not $\lambda$-split over $M^\lambda_i$.  Since $M^\lambda_{i+2}$ is universal over $M^\lambda_{i+1}$ there exists $f:M'\underset{M^\lambda_{i+1}} \rightarrow M^\lambda_{i+2}$. Extend $f$ to $g \in \Aut_{M_{i + 1}^\lambda} (\sea)$. Let $q := g^{-1} (p) \rest M' = \gtp (g^{-1} (a) / M')$. Note that $q$ is nonalgebraic if $p$ is nonalgebraic (the converse will follow once we have shown that $q$ extends $p$). By monotonicity, invariance, and our assumption that $p$ does not $\mu$-split over $M$, we can conclude that $q$ does not $\mu$-split over $M$. By similar reasoning also $q$ does not $\lambda$-split over $M^\lambda_i$. 
In particular $\gtp(g^{-1}(a)/M^\lambda) = q \rest M^\lambda$ does not $\lambda$-split over $M^\lambda_i$.  Since $g$ fixes $M^\lambda_{i+1}$, we know that $g^{-1}(a)$ realizes $p\restriction M^\lambda_{i+1}$. 
Therefore, we get by the uniqueness of non-$\lambda$-splitting extensions that
$q \rest M^\lambda = \gtp(f^{-1}(a)/M^\lambda)=\gtp(a/M^\lambda) = p$. This shows that $q$ extends $p$, as desired.
\end{proof}

We can now prove an extension property for towers in $\K^*_{\lambda,\alpha,\mu}$.

\begin{lemma}\label{extension lemma 0}
  Let $\lambda$ and $\mu$ be cardinals satisfying $\lambda \ge \mu \ge \LS (\K)$. Assume that $\K$ is superstable in $\mu$ and in $\lambda$. For any $(\bar M, \bar a, \bar N) \in \K_{\lambda, \alpha, \mu}^\ast$, there exists $(\bar M', \bar a, \bar N) \in \K_{\lambda, \alpha, \mu}^\ast$ so that:

  $$
  (\bar M,\bar a,\bar N)<_{\mu}(\bar M',\bar a,\bar N)
  $$
\end{lemma}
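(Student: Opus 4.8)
\emph{Plan.} This is the mixed-cardinality analogue of the classical extension property for towers, and I would prove it by induction on $i<\alpha$, adapting the argument of \cite{vandierennomax} (see also \cite{gvv-toappear-v1_2, vandieren-chainsat-toappear-v2}) with the single-cardinal facts replaced by their two-cardinal counterparts. One builds $\langle M'_i : i<\alpha\rangle$ together with, for each $i$, a sequence $\langle M_i^\gamma : \gamma<\theta_i\rangle$ witnessing that $M'_i$ is a $(\lambda,\theta_i)$-limit model with $M_i\lta M_i^0$. The invariants to carry at stage $i$ are: $M'_j\lea M'_i$ for all $j<i$; $M_i\lea M'_i$; and $\tp(a_i/M'_i)$ is non-algebraic, extends $\tp(a_i/M_i)$, and does not $\mu$-split over $N_i$. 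Once these hold for all $i$, the triple $(\bar M',\bar a,\bar N)$ is a genuine tower (note $a_i\notin M'_i$ by non-algebraicity, while $a_i\in M_{i+1}\lea M'_{i+1}$), it lies in $\K^*_{\lambda,\alpha,\mu}$ (the remaining requirement, that $M'_i$ be $\mu$-universal over $N_i$, is inherited from $M_i\lea M'_i$), and the witnessing sequences give $(\bar M,\bar a,\bar N)<_\mu(\bar M',\bar a,\bar N)$.

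For the inductive step, the first observation is that, since $M_i$ is $\mu$-universal over $N_i$ and $\K$ is $\mu$-stable, $M_i$ contains a $(\mu,\cdot)$-limit model $M_i^\mu$ over $N_i$; the restriction $\tp(a_i/M_i^\mu)$ is then non-algebraic, does not $\mu$-split over $N_i$, and has for its domain an actual limit model over a model containing $N_i$. This is what makes the extension machinery applicable (Proposition \ref{extension for non-splitting} and the size-$\mu$ extension property of \cite{vandierennomax}), and, by uniqueness of non-$\mu$-splitting extensions over the universal model $M_i^\mu$, the non-$\mu$-splitting-over-$N_i$ extension of $\tp(a_i/M_i^\mu)$ to any model containing $M_i$ automatically restricts to $\tp(a_i/M_i)$ on $M_i$. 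I would then build $M'_i$ as the union of a continuous increasing chain of limit models over $M_i$, arranging that the chain absorbs $M_i$ and all the previously constructed $M'_j$ ($j<i$) and propagating up a coherent sequence of non-$\mu$-splitting (over $N_i$) extensions of $\tp(a_i/M_i^\mu)$: successor steps invoke the extension property for non-$\mu$-splitting together with an automorphism fixing the current model (hence fixing everything built so far) to keep $a_i$ realizing the extension, and limit steps invoke the continuity of non-$\mu$-splitting (Proposition \ref{limit splitting proposition}), which is where superstability enters. Proposition \ref{limit model lemma} then identifies the resulting $M'_i$ as a $(\lambda,\theta_i)$-limit model over a model strictly containing $M_i$, completing the step. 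All the tools used (Propositions \ref{weak-trans}, \ref{limit splitting proposition}, \ref{extension for non-splitting}, \ref{limit model lemma}, and the size-$\mu$ extension property) are available because $\K$ is superstable in both $\mu$ and $\lambda$.

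I expect the bookkeeping inside the inductive step to be the main obstacle: $M'_i$ must simultaneously be a limit model over a model strictly above $M_i$, contain $M_i$, contain every earlier $M'_j$, and carry a non-$\mu$-splitting (over $N_i$) extension of $\tp(a_i/M_i)$ realized by $a_i$ itself. A crude Löwenheim--Skolem hull of $M_i\cup\bigcup_{j<i}M'_j$ would absorb the models but may destroy non-$\mu$-splitting of $\tp(a_i/{-})$ over $N_i$, so $M'_i$ has to be produced as an iterate of the extension property, and one must check that each automorphism used to keep $a_i$ realizing a successive extension fixes the portion of $M'_i$ already constructed (it does, since it fixes the current model in the chain, which contains $M_i^\mu$ and eventually $M_i$ and all earlier $M'_j$). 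Passing to $M_i^\mu$ at the outset is precisely what lets the extension property be started, since $M_i$ itself need not come presented as a limit model over a model containing $N_i$; getting this coordination exactly right, mimicking the classical tower-extension construction, is where essentially all the work lies.
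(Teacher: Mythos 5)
Your proposal takes essentially the same route as the paper: adapt the proof of the extension property for towers, replacing the single-cardinal non-$\mu$-splitting extension lemma by Proposition \ref{extension for non-splitting} and using continuity of non-splitting (Proposition \ref{limit splitting proposition}) at limit stages. The paper organizes this as $\cf(\lambda)$-many iterations of the known extension property for towers (\cite[Lemma 5.3]{gvv-toappear-v1_2}) followed by a coordinatewise union, whereas you unroll that outer iteration into a per-$i$ chain producing each $M'_i$ directly as a limit model; this is only a re-ordering of the same double induction, and the bookkeeping subtlety you flag (absorbing $M_i$ and the earlier $M'_j$ without breaking non-splitting) is exactly the one already handled in the cited tower-extension argument.
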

\begin{proof}
    If $\lambda = \mu$, the result follows from infinitely many (for example $\cf (\lambda)$ many) applications of \cite[Lemma 5.3]{gvv-mlq} which is the extension property for towers. If $\lambda > \mu$, the result follows similarly from the proof of the extension property for towers using Proposition \ref{extension for non-splitting}. 
\end{proof}

We also have a continuity property:

\begin{lemma}\label{continuity lemma}
  Let $\mu \ge \LS (\K)$ be such that $\K$ is $\mu$-superstable. Let $\seq{\lambda_i : i < \delta}$ be an increasing sequence of cardinals with $\lambda_0 \ge \mu$. Let $\seq{(\bar{M}^i, \bar{a}, \bar{N}) \in \K_{\lambda_i, \alpha, \mu}^* \mid i < \delta}$ be a sequence of towers such that $(\bar{M}^i, \bar{a}, \bar{N}) \lta_{\mu} (\bar{M}^{i + 1}, \bar{a}, \bar{N})$ for all $i < \delta$.

Let $\bar{M}^\delta$ be the sequence composed of models of the form $M_\beta^{\delta} := \bigcup_{i < \delta} M_\beta^{i}$ for $\beta < \alpha$. Let $\lambda := \sum_{i < \delta}{\lambda_i}$. 

Then $(\bar{M}^\delta, \bar{a}, \bar N) \in \K_{\lambda, \alpha, \mu}*$ and $(\bar{M}^i, \bar{a}, \bar{N}) \lta_{\mu}(\bar{M}^\delta, \bar{a}, \bar N)$ for all $i < \delta$.
\end{lemma}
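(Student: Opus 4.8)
The plan is to verify the two assertions separately: first that the union tower $(\bar M^\delta, \bar a, \bar N)$ is a genuine member of $\K_{\lambda,\alpha,\mu}^*$, and second that it $<_\mu$-extends each $(\bar M^i, \bar a, \bar N)$. For the membership part, the cardinality bookkeeping is routine: each $M_\beta^\delta = \bigcup_{i<\delta} M_\beta^i$ is an increasing union of models of sizes $\lambda_i$, so $\|M_\beta^\delta\| = \sum_{i<\delta}\lambda_i = \lambda$; the $N_\beta$'s are unchanged and still have size $\mu$; and $\langle M_\beta^\delta \mid \beta<\alpha\rangle$ is increasing since each $\langle M_\beta^i\mid\beta<\alpha\rangle$ is. The two substantive conditions to check are (i) $M_\beta^\delta$ is $\mu$-universal over $N_\beta$, and (ii) $\tp(a_\beta/M_\beta^\delta)$ does not $\mu$-split over $N_\beta$. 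For (i), $\mu$-universality over $N_\beta$ already holds at stage $0$ (it holds for $M_\beta^0$ since $(\bar M^0,\bar a,\bar N)\in\K_{\lambda_0,\alpha,\mu}^*$) and passes up to the union because $M_\beta^0 \lea M_\beta^\delta$. For (ii), the key point is that $\tp(a_\beta/M_\beta^i)$ does not $\mu$-split over $N_\beta$ for every $i<\delta$ (again from $(\bar M^i,\bar a,\bar N)$ being a tower), and the chain $\langle M_\beta^i\mid i<\delta\rangle$ has each successor universal over its predecessor — indeed, this is exactly what $(\bar M^i,\bar a,\bar N) \lta_\mu (\bar M^{i+1},\bar a,\bar N)$ supplies via Definition \ref{tower-limit-order-def}. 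So Proposition \ref{limit splitting proposition} (continuity of $\mu$-splitting over models of size $\geq\mu$) applies with $M_0$ there equal to $M_\beta^0$ here, giving that $\tp(a_\beta/M_\beta^\delta)$ does not $\mu$-split over $M_\beta^0$; but since $N_\beta \lea M_\beta^0$ and non-$\mu$-splitting is monotone in the base... wait — that is the wrong direction. Instead I should apply Proposition \ref{limit splitting proposition} after first noting that non-splitting over $N_\beta$ for each $M_\beta^i$ is what we have; the proposition as stated needs a fixed base $M_0$ with $M_0 \lea M_1$, so I take $M_0 := N_\beta$, which requires $N_\beta \lea M_\beta^i$ (true) and $M_\beta^{i+1}$ universal over $M_\beta^i$ (true, as noted). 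Hence directly $\tp(a_\beta/M_\beta^\delta)$ does not $\mu$-split over $N_\beta$, giving (ii).

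For the second assertion, I must show $(\bar M^i,\bar a,\bar N) \lta_\mu (\bar M^\delta,\bar a,\bar N)$ for each fixed $i<\delta$. Unwinding Definition \ref{tower-limit-order-def}, I need: $M_\beta^i \lea M_\beta^\delta$ (clear), $\bar a$ and $\bar N$ unchanged (clear), and — the real content — for each $\beta<\alpha$ a value $\theta<\lambda_i^+$... no: I need $M_\beta^\delta$ to be a $(\lambda,\theta)$-limit model for some $\theta<\lambda^+$, witnessed by a chain $\langle M_\beta^{\delta,\gamma}\mid\gamma<\theta\rangle$ with $M_\beta^i \lta M_\beta^{\delta,0}$. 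Here I would concatenate witnesses: for each $j$ with $i \le j < \delta$, the relation $(\bar M^j,\bar a,\bar N)\lta_\mu(\bar M^{j+1},\bar a,\bar N)$ hands me a chain witnessing that $M_\beta^{j+1}$ is a $(\lambda_{j+1},\theta_j)$-limit model over something strictly above $M_\beta^j$ (in particular strictly above $M_\beta^i$ when $j=i$). Stringing these chains together across $j\in[i,\delta)$ — inserting each $M_\beta^j$ and its limit-witnessing chain up to $M_\beta^{j+1}$ — produces a strictly increasing continuous sequence of length some $\theta<\lambda^+$ with union $M_\beta^\delta$, every successor universal over its predecessor, and first model strictly above $M_\beta^i$. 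Then Proposition \ref{limit model lemma} (applied with $\lambda$ there equal to $\lambda$ here, using that $\K$ is stable in $\lambda$ with no maximal models in $\lambda$ — which follows from $\mu$-superstability plus amalgamation, or more carefully should be assumed; here $\lambda$-stability needs justification, see below) gives that $M_\beta^\delta$ is a $(\lambda,\theta)$-limit model, and the construction arranged $M_\beta^i \lta M_\beta^{\delta,0}$. This is precisely condition for $\lta_\mu$.

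The main obstacle I anticipate is twofold and mostly bookkeeping rather than conceptual. First, assembling the concatenated limit-witnessing chain cleanly: one must interleave the models $M_\beta^j$ (for $i<j<\delta$, at limit stages of the concatenation) with the internal limit-chains from each step, check continuity at limit ordinals, and verify the total length is $<\lambda^+$ — this is where one uses that $\delta \le \lambda$ (since $\lambda \ge \lambda_0 \ge \mu \ge \delta$, as $\delta<\mu^+$ forces $\delta\le\mu$) and each $\theta_j<\lambda_{j+1}^+\le\lambda^+$, so the sum is still $<\lambda^+$. Second, to invoke Proposition \ref{limit model lemma} one needs $\K$ stable in $\lambda=\sum_{i<\delta}\lambda_i$ and with no maximal models in $\lambda$; the cleanest route is to observe that if $\lambda = \lambda_j$ for some $j<\delta$ then superstability in $\lambda_j$ is among the hypotheses available in the intended applications, and if $\lambda$ is a genuine limit of the $\lambda_i$ then $M_\beta^\delta$ is visibly a union of a universal chain and one can argue directly, or appeal to the fact (used elsewhere in the paper) that these stability instances are guaranteed in the contexts where Lemma \ref{continuity lemma} is applied. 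In the write-up I would either add the mild hypothesis that $\K$ is stable in $\lambda$ (harmless, since Theorem \ref{transfer symmetry} assumes superstability throughout $[\mu,\lambda]$) or note that $M_\beta^\delta$ being a $(\lambda,\theta)$-limit follows directly from Proposition \ref{limit model lemma} once the chain is built, with its stability hypothesis met because every model along the concatenated chain has cardinality $\le\lambda$ and the chain is built inside the towers whose existence already presupposes the needed stability.
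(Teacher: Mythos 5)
Your plan is essentially sound and uses the same two key ingredients the paper does: Proposition \ref{limit splitting proposition} for the non-$\mu$-splitting condition and Proposition \ref{limit model lemma} for the limit-model condition. The paper takes a structurally cleaner route to the latter, however: it argues by induction on $\delta$ that one may assume without loss of generality that the sequence of towers is \emph{continuous} (i.e.\ $M_\beta^i = \bigcup_{j<i} M_\beta^j$ at limit $i<\delta$), after which the chain $\langle M_\beta^i : i<\delta\rangle$ itself is a strictly increasing continuous chain with universal successors, and Proposition \ref{limit model lemma} applies directly — there is no need to concatenate the internal witness chains at all. Your concatenation idea does work, but as sketched it has a wrinkle worth flagging: at the seam between the witness chain for the step $j\to j+1$ and the one for $j+1\to j+2$, the first model $M_\beta^{j+2,0}$ of the latter is only guaranteed to properly extend $M_\beta^{j+1}$ (the $<_\mu$ definition gives $M_\beta^{j+1} \lta M_\beta^{j+2,0}$, not universality), so to keep every successor universal over its predecessor you should skip that first model and pass directly to $M_\beta^{j+2,1}$. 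Similarly, your bound ``$\delta < \mu^+$ forces $\delta\le\mu$'' conflates $\delta$ with the tower-length $\alpha$; $\delta$ is not a priori bounded by $\mu$, though $\delta \le \lambda$ still holds and suffices. Finally, your observation that the statement as written assumes only $\mu$-superstability while the appeal to Proposition \ref{limit model lemma} needs stability (and no maximal models) in $\lambda$ is a genuine and correct point — the paper's own proof has the same implicit dependency, which is harmless in practice because all uses of the lemma (Lemma \ref{extension lemma}, Theorem \ref{transfer symmetry}) supply superstability throughout the relevant interval of cardinals, but strictly speaking the hypothesis should include it.
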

\begin{proof}
  Working by induction on $\delta$, we can assume without loss of generality that the sequence of tower is continuous. That is, for each $\beta < \alpha$ and limit $i < \delta$, $M_\beta^i = \bigcup_{j < i} M_\beta^j$. Of course, it is enough to show that $(\bar{M}^0, \bar{a}, \bar{N}) \lta_{\mu} (\bar{M}^\delta, \bar{a}, \bar N)$. Let $\beta < \alpha$. There are two things to check: $M^\lambda_\beta$ is a limit model over a model that contains $M_\beta^0$, and $\gtp(a_\beta/M^\lambda_\beta)$ does not $\mu$-split over $N_\beta$. Proposition \ref{limit model lemma} confirms that $M^\lambda_\beta$ is a $(\lambda,\delta)$-limit model over some model containing $M_\beta^0$.  Because each $(\bar M^i,\bar a,\bar N)$ is a tower, we know that $\gtp(a_\beta/M^i_\beta)$ does not $\mu$-split over $N_\beta$.  This allows us to apply Proposition \ref{limit splitting proposition} to conclude that $\gtp(a_\beta/M^\lambda_\beta)$ does not $\mu$-split over $N_\beta$.
\end{proof}

We conclude an extension property for towers of different sizes

\begin{lemma}\label{extension lemma}
Let $\kappa$, $\lambda$ and $\mu$ be cardinals satisfying $\lambda\geq\kappa\geq\mu\geq\LS(\K)$. Assume that $\K$ is superstable
in $\mu$ and in every $\chi \in [\kappa, \max(\kappa^+, \lambda))$.

  Let $(\bar M^\kappa,\bar a,\bar N)\in\K^*_{\kappa,\alpha,\mu}$.
\begin{enumerate}
\item There exists $(\bar M,\bar a,\bar N)\in\K^*_{\lambda,\alpha,\mu}$ so that $$(\bar M^\kappa,\bar a,\bar N)<_{\mu}(\bar M,\bar a,\bar N).$$
\item\label{furthermore lemma} 
If in addition $\K$ is $\lambda$-superstable, then there exists a sequence $\langle N^\lambda_\beta\mid \beta<\alpha\rangle$ so that $N_\beta\leq N^\lambda_\beta$ for all $\beta < \alpha$ and $(\bar M,\bar a,\bar N^\lambda)\in\K^*_{\lambda,\alpha}$ (so $\|N_\beta^\lambda\| = \lambda$ for all $\beta < \alpha$).

\end{enumerate}
\end{lemma}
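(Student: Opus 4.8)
The plan is to prove part~(1) by a transfinite iteration that climbs from size $\kappa$ to size $\lambda$ — applying the fixed-size extension Lemma~\ref{extension lemma 0} at successor stages and the continuity Lemma~\ref{continuity lemma} at limit stages — and then to deduce part~(2) from part~(1) by enlarging the distinguished submodels $N_\beta$ to size $\lambda$ using $\lambda$-superstability. For part~(1), the case $\lambda=\kappa$ is immediate: then $\max(\kappa^+,\lambda)=\kappa^+$, the hypothesis supplies $\kappa$-superstability, and Lemma~\ref{extension lemma 0}, applied with $\kappa$ in place of its ``$\lambda$'', yields a $<_\mu$-extension inside $\K^*_{\kappa,\alpha,\mu}$. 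So assume $\kappa<\lambda$. I would first fix a \emph{regular} cardinal $\delta$ with $\mu<\delta\le\lambda$ and $\delta\ge\cf(\lambda)$ (for instance $\delta=\max(\mu^+,\cf(\lambda))$, which is $\le\lambda$ since $\mu^+\le\kappa^+\le\lambda$), and then build a continuous $<_\mu$-increasing chain of towers $\seq{(\bar M^i,\bar a,\bar N)\in\K^*_{\chi_i,\alpha,\mu}:i\le\delta}$ starting from $(\bar M^0,\bar a,\bar N)=(\bar M^\kappa,\bar a,\bar N)$, where $\seq{\chi_i:i\le\delta}$ is a non-decreasing sequence of cardinals with $\chi_0=\kappa$, $\chi_i<\lambda$ for $i<\delta$, and $\chi_\delta=\lambda$; at successor stages I use Lemma~\ref{extension lemma 0} (keeping the size fixed, or, when the size is to increase, inserting an auxiliary subchain of the same kind and taking a union), and at limit stages I use Lemma~\ref{continuity lemma}. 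Every appeal to superstability is then at a cardinal in $[\kappa,\lambda)\subseteq[\kappa,\max(\kappa^+,\lambda))$, as allowed. The top tower $(\bar M^\delta,\bar a,\bar N)$ lies in $\K^*_{\lambda,\alpha,\mu}$ and $<_\mu$-extends $(\bar M^\kappa,\bar a,\bar N)$: each $M^\delta_\beta$ is a $(\lambda,\delta)$-limit model over a model properly containing $M^\kappa_\beta$ by Proposition~\ref{limit model lemma}, while $\tp(a_\beta/M^\delta_\beta)$ does not $\mu$-split over $N_\beta$ by Proposition~\ref{limit splitting proposition}.

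The main obstacle in part~(1) is that the final application of Proposition~\ref{limit model lemma} is at the cardinal $\lambda$, which requires $\K$ to be stable in $\lambda$ with no maximal models of size $\lambda$ — yet $\lambda$-superstability is \emph{not} among the hypotheses. I would obtain ``no maximal models of size $\lambda$'' from $\mu$-superstability alone: together with amalgamation and joint embedding (Remark~\ref{jep remark}) it forces arbitrarily large models, hence no maximal models in any cardinal $\ge\LS(\K)$. For ``$\K$ is stable in $\lambda$'': if $\lambda=\nu^+$, this is the standard deduction of $\nu^+$-stability from $\nu$-superstability (resolve a size-$\nu^+$ model as a continuous union of size-$\nu$ models with successors universal, note that each type over the union does not $\nu$-split over some level by the no-long-splitting-chains property, and count via uniqueness of non-$\nu$-splitting extensions); if $\lambda$ is a limit cardinal, the same counting goes through using superstability at a large enough $\chi<\lambda$. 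These stability-transfer facts are exactly why the \emph{open} interval $[\kappa,\max(\kappa^+,\lambda))$ is enough in the hypothesis.

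For part~(2), assume in addition that $\K$ is $\lambda$-superstable and let $(\bar M,\bar a,\bar N)\in\K^*_{\lambda,\alpha,\mu}$ be the tower produced in part~(1); by construction each $M_\beta$ is a $(\lambda,\delta)$-limit model with $\delta$ regular and $>\mu$, witnessed by an increasing continuous chain $\seq{M^\gamma_\beta:\gamma<\delta}$ in $\K_\lambda$ with each $M^{\gamma+1}_\beta$ universal over $M^\gamma_\beta$. Fix $\beta$ with $\beta+1<\alpha$. Since $\delta$ is regular and $\|N_\beta\|=\mu<\delta$, there is $\gamma_0<\delta$ with $N_\beta\lea M^{\gamma_0}_\beta$; and by $\lambda$-superstability (the no-long-$\lambda$-splitting-chains property, applied to $\seq{M^\gamma_\beta:\gamma<\delta}$ and $\tp(a_\beta/M_\beta)$) there is $\gamma_1<\delta$ with $\tp(a_\beta/M_\beta)$ not $\lambda$-splitting over $M^{\gamma_1}_\beta$. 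I then set $N^\lambda_\beta:=M^{\gamma}_\beta$ for some fixed limit ordinal $\gamma$ with $\max(\gamma_0,\gamma_1)\le\gamma<\delta$. Then $N_\beta\lea N^\lambda_\beta\lea M_\beta$; by monotonicity of non-$\lambda$-splitting, $\tp(a_\beta/M_\beta)$ does not $\lambda$-split over $N^\lambda_\beta$; and since the level $M^{\gamma+1}_\beta\lea M_\beta$ is already universal over $M^\gamma_\beta=N^\lambda_\beta$, so is $M_\beta$. As $\bar M$ is an increasing chain of size-$\lambda$ limit models and $a_\beta\in M_{\beta+1}\setminus M_\beta$ (both inherited from the tower of part~(1)), this shows $(\bar M,\bar a,\bar N^\lambda)\in\K^*_{\lambda,\alpha}$ with $N_\beta\lea N^\lambda_\beta$, as required. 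The only delicate point is guaranteeing that $N_\beta$ is bounded in the resolution of $M_\beta$, which is exactly why I would take $\delta$ regular and $>\mu$ already when building the tower in part~(1).
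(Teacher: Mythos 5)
Your overall strategy for part (1) — iterate Lemma~\ref{extension lemma 0} along a chain of cardinals climbing from $\kappa$ to $\lambda$, take unions at limits via Lemma~\ref{continuity lemma}, and use Propositions~\ref{limit model lemma} and~\ref{limit splitting proposition} to verify the limit tower is a legitimate $<_\mu$-extension — is the same as the paper's (which phrases it as induction on $\lambda$ with a chain of length $\cf(\lambda)$). However, your insistence on a \emph{regular} $\delta$ with $\delta>\mu$ introduces a genuine gap. You set $\delta=\max(\mu^+,\cf(\lambda))$ and require a non-decreasing sequence $\seq{\chi_i: i<\delta}$ of cardinals with $\chi_i<\lambda$ for $i<\delta$ and $\sup_{i<\delta}\chi_i=\lambda$. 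When $\cf(\lambda)\leq\mu$ this can fail: take $\lambda=\aleph_\omega$, $\kappa=\mu=\aleph_1$, so $\delta=\aleph_2$. Any non-decreasing $\chi_i<\aleph_\omega$ over $\delta=\aleph_2$ steps takes values in the countable set $\{\aleph_n:n<\omega\}$; since $\aleph_2$ is regular and uncountable, the sequence is eventually constant, so its supremum is some $\aleph_n<\aleph_\omega$, not $\lambda$. So no such sequence exists, and the construction as stated cannot get off the ground for such $\lambda$.

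The reason you wanted $\delta>\mu$ — so that regularity lets you bound $N_\beta$ inside some level $M^{\gamma_0}_\beta$ of the $\K_\lambda$-resolution in part (2) — is actually unnecessary. By Definition~\ref{tower-limit-order-def}, the $<_\mu$-relation already demands that $M_\beta$ be a limit model witnessed by a resolution whose \emph{first} model properly contains $M^\kappa_\beta$, and $N_\beta\lea M^\kappa_\beta$ since $(\bar M^\kappa,\bar a,\bar N)$ is a tower. So $N_\beta$ is contained in the $0$-th resolvent with no counting argument needed; the paper's proof of part (2) simply takes $N^\lambda_\beta:=M^*_{\beta,i_\beta}$ where $i_\beta$ comes from $\lambda$-superstability applied to that resolution, and $N_\beta\lea M^*_{\beta,0}\lea N^\lambda_\beta$ comes for free. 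Dropping the $\delta>\mu$ requirement and taking $\delta=\cf(\lambda)$ (with $\lambda_i=\chi$ constant when $\lambda=\chi^+$), as the paper does, avoids the problem entirely.

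Your observation about where $\lambda$-stability and no-maximal-models at size $\lambda$ enter (through Proposition~\ref{limit model lemma} inside Lemma~\ref{continuity lemma}) is a fair one — the paper does not spell this out either. For $\lambda$ a successor it is the standard deduction of $\chi^+$-stability from $\chi$-superstability, which your hypothesis covers; for $\lambda$ a limit, your assertion that ``the same counting goes through'' needs an actual argument and is not something that follows mechanically from stability below $\lambda$ without some locality. You should either carry out that argument carefully or restrict the claim. In any case, the concrete blocker is the choice of $\delta$: fix that by reverting to $\cf(\lambda)$ and using the $<_\mu$-definition directly in part (2), and the proof aligns with the paper's.
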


\begin{proof}
We prove the first statement in the lemma by induction on $\lambda$. If $\lambda = \kappa$, this is given by Lemma \ref{extension lemma 0}. Now assume that $\lambda > \kappa$. Fix an increasing continuous sequence $\seq{\lambda_i \mid i < \cf (\lambda)}$ which is cofinal in $\lambda$ and so that $\lambda_0 = \kappa$ (if $\lambda = \chi^+$ is a successor we can take $\lambda_i = \chi$ for all $i < \lambda$). We build a sequence $\seq{(\bar{M}^{\lambda_i}, \bar{a}, \bar{N}) \in \K_{\lambda_i, \alpha, \mu}^* \mid i < \cf (\lambda)}$ which is increasing (that is, $(\bar{M}^{\lambda_i}, \bar{a}, \bar{N}) \lta_{\mu} (\bar{M}^{\lambda_{i + 1}}, \bar{a}, \bar{N})$ for all $i < \cf (\lambda)$) and continuous (in the obvious sense, see Lemma \ref{continuity lemma}). This is possible by the induction hypothesis. Now by Lemma \ref{continuity lemma}, the union of the chain of towers (defined there) is as desired.

For part (\ref{furthermore lemma}), recall from Definition \ref{tower-limit-order-def} that for each $\beta < \alpha$, $M_\beta$ is a limit model over some model containing $M_\beta^\kappa$.  Let $\langle M^*_{\beta,i}\in\K_\lambda\mid i< \theta_{\beta}\rangle$ witness this. By $\lambda$-superstability, for each $\beta<\alpha$, there exists $i_\beta<\theta_{\beta}$ so that $\gtp(a_\beta/M^\lambda_\beta)$ does not $\lambda$-split over $M^*_{\beta,i_\beta}$.  By our choice of $M^*_{\beta,0}$ containing $M_\beta^\kappa$, and consequently $N_\beta$, we can take $N^\lambda_\beta:=M^*_{\beta,i_\beta}$.  
\end{proof}

We now begin the proof of Theorem \ref{transfer symmetry}.  The structure of the proof is similar to the proof of Theorem 2 of \cite{vandieren-chainsat-apal}; only here we work with towers in $\K^*_{\lambda,\alpha,\mu}$ as opposed to only towers in $\K^*_{\mu,\alpha}$.
\begin{proof}[Proof of Theorem \ref{transfer symmetry}]
Suppose for the sake of contradiction that $\K$ does not have symmetry for $\mu$-non-splitting.  By Fact \ref{sym-reduced-tower} and our $\mu$-superstability assumption, $\K$ has a reduced discontinuous tower in $\K^*_{\mu,\alpha}$ for some $\alpha<\mu^+$.  Let $\alpha$ be the minimal ordinal  for which there is a reduced, discontinuous tower  in $\K^*_{\mu,\alpha}$.  By Lemma 5.7 of \cite{gvv-mlq}, we may assume that $\alpha=\delta+1$ for some limit ordinal $\delta$.  Fix $\T=(\bar M,\bar a,\bar N)\in\K^*_{\mu,\alpha}$ a reduced discontinuous tower with $b\in M_\delta\backslash \Union_{\beta<\alpha}M_\beta$.    

Let $I := \cf(\lambda)$. By Lemma \ref{extension lemma}, we can build an increasing and continuous chain of  towers $\langle \T^i\mid i\in I\rangle$ extending $\T\restriction\delta$.  If $\lambda=\kappa^+$ for some $\kappa$, then select each $\T^i\in\K^*_{\kappa,\delta,\mu}$.  If $\lambda$ is a limit cardinal, fix $\langle \lambda_i\mid i<\cf(\lambda)\rangle$ to be an increasing and continuous sequence of cardinals cofinal in $\lambda$, with $\lambda_0>\mu$ and choose $\T^i\in\K^*_{\lambda_i,\delta,\mu}$.  Let $\T^\lambda:=\Union_{i\in I}\T^i$. 

Notice that by Lemma \ref{continuity lemma}, and our assumptions on the towers $\T^i$, we can conclude that $\T^\lambda\in\K^*_{\lambda,\delta,\mu}$ and $\T^\lambda$ extends $\T\restriction\delta$.  In particular, for each $\beta<\alpha$,
\begin{equation}\label{mu split equation}
\tp(a_\beta/M^\lambda_\beta)\text{ does not }\mu\text{-split over }N_\beta.
\end{equation}

Furthermore by the second part of Lemma \ref{extension lemma} we can find $N^\lambda_\beta$ so that the tower  defined by $(\bar M^\lambda,\bar a,\bar N^\lambda)$ is in $\K^*_{\lambda,\delta}$ and each $M^\lambda_\beta$ is a limit over $N^\lambda_\beta$. We can extend this to a tower of length $\delta+1$ by appending to $(\bar M^\lambda,\bar a,\bar N^\lambda)$ a model $M^\lambda_\delta$ of cardinality $\lambda$ containing $\Union_{\beta<\delta}M^\lambda_\beta$ and $M^\delta$.  Call this tower $\T^b$, since it contains $b$.

By $\lambda$-symmetry and Fact \ref{sym-reduced-tower}, we know that all reduced towers in $\K^*_{\lambda,\alpha}$ are continuous.  Therefore $\T^b$ is not reduced.  However, by the density of reduced towers \cite[Theorem 5.6]{gvv-mlq}, we can find a reduced, continuous extension of $\T^b$ in $\K^*_{\lambda,\delta+1}$.  By $\lambda$-many applications of this theorem, we may assume that for each $\beta<\alpha$, the model indexed by $\beta$ in this reduced tower is a $(\lambda,\cf(\lambda))$-limit over $M^\lambda_\beta$.
Refer to this tower as $\T^*$.    
See Fig. \ref{fig:tower}.
\begin{figure}[h]
\begin{tikzpicture}[rounded corners=5mm,scale =2.7,inner sep=.35mm]
\draw (0,1.5) rectangle (.75,.5);
\draw (0,1.5) rectangle (1.75,1);
\draw (.25,.65) node {$N_0$};
\draw (1.25,1.1) node {$N_\beta$};
\draw[rounded corners=5mm]  (0, 1.5) --(0,.75)-- (1,-1) -- (1.5,-1)  -- (1.75,1)--(1.75,1.5)--  cycle;
\draw (1.85,.7) node {$N^{\lambda}_\beta$};
\draw (0,0) rectangle (4,1.5);
\draw (.85,.25) node {$M_0$};
\draw(1.4,.25) node {$M_1$};
\draw (1.8,.25) node {$\dots M_\beta$};
\draw (2.35,.25) node {$M_{\beta+1}$};
\draw (3.15,.2) node {$\dots\displaystyle{\Union_{k<\delta}M_k}$};
\draw (3.85, .25) node {$M_\delta$};
\draw (-.4,.25) node {$\T\in\K^*_{\mu,\alpha}$};
\draw (0,1.5) rectangle (3.5, -.4);
\draw[rounded corners=5mm]  (0, 1.5) -- (0,-2) -- (3.6,-2)  -- (4,0)--(4,1.5) --  cycle;
\draw[rounded corners=5mm]  (0, 1.5) -- (0,-1.35) -- (3.5,-1.35)  -- (4,0)--(4,1.5) --  cycle;
\draw (.85,-.15) node {$M^{i}_0$};
\draw (1.8,-.15) node {$\dots M^{i}_\beta$};
\draw (2.35,-.15) node {$M^{i}_{\beta+1}$};
\draw(1.4,-.15) node {$M^{i}_1$};
\draw (3.15,-.2) node {$\dots\displaystyle{\Union_{l<\delta}M^{i}_l}$};
\draw (-.4,-.15) node {$\T^i\in\K^*_{\lambda_i,\delta,\mu}$};
\draw (.85,-.6) node {$\vdots$};
\draw (1.75,-.6) node {$\vdots$};
\draw (2.35,-.6) node {$\vdots$};
\draw (3.2,-.6) node {$\vdots$};
\draw (1.35,-.6) node {$\vdots$};
\draw (0,1.5) rectangle (3.5, -1.35);
\draw (0,1.5) rectangle (1,-2);
\draw(0,1.5) rectangle (1.5, -2);
\draw (0,1.5) rectangle (2.5, -2);
\draw (0,1.5) rectangle (2,-2);
\draw (.8,-1.15) node {$M^{\lambda}_0$};
\draw (1.8,-1.15) node {$ M^{\lambda}_\beta$};
\draw (2.3,-1.15) node {$M^{\lambda}_{\beta+1}$};
\draw(1.35,-1.15) node {$M^{\lambda}_1$};
\draw (3.1,-1.2) node {$\dots\displaystyle{\Union_{l<\delta}M^{\lambda}_l}$};
\draw (-.4,-1.15) node {$\T^{b}\in\K^*_{\lambda,\alpha}$};
\draw (-.4,-1.75) node {$\T^{*}\in\K^*_{\lambda,\alpha}$};
\draw (.8,-1.75) node {$M^{*}_0$};
\draw(1.4,-1.75) node {$M^*_1$};
\draw (1.8,-1.75) node {$ M^{*}_\beta$};
\draw (2.3,-1.75) node {$M^{*}_{\beta+1}$};
\node at (3.75,.75)[circle, fill, draw, label=90:$b$] {};
\node at (2.25,.65)[circle, fill, draw, label=290:$a_\beta$] {};
\node at (1.1,.65)[circle, fill, draw, label=290:$a_1$] {};
\draw (3.65, -.6) node {$M^{\lambda}_\delta$};
\draw (3.1,-1.8) node {$\dots\displaystyle{\Union_{\beta<\delta}M^*_\beta}=M^*_\delta$};
\end{tikzpicture}
\caption{The  towers in the proof of Theorem \ref{transfer symmetry}} \label{fig:tower}
\end{figure}
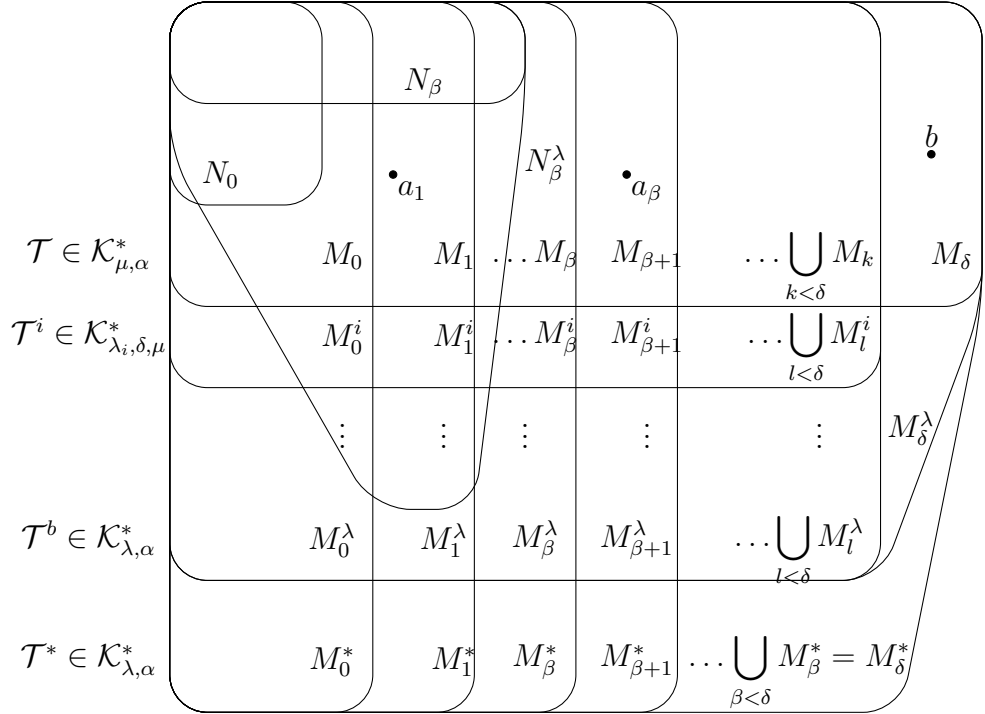

\begin{claim}\label{non-split claim}
For every $\beta<\alpha$, $\tp(a_\beta/M^*_\beta)$ does not $\mu$-split over $N_\beta$.
\end{claim}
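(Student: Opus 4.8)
The plan is to derive the claim from the extension property for non-$\mu$-splitting (Proposition~\ref{extension for non-splitting}) together with the uniqueness of non-$\lambda$-splitting extensions. Fix $\beta$ with $\beta+1<\alpha$ (otherwise there is nothing to prove), and record what is known about the models and types involved. By \eqref{mu split equation}, $\tp(a_\beta/M^\lambda_\beta)$ does not $\mu$-split over $N_\beta$. Since $(\bar M^\lambda,\bar a,\bar N^\lambda)$ is a tower in $\K^*_{\lambda,\delta}$ (part~(\ref{furthermore lemma}) of Lemma~\ref{extension lemma}), we have $N_\beta\lea N^\lambda_\beta\lea M^\lambda_\beta$, the model $M^\lambda_\beta$ is a limit model over $N^\lambda_\beta$ (hence $\lambda$-universal over $N^\lambda_\beta$), and $\tp(a_\beta/M^\lambda_\beta)$ does not $\lambda$-split over $N^\lambda_\beta$. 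Finally, $M^*_\beta$ is a $(\lambda,\cf(\lambda))$-limit over $M^\lambda_\beta$, and since $\T^*$ is a tower in $\K^*_{\lambda,\delta+1}$ whose designated submodels are $\bar N^\lambda$, the type $\tp(a_\beta/M^*_\beta)$ does not $\lambda$-split over $N^\lambda_\beta$.

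Next I would apply Proposition~\ref{extension for non-splitting} to $\tp(a_\beta/M^\lambda_\beta)$, with $(M,M^\lambda,M')$ there standing for $(N_\beta,M^\lambda_\beta,M^*_\beta)$ (the hypotheses $\lambda>\mu$, $\mu$-stability, $\lambda$-superstability, and ``$M^\lambda_\beta$ is limit over a model containing $N_\beta$'' all hold under our standing assumptions), obtaining $q\in\gS(M^*_\beta)$ extending $\tp(a_\beta/M^\lambda_\beta)$ with $q$ not $\mu$-splitting over $N_\beta$. The additional point I need is that this $q$ can be taken not to $\lambda$-split over $N^\lambda_\beta$ as well: running the proof of Proposition~\ref{extension for non-splitting} with a limit resolution of $M^\lambda_\beta$ that begins with $N^\lambda_\beta$ — which exists because $M^\lambda_\beta$ is limit over $N^\lambda_\beta$ — and using that $\tp(a_\beta/M^\lambda_\beta)$ already does not $\lambda$-split over the bottom model of that resolution, the type $q$ that gets constructed does not $\lambda$-split over $N^\lambda_\beta$. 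Then $q$ and $\tp(a_\beta/M^*_\beta)$ are two types in $\gS(M^*_\beta)$, both extending $\tp(a_\beta/M^\lambda_\beta)$, and neither of them $\lambda$-splits over $N^\lambda_\beta$; since $M^\lambda_\beta$ is $\lambda$-universal over $N^\lambda_\beta$, the uniqueness of non-$\lambda$-splitting extensions (see~\cite{vandierennomax}) forces $q=\tp(a_\beta/M^*_\beta)$. Hence $\tp(a_\beta/M^*_\beta)$ does not $\mu$-split over $N_\beta$, which is the claim.

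The step I expect to be the main obstacle is exactly the coupling just described: Proposition~\ref{extension for non-splitting} as stated only promises a non-$\mu$-splitting extension, so to know that the \emph{same} extension simultaneously witnesses non-$\lambda$-splitting over $N^\lambda_\beta$ one must look inside its proof, where the extension is produced as a pullback of a non-$\lambda$-splitting extension. This is the point where the two cardinalities ``talk to each other,'' and it is really the content of the claim. If one prefers to use Proposition~\ref{extension for non-splitting} as a black box, a variant still works: its proof in any case yields a $q$ not $\lambda$-splitting over some model $P$ of the chosen limit resolution of $M^\lambda_\beta$, and taking that resolution to start at $N^\lambda_\beta$ places $N^\lambda_\beta\lea P\lea M^\lambda_\beta$ with $M^\lambda_\beta$ still $\lambda$-universal over $P$, so by monotonicity both $q$ and $\tp(a_\beta/M^*_\beta)$ fail to $\lambda$-split over $P$ and uniqueness of non-$\lambda$-splitting extensions applies as before. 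The remaining verifications — the hypotheses of Proposition~\ref{extension for non-splitting} and the standard monotonicity and invariance facts for splitting — are immediate from the superstability assumptions of Theorem~\ref{transfer symmetry} and the construction of the towers $\T^b$ and $\T^*$.
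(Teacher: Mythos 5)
Your argument is correct, but it is a genuinely different route from the one taken in the paper. The paper invokes $\lambda$-symmetry: by Fact~\ref{uq-limit} (uniqueness of limit models under symmetry), since $M^\lambda_\beta$ and $M^*_\beta$ are both limit over $N^\lambda_\beta$, there is an isomorphism $f:M^\lambda_\beta\cong_{N^\lambda_\beta}M^*_\beta$; then non-$\lambda$-splitting of $\tp(a_\beta/M^*_\beta)$ over $N^\lambda_\beta$ together with the definition of splitting gives $\tp(f(a_\beta)/M^*_\beta)=\tp(a_\beta/M^*_\beta)$, so an automorphism $g\in\Aut_{M^*_\beta}(\C)$ can be composed with $f$ and applied to equation~\eqref{mu split equation} to transport the non-$\mu$-splitting statement to $M^*_\beta$. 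You instead avoid $\lambda$-symmetry here altogether: you produce, via Proposition~\ref{extension for non-splitting}, a $q\in\gS(M^*_\beta)$ extending $\tp(a_\beta/M^\lambda_\beta)$ that does not $\mu$-split over $N_\beta$, and then identify $q$ with $\tp(a_\beta/M^*_\beta)$ by uniqueness of non-$\lambda$-splitting extensions. Your care with the coupling is warranted and your ``variant'' is the one that actually works: the proof of Proposition~\ref{extension for non-splitting} gives non-$\lambda$-splitting over some $M^\lambda_i$ in the chosen resolution, not necessarily over the bottom model; taking the resolution of $M^\lambda_\beta$ to start at $N^\lambda_\beta$, and then using that $M^\lambda_\beta$ is universal over $M^\lambda_i$ and that $\tp(a_\beta/M^*_\beta)$ does not $\lambda$-split over $N^\lambda_\beta\lea M^\lambda_i$ (hence not over $M^\lambda_i$ by monotonicity), does make the uniqueness argument go through. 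Both approaches ultimately lean on $\lambda$-superstability; the paper's is shorter and uses Fact~\ref{uq-limit} as a black box, whereas yours buys independence from the $\lambda$-symmetry hypothesis at the cost of opening up the proof of Proposition~\ref{extension for non-splitting}. (Note that $\lambda$-symmetry is still used elsewhere in the proof of Theorem~\ref{transfer symmetry} --- to conclude that $\T^b$ is not reduced and to arrange the resolution of $\T^*$ --- so this does not weaken the hypotheses of the theorem overall.)
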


\begin{proof}
Since $M^\lambda_\beta$ and $M^*_\beta$ are both limit models over $N^\lambda_\beta$, by $\lambda$-symmetry and Fact \ref{uq-limit}, there exists $f:M^\lambda_\beta\cong_{N^\lambda_\beta}M^*_\beta$.
Since $\T^*$ is a tower extending $\T^b$, we know $\tp(a_\beta/M^*_\beta)$ does not $\lambda$-split over $N^\lambda_\beta$.  Therefore by the definition of non-splitting, it must be the case that $\tp(f(a_\beta)/M^*_\beta)=\tp(a_\beta/M^*_\beta)$.  From this equality of types we can fix $g\in\Aut_{M^*_\beta}(\C)$ with $g(f(a_\beta))=a_\beta$.  An application of $g\circ f$ to $(\ref{mu split equation})$ yields the statement of the claim.
\end{proof}

We can now complete the proof of Theorem \ref{transfer symmetry}.   By the continuity of $\T^*$ there exists $\beta<\delta$ so that $b\in M^*_\beta$.  We can then use $\T^*$ to construct a tower $\grave\T$ in $\K^*_{\mu,\delta+1}$ extending $\T$ so that $b\in \grave M_\beta$ contradicting our assumption that $\T$ was reduced.   This is possible by the downward L\"{o}wenheim property of abstract elementary classes, $\mu$-stability, universality of the models in $\T^*$, monotonicity of non-$\mu$-splitting, and Claim \ref{non-split claim}.

\end{proof}

Similar to the proof of \cite[Theorem 2]{vandieren-symmetry-apal} we can use Lemma \ref{extension lemma} to derive symmetry from categoricity. More precisely, it is enough to assume that all the models in the top cardinal have enough saturation.

\begin{theorem}\label{categ-sym}
  Suppose $\lambda$ and $\mu$ are cardinals so that $\lambda>\mu\geq\LS(\K)$.

If $\K$ is superstable in every $\chi \in [\mu, \lambda)$, and all the models of size $\lambda$ are $\mu^+$-saturated, then $\K$ has $\mu$-symmetry.
\end{theorem}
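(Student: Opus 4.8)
The plan is to follow the strategy of the proofs of Theorem~\ref{transfer symmetry} and \cite[Theorem~4]{vandieren-symmetry-v1}, using the $\mu^+$-saturation hypothesis in place of the $\lambda$-superstability and $\lambda$-symmetry available there. Since $\mu\in[\mu,\lambda)$, $\K$ is $\mu$-superstable, so Fact~\ref{sym-reduced-tower} applies. Assume toward a contradiction that $\K$ lacks $\mu$-symmetry; then there is a reduced discontinuous tower in some $\K^*_{\mu,\alpha}$, and taking $\alpha$ least such and invoking Lemma~5.7 of \cite{gvv-toappear-v1_2} we may assume $\alpha=\delta+1$ with $\delta$ a limit ordinal. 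Fix such a reduced discontinuous $\T=(\bar M,\bar a,\bar N)\in\K^*_{\mu,\delta+1}$ and an element $b\in M_\delta\backslash M_{<\delta}$, where $M_{<\delta}:=\Union_{\gamma<\delta}M_\gamma$.

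Next I would lift the initial segment $\T\restriction\delta$ to a tower of big models. Since $\lambda>\mu$ forces $\max(\mu^+,\lambda)=\lambda$, the first part of Lemma~\ref{extension lemma} (with $\kappa=\mu$) applies to $\T\restriction\delta\in\K^*_{\mu,\delta}$ and yields $(\bar M^\lambda,\bar a,\bar N)\in\K^*_{\lambda,\delta,\mu}$ with $\T\restriction\delta<_\mu(\bar M^\lambda,\bar a,\bar N)$; in particular, for $\gamma<\delta$ we have $M_\gamma\lea M^\lambda_\gamma$, $M^\lambda_\gamma\in\K_\lambda$ is $\mu^+$-saturated by hypothesis, and $\tp(a_\gamma/M^\lambda_\gamma)$ does not $\mu$-split over $N_\gamma$. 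Let $M^\lambda_\delta:=\Union_{\gamma<\delta}M^\lambda_\gamma$; as $|\delta|\le\mu<\lambda$ this is in $\K_\lambda$, hence $\mu^+$-saturated, and $M_{<\delta}\lea M^\lambda_\delta$. Because $M_{<\delta}$ has size $\mu<\mu^+$, there is $b^*\in M^\lambda_\delta$ with $\tp(b^*/M_{<\delta})=\tp(b/M_{<\delta})$, and since $\delta$ is a limit ordinal $b^*\in M^\lambda_\beta$ for some $\beta<\delta$. Replacing each $M^\lambda_\gamma$ by $\sigma(M^\lambda_\gamma)$ for a suitable $\sigma\in\Aut_{M_{<\delta}}(\C)$ sending $b^*$ to $b$, I may assume $b\in M^\lambda_\beta$; this substitution preserves the displayed properties since $\sigma$ fixes each $M_\gamma$, $N_\gamma$, $a_\gamma$ pointwise (all of these lie in $M_{<\delta}$, using that $\delta$ is limit).

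The final step builds a tower $\grave\T=(\seq{\grave M_\gamma\mid\gamma\le\delta},\bar a,\bar N)\in\K^*_{\mu,\delta+1}$ with $\T<\grave\T$ and $b\in\grave M_\beta$, which contradicts the reducedness of $\T$ since then $b\in\grave M_\beta\cap M_\delta$ while $b\notin M_\beta$ (as $\beta<\delta$). For $\gamma<\delta$ one chooses $\grave M_\gamma\lea M^\lambda_\gamma$ of size $\mu$ extending $M_\gamma$, universal over $M_\gamma$, containing $b$ when $\gamma\ge\beta$, and omitting $a_\gamma$ (possible since $\tp(a_\gamma/M_\gamma)$ is nonalgebraic); the universal extensions come from embedding a $(\mu,\omega)$-limit model over the relevant base into $M^\lambda_\gamma$ over that base, which is legitimate because a $\mu^+$-saturated model is universal over each of its submodels of size $\le\mu$, and $\tp(a_\gamma/\grave M_\gamma)$ does not $\mu$-split over $N_\gamma$ by monotonicity. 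For the top model one amalgamates $M_\delta$ and $\Union_{\gamma<\delta}\grave M_\gamma$ over $M_{<\delta}$ and then takes a limit model over the result, so that $\grave M_\delta$ is universal over $M_\delta$.

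I expect essentially all the difficulty to be concentrated in this last step: arranging the $\grave M_\gamma$ so that they simultaneously form a continuous chain, stay universal over the respective $M_\gamma$ — which at a limit stage $\gamma$ forces one to build $\grave M_{\gamma+1}$ universal over $\grave M_\gamma$, not merely over $M_{\gamma+1}$ — retain non-$\mu$-splitting, and absorb $b$, together with the extra amalgamation for the top model since $M_\delta$ need not be a substructure of $M^\lambda_\delta$. The one genuinely new ingredient compared to Theorem~\ref{transfer symmetry} is that the $\mu^+$-saturation of the models of size $\lambda$ is exactly what replaces $\lambda$-superstability (so that $M^\lambda_\delta$, though only the union of a chain of limit models of length $\delta$, still realizes $\tp(b/M_{<\delta})$ and is universal over its small submodels) and $\lambda$-symmetry in the original proof.
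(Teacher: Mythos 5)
Your proposal is correct and follows essentially the same route as the paper's proof: pass to a minimal reduced discontinuous tower of successor length, extend to a tower of models of size $\lambda$ via the first part of Lemma~\ref{extension lemma}, use $\mu^+$-saturation (replacing $\lambda$-superstability and $\lambda$-symmetry from Theorem~\ref{transfer symmetry}) to realize $\tp(b/\bigcup_{\gamma<\delta}M_\gamma)$ at some stage $\beta<\delta$, conjugate by an automorphism over $\bigcup_{\gamma<\delta}M_\gamma$, and then cut down to a $\K^*_{\mu,\delta+1}$-extension that captures $b$ early, contradicting reducedness. The extra detail you supply for the final cut-down step is consistent with what the paper merely sketches.
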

\begin{proof}
Suppose that $\K$ does not satisfy $\mu$-symmetry.  Then by Fact \ref{sym-reduced-tower} there is a reduced discontinuous tower in $\K^*_{\mu,\alpha}$. As in the proof of Theorem \ref{transfer symmetry}, we can find a discontinuous reduced tower  $\T\in\K^*_{\mu,\alpha}$ with $\alpha=\delta+1$ with the witness of discontinuity $b\in M_\delta\backslash \Union_{\beta<\delta}M_\beta$ .  As in the proof of Theorem \ref{transfer symmetry}, we can use Lemma \ref{extension lemma} (note that we only use the first part so not assuming $\lambda$-superstability is okay) to find a tower $\T^\lambda\in\K^*_{\lambda,\mu,\delta}$ extending $\T\restriction\delta$.  

By our assumption that all the models of size $\lambda$ are $\mu^+$-saturated, $\tp(b/\Union_{\beta<\delta}M_\beta)$ is realized in $\Union_{\beta<\delta}M^\lambda_\beta$.  Let $b'$ and $\beta'<\delta$ be such that
$b'\models \tp(b/\Union_{\beta<\delta}M_\beta)$ and $b'\in M^\lambda_{\beta'}$.  Fix $f\in\Aut_{\Union_{\beta<\delta}M_\beta}(\C)$ so that $f(b')=b$.  Notice that $\T^b:=f(\T^\lambda)$ is a tower in $\K^*_{\lambda,\delta,\mu}$ extending $\T\restriction\delta$ with $b\in M^b_{\beta'}$.  

We can now use the downward L\"{o}wenheim-Skolem property of abstract elementary classes, stability in $\mu$, $\mu^+$-saturation of models of cardinality $\lambda$, and monotonicity of non-$\mu$-splitting to construct from $\T^b$ a discontinuous tower  in $\K^*_{\mu,\alpha}$ extending $\T$ so that $b$ appears in the model indexed by $\beta'$ in the tower.  This will contradict our choice of $\T$ being reduced.
\end{proof}
\begin{remark}
  Instead of assuming that all the models of size $\lambda$ are $\mu^+$-saturated, it is enough to assume the following weaker property. For any $\delta < \mu^+$ and any increasing chain $\seq{M_i : i < \delta}$ in $\K_\lambda$ of $(<\lambda, \cf(\lambda))$-limit models (i.e.\ for each $i < \delta$, there exists a resolution of $M_i$ $\seq{M_i^j \in \K_{<\lambda} : j < \cf(\lambda)}$ such that $M_i^{j + 1}$ is universal over $M_i^j$ for each $j < \cf(\lambda)$), $\bigcup_{i < \delta} M_i$ is $\mu^+$-saturated.
\end{remark}

\section{A hierarchy of symmetry properties}\label{sym-props-sec}

We discuss the relationship between the symmetry property of Definition \ref{sym defn} and other symmetry properties previously defined in the literature, especially the symmetry property in the definition of a good $\mu$-frame. This expands on the short remark after Definition 3 of \cite{vandieren-symmetry-apal} and on Corollary 2 there. It will be convenient to use the following terminology. This appears already in \cite[Definition 3.8]{ss-tame-jsl}.

\begin{definition}\label{mu-forking-def}
  Let $M_0 \lea M \lea N$ be models in $\K_\mu$. We say a type $p \in \gS (N)$ \emph{explicitly does not $\mu$-fork over $(M_0, M)$} if:

    \begin{enumerate}
      \item $M$ is universal over $M_0$.
      \item $p$ does not $\mu$-split over $M_0$.
    \end{enumerate}

    We say that \emph{$p$ does not $\mu$-fork over $M$} if there exists $M_0$ so that $p$ explicitly does not $\mu$-fork over $(M_0, M)$.
\end{definition}
\begin{remark}
  Assuming $\mu$-superstability, the relation ``$p$ does not $\mu$-fork over $M$'' is very close to defining an independence notion with the properties of forking in a first-order superstable theory (i.e.\ a good $\mu$-frame, see below). In fact using tameness it can be used to do precisely that, see \cite{ss-tame-jsl} or Theorem \ref{good-frame-weak-tameness}. Moreover forking in any categorical good $\mu$-frame has to be $\mu$-forking, see Fact \ref{canon-fact}.
\end{remark}

We now give several variations on $\mu$-symmetry. We will show that variation (\ref{many-syms-1}) is equivalent to (\ref{many-syms-2}) which implies (\ref{many-syms-3}) which implies (\ref{many-syms-4}). Moreover variation (\ref{many-syms-1}) is equivalent to the $\mu$-symmetry of Definition \ref{sym defn} and variation (\ref{many-syms-4}) is equivalent to the symmetry property of good frames. We do not know if any of the implications can be reversed, or even if all the variations already follow from superstability (see Question \ref{good-frame-sym-q}).

For clarity, we have highlighted the differences between each property. 

\begin{definition}\label{many-syms}
  Let $\mu \ge \LS (\K)$.
  \begin{enumerate}
  \item\label{many-syms-1} $\K$ has \emph{uniform $\mu$-symmetry} if for any limit models $N, M_0, M$ in $\K_\mu$ where $M$ is limit over $M_0$ and $M_0$ is limit over $N$, if \underline{$\gtp (b / M)$ does not $\mu$-split over $M_0$}, $a \in |M|$, and $\gtp (a / M_0)$ explicitly does not $\mu$-fork over $(N, M_0)$, there exists $M_b \in \K_{\mu}$ containing $b$ and limit over $M_0$ so that \textbf{$\gtp (a / M_b)$ explicitly does not $\mu$-fork over $(N, M_0)$}.
    \item\label{many-syms-2} $\K$ has \emph{weak uniform $\mu$-symmetry} if for any limit models $N, M_0, M$ in $\K_\mu$ where $M$ is limit over $M_0$ and $M_0$ is limit over $N$, if \underline{$\gtp (b / M)$ does not $\mu$-fork over $M_0$}, $a \in |M|$, and $\gtp (a / M_0)$ explicitly does not $\mu$-fork over $(N, M_0)$, there exists $M_b \in \K_{\mu}$ containing $b$ and limit over $M_0$ so that \textbf{$\gtp (a / M_b)$ explicitly does not $\mu$-fork over $(N, M_0)$}.  See Figure \ref{fig:weak uniform sym}.
    \item\label{many-syms-3} $\K$ has \emph{non-uniform $\mu$-symmetry} if for any limit models $M_0, M$ in $\K_\mu$ where $M$ is limit over $M_0$, if \underline{$\gtp (b / M)$ does not $\mu$-split over $M_0$}, $a \in |M|$, and $\gtp (a / M_0)$ does not $\mu$-fork over $M_0$, there exists $M_b \in \K_{\mu}$ containing $b$ and limit over $M_0$ so that \textbf{$\gtp (a / M_b)$ does not $\mu$-fork over $M_0$}.
    \item\label{many-syms-4} $\K$ has \emph{weak non-uniform $\mu$-symmetry} if for any limit models $M_0, M$ in $\K_\mu$ where $M$ is limit over $M_0$, if \underline{$\gtp (b / M)$ does not $\mu$-fork over $M_0$}, $a \in |M|$, and $\gtp (a / M_0)$ does not $\mu$-fork over $M_0$, there exists $M_b \in \K_{\mu}$ containing $b$ and limit over $M_0$ so that \textbf{$\gtp (a / M_b)$ does not $\mu$-fork over $M_0$}.
  \end{enumerate}
\end{definition}

\begin{figure}[h]
\begin{tikzpicture}[rounded corners=5mm, scale=3,inner sep=.5mm]
\draw (0,1.25) rectangle (.5,.5);
\draw (.4,1.1) node {$N$};
\draw (0,0) rectangle (3,1.25);
\draw (0,1.25) rectangle (1,0);
\draw (.85,.2) node {$M_0$};
\draw (3.2, .25) node {$M$};
\draw (.85, .9) node {$M'_0$};
\draw (.3,.8) rectangle (.9,.4);
\draw[color=gray] (0,1.25) rectangle (1.5, -.5);
\node at (1.1,-.25)[circle, fill, draw, label=45:$b$] {};
\node at (2,.75)[circle, fill, draw, label=45:$a$] {};
\draw[color=gray] (1.75,-.25) node {$M^{b}$};
\end{tikzpicture}
\caption{A diagram of the models and elements in the definition of weak uniform $\mu$-symmetry.  We require that $\gtp (b / M)$ does not $\mu$-\emph{fork} over $M_0$ in the weak version, so there exists $M_0'$ such that $M_0$ is limit over $M_0'$ and $\gtp (b / M)$ does not $\mu$-split over $M_0'$} \label{fig:weak uniform sym}
\end{figure}
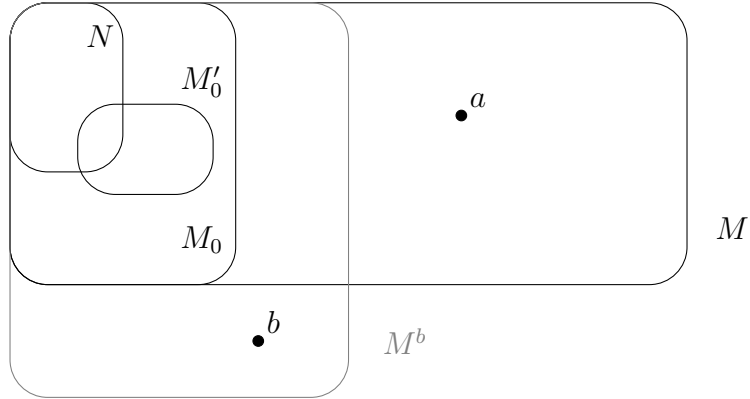

The difference between the uniform and non-uniform variations is in the conclusion: in the uniform case, we start with $\gtp (a / M_0)$ which explicitly does not $\mu$-fork over $(N, M_0)$ and get $\gtp (a / M_b)$ explicitly does not $\mu$-fork over $(N, M_0)$. Thus both types do not $\mu$-split over $N$. In the non-uniform case, we start with $\gtp (a / M_0)$ which does not $\mu$-fork over $M_0$, hence explicitly does not $\mu$-fork over $(N, M_0)$ for some $N$, but we only get that $\gtp (a / M_b)$ does not $\mu$-fork over $M_0$, so it explicitly does not $\mu$-fork over $(N', M_0)$, for some $N'$ potentially different from $N$. 

The difference between weak and non-weak is in the starting assumption: in the weak case, we assume that $\gtp (b / M)$ does not $\mu$-fork over $M_0$, hence there exists $M_0'$ so that $M_0$ is limit over $M_0'$ and $\gtp (b / M)$ does not $\mu$-split over $M_0'$. In the non-weak case, we assume only that $\gtp (b / M)$ does not $\mu$-\emph{split} over $M_0$. Even under $\mu$-superstability, it is open whether this implies that there must exist a smaller $M_0'$ so that $\gtp (b / M)$ does not $\mu$-split over $M_0'$. The problem is that $\mu$-splitting need not satisfy the transitivity property, see the discussion after Definition 3.8 in \cite{ss-tame-jsl}.

Using the monotonicity property of $\mu$-splitting, we get the easy implications:

\begin{proposition}\label{easy-implications}
  Let $\mu \ge \LS (\K)$. If $\K$ has uniform $\mu$-symmetry, then it has non-uniform $\mu$-symmetry and weak uniform $\mu$-symmetry. If $\K$ has non-uniform $\mu$-symmetry, then it has weak non-uniform $\mu$-symmetry.
\end{proposition}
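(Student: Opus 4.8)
The plan is to unwind the definitions and apply the monotonicity property of $\mu$-splitting (and hence of $\mu$-forking) in the obvious way. I would prove the three implications separately, treating each as a short bookkeeping argument.

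\textbf{Uniform $\mu$-symmetry implies non-uniform $\mu$-symmetry.} Suppose we are given limit models $M_0, M$ with $M$ limit over $M_0$, an element $b$ with $\gtp(b/M)$ not $\mu$-splitting over $M_0$, and $a \in |M|$ with $\gtp(a/M_0)$ not $\mu$-forking over $M_0$. By definition of $\mu$-forking there is $N$ so that $\gtp(a/M_0)$ explicitly does not $\mu$-fork over $(N, M_0)$, i.e.\ $N \lea M_0$, $M_0$ is limit over $N$, and $\gtp(a/M_0)$ does not $\mu$-split over $N$. Now all the hypotheses of uniform $\mu$-symmetry are met (with this $N$), so we get $M_b \in \K_\mu$ containing $b$ and limit over $M_0$ with $\gtp(a/M_b)$ explicitly not $\mu$-forking over $(N, M_0)$; in particular $\gtp(a/M_b)$ does not $\mu$-fork over $M_0$, which is the desired conclusion.

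\textbf{Uniform $\mu$-symmetry implies weak uniform $\mu$-symmetry.} Here we are given the same set-up except that the assumption on $b$ is weakened to: $\gtp(b/M)$ does not $\mu$-fork over $M_0$. By definition this gives $M_0'$ with $M_0$ limit over $M_0'$ and $\gtp(b/M)$ not $\mu$-splitting over $M_0'$. Since $M_0$ is limit over $M_0'$, we have $M_0' \lea M_0$, and by monotonicity of $\mu$-splitting $\gtp(b/M)$ does not $\mu$-split over $M_0$ either. Thus the hypotheses of uniform $\mu$-symmetry hold verbatim, and its conclusion is exactly the conclusion required for weak uniform $\mu$-symmetry (the conclusions of the two properties are literally the same statement).

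\textbf{Non-uniform $\mu$-symmetry implies weak non-uniform $\mu$-symmetry.} Again the only difference is that the hypothesis on $b$ is $\gtp(b/M)$ does not $\mu$-fork over $M_0$ rather than $\gtp(b/M)$ does not $\mu$-split over $M_0$; as in the previous paragraph, monotonicity of $\mu$-splitting lets us pass from ``does not $\mu$-split over the smaller $M_0'$'' to ``does not $\mu$-split over $M_0$'', so the hypotheses of non-uniform $\mu$-symmetry are satisfied, and the conclusion carries over unchanged.

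I do not expect any real obstacle; the only things to be careful about are (i) that when $M_0$ is limit over $M_0'$ one indeed has $M_0' \lea M_0$, so monotonicity of non-$\mu$-splitting applies, and (ii) that the conclusions of the ``weak'' and ``non-weak'' versions of each property coincide, so that nothing needs to be adjusted on the output side. The whole proposition is really just ``unwind $\mu$-forking into $\mu$-splitting, then apply monotonicity.''
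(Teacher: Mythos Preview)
Your proposal is correct and matches the paper's approach exactly: the paper's entire proof is the single clause ``Using the monotonicity property of $\mu$-splitting,'' and you have faithfully unpacked what that means for each of the three implications. One tiny bookkeeping point you glossed over (and so does the paper): in the first implication, the $N$ produced by the definition of $\mu$-forking need not itself be a limit model, whereas uniform $\mu$-symmetry is stated for limit $N$; this is harmless since by Proposition~\ref{unif-usual} uniform $\mu$-symmetry is equivalent to Definition~\ref{sym defn}, where no such requirement on $N$ is imposed.
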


Playing with the definitions and monotonicity of $\mu$-splitting (noting that cases ruled out by Definition \ref{sym defn} such as $a \in |M_0|$ are easy to handle), we also have:

\begin{proposition}\label{unif-usual}
  $\K$ has uniform $\mu$-symmetry if and only if it has $\mu$-symmetry (in the sense of Definition \ref{sym defn}).
\end{proposition}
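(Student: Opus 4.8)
The plan is to unwind the notion ``explicitly does not $\mu$-fork over $(N,M_0)$'' from Definition \ref{mu-forking-def}: it means exactly ``$M_0$ is limit over $N$ and the type does not $\mu$-split over $N$'', and the ``$M_0$ is limit over $N$'' clause is already part of the hypotheses of uniform $\mu$-symmetry (and trivially holds in the conclusion). So uniform $\mu$-symmetry reads: for limit models $N,M_0,M\in\K_\mu$ with $M$ limit over $M_0$ and $M_0$ limit over $N$, if $\gtp(b/M)$ does not $\mu$-split over $M_0$, $a\in|M|$, and $\gtp(a/M_0)$ does not $\mu$-split over $N$, then there is $M_b\in\K_\mu$ limit over $M_0$ with $b\in M_b$ and $\gtp(a/M_b)$ not $\mu$-splitting over $N$. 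This is word-for-word Definition \ref{sym defn} except that (i) Definition \ref{sym defn} asks only that $M$ be \emph{universal} over $M_0$ (and imposes no ``limit model'' status on $N$ and $M$), and (ii) Definition \ref{sym defn} adds the non-degeneracy clauses $a\in M\setminus M_0$ and ``$\gtp(a/M_0)$, $\gtp(b/M)$ non-algebraic''.

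For ``$\mu$-symmetry $\Rightarrow$ uniform $\mu$-symmetry'': given a configuration as in the unwound uniform $\mu$-symmetry, the non-degenerate case $a\in|M|\setminus|M_0|$ with $\gtp(b/M)$ non-algebraic is immediate, since ``$M$ limit over $M_0$'' implies ``$M$ universal over $M_0$'', so conditions \ref{limit sym cond}--\ref{last} of Definition \ref{sym defn} all hold and the $M^b$ it produces is the desired $M_b$. The cases ``ruled out by Definition \ref{sym defn}'' are handled by hand. If $a\in|M_0|$: take any limit model $M_b$ over $M_0$ containing $b$ (built, using $\mu$-stability and no maximal models, by interleaving $b$ into a universal chain over $M_0$); since $\gtp(a/M_0)$ is algebraic and non-$\mu$-splitting over $N$ and $M_b$ is limit over $N$, the uniqueness of non-$\mu$-splitting extensions over limit models --- together with the fact that an algebraic type is realized only by the element defining it --- forces $\gtp(a/M_b)$ to be \emph{the} non-$\mu$-splitting-over-$N$ extension of $\gtp(a/M_0)$, so in particular it does not $\mu$-split over $N$. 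If instead $\gtp(b/M)$ is algebraic, replace $b$ by a non-algebraic $b'$ with $\gtp(b'/M)$ non-$\mu$-splitting over $M_0$ (a non-$\mu$-splitting extension to $M$ of a non-algebraic type over $M_0$), run the non-degenerate case to get $M^{b'}$, and then, using monotonicity of $\mu$-splitting and a further limit-model construction, adjust to a limit model $M_b$ over $M_0$ containing both $b'$ and $b$.

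For the converse ``uniform $\mu$-symmetry $\Rightarrow$ $\mu$-symmetry'': start from a configuration as in Definition \ref{sym defn}. Using $\mu$-stability, no maximal models, and the extension and continuity properties of non-$\mu$-splitting, build $M'\gea M$ with $M'$ limit over $M$ --- hence, by prepending $M_0$ to the witnessing chain and using that $M$ is universal over $M_0$, also limit over $M_0$ --- with $a\in|M'|$ and with $\gtp(b/M')$ still non-$\mu$-splitting over $M_0$, the last point by choosing each successor in the chain defining $M'$ so as not to introduce $\mu$-splitting of $\gtp(b/\cdot)$ over $M_0$. Also fix a chain $N=N_0\lta N_1\lta\cdots$ witnessing that $M_0$ is limit over $N$; then $N_1$ is a limit model, $M_0$ is limit over $N_1$, and by monotonicity of non-$\mu$-splitting $\gtp(a/M_0)$ does not $\mu$-split over $N_1$. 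Apply uniform $\mu$-symmetry to $(N_1,M_0,M',a,b)$ to get $M_b\ni b$ limit over $M_0$ with $\gtp(a/M_b)$ not $\mu$-splitting over $N_1$; finally, by the weak transitivity property of $\mu$-splitting (Proposition \ref{weak-trans}, applied with a model universal over $N_1$ inside $M_0$ and using that $\gtp(a/M_0)$ does not $\mu$-split over $N$) conclude that $\gtp(a/M_b)$ does not $\mu$-split over $N$, which is the conclusion of Definition \ref{sym defn}.

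The step I expect to be the main obstacle is the careful treatment of the degenerate cases in the first direction, especially ensuring, when $b$ already lies in $M$, that a limit model over $M_0$ containing $b$ can be produced while keeping the non-$\mu$-splitting information about $a$ intact; this is exactly the point where the mismatch between ``universal over'' and ``limit over'' and the interplay of monotonicity with the extension and uniqueness properties of non-$\mu$-splitting over limit models must be navigated with care.
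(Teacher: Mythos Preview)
Your approach is the same as the paper's in spirit: the paper's entire proof is the one sentence ``Playing with the definitions and monotonicity of $\mu$-splitting (noting that cases ruled out by Definition \ref{sym defn} such as $a \in |M_0|$ are easy to handle)'', and you carry out exactly this unwinding. Two remarks are in order, though.

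First, you correctly notice that more than bare monotonicity is needed. In the direction uniform $\Rightarrow$ $\mu$-symmetry you must upgrade ``$M$ universal over $M_0$'' to ``$M'$ limit over $M_0$'' while keeping $\gtp(b/M')$ non-$\mu$-splitting over $M_0$ (this uses the extension and continuity properties of non-splitting), and you must pass from a limit $N_1$ back down to the original $N$ via weak transitivity (Proposition \ref{weak-trans}). All of these implicitly require $\mu$-stability (and your continuity step even $\mu$-superstability). That is fine --- the proposition is only ever invoked under $\mu$-superstability (e.g.\ in Lemma \ref{weak-unif-equiv}) --- but be aware that the paper's ``monotonicity'' is an understatement, and your more explicit bookkeeping is the right level of care.

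Second, your handling of the subcase ``$\gtp(b/M)$ algebraic, $a \notin M_0$'' has a gap. You produce $M^{b'}$ with $\gtp(a/M^{b'})$ non-$\mu$-splitting over $N$ and then propose to ``adjust to a limit model $M_b$ over $M_0$ containing both $b'$ and $b$'' using monotonicity. But enlarging $M^{b'}$ to contain $b$ goes in the wrong direction for monotonicity of non-splitting: there is no reason $\gtp(a/M_b)$ should still fail to $\mu$-split over $N$. The case $b \in M_0$ is genuinely easy (any limit model over $M_0$ contains $b$, and the extension/automorphism trick you used for $a \in M_0$ handles $a$), but $b \in M \setminus M_0$ with $a \in M \setminus M_0$ needs a more careful argument --- for instance, one can build the chain for $M_b$ one step at a time, at each successor step using extension of $\gtp(a/M_i^b)$ and an automorphism over $M_i^b$; once $b$ is in $M_1^b$ the automorphisms fix it, so the only delicate point is the very first step, where one must simultaneously control the types of $a$ and $b$ over $M_0$. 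The paper sweeps all of this under ``easy to handle'', so you are in good company, but your written argument for this particular subcase does not close.
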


Surprisingly, uniform symmetry and weak uniform symmetry are also equivalent assuming superstability. We will use the characterization of symmetry in terms of reduced towers provided by Fact \ref{sym-reduced-tower}.

\begin{lemma}\label{weak-unif-equiv}
  If $\K$ is $\mu$-superstable, then weak uniform $\mu$-symmetry is equivalent to uniform $\mu$-symmetry.
\end{lemma}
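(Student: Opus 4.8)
The plan is to prove the nontrivial direction: assuming $\mu$-superstability and weak uniform $\mu$-symmetry, derive uniform $\mu$-symmetry. By Proposition \ref{unif-usual} it suffices to establish $\mu$-symmetry in the sense of Definition \ref{sym defn}, and by Fact \ref{sym-reduced-tower} this is equivalent to the statement that every reduced tower in $\K^\ast_{\mu,\alpha}$ is continuous. So I would argue by contradiction: suppose $\K$ has a reduced discontinuous tower. As in the proof of Theorem \ref{transfer symmetry} (invoking Lemma 5.7 of \cite{gvv-toappear-v1_2}), I may assume the tower has length $\alpha = \delta + 1$ for $\delta$ a limit ordinal, and fix a witness $b \in M_\delta \setminus \bigcup_{\beta < \delta} M_\beta$.

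The key point is to feed this configuration into weak uniform $\mu$-symmetry rather than into the ordinary (uniform) symmetry property. First I would set $M_0 := \bigcup_{\beta < \delta} M_\beta$, so $M_0$ is a limit model; by the continuity property of $\mu$-splitting under superstability, one can arrange that $\gtp(b / M_\delta)$ does not $\mu$-split over $M_\beta$ for some $\beta < \delta$, and since $M_0$ is limit over $M_\beta$ this says exactly that $\gtp(b / M_\delta)$ does not $\mu$-fork over $M_0$ — which is precisely the (weakened) hypothesis on $b$ needed in variation (\ref{many-syms-2}). For the $a$ side I would use one of the designated elements $a_\gamma$ together with its designated $N_\gamma$: $\gtp(a_\gamma / M_\gamma)$ does not $\mu$-split over $N_\gamma$ and, after passing to a suitable resolution, $M_\gamma$ is limit over $N_\gamma$, so $\gtp(a_\gamma/M_\gamma)$ explicitly does not $\mu$-fork over $(N_\gamma, M_\gamma)$. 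Applying weak uniform $\mu$-symmetry then yields a limit model $M_b$ over $M_\gamma$ containing $b$ with $\gtp(a_\gamma/M_b)$ explicitly not $\mu$-forking over $(N_\gamma, M_\gamma)$, hence not $\mu$-splitting over $N_\gamma$.

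From there the argument parallels the final steps of the proof of Theorem \ref{transfer symmetry}: using the downward Löwenheim–Skolem property, $\mu$-stability, universality, and monotonicity of non-$\mu$-splitting, I would construct from $M_b$ an extension of the original tower $\T$ (of length $\delta+1$) in which $b$ appears in the model indexed by an ordinal $< \delta$, contradicting reducedness of $\T$. The main obstacle I anticipate is bookkeeping: making sure the element $b$ that witnesses discontinuity can simultaneously be handled at every relevant index $\gamma < \delta$ of the tower, and that the limit-model/universality requirements in the definition of $<$-extension of towers are met when splicing $M_b$ back in — essentially repeating the delicate tower-surgery at the end of Theorem \ref{transfer symmetry}, but now with all models of size $\mu$, so that no cardinality juggling is needed and only the weak hypothesis on $\gtp(b/M)$ is invoked. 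A secondary point to check carefully is that the passage from "$\gtp(b/M_\delta)$ does not $\mu$-split over $M_0$" is never actually needed — we only need the $\mu$-\emph{fork} version — which is exactly why the weak hypothesis suffices and the equivalence goes through.
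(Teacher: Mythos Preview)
Your approach is essentially the same as the paper's: pass through the characterization of $\mu$-symmetry via continuity of reduced towers (Fact~\ref{sym-reduced-tower}), and observe that the proof of the direction $(\ref{symmetry item})\Rightarrow(\ref{reduced are continuous})$ only ever invokes symmetry in a situation where the hypothesis on $b$ can be upgraded from ``does not $\mu$-split'' to ``does not $\mu$-fork'', so that weak uniform $\mu$-symmetry suffices. The paper's proof is simply more economical: rather than re-running the tower argument, it just points to the existing proof of Fact~\ref{sym-reduced-tower} and identifies the single place where the upgrade is needed, namely that the model $M$ over which $\gtp(b/M)$ is taken is always of the form $\bigcup_{i<\delta} M_i$ with $M_{i+1}$ universal over $M_i$, so superstability yields non-$\mu$-splitting over some $M_i$, and hence explicit non-$\mu$-forking over $(M_i,M_{i+1})$.

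Two minor slips in your sketch are worth flagging. First, you write $\gtp(b/M_\delta)$, but $b\in M_\delta$, so that type is algebraic; you mean $\gtp(b/\bigcup_{\beta<\delta}M_\beta)$. Second, in a general tower the sequence $\langle M_\beta\rangle$ is merely increasing (each $M_\beta$ is a limit model, but $M_{\beta+1}$ need not be universal over $M_\beta$), so you cannot directly apply the ``no long splitting chains'' clause of superstability to it; the relevant chain with universal steps in the proof of Fact~\ref{sym-reduced-tower} arises differently. These are details rather than genuine gaps---your identification of the crux (that only non-$\mu$-forking of $\gtp(b/M)$, never bare non-$\mu$-splitting, is actually used) is exactly right.
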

\begin{proof}
  By Proposition \ref{easy-implications}, uniform symmetry implies weak uniform symmetry. Now assuming weak uniform symmetry, the proof of $(\ref{symmetry item})\Rightarrow(\ref{reduced are continuous})$ of Fact \ref{sym-reduced-tower} still goes through. The point is that whenever we consider $\gtp (b / M)$ in the proof, $M = \bigcup_{i < \delta} M_i$ for some increasing continuous $\seq{M_i : i < \delta}$ with $M_{i + 1}$ universal over $M_i$ for all $i < \delta$, and we simply use that by superstability $\gtp (b / M)$ does not $\mu$-split over $M_i$ for some $i < \delta$. However we also have that $\gtp (b / M)$ explicitly does not $\mu$-fork over $(M_i, M_{i + 1})$.

  Therefore reduced towers are continuous, and hence by Fact \ref{sym-reduced-tower} $\K$ has $\mu$-symmetry (and so by Proposition \ref{unif-usual} uniform $\mu$-symmetry).
\end{proof}

How do these definitions compare to the symmetry property in good $\mu$-frames? Recall \cite[Definition II.2.1]{shelahaecbook} that a good $\mu$-frame is a triple $\s = (\K_\mu, \nf, \Sbs)$ where:

\begin{enumerate}
  \item $\K$ is an AEC.
  \item For each $M \in \K_\mu$, $\Sbs (M)$ (called the set of \emph{basic types} over $M$) is a set of nonalgebraic Galois types over $M$ satisfying (among others) the \emph{density property}: if $M \lta N$ are in $\K_\mu$, there exists $a \in |N| \backslash |M|$ such that $\gtp (a / M; N) \in \Sbs (M)$.
  \item $\nf$ is an (abstract) independence relation on types of length one over models in $\K_\mu$ satisfying several basic properties (that we will not list here) of first-order forking in a superstable theory.
\end{enumerate}

\begin{remark}
We will \emph{not} use the axiom (B) \cite[Definition II.2.1]{shelahaecbook} requiring the existence of a superlimit model of size $\mu$. In fact many papers (e.g.\ \cite{jrsh875}) define good frames without this assumption.
\end{remark}

As in \cite[Definition II.6.35]{shelahaecbook}, we say that a good $\mu$-frame $\s$ is \emph{type-full} if for each $M \in \K_\mu$, $\Sbs (M)$ consists of \emph{all} the nonalgebraic types over $M$. For simplicity, we focus on type-full good frames in this paper. Given a type-full good $\mu$-frame $\s = (\K_\mu, \nf, \Sbs)$ and $M_0 \lea M$ both in $\K_\mu$, we say that a nonalgebraic type $p \in \gS (M)$ \emph{does not $\s$-fork over $M_0$} if it does not fork over $M_0$ according to the abstract independence relation $\nf$ of $\s$. We say that a good $\mu$-frame $\s$ is \emph{on $\K_\mu$} if its underlying class is $\K_\mu$.

The existence of a good $\mu$-frame gives quite a lot of information about the class. 

\begin{fact}\label{good-frame-facts}
  Assume there is a good $\mu$-frame on $\Ksatp{\lambda}_\mu$, for $\lambda \le \mu$ (so in particular, unions of chains of $\lambda$-saturated models are $\lambda$-saturated). Then:

  \begin{enumerate}
    \item For any $M_0, M_1, M_2 \in \K_\mu$ such that $M_1$ and $M_2$ are limit over $M_0$, $M_1 \cong_{M_0} M_2$. 
    \item $\K$ is $\mu$-superstable.
  \end{enumerate}
\end{fact}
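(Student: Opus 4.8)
The plan is to establish clause (2) first and then deduce clause (1) from it with the machinery already set up. For (2), I would check the four conditions of Definition \ref{ss assm} for $\K$ against the axioms of the good $\mu$-frame $\s$ on $\Ksatp{\lambda}_\mu$. That $\mu \ge \LS(\K)$ is forced by the hypotheses (a good $\mu$-frame lives on an AEC of L\"owenheim-Skolem number at most $\mu$, and $\Ksatp{\lambda} \subseteq \K$). Nonemptiness, joint embedding, and no maximal models in $\K_\mu$, together with stability in $\mu$, are among (or immediate consequences of) the frame axioms for the class $\Ksatp{\lambda}_\mu$; since $\lambda \le \mu$ one argues that $\Ksatp{\lambda}_\mu$ is cofinal in $\K_\mu$ (every $M \in \K_\mu$ sitting inside a $\lambda$-saturated model of size $\mu$, built using the global amalgamation hypothesis and the stability just obtained), so that $\K_\mu$ inherits these properties from a cofinal subclass and stability passes up along extensions ($|\gS(M)| \le |\gS(M^+)| \le \mu$ for $M \le M^+$ with $M^+ \in \Ksatp{\lambda}_\mu$). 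The remaining ``no long splitting chains'' clause follows from the local character of $\s$-nonforking (a frame axiom) once one knows that, over limit models, $\s$-nonforking refines non-$\mu$-splitting; here I would invoke the canonicity theorem for good frames \cite[Theorem 9.7]{indep-aec-v5}, which matches the abstract forking of $\s$ with $\mu$-forking in the sense of Definition \ref{mu-forking-def}. Alternatively, one may cite wholesale the known implication that a good $\mu$-frame makes its underlying class $\mu$-superstable and then transfer superstability to $\K$ along the cofinal subclass.

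With (2) in hand, for (1) I would apply Fact \ref{uq-limit}: since $\K$ is now $\mu$-superstable, it suffices to show that $\K$ has $\mu$-symmetry, equivalently (Fact \ref{sym-reduced-tower}) that every reduced tower in $\K^*_{\mu,\alpha}$ is continuous. The good frame $\s$ carries its own symmetry axiom, which --- after identifying $\s$-nonforking with $\mu$-forking via canonicity and reading it in the language of Definition \ref{mu-forking-def} --- amounts to weak uniform $\mu$-symmetry in the sense of Definition \ref{many-syms}.(\ref{many-syms-2}). By Lemma \ref{weak-unif-equiv} (using $\mu$-superstability) this upgrades to uniform $\mu$-symmetry, which by Proposition \ref{unif-usual} is exactly $\mu$-symmetry in the sense of Definition \ref{sym defn}; then Fact \ref{uq-limit} yields the uniqueness of limit models in $\K_\mu$. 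A more classical alternative for (1) is to use that a good frame has uniqueness of limit models inside its own underlying class and then observe, via (2), that every limit model of $\K_\mu$ is $\lambda$-saturated and hence lies in $\Ksatp{\lambda}_\mu$; but the route through Fact \ref{uq-limit} avoids arguing that last point in isolation.

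The main obstacle throughout is the mismatch between the frame's underlying class $\Ksatp{\lambda}_\mu$ and the class $\K_\mu$ about which we want to conclude. A priori a universal extension of a member of $\Ksatp{\lambda}_\mu$ --- let alone of an arbitrary member of $\K_\mu$ --- need not be $\lambda$-saturated, so $\K$-limit models (in particular short ones, e.g.\ $(\mu,\omega)$-limits when $\lambda > \aleph_0$) need not belong to the frame's class, and the frame axioms, stated for $\Ksatp{\lambda}_\mu$, do not automatically talk about them. Dealing with this forces one to exploit $\lambda \le \mu$ together with stability (which is itself part of what is being proven, so some care about the order of the argument is needed) to see that $\Ksatp{\lambda}_\mu$ is cofinal in $\K_\mu$ and that the relevant structural properties transfer, and to pin down precisely which form of symmetry the frame's symmetry axiom delivers and how the abstract independence relation of $\s$ compares with concrete $\mu$-splitting --- which is exactly the point at which the canonicity theorem for good frames is needed.
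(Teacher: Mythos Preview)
Your primary route for (1) has a genuine gap. After identifying $\s$-forking with $\mu$-forking, the frame's symmetry axiom translates to \emph{weak non-uniform} $\mu$-symmetry (Definition \ref{many-syms}.(\ref{many-syms-4})), not weak uniform $\mu$-symmetry; this is exactly the content of Theorem \ref{sym-good-frame-equiv}. Lemma \ref{weak-unif-equiv} only upgrades weak \emph{uniform} to uniform, and the paper explicitly records (Question \ref{good-frame-sym-q}) that the implication from a good $\mu$-frame to full $\mu$-symmetry is open. So you cannot reach Fact \ref{uq-limit} this way. Your ``more classical alternative'' is in fact the only viable route and is what the paper does: use the known uniqueness of limit models \emph{inside} a good frame \cite[Lemma II.4.8]{shelahaecbook}, and then argue by a back-and-forth that any limit model in $\K_\mu$ over $M_0$ is isomorphic over $M_0$ to one whose witnessing chain lies in $\Ksatp{\lambda}_\mu$. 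Note that your phrasing ``every limit model of $\K_\mu$ is $\lambda$-saturated'' is not what is needed (and is not obvious for short limits before uniqueness is known); what is needed is the back-and-forth replacement of the witnessing chain.

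For (2), your appeal to the canonicity theorem is heavier than necessary and potentially circular: Fact \ref{canon-fact} is stated for \emph{categorical} good frames, and you do not yet know $\Ksatp{\lambda}_\mu$ is categorical at this stage. The paper instead uses the elementary observation (from the uniqueness axiom alone) that $\s$-nonforking implies non-$\mu$-splitting \cite[Lemma 4.2]{bgkv-v2}; local character of $\s$-forking then immediately gives ``no long splitting chains'' for chains in $\Ksatp{\lambda}_\mu$, and the same back-and-forth argument as in (1) transfers this to arbitrary chains in $\K_\mu$. Your handling of stability and the structural axioms via cofinality of $\Ksatp{\lambda}_\mu$ in $\K_\mu$ is fine and matches the paper's approach.
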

\begin{proof}
  The first part is \cite[Lemma II.4.8]{shelahaecbook} (or see \cite[Theorem 9.2]{ext-frame-jml}). Note that by the usual back and forth argument (as made explicit in the proof of \cite[Theorem 7.1]{bv-sat-v3}), any limit model is isomorphic to a limit model where the models witnessing it are $\lambda$-saturated. The second part is because:
  \begin{itemize}
    \item By definition of a good $\mu$-frame, $\mu \ge \LS (\K)$, $\K_\mu$ is nonempty, has amalgamation, joint embedding, and no maximal models.
    \item By \cite[Claim II.4.2.(1)]{shelahaecbook}, $\K$ is stable in $\mu$.
    \item By the uniqueness property of $\s$-forking, if a type does not $\s$-fork over $M_0$ (where $\s$ is a good $\mu$-frame on $\K_\mu$), then it does not $\mu$-split over $M_0$ (see \cite[Lemma 4.2]{bgkv-apal}). Thus we obtain the ``no long splitting chains'' condition in Definition \ref{ss assm} when the $M_i$s are $\lambda$-saturated, and as noted above (or in \cite[Proposition 10.6]{indep-aec-apal}) we can do a back and forth argument to get that no long splitting chains holds even when the members of the chain are not saturated.
  \end{itemize}
\end{proof}

Among the axioms a good $\mu$-frame must satisfy is the symmetry axiom:

\begin{definition}\label{good-frame-sym}
  The \emph{symmetry axiom} for a good $\mu$-frame $\s = (\K_\mu, \nf, \Sbs)$ is the following statement: For any $M_0 \lea M$ in $\K_\mu$, if $\gtp (b / M)$ does not $\s$-fork over $M_0$ and $a \in |M| \backslash |M_0|$ is so that $\gtp (a / M_0) \in \Sbs (M_0)$, there exists $M_b \in \K_\mu$ containing $b$ and extending $M_0$ so that $\gtp (a / M_b)$ does not $\s$-fork over $M_0$.
  
Note that the good frame axioms imply that $\K$ has amalgamation in $\mu$, so for this definition (and for simplicity only) we work inside a saturated model $\sea$ of size $\mu^+$.
\end{definition}

Since the symmetry properties of Definition \ref{many-syms} are all over limit models only, we will discuss only frames whose models are the limit models. By Fact \ref{good-frame-facts}, such frames are categorical (that is, their underlying class has a single model up to isomorphism). This is not a big loss since most known general constructions of a good $\mu$-frame (e.g.\ \cite[Theorem II.3.7]{shelahaecbook}, \cite[Theorem 1.3]{ss-tame-jsl}) assume categoricity in $\mu$. In the known constructions when categoricity in $\mu$ is not assumed (such as in \cite[Corollary 10.19]{indep-aec-apal}), it holds that the union of a chain of $\mu$-saturated model is $\mu$-saturated, so we can simply restrict the frame to the saturated models of size $\mu$.

We will use \cite[Theorem 9.7]{indep-aec-apal} that categorical good $\mu$-frames are canonical:

\begin{fact}[The canonicity theorem for categorical good frames]\label{canon-fact}
  Let $\s = (\K_\mu, \nf, \Sbs)$ be a categorical good $\mu$-frame. Let $p \in \Sbs (M)$ and let $M_0 \lea M$ be in $\K_\mu$. Then $p$ does not $\s$-fork over $M_0$ if and only if $p$ does not $\mu$-fork over $M_0$ (recall Definition \ref{mu-forking-def}).
\end{fact}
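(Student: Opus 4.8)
This statement is quoted from \cite[Theorem 9.7]{indep-aec-v5}, so in the paper it is simply cited; here is the plan one would follow to prove it from the tools assembled above. The strategy is the standard canonicity principle of \cite[Section 5]{bgkv-v2}: an independence relation is determined by the \emph{existence} and \emph{uniqueness} of its nonforking extensions, so it suffices to show that $\s$-nonforking and $\mu$-nonforking each have these two properties over the single (up to isomorphism, by Fact \ref{good-frame-facts}) limit model of $\K_\mu$, and that one implies the other in one direction --- the rest is formal. First I would record the inputs: by Fact \ref{good-frame-facts}, $\s$ being a good and categorical frame makes $\K$ $\mu$-superstable and forces every model in $\K_\mu$ to be a limit model, so the non-$\mu$-splitting machinery applies --- extension of $\mu$-nonforking (Proposition \ref{extension for non-splitting} with $\lambda = \mu$, or \cite[Theorem I.4.10]{vandierennomax}) and uniqueness of non-$\mu$-splitting extensions (again from \cite{vandierennomax}; crucially, in ``$p$ does not $\mu$-fork over $M_0$'' the base $M_0$ is a universal extension of the splitting base, which is what lets uniqueness be applied). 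On the frame side I would use reflexivity, base monotonicity, extension, uniqueness, and local character of $\s$; and the bridge \cite[Lemma 4.2]{bgkv-v2} (already used in the proof of Fact \ref{good-frame-facts}): if $p$ does not $\s$-fork over $M_0$ then $p$ does not $\mu$-split over $M_0$.

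Next I would prove the two implications. For ``$\s$-nonforking $\Rightarrow$ $\mu$-nonforking'': given $p \in \Sbs(M)$ not $\s$-forking over $M_0$, set $p_0 := p \rest M_0$; since $M_0$ is limit, local character of $\s$ applied along a resolution of $M_0$, together with $\mu$-superstability, produces $M_0' \lea M_0$ with $M_0$ limit over $M_0'$ and $p_0$ not $\mu$-splitting over $M_0'$, so $p_0$ does not $\mu$-fork over $M_0$; extension for non-$\mu$-splitting then gives $q \in \gS(M)$ extending $p_0$, not $\mu$-splitting over $M_0'$, hence not $\mu$-forking over $M_0$; the reverse implication (applied to $q$) yields $q$ does not $\s$-fork over $M_0$; and uniqueness of $\s$-nonforking extensions (both $p$ and $q$ extend $p_0$) gives $p = q$, so $p$ does not $\mu$-fork over $M_0$. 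The other implication is symmetric: given $p$ not $\mu$-forking over $M_0$, let $q$ be the $\s$-nonforking extension of $p \rest M_0$ to $M$; the first implication (applied to $q$) gives $q$ does not $\mu$-fork over $M_0$; since $p$ and $q$ agree on $M_0$ and neither $\mu$-forks over it, uniqueness of non-$\mu$-splitting extensions (usable because $M_0$ is universal over the underlying splitting base) gives $p = q$, so $p$ does not $\s$-fork over $M_0$.

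The hardest point is the failure of transitivity for $\mu$-splitting (see the discussion after Definition 3.8 in \cite{ss-tame-toappear-v3}): ``does not $\mu$-split over $M_0'$'' cannot be freely moved between a type and its restrictions or extensions, so the descent from $M_0$ to the smaller limit base $M_0'$ must be done carefully, leaning on $\mu$-superstability and on the structural fact that a non-$\mu$-fork base is always a universal extension of a non-$\mu$-splitting base. A second, related, subtlety is that the two implications above invoke one another through the auxiliary type $q$; making this non-circular requires handling the two directions by a single simultaneous argument (as in \cite{indep-aec-v5}) rather than settling one cleanly in advance --- which is also why it is reasonable simply to cite \cite[Theorem 9.7]{indep-aec-v5} here.
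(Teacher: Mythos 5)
As you note, the paper does not prove this statement: it labels it a Fact, cites \cite[Theorem 9.7]{indep-aec-v5}, and adds only the remark that Theorem~9.7 there uses ``universal over'' rather than the ``limit over'' of Definition~\ref{mu-forking-def}, but that the proof goes through either way. Your sketch of the canonicity scheme from \cite[Section 5]{bgkv-v2} (descend to a small base, use the bridge that $\s$-nonforking implies non-$\mu$-splitting, then compare via extension and uniqueness) is the right broad outline, and you correctly identify the failure of transitivity for $\mu$-splitting and the structure of the $\mu$-forking base as the delicate points.

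However, your forward direction (``$\s$-nonforking $\Rightarrow$ $\mu$-nonforking'') does not need to invoke the reverse implication, so the circularity you worry about disappears with a more direct argument. Having used local character of $\s$ along a resolution of $M_0$ to find $M_0' \lea M_0$ with $M_0$ limit over $M_0'$ and $p \rest M_0$ not $\s$-forking over $M_0'$, you should then apply \emph{transitivity of $\s$-forking} (one of the good-frame axioms): since $p$ does not $\s$-fork over $M_0$ and $p \rest M_0$ does not $\s$-fork over $M_0'$, $p$ itself does not $\s$-fork over $M_0'$. The bridge \cite[Lemma 4.2]{bgkv-v2} then gives that $p$ does not $\mu$-split over $M_0'$, so $p$ explicitly does not $\mu$-fork over $(M_0', M_0)$ --- no auxiliary type $q$ is required. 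With the forward direction settled in advance, the reverse direction goes through as you describe via extension for $\s$-forking and the forward direction applied to the extension $q$. The remaining real subtlety is the uniqueness of $\mu$-nonforking extensions that you appeal to at the end: $p$ and $q$ may have different small witnessing bases, and since non-$\mu$-splitting is not transitive you must first pass both to a common base $N$ (monotonicity of non-splitting lets you enlarge the base) over which $M_0$ is still universal --- this is exactly where the paper's remark about ``universal over'' versus ``limit over'' earns its keep.
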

\begin{remark}\label{symmetry-rmk}
  The proof of the second part of Fact \ref{good-frame-facts} and Fact \ref{canon-fact} do not use the symmetry axiom (but the first part of Fact \ref{good-frame-facts} does). 
\end{remark}

Using the canonicity theorem, we obtain:

\begin{theorem}\label{sym-good-frame-equiv}
  Let $\s$ be a type-full categorical good $\mu$-frame on $\K_\mu$, except that we do not assume that it satisfies the symmetry axiom. The following are equivalent:

  \begin{enumerate}
    \item $\s$ satisfies the symmetry axiom (Definition \ref{good-frame-sym}).
    \item $\K$ has weak non-uniform $\mu$-symmetry (Definition \ref{many-syms}.(\ref{many-syms-4})).
  \end{enumerate}
\end{theorem}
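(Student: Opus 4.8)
The plan is to prove the two implications separately, using the canonicity theorem (Fact \ref{canon-fact}) to translate between $\s$-forking and $\mu$-forking throughout. The crucial observation is that, since $\s$ is type-full and categorical, for any $M_0 \lea M$ in $\K_\mu$ and any nonalgebraic $p \in \gS(M)$, ``$p$ does not $\s$-fork over $M_0$'' is literally the same statement as ``$p$ does not $\mu$-fork over $M_0$.'' Moreover, because the frame is categorical, every model in $\K_\mu$ is a limit model, so the ``limit over'' hypotheses appearing in Definition \ref{many-syms}.(\ref{many-syms-4}) are automatically satisfied by any pair $M_0 \lea M$ with $M_0 \neq M$ (and in fact $M$ is limit over $M_0$ by the standard back-and-forth argument, as noted in the proof of Fact \ref{good-frame-facts}). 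Also note the frame axioms give amalgamation in $\mu$, so the ambient monster model of size $\mu^+$ used in Definition \ref{good-frame-sym} is harmless, and Remark \ref{symmetry-rmk} assures us Fact \ref{canon-fact} does not circularly depend on symmetry.

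For $(1) \Rightarrow (2)$: assume $\s$ satisfies the symmetry axiom and let $M_0, M$ be limit models in $\K_\mu$ with $M$ limit over $M_0$, suppose $\gtp(b/M)$ does not $\mu$-fork over $M_0$ and $a \in |M|$ with $\gtp(a/M_0)$ not $\mu$-forking over $M_0$. If $a \in |M_0|$ the conclusion is trivial (take $M_b$ any limit model over $M_0$ containing $b$, which exists by superstability/amalgamation, and note $\gtp(a/M_b)$ is algebraic hence the non-forking requirement over $M_0$ is vacuous or handled directly — actually one should be slightly careful: if $a \in |M_0|$ then $\gtp(a/M_0)$ is algebraic, which is excluded by requiring basic/nonalgebraic types, so WLOG $a \notin |M_0|$). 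So assume $a \in |M| \setminus |M_0|$; then $\gtp(a/M_0)$ is nonalgebraic, hence in $\Sbs(M_0)$ since $\s$ is type-full. By Fact \ref{canon-fact}, $\gtp(b/M)$ does not $\s$-fork over $M_0$. Apply the symmetry axiom to get $M_b \in \K_\mu$ containing $b$, extending $M_0$, with $\gtp(a/M_b)$ not $\s$-forking over $M_0$. Since $\s$ is categorical, $M_b$ is a limit model; by making it larger (using the extension property of $\s$-forking and that $\s$ is categorical so we can find a limit model over $M_0$ inside which $M_b$ embeds over $M_0 \cup \{b\}$, then transport) we may assume $M_b$ is limit over $M_0$. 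Finally Fact \ref{canon-fact} converts ``does not $\s$-fork over $M_0$'' back to ``does not $\mu$-fork over $M_0$,'' giving weak non-uniform $\mu$-symmetry.

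For $(2) \Rightarrow (1)$: assume $\K$ has weak non-uniform $\mu$-symmetry and let $M_0 \lea M$ in $\K_\mu$, $\gtp(b/M)$ not $\s$-forking over $M_0$, $a \in |M| \setminus |M_0|$ with $\gtp(a/M_0) \in \Sbs(M_0)$. Since $\s$ is type-full, $\Sbs(M_0)$ is all nonalgebraic types, so $\gtp(a/M_0)$ is just nonalgebraic. By categoricity $M_0$ and $M$ are limit models and $M$ is limit over $M_0$. By Fact \ref{canon-fact}, $\gtp(b/M)$ does not $\mu$-fork over $M_0$ and $\gtp(a/M_0)$ does not $\mu$-fork over $M_0$ (the latter because any nonalgebraic type over a limit model $M_0$ in a categorical good frame does not $\s$-fork over $M_0$ by reflexivity, hence does not $\mu$-fork over $M_0$ by canonicity). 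Apply weak non-uniform $\mu$-symmetry to obtain $M_b \in \K_\mu$ containing $b$, limit over $M_0$, with $\gtp(a/M_b)$ not $\mu$-forking over $M_0$. One more application of Fact \ref{canon-fact} gives $\gtp(a/M_b)$ not $\s$-forking over $M_0$, which is exactly the conclusion of the symmetry axiom.

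I expect the main obstacle to be the bookkeeping around the ``limit over'' versus ``universal over'' distinctions and ensuring the model $M_b$ produced by one symmetry notion can be massaged into the form required by the other (in particular, upgrading ``extending $M_0$'' to ``limit over $M_0$'' in the $(1)\Rightarrow(2)$ direction, and checking that enlarging $M_b$ preserves non-forking of $\gtp(a/M_b)$ over $M_0$ — this uses the extension and monotonicity properties of $\mu$-forking, i.e. of $\mu$-splitting plus the limit-witness, together with the uniqueness of non-$\mu$-splitting extensions). A secondary point requiring care is the trivial-case analysis ($a \in |M_0|$, or $\gtp(a/M_0)$ algebraic), which the parenthetical remark before Proposition \ref{unif-usual} flags as ``easy to handle'' but should be mentioned. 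Everything else is a direct translation through the canonicity theorem.
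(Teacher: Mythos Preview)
Your approach is the same as the paper's: use the canonicity theorem (Fact \ref{canon-fact}) to identify $\s$-forking with $\mu$-forking, and then the two symmetry statements become translations of one another. The paper's proof is a two-line remark to exactly this effect; you have simply unpacked the bookkeeping.

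One point deserves care. In the $(2)\Rightarrow(1)$ direction you assert that for any $M_0 \lta M$ in $\K_\mu$, $M$ is automatically limit over $M_0$, citing the back-and-forth argument in the proof of Fact \ref{good-frame-facts}. But the first part of Fact \ref{good-frame-facts} (uniqueness of limit models over a base) \emph{uses} the symmetry axiom (see Remark \ref{symmetry-rmk}), so you cannot invoke it here without circularity, and categoricity alone only gives $M' \cong M$, not $M' \cong_{M_0} M$. The clean fix is the one you already use in the other direction: enlarge $M$ to some $M'$ that is limit over $M_0$, use the extension axiom of $\s$ to push $\gtp(b/M)$ up to a non-$\s$-forking type over $M'$ realized by some $b'$, run weak non-uniform $\mu$-symmetry with $(M_0, M', a, b')$, and then transport back via an automorphism fixing $M$ that sends $b'$ to $b$ (such an automorphism fixes both $M_0$ and $a$). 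With that adjustment your argument is complete and matches the paper's.
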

\begin{proof}
  By Fact \ref{canon-fact} (and Remark \ref{symmetry-rmk}), $\mu$-forking and $\s$-forking coincide. Now replace $\s$-forking by $\mu$-forking in the symmetry axiom and expand the definition.
\end{proof}

One can ask whether weak non-uniform symmetry can be replaced by the uniform version:

\begin{question}\label{good-frame-sym-q}
  Assume there is a type-full categorical good $\mu$-frame on $\K_\mu$. Does $\K$ have $\mu$-symmetry? More generally, if $\K$ is $\mu$-superstable, does $\K$ have $\mu$-symmetry?
\end{question}

We will show (Corollary \ref{mu-symmetry-1}) that the answer is positive if $\K$ is $\mu$-tame. Still much less suffices: 

\begin{theorem}\label{sym-good-frame}
  If $\K$ is $\mu$-superstable and has a good $\mu^+$-frame on $\Ksatp{\lambda}_{\mu^+}$ for some $\lambda \le \mu^+$, then $\K$ has $\mu$-symmetry.
\end{theorem}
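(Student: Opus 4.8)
The strategy is to deduce $\mu$-symmetry from Fact~\ref{chainsat-sym}, so it suffices to produce one missing ingredient: the union of any chain of length less than $\mu^{++}$ of saturated models of size $\mu^+$ is saturated. I would obtain this from Fact~\ref{union-sat-monica}, so the real goal becomes to verify its three hypotheses, namely that $\K$ is $\mu$-superstable (given by assumption), that $\K$ is $\mu^+$-superstable, and that every limit model in $\K_{\mu^+}$ is saturated.

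The engine for the last two points is Fact~\ref{good-frame-facts} applied with $\mu^+$ in place of $\mu$; its hypothesis is exactly met since we are given a good $\mu^+$-frame on $\Ksatp{\lambda}_{\mu^+}$ with $\lambda \le \mu^+$. It yields simultaneously that $\K$ is $\mu^+$-superstable and that limit models in $\K_{\mu^+}$ are unique in the strong sense: for $M_0, M_1, M_2 \in \K_{\mu^+}$ with $M_1$ and $M_2$ limit over $M_0$, we have $M_1 \cong_{M_0} M_2$. To see that every limit model in $\K_{\mu^+}$ is then saturated, I would take such a model, say limit over some $N_0 \in \K_{\mu^+}$; using $\mu^+$-superstability (stability in $\mu^+$, amalgamation, no maximal models in $\mu^+$) build the $(\mu^+,\mu^+)$-limit model $N$ over $N_0$; observe by the folklore argument (using that $\mu^+$ is regular and strictly above $\mu$, together with universality at successor stages) that $N$ is $\mu^+$-saturated, hence saturated; and finally transport saturation back along the isomorphism over $N_0$ supplied by the uniqueness of limit models.

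With these three facts established, Fact~\ref{union-sat-monica} gives that $\Ksatp{\mu^+}$ is an AEC with $\LS(\Ksatp{\mu^+}) = \mu^+$; since AECs are closed under unions of chains and a chain of length less than $\mu^{++}$ of models of size $\mu^+$ has union still of size $\mu^+$, this is precisely the chain-of-saturated-models hypothesis of Fact~\ref{chainsat-sym}, from which $\mu$-symmetry follows.

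I do not expect a serious obstacle: essentially every step is a direct appeal to a cited result, and the argument is uniform in $\lambda$ (when $\lambda = \mu^+$ one could even shortcut, since the frame's underlying class is then already the saturated models of size $\mu^+$ and closure under chains is immediate, but the case $\lambda \le \mu$ genuinely needs the detour through Facts~\ref{good-frame-facts} and~\ref{union-sat-monica}). If anything requires care, it is the one semantic step of the proof—the upgrade from ``uniqueness of limit models in $\K_{\mu^+}$'' to ``every limit model of size $\mu^+$ is saturated''—but this is routine given the standard fact that $(\mu^+,\mu^+)$-limit models are saturated under stability in $\mu^+$.
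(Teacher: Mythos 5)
Your argument is correct and follows the paper's proof almost verbatim: apply Fact~\ref{good-frame-facts} (at level $\mu^+$) to get $\mu^+$-superstability and saturation of limit models in $\K_{\mu^+}$, feed these into Fact~\ref{union-sat-monica} to conclude $\Ksatp{\mu^+}$ is an AEC, and finish with Fact~\ref{chainsat-sym}. You spell out the minor upgrade from ``uniqueness of limit models'' to ``all limit models of size $\mu^+$ are saturated'' (via the $(\mu^+,\mu^+)$-limit being saturated and transport along the isomorphism), a step the paper leaves implicit inside its citation of Fact~\ref{good-frame-facts}, but this is just a matter of explicitness, not a different route.
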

\begin{proof}
  By Fact \ref{good-frame-facts}, all limit models in $\K_{\mu^+}$ are saturated and $\K$ is $\mu^+$-superstable. By Fact \ref{union-sat-monica}, the remark following it, and Fact \ref{chainsat-sym}, $\K$ has $\mu$-symmetry.
\end{proof}

We end this section with a partial answer to Question \ref{good-frame-sym-q} assuming that the good frame satisfies several additional technical properties of frames introduced by Shelah (see \cite[Definitions III.1.1, III.1.3]{shelahaecbook}). For this result amalgamation (Hypothesis \ref{ap-hyp}) is not necessary.

\begin{corollary}
  Assume there is a successful $\text{good}^+$ $\mu$-frame with underlying class $\K_\mu$. Then $\K$ has $\mu$-symmetry.
\end{corollary}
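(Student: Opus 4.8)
The plan is to reduce to Theorem~\ref{sym-good-frame} by passing to the successor of the frame. First note that a $\text{good}^+$ $\mu$-frame is in particular a good $\mu$-frame, and the underlying class of the given frame is $\K_\mu = \Ksatp{0}_\mu$; hence Fact~\ref{good-frame-facts} applies with ``$\lambda$'' taken to be $0$ and yields that $\K$ is $\mu$-superstable. This establishes the first hypothesis of Theorem~\ref{sym-good-frame}.

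Second, I would invoke Shelah's theory of successful $\text{good}^+$ frames (\cite[Chapter III]{shelahaecbook}): a successful $\text{good}^+$ $\mu$-frame $\s$ has a successor $\s^+$, which is again a successful $\text{good}^+$ $\mu^+$-frame and whose underlying class is exactly the class of saturated models of size $\mu^+$, that is, $\Ksatp{\mu^+}_{\mu^+}$. The hypothesis ``$\text{good}^+$'' is precisely what guarantees that the underlying class of $\s^+$ really is this class of saturated models (rather than a proper subclass), while ``successful'' is what guarantees that $\s^+$ satisfies the good-frame axioms. Thus $\K$ has a good $\mu^+$-frame on $\Ksatp{\lambda}_{\mu^+}$ with $\lambda := \mu^+ \le \mu^+$, and Theorem~\ref{sym-good-frame} (applied with this $\lambda$) gives that $\K$ has $\mu$-symmetry. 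If one prefers not to use Theorem~\ref{sym-good-frame} as a black box, one can unwind its proof: a second application of Fact~\ref{good-frame-facts} (now to $\s^+$, with both ``$\mu$'' and ``$\lambda$'' played by $\mu^+$) gives that $\K$ is $\mu^+$-superstable, the underlying class of $\s^+$ being the saturated models of size $\mu^+$ forces every limit model in $\K_{\mu^+}$ to be saturated, so Fact~\ref{union-sat-monica} together with Remark~\ref{sym-chainsat-rmk} shows that unions of chains of saturated models of size $\mu^+$ are saturated, and then Fact~\ref{chainsat-sym} delivers $\mu$-symmetry.

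I expect the main obstacle to be this second step: pinning down the exact statement and citation in \cite[Chapter III]{shelahaecbook} that the successor of a successful $\text{good}^+$ frame is again a good (in fact successful $\text{good}^+$) frame, and that its underlying class is the class of saturated models. This is the heaviest technical input; everything else is a routine assembly of results already quoted in Section~\ref{sym-props-sec}.

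One last point is needed because the corollary is stated without the global amalgamation Hypothesis~\ref{ap-hyp}: the good-frame axioms already provide amalgamation in $\K_\mu$, and the successfulness of $\s$ provides an AEC structure on $\K_{[\mu,\mu^+]}$ with amalgamation (via Shelah's uniqueness machinery for the relevant amalgamation triples). Consequently the instances of Theorem~\ref{sym-good-frame}, Fact~\ref{good-frame-facts}, Fact~\ref{union-sat-monica} and Fact~\ref{chainsat-sym} invoked above can all be carried out relative to that class, so Hypothesis~\ref{ap-hyp} is not actually used here.
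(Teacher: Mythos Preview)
Your proposal is correct and follows essentially the same route as the paper: establish $\mu$-superstability via Fact~\ref{good-frame-facts}, invoke Shelah's successor-frame machinery (the paper cites \cite[III.1.6, III.1.7, III.1.8]{shelahaecbook}) to get a good $\mu^+$-frame on $\Ksatp{\mu^+}_{\mu^+}$, and then apply Theorem~\ref{sym-good-frame}. The paper additionally reduces to the type-full case via \cite[Claim II.6.36]{shelahaecbook} before invoking Fact~\ref{good-frame-facts}, but this is not essential to the argument; your attribution of what ``successful'' versus ``$\text{good}^+$'' each buy is slightly imprecise (successful gives the successor frame on the saturated models, $\text{good}^+$ is what makes that successor again good), but the overall logic is sound.
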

\begin{proof}
  Let $\s$ be a successful $\text{good}^+$ $\mu$-frame with underlying class $\K_\mu$. By \cite[Claim II.6.36]{shelahaecbook}, we can assume without loss of generality that $\s$ is type-full (note that by \cite[Theorem 6.13]{bgkv-apal} there can be only one such type-full frame). By Fact \ref{good-frame-facts}, $\K$ is $\mu$-superstable.  By \cite[III.1.6, III.1.7, III.1.8]{shelahaecbook}, there is a good $\mu^+$-frame $\s^+$ on $\Ksatp{\mu^+}_{\mu^+}$. By Theorem \ref{sym-good-frame}, $\K$ has $\mu$-symmetry.
\end{proof}


\section{Symmetry from no order property}\label{sym-op-sec}

In this section, we give another way to derive symmetry. The idea is to imitate the argument from \cite[Theorem 5.14]{bgkv-apal}, but we first have to obtain enough properties of independence. We will work with $\mu$-forking (Definition \ref{mu-forking-def}). We start by improving Proposition \ref{extension for non-splitting}.

\begin{proposition}[Extension property of forking]\label{mu-forking-props}
  Let $\LS (\K) \le \mu \le \lambda$. Let $M \lea N$ be in $K_{[\mu,\lambda]}$. Let $p \in \gS (M)$ be such that $p$ explicitly does not $\mu$-fork over $(M_0, M)$. If $\K$ is superstable in every $\chi \in [\mu, \lambda]$, then there exists $q \in \gS (N)$ extending $p$ and explicitly not $\mu$-forking over $(M_0, M)$. Moreover $q$ is algebraic if and only if $p$ is.
\end{proposition}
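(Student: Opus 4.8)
Unwinding Definition \ref{mu-forking-def}, the conclusion ``$q$ explicitly does not $\mu$-fork over $(M_0,M)$'' just means ``$q$ extends $p$ and $q$ does not $\mu$-split over $M_0$'' (the clause that $M$ be limit over $M_0$ is inherited verbatim from the hypothesis on $p$). So the task is to find $q\in\gS(N)$ with $q\rest M=p$ and $q$ non-$\mu$-splitting over $M_0$, keeping track of a single element realizing the types to obtain the moreover clause exactly as in the proof of Proposition \ref{extension for non-splitting} (pick the realizing element in $|M|$ if $p$ is algebraic; otherwise keep it outside everything we build, which is automatic since all the partial extensions will be nonalgebraic). I would argue by induction on $\|N\|$. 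When $\|N\|=\|M\|$ there is nothing new: if $\|N\|=\mu$ this is the classical extension property for non-$\mu$-splitting \cite[Theorem I.4.10]{vandierennomax}, and if $\|N\|>\mu$ it is Proposition \ref{extension for non-splitting} applied with its ``$\lambda$'' set to $\|N\|$ (its hypotheses of $\mu$-stability and $\|N\|$-superstability being covered by our blanket superstability in $[\mu,\lambda]$, and its domain being a limit model over a model containing $M_0$ exactly because $M$ is).

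For the inductive step $\|M\|<\|N\|$ the difficulty is that one cannot in general resolve $N$ itself as a chain whose successor models are universal over their predecessors, so instead I would pass to a larger model built by hand. Using stability and no maximal models in the relevant cardinals, build an increasing continuous chain $\langle R_i\mid i\le\delta\rangle$ with $R_0=M$ (already a limit model over $M_0$); with $\|R_i\|$ nondecreasing, $\|R_i\|<\|N\|$ for $i<\delta$, $R_\delta:=\bigcup_{i<\delta}R_i$ of size $\|N\|$, and $N\lea R_\delta$ (interleaving a resolution of $N$ into the construction via amalgamation); and with each $R_{i+1}$ chosen to be a limit model over a model containing $R_i$ that is long enough to be $\|R_i\|$-universal over $R_i$ (e.g.\ a $(\|R_{i+1}\|,\|R_i\|^+)$-limit model), so that $R_{i+1}$ is universal over $R_i$ in the sense of the paper's definition of universality even when $\|R_{i+1}\|>\|R_i\|$. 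By Proposition \ref{limit model lemma} the partial unions $R_i$ at limit $i$ are again limit models over models containing $M_0$, and by choice so is each $R_{i+1}$. Now extend $p$ along the chain: at a successor stage with $\|R_{i+1}\|=\|R_i\|$ use Proposition \ref{extension for non-splitting} (or the classical result at $\mu$), and at a successor stage with $\|R_{i+1}\|>\|R_i\|$ use the induction hypothesis at $\|R_{i+1}\|<\|N\|$, producing increasing $p_i\in\gS(R_i)$ with $p_0\rest M=p$ and each $p_i$ non-$\mu$-splitting over $M_0$; at limit stages and at $i=\delta$ invoke the continuity Proposition \ref{limit splitting proposition} — applicable precisely because successors are universal over predecessors — to see the union is still non-$\mu$-splitting over $M_0$. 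Finally set $q:=p_\delta\rest N$; monotonicity of non-$\mu$-splitting gives $q$ non-$\mu$-splitting over $M_0$, and $q\rest M=p$.

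The one real obstacle is performing the extension \emph{across} cardinals rather than within a single one, since Proposition \ref{extension for non-splitting} only moves between models of equal size. The device above sidesteps this by never trying to resolve $N$ with universal successors but instead constructing a limit model $R_\delta\gea N$ from scratch, and the three ingredients that make the induction close are: one can always build the next limit model universal over its predecessor even when larger; the partial unions of such a chain are again limit models (Proposition \ref{limit model lemma}, which is what keeps the induction hypothesis — whose domain must be a limit model — applicable); and non-$\mu$-splitting over the fixed small model $M_0$ passes to unions along chains with universal successors (Proposition \ref{limit splitting proposition}). This is exactly where the hypothesis of superstability in \emph{every} $\chi\in[\mu,\lambda]$ enters: at $\mu$ for Proposition \ref{limit splitting proposition} and the base case, at $\|N\|$ for Proposition \ref{extension for non-splitting}, and at each intermediate cardinal the chain $\langle R_i\rangle$ passes through.
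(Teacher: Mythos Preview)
Your proposal is correct and follows the same inductive scheme as the paper: induction on $\|N\|$, with the base case $\|N\|=\|M\|$ handled by Proposition~\ref{extension for non-splitting} (or the classical $\mu$-case), and the step $\|M\|<\|N\|$ handled by building a chain with universal successors above $M$, extending the type along it via the induction hypothesis at successors and Proposition~\ref{limit splitting proposition} at limits.

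The only difference is in how $N$ enters. The paper does not try to make the chain contain $N$: it simply builds $\langle N_i\in\K_{\|M\|+|i|}:i\le\|N\|\rangle$ with $N_0=M$ and $N_{i+1}$ limit over $N_i$, observes that the top $N_{\|N\|}$ is $\|N\|$-universal over $M$, fixes $f:N\xrightarrow[M]{}N_{\|N\|}$, and sets $q:=f^{-1}(\gtp(a/f[N]))$. You instead interleave a resolution of $N$ into the chain so that $N\lea R_\delta$ and then restrict. Both are valid; the paper's route trades your interleaving bookkeeping for a single embed-and-pull-back at the end, and has the mild advantage that successor steps are always between models of the \emph{same} cardinality (since $\|M\|+|i|=\|M\|+|i+1|$), so the cross-cardinal case of the induction hypothesis is never explicitly invoked at a successor.
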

\begin{proof}
  By induction on $\|N\|$. Let $a$ realize $p$. If $\|N\| = \|M\|$ this is given by Proposition \ref{extension for non-splitting} (if $\|M\| = \|N\| = \mu$, this is \cite[Theorem I.4.10]{vandierennomax}). If $\|M\| < \|N\|$, build $\seq{N_i \in \K_{\|M\| + |i|} : i \le \|N\|}$ increasing continuous such that $N_0 = M$, $N_{i + 1}$ is limit over $N_i$, and $\gtp (a / N_i)$ explicitly does not $\mu$-fork over $(M_0, M)$. This is possible by the induction hypothesis and the continuity property of splitting (Proposition \ref{limit splitting proposition}). Now $N_\lambda$ is $\|N\|$-universal over $N_0 = M$, so let $f: N \xrightarrow[M]{} N_\lambda$. Let $q := f^{-1} (\gtp (a / f[N]))$. It is easy to check that $q$ is as desired.
\end{proof}

Next we recall the definition of the order property in AECs \cite[Definition 4.3]{sh394}.

\begin{definition}
  Let $\alpha$ and $\lambda$ be cardinals. A model $M \in \K$ has the \emph{$\alpha$-order property of length $\lambda$} if there exists $\seq{\ba_i : i < \lambda}$ inside $M$ with $\ell (\ba_i) = \alpha$ for all $i < \lambda$, such that for any $i_0 < j_0 < \lambda$ and $i_1 < j_1 < \lambda$, $\gtp (\ba_{i_0} \ba_{j_0} / \emptyset) \neq \gtp (\ba_{j_1} \ba_{i_1} / \emptyset)$. 

  We say that \emph{$\K$ has the $\alpha$-order property of length $\lambda$} if some $M \in \K$ has it. We say that \emph{$\K$ has the $\alpha$-order property} if it has the $\alpha$-order property of length $\lambda$ for all cardinals $\lambda$.
\end{definition}

We will use two important facts.  The first says that it is enough to look at length up to the Hanf number. The second is that the order property implies instability.

\begin{definition}\label{hanf-def}
  For $\lambda$ an infinite cardinal, $\hanf{\lambda} := \beth_{(2^{\lambda})^+}$.
\end{definition}

\begin{fact}[Claim 4.5.3 in \cite{sh394}]\label{shelah-hanf}
  Let $\alpha$ be a cardinal. If $\K$ has the $\alpha$-order property of length $\lambda$ for all $\lambda < \hanf{\alpha + \LS (K)}$, then $\K$ has the $\alpha$-order property.
\end{fact}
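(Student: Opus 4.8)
The plan is to run the standard Hanf-number argument, turning witnesses of all lengths below $\hanf{\alpha + \LS(\K)}$ into witnesses of unbounded length by way of Shelah's presentation theorem and Ehrenfeucht--Mostowski models. Write $\chi := \alpha + \LS(\K)$, so $\hanf{\chi} = \beth_{(2^\chi)^+}$. By definition of the $\alpha$-order property it is enough to produce, for an arbitrary cardinal $\lambda$, a member of $\K$ with the $\alpha$-order property of length $\lambda$; and by hypothesis we are handed, for every $\mu < \hanf{\chi}$, a model $M_\mu \in \K$ together with a witnessing sequence $\seq{\ba^\mu_i : i < \mu}$ of $\alpha$-tuples. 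So what is really needed is a uniform way to stretch.

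First I would invoke Shelah's presentation theorem to fix a Skolemized first-order language $L_1 \supseteq L(\K)$ and theory $T_1$ with $|L_1|, |T_1| \le \LS(\K)$, and a set $\Gamma_1$ of $L_1$-types over $\emptyset$ with $|\Gamma_1| \le 2^{\LS(\K)}$, so that $\K$ is exactly the class of $L(\K)$-reducts of models of $T_1$ omitting $\Gamma_1$, in a way that respects $\lea$ and Skolem hulls. For each $\mu < \hanf{\chi}$ lift $M_\mu$ to some $\hat M_\mu \models T_1$ omitting $\Gamma_1$ and colour each finite increasing tuple of indices below $\mu$ by the $L_1$-type over $\emptyset$ of the corresponding chunk of $\seq{\ba^\mu_i}$. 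There are at most $2^{|L_1| + \alpha} \le 2^\chi$ colours, and $\beth_n(2^\chi)^+ < \hanf{\chi}$ for all $n < \omega$, so iterated Erd\H{o}s--Rado gives, for every $n$ and every prescribed finite length, an $n$-$L_1$-indiscernible sub-sequence of $\alpha$-tuples of that length inside a model of $T_1$ omitting $\Gamma_1$, each finite sub-configuration of which is order-isomorphic to one occurring in an original witness. I would then feed these into Morley's omitting-types theorem --- applicable precisely because the relevant data has size $\le \chi$, hence Hanf number $\beth_{(2^\chi)^+} = \hanf{\chi}$, and the required configurations are available at all lengths below this bound --- to obtain, for every linear order $J$, a model $N_J \models T_1$ omitting $\Gamma_1$ and carrying a fully $L_1$-order-indiscernible $J$-indexed sequence $\seq{\bb^J_j : j \in J}$ of $\alpha$-tuples whose finite chunks, in each order configuration, realize $L_1$-types realized by analogously ordered chunks of the original witnesses; thus $N_J \rest L(\K) \in \K$.

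The remaining task --- and the step I expect to be the only real obstacle --- is to verify that $\seq{\bb^J_j : j \in J}$ witnesses the $\alpha$-order property in $N_J \rest L(\K)$, after which taking $|J| = \lambda$ finishes the proof. The subtlety is that the order property is phrased in terms of Galois types whereas the construction only controls $L_1$-types, and a priori two pairs could share an $L_1$-type yet have distinct Galois types, so distinctness need not transfer for free. The fact to exploit is that a Galois type over $\emptyset$ is pinned down by any single member of $\K$ containing the tuple (amalgamate inside $\C$, then use invariance and monotonicity). So, given $j_0 <_J j_0'$ and $j_1 <_J j_1'$, pick in some original witness $\seq{\ba^\mu_i : i < \mu}$ indices $i_0 < i_0'$ and $i_1 < i_1'$ whose chunks realize the $L_1$-types of $\bb^J_{j_0}\bb^J_{j_0'}$ and of $\bb^J_{j_1'}\bb^J_{j_1}$ respectively; the indiscernibility provides an $L_1$-isomorphism from the Skolem hull of $\{\bb^J_{j_0}, \bb^J_{j_0'}\}$ in $N_J$ onto the Skolem hull of $\{\ba^\mu_{i_0}, \ba^\mu_{i_0'}\}$ in $\hat M_\mu$ matching the tuples, and its $L(\K)$-reduct is an isomorphism of $\lea$-substructures. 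Monotonicity and invariance of Galois types then give $\gtp(\bb^J_{j_0}\bb^J_{j_0'}/\emptyset) = \gtp(\ba^\mu_{i_0}\ba^\mu_{i_0'}/\emptyset)$ and, symmetrically, $\gtp(\bb^J_{j_1'}\bb^J_{j_1}/\emptyset) = \gtp(\ba^\mu_{i_1'}\ba^\mu_{i_1}/\emptyset)$; since $\seq{\ba^\mu_i}$ witnesses the order property the two right-hand sides differ, hence so do the left-hand sides. Apart from this transfer, the only care required is the cardinal bookkeeping guaranteeing that the presentation theorem and Morley's method deliver exactly the bound $\hanf{\alpha + \LS(\K)}$; the remaining verifications are routine.
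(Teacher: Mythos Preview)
The paper does not prove this statement; it is quoted as a fact from \cite{sh394} and used as a black box. Your argument is the standard one and is correct: invoke Shelah's presentation theorem to pass to a first-order $L_1$-setting with $|L_1|\le\LS(\K)$ and $\le 2^{\LS(\K)}$ types to omit, apply iterated Erd\H{o}s--Rado and Morley's omitting-types machinery (the bookkeeping you indicate is exactly what pins the threshold at $\hanf{\alpha+\LS(\K)}$) to obtain Ehrenfeucht--Mostowski models over arbitrary linear orders whose skeletons are $L_1$-indiscernible $\alpha$-tuples with each finite configuration realized by an analogously ordered configuration from some original witness, and then check that the Galois-type inequality transfers. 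Your treatment of that last step --- the only genuinely delicate point, since the construction controls syntactic $L_1$-types while the order property concerns Galois types --- is right: the $L_1$-isomorphism of Skolem hulls restricts to a $\K$-isomorphism by the presentation theorem, and under amalgamation and joint embedding this forces equality of Galois types over $\emptyset$, so the inequality witnessed by the original $\seq{\ba^\mu_i}$ pulls back to $\seq{\bb^J_j}$. One small simplification: you do not actually need two separate pairs $i_0<i_0'$ and $i_1<i_1'$; a single pair $i<i'$ suffices, since by indiscernibility all increasing (resp.\ decreasing) pairs in $J$ share the same $L_1$-type, and the order property applied with $i_0=i_1$, $j_0=j_1$ already gives $\gtp(\ba^\mu_i\ba^\mu_{i'}/\emptyset)\neq\gtp(\ba^\mu_{i'}\ba^\mu_i/\emptyset)$.
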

\begin{fact}\label{stab-facts-op}
  If $\K$ has the $\alpha$-order property and $\mu \ge \LS (\K)$ is such that $\mu = \mu^{\alpha}$, then $\K$ is not stable in $\mu$.
\end{fact}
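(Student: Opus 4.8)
The plan is to show that the $\alpha$-order property forces some $M \in \K_\mu$ to carry more than $\mu$ Galois types of $\alpha$-tuples, and then to use $\mu = \mu^\alpha$ to convert this into a failure of $\mu$-stability for length-$1$ types. The hypothesis $\mu = \mu^\alpha$ is used in two places: it forces $\alpha \le \mu$ (otherwise $2^\alpha \le \mu^\alpha = \mu < 2^\alpha$), so that a model containing $\mu$-many $\alpha$-tuples still has size $\mu$; and the standard passage from $\mu$-stability for length-$1$ types to a bound on the number of length-$\alpha$ types (which uses amalgamation, Hypothesis \ref{ap-hyp}) only yields $|\gS^\alpha (M)| \le \mu^{|\alpha|}$, so it is the equality $\mu = \mu^\alpha$ that makes having $2^\kappa > \mu$ many $\alpha$-types incompatible with stability.

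First I would replace the order property by order-indiscernibles. Via the presentation theorem (every AEC is a $\mathrm{PC}$-class) and Erd\H{o}s--Rado, in the usual way (cf.\ the arguments around \cite{sh394}), the $\alpha$-order property at all lengths yields, for \emph{any} linear order $I$, a sequence $\seq{\ba_t : t \in I}$ in $\sea$ with $\ell (\ba_t) = \alpha$ that is order-indiscernible over $\emptyset$ and still witnesses the order property: for $s < t$ in $I$, $\gtp (\ba_s \ba_t / \emptyset)$ equals a fixed type $p_<$, different from its reverse $p_>$. Then I would choose $I$ to contain a suborder $D$ with $|D| \le \mu$ admitting more than $\mu$ pairwise distinct Dedekind cuts, each realized by an actual point of $I \setminus D$; concretely, with $\kappa := \min \{ \lambda : 2^\lambda > \mu \}$, one can take $D := ({}^{<\kappa}2, <_{\mathrm{lex}})$ inside $I := ({}^{\le \kappa}2, <_{\mathrm{lex}})$, the $2^\kappa > \mu$ branches of length $\kappa$ furnishing the cuts. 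Setting $A := \bigcup_{t \in D} \ba_t$, we have $|A| \le \mu$ (using $\alpha \le \mu$ and $|D| \le \mu$), so by L\"{o}wenheim--Skolem there is $M \in \K_\mu$ with $A \subseteq |M|$. For a branch $b \in I \setminus D$ the type $\gtp (\ba_b / M)$ is well defined, and these are pairwise distinct: given $b \ne b'$, pick $\eta \in D$ lying strictly between them, say $b <_{\mathrm{lex}} \eta <_{\mathrm{lex}} b'$; if $\gtp (\ba_b / M) = \gtp (\ba_{b'} / M)$, an automorphism of $\sea$ fixing $M$ (hence fixing $\ba_\eta \subseteq |M|$) and sending $\ba_b$ to $\ba_{b'}$ would give $\gtp (\ba_b \ba_\eta / \emptyset) = \gtp (\ba_{b'} \ba_\eta / \emptyset)$, i.e.\ $p_< = p_>$, a contradiction. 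Hence $M$ has at least $2^\kappa > \mu$ Galois types of $\alpha$-tuples, and by the length reduction and $\mu = \mu^\alpha$, $\K$ is not stable in $\mu$.

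The hard part is the absence of compactness: one cannot simply realize the ``average'' type of the indiscernible sequence at a cut to obtain the tuples $\ba_b$. This is handled by (i) extracting the indiscernibles along a prescribed linear order through the presentation theorem / Ehrenfeucht--Mostowski machinery rather than by compactness, and (ii) arranging the index order so that the cut-points are genuine members of the sequence --- the length-$\kappa$ branches of $I = {}^{\le\kappa}2$ --- so that they already live inside $\sea$. One also has to check that such a linear order exists at all for every infinite $\mu$: a short case split according to whether $\kappa$ is a limit or a successor cardinal confirms that $|{}^{<\kappa}2| = 2^{<\kappa} \le \mu$ while ${}^{<\kappa}2$ has at least $2^\kappa > \mu$ Dedekind cuts (distinct length-$\kappa$ branches are separated by some shorter node). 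The remaining ingredients --- the automorphism bookkeeping above, and the transfer of stability from length $1$ to length $\alpha$ --- are routine and standard in the literature.
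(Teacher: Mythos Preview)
Your proposal is correct and follows essentially the same route as the paper's proof, which simply cites the two ingredients you sketch: \cite[Claim 4.8.2]{sh394} (or \cite[Fact 5.13]{bgkv-v2}) for the existence of $M \in \K_\mu$ with $|\gS^\alpha(M)| > \mu$ via EM-models over a cleverly chosen linear order, and \cite[Theorem 3.1]{longtypes-toappear-v2} for the passage from many $\alpha$-types to failure of stability for $1$-types using $\mu = \mu^\alpha$. Your write-up expands these citations rather than diverging from them; just make sure you specify the order on $\fct{\le \kappa}{2}$ so that each node sits between its $0$- and $1$-successors (e.g.\ $s^\frown 0 < s < s^\frown 1$), which is what guarantees that any two distinct length-$\kappa$ branches are separated by their common initial segment in $\fct{<\kappa}{2}$.
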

\begin{proof}
  By \cite[Claim 4.8.2]{sh394} (see \cite[Fact 5.13]{bgkv-apal} for a proof), there exists $M \in \K_\mu$ such that $|\gS^{\alpha} (M)| > \mu$. By \cite[Theorem 3.1]{longtypes-ndjfml}, $\K$ is not stable in $\mu$.
\end{proof}

The following lemma appears in some more abstract form in \cite[Lemma 5.6]{bgkv-apal}.  The lemma says that if we assume that $p$ does not $\mu$-fork over $M$,
then in the definition of non-splitting (Definition \ref{def:splitting}) we can  replace the $N_\ell$ by arbitrary sequences in $N$ of length at most $\mu$. In the proof of Lemma \ref{sym-lem}, this will be used for sequences of length one.

\begin{lemma}\label{ns-lemma}
  Let $\mu \ge \LS (\K)$. Let $M \in \K_\mu$ and $N \in \K_{\ge \mu}$ be such that $M \lea N$. Assume that $\K$ is stable in $\mu$. If $p \in \gS (N)$ does not $\mu$-fork over $M$ (Definition \ref{mu-forking-def}), $a$ realizes $p$, and $\bb_1, \bb_2 \in \fct{\le \mu}{|N|}$ are such that $\gtp (\bb_1 / M) = \gtp (\bb_2 / M)$, then $\gtp (a \bb_1 / M) = \gtp (a \bb_2 / M)$.
\end{lemma}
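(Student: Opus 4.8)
The statement is a standard "forking implies a type is determined by a small amount of information" lemma, and the proof will run by reducing to the definition of $\mu$-splitting together with the extension/uniqueness machinery for non-$\mu$-forking. First I would fix $M \in \K_\mu$, $N \in \K_{\ge\mu}$ with $M \lea N$, a type $p = \gtp(a/N) \in \gS(N)$ not $\mu$-forking over $M$, and $\bb_1, \bb_2 \in \fct{\le\mu}{|N|}$ with $\gtp(\bb_1/M) = \gtp(\bb_2/M)$. Since $p$ does not $\mu$-fork over $M$, fix by Definition \ref{mu-forking-def} a model $M_0 \lea M$ witnessing this explicitly, so $M$ is limit over $M_0$ and $p$ does not $\mu$-split over $M_0$.

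The key step is to pick a small model $N^* \in \K_\mu$ with $M \lea N^* \lea N$ and $\bb_1, \bb_2 \in \fct{\le\mu}{|N^*|}$ (possible by the Löwenheim--Skolem property since $\|M\| + |\bb_1| + |\bb_2| \le \mu$), and to analyze $\gtp(a/N^*)$. By monotonicity of non-$\mu$-splitting, $\gtp(a/N^*)$ does not $\mu$-split over $M_0$, so it explicitly does not $\mu$-fork over $(M_0, M)$. Next, using $\gtp(\bb_1/M) = \gtp(\bb_2/M)$, fix $h \in \Aut_M(\sea)$ with $h(\bb_1) = \bb_2$; replacing $N^*$ by a common extension inside $\sea$ of both $N^*$ and $h[N^*]$ of size $\mu$ (again Löwenheim--Skolem plus amalgamation, which holds by Hypothesis \ref{ap-hyp}), I may assume $h[N^*] \lea N^*$ as well. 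Then both $N^*$ and $h[N^*]$ are models of size $\mu$ squeezed between $M$ and $N^*$, and $h \rest N^* : N^* \cong_M h[N^*]$.

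The final step invokes the definition of $\mu$-splitting directly: I want to show $\gtp(a\bb_1/M) = \gtp(a\bb_2/M)$, equivalently that there is an automorphism of $\sea$ fixing $M$ and sending $a\bb_1$ to $a\bb_2$. Here I should be careful about the domains: I do not literally have $\bb_1 \in |N^*|$ inside a single model $N_1$ with $h$ matching it to $N_2$ in the crude sense of Definition \ref{def:splitting}, so the cleaner route is to extend $h$ to $\hat h \in \Aut_M(\sea)$ and compare $\gtp(a/N^*)$ with $\gtp(\hat h^{-1}(a)/N^*)$ using that $\gtp(a/N^*)$ does not $\mu$-fork (hence does not $\mu$-split) over $M_0$: since $\hat h$ fixes $M$ and $M$ is limit over $M_0$, and both $N^*$ and $\hat h^{-1}[N^*]$ lie between $M$ and a common $\mu$-sized model, the definition of $\mu$-splitting over $M_0$ gives $\hat h^{-1}(p \rest N^{**}) = p \rest N^{**}$ on the relevant pieces, whence $\gtp(\hat h^{-1}(a)/M \cup \bb_1) = \gtp(a/M \cup \bb_1)$ after unwinding. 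Applying $\hat h$ and using $\hat h(\bb_1) = \bb_2$, $\hat h \rest M = \id_M$ then yields $\gtp(a\bb_1/M) = \gtp(a\bb_2/M)$.

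**Main obstacle.** The subtle point is bookkeeping with the domains: Definition \ref{def:splitting} is phrased in terms of models $N_1, N_2 \in \K_\mu$ and an isomorphism $f : N_1 \cong_M N_2$, not arbitrary short sequences, so the real content is that non-$\mu$-forking (via the limit model $M_0 \lea M$ and monotonicity) lets one absorb the sequences $\bb_1, \bb_2$ into $\mu$-sized models sitting between $M$ and $N$ to which the splitting definition applies. I would want to double-check that choosing these models and the amalgam is done so that a single ambient automorphism does all the work simultaneously, rather than juggling several partial maps; once set up correctly the verification is a routine diagram chase, as the reference to \cite[Lemma 5.6]{bgkv-v2} suggests.
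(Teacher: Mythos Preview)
There is a genuine gap in your argument, and it occurs precisely at the step you flag as ``bookkeeping.'' When you replace $N^*$ by a common $\mu$-sized extension $N^{**}$ of $N^*$ and $h[N^*]$ (or $\hat h^{-1}[N^*]$) inside $\sea$, you lose the only thing you know about $a$: the model $N^{**}$ need not sit below $N$, since $h$ is an arbitrary automorphism of the monster fixing $M$ and there is no reason for $h[N^*]$ to land inside $N$. Monotonicity of non-$\mu$-splitting only lets you shrink the domain of the type, not enlarge it; so from $\gtp(a/N)$ not $\mu$-splitting over $M_0$ you get nothing about $\gtp(a/N^{**})$. Consequently the definition of non-splitting cannot be applied to the pair $N^*$, $h[N^*]$ inside $N^{**}$, and the conclusion $\hat h^{-1}(p \rest N^{**}) = p \rest N^{**}$ is unjustified. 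This is not a bookkeeping issue but a missing ingredient: you mention ``extension/uniqueness machinery'' in your opening sentence and then never use it.

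The paper's proof supplies exactly this missing step. After reducing to $N \in \K_\mu$, it uses the extension and uniqueness properties of non-$\mu$-splitting (here is where stability in $\mu$ and the fact that $M$ is limit over $M_0$ enter) to produce $N' \gea N$ of size $\mu$ which is universal over both $N$ and $M$ and such that $\gtp(a/N')$ still does not $\mu$-split over $M_0$. Universality of $N'$ over $N$ is what gives the room your argument lacks: it guarantees an embedding $f : N \xrightarrow[M]{} N'$ with $f(\bb_1) = \bb_2$ whose image lies inside the domain $N'$ of a type already known not to split over $M_0$. Non-splitting then gives $\gtp(f(a)/f[N]) = \gtp(a/f[N])$, and the rest is the diagram chase you describe. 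Your outline becomes correct once you insert this extension step before attempting to absorb $h[N^*]$.
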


\begin{proof}
Pick $N_0 \in \K_\mu$ containing $\bb_1 \bb_2$ with $M \lea N_0 \lea N$. Then $p \rest N_0$ does not $\mu$-fork over $M$.   Replacing $N$ by $N_0$ if necessary, we can assume without loss of generality that $N \in \K_\mu$. By definition of $\mu$-forking, there exists $M_0 \in \K_\mu$ such that $M_0 \lea M$ and $p$ does not $\mu$-split over $M_0$. By the extension and uniqueness property for $\mu$-splitting there exists $N'$ extending $N$ of cardinality $\mu$ so that $N'$ is universal over both $N$ and $M$, and $\gtp(a/N')$ does not $\mu$-split over $M_0$.  Since  $\gtp (\bb_1 / M) = \gtp (\bb_2 / M)$ and since $N'$ is universal over $N$, we can find $f:N\underset{M}\rightarrow N'$ so that $f(\bb_1)=\bb_2$.
Since $\gtp(a/N')$ does not $\mu$-split over $M_0$ we know $\gtp(f(a)/f(N))=\gtp(a/f(N))$.  By our choice of $f$ this implies that there exists $g\in\Aut_{f(N)}(\C)$ so that 
$g(f(a))=a$, $g\restriction M=\id_M$, and $g(\bb_2)=\bb_2$.  In other words $\gtp(f(a)\bb_2/M)=\gtp(a\bb_2/M)$.
Moreover $f^{-1}$ witnesses that $\gtp (a \bb_1 / M) = \gtp (f(a) \bb_2 / M)$, which we have seen is equal to $\gtp(a\bb_2/M)$.
\end{proof}

The next lemma shows that failure of symmetry implies the order property. The proof is similar to that of \cite[Theorem 5.14]{bgkv-apal}, the difference is that we use Lemma \ref{ns-lemma} and the equivalence between symmetry and weak uniform symmetry (Lemma \ref{weak-unif-equiv}).

\begin{lemma}\label{sym-lem}
  Let $\lambda > \mu \ge \LS (\K)$. Assume that $\K$ is superstable in every $\chi \in [\mu, \lambda)$. If $\K$ does \emph{not} have $\mu$-symmetry, then it has the $\mu$-order property of length $\lambda$.
\end{lemma}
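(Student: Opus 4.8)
The plan is to imitate the proof of \cite[Theorem 5.14]{bgkv-v2}: assume $\K$ fails $\mu$-symmetry and manufacture from the witnesses to this failure an infinite sequence inside some model that exhibits the $\mu$-order property of length $\lambda$. By Lemma \ref{weak-unif-equiv} and the $\mu$-superstability we get from the hypotheses (taking $\chi = \mu$), failure of $\mu$-symmetry is the same as failure of \emph{weak uniform} $\mu$-symmetry. So I would fix limit models $N, M_0, M$ in $\K_\mu$ with $M$ limit over $M_0$ and $M_0$ limit over $N$, together with elements $a, b$, witnessing this failure: $\gtp(b/M)$ does not $\mu$-fork over $M_0$ (so there is $M_0'$ with $M_0$ limit over $M_0'$ and $\gtp(b/M)$ not $\mu$-splitting over $M_0'$), $\gtp(a/M_0)$ explicitly does not $\mu$-fork over $(N,M_0)$, but there is \emph{no} limit model $M_b$ over $M_0$ containing $b$ with $\gtp(a/M_b)$ explicitly not $\mu$-forking over $(N,M_0)$.

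Next I would build, by a length-$\lambda$ induction using the extension property for $\mu$-forking (Proposition \ref{mu-forking-props}) together with the extension/uniqueness properties for $\mu$-splitting, two interleaved increasing chains of limit models and two sequences $\seq{a_i : i < \lambda}$, $\seq{b_i : i < \lambda}$, so that at each stage $a_i$ realizes the (non-$\mu$-forking-over-$N$) extension of $\gtp(a/M_0)$ over the model built so far, and $b_i$ realizes the (non-$\mu$-forking-over-$M_0'$) extension of $\gtp(b/M)$ — arranged so that for $i < j$ the model containing $a_i, b_i$ is a strong submodel over which the type of $a_j$ (resp. $b_j$) does not fork, while the failure of symmetry prevents the reversed configuration from occurring. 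The upshot should be that $\gtp(a_i b_j / N) \neq \gtp(b_i a_j / N)$ whenever $i < j$, which after restricting the parameter set appropriately gives the $\mu$-order property of length $\lambda$ witnessed by the sequence $\seq{a_i b_i : i < \lambda}$ (of length $2 \cdot \mu = \mu$, or pad to exactly $\mu$). The key device making the two directions asymmetric is Lemma \ref{ns-lemma}: since the relevant types do not $\mu$-fork over a model of size $\mu$, equality of Galois types of $\mu$-sized parameter sequences over that model is detected already over that model, so one can compare the configuration $(a_i, b_j)$ with $(b_i, a_j)$ by transporting along automorphisms and reduce a hypothetical equality to an instance of the symmetry conclusion that was assumed to fail.

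I expect the main obstacle to be the bookkeeping in the inductive construction: one must simultaneously keep the $a$-side not $\mu$-forking over the fixed small model $N$ and the $b$-side not $\mu$-forking over $M_0'$, while ensuring the models stay limit (of size $\mu$) over the right bases at every successor and limit stage, and — crucially — arranging things so that an equality $\gtp(a_i b_j / N) = \gtp(b_i a_j / N)$ would force the existence of the forbidden $M_b$. This is exactly where "we do not have as much global independence as in \cite{bgkv-v2}" bites: instead of a global independence calculus one has to argue by hand with Proposition \ref{mu-forking-props}, Lemma \ref{ns-lemma}, the uniqueness of non-$\mu$-splitting extensions, and the monotonicity of $\mu$-splitting, being careful that each model introduced is genuinely limit over the intended base so that "$\mu$-fork" is meaningful. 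Once the order-property sequence is in hand, the length is exactly $\lambda$ by construction, so no appeal to the Hanf-number fact (Fact \ref{shelah-hanf}) is needed here — that will come later when turning the local order property into instability.
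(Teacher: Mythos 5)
Your proposal is essentially the paper's proof: reduce failure of $\mu$-symmetry to failure of weak uniform $\mu$-symmetry via Lemma~\ref{weak-unif-equiv}, fix witnesses $N, M_0, M, a, b$, build interleaved chains of limit models with sequences $\seq{a_\alpha}, \seq{b_\alpha}$ of realizations of nonforking extensions of $\gtp(a/M_0)$ and $\gtp(b/M)$ using Proposition~\ref{mu-forking-props}, and use Lemma~\ref{ns-lemma} plus the failure of the symmetry conclusion to show that $\gtp(a_\alpha b_\beta / M_0)$ equals $\gtp(ab/M_0)$ when $\beta \ge \alpha$ but not when $\beta < \alpha$. One small imprecision worth noting: the $\mu$-order property requires tuples of length $\mu$ with Galois types over $\emptyset$, so the paper absorbs an enumeration $\bar{d}$ of $M_0$ into each tuple, setting $\bc_\alpha := a_\alpha b_\alpha \bar{d}$; your phrase ``of length $2\cdot\mu = \mu$, or pad to exactly $\mu$'' does not quite describe this (each $a_\alpha b_\alpha$ has length $2$, and the length-$\mu$ contribution comes from concatenating the parameter set, not padding), though your earlier gesture at ``restricting the parameter set appropriately'' points in the right direction. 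Also note the paper's models $N_\alpha$ grow to cardinality $\mu + |\alpha|$ over the course of the length-$\lambda$ induction rather than staying in $\K_\mu$, which is needed since $\lambda > \mu$; this does not affect the use of Lemma~\ref{ns-lemma} because $\mu$-forking over $M_0 \in \K_\mu$ is still meaningful for types over larger models.
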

\begin{proof}
  By Lemma \ref{weak-unif-equiv}, $\K$ does not have weak uniform $\mu$-symmetry. We first pick witnesses to that fact. Pick limit models $N, M_0, M \in \K_\mu$ such that $M$ is limit over $M_0$ and $M_0$ is limit over $N$. Pick $b$ such that $\gtp (b / M)$ does not $\mu$-fork over $M_0$, $a \in |M|$, and $\gtp (a / M_0)$ explicitly does not $\mu$-fork over $(N, M_0)$, and there does \emph{not} exist $M_b \in \K_{\mu}$ containing $b$ and limit over $M_0$ so that $\gtp (a / M_b)$ explicitly does not $\mu$-fork over $(N, M_0)$. We will show that $\sea$ has the $\mu$-order property of length $\lambda$.

  We build increasing continuous $\seq{N_\alpha : \alpha < \lambda}$ and $\seq{a_\alpha, b_\alpha, N_\alpha' : \alpha < \lambda}$ by induction so that for all $\alpha < \lambda$:

  \begin{enumerate}
  \item $N_\alpha, N_\alpha' \in \K_{\mu + |\alpha|}$.
  \item $N_0$ is limit over $M$ and $b \in |N_0|$.
  \item $\gtp (a_\alpha / M_0) = \gtp (a / M_0)$ and $a_\alpha \in |N_\alpha'|$.
  \item\label{req-4} $\gtp (b_\alpha / M) = \gtp (b / M)$ and $b_\alpha \in |N_{\alpha + 1}|$.
  \item $N_\alpha'$ is limit over $N_\alpha$ and $N_{\alpha + 1}$ is limit over $N_{\alpha}'$. 
  \item $\gtp (a_\alpha / N_\alpha)$ explicitly does not $\mu$-fork over $(N, M_0)$ and $\gtp (b_\alpha / N_\alpha')$ does not $\mu$-fork over $M_0$.
\end{enumerate}

\paragraph{\underline{This is possible}}

Let $N_0$ be any model in $\K_\mu$ containing $b$ that is limit over $M$. At $\alpha$ limits, let $N_\alpha := \bigcup_{\beta < \alpha} N_\beta$. Now assume inductively that $N_\beta$ has been defined for $\beta \le \alpha$, and $a_\beta$, $b_\beta$, $N_\beta'$ have been defined for $\beta < \alpha$. By extension for splitting, find $q \in \gS (N_\alpha)$ that explicitly does not $\mu$-fork over $(N, M_0)$ and extends $\gtp (a / M_0)$. Let $a_\alpha$ realize $q$ and pick $N_\alpha'$ limit over $N_\alpha$ containing $a_\alpha$. Now by extension again, find $q' \in \gS (N_\alpha')$ that does not $\mu$-fork over $M_0$ and extends $\gtp (b / M)$. Let $b_\alpha$ realize $q'$ and pick $N_{\alpha + 1}$ limit over $N_\alpha'$ containing $b_\alpha$.

\paragraph{\underline{This is enough}}

We show that for $\alpha, \beta < \lambda$:

\begin{enumerate}
\item\label{cond-0} $\gtp (a_\alpha b / M_0) \neq \gtp (a b / M_0)$
\item\label{cond-01} $\gtp (a b_\beta / M_0) = \gtp (a b / M_0)$
  \item\label{cond-1} If $\beta < \alpha$, $\gtp (ab / M_0) \neq \gtp (a_\alpha b_\beta / M_0)$.
  \item\label{cond-2} If $\beta \ge \alpha$, $\gtp (ab / M_0) = \gtp (a_\alpha b_\beta / M_0)$.
\end{enumerate}

For (\ref{cond-0}), observe that $b \in |N_0| \subseteq |N_\alpha|$ and $\gtp (a_\alpha / N_\alpha)$ explicitly does not $\mu$-fork over $(N, M_0)$. Therefore by monotonicity $N_\alpha$ witnesses that there exists $N_b \in \K_\mu$ containing $b$ and limit over $M_0$ so that $\gtp (a_\alpha / N_b)$ explicitly does not $\mu$-fork over $(N, M_0)$. By failure of symmetry and invariance, we must have that $\gtp (a_\alpha b / M_0) \neq \gtp (a b / M_0)$.

For (\ref{cond-01}), use the assumption that $a \in |M|$ together with clause (\ref{req-4}) of the construction.

For (\ref{cond-1}), suppose $\beta < \alpha$.  We know that $\gtp (a_\alpha / N_\alpha)$ does not $\mu$-fork over $M_0$. Since $\beta < \alpha$, $b, b_\beta \in |N_\alpha|$ and $\gtp (b / M) = \gtp (b_\beta / M)$, we must have by Lemma \ref{ns-lemma} that $\gtp (a_\alpha b / M_0) = \gtp (a_\alpha b_\beta / M_0)$.  Together with (\ref{cond-0}), this implies $\gtp (a b / M_0) \neq \gtp (a_\alpha b_\beta / M_0)$.  This is really where we use the equivalence between uniform $\mu$-symmetry and weak uniform $\mu$-symmetry: if we only had failure of uniform $\mu$-symmetry, then we would only know that $\gtp (b / M)$ does not \emph{$\mu$-split} over $M_0$, so would be unable to use Lemma \ref{ns-lemma}. 

To see (\ref{cond-2}), suppose $\beta \geq \alpha$ and recall that (by (\ref{cond-01})) $\gtp (a b / M_0) = \gtp (a b_\beta / M_0)$. We also have that $\gtp (b_\beta / N_\beta')$ does not $\mu$-fork over $M_0$. Moreover $\gtp (a / M_0) = \gtp (a_\alpha / M_0)$, and $a, a_\alpha \in N_{\beta}'$. By Lemma \ref{ns-lemma} again, $\gtp (a b_\beta / M_0) = \gtp (a_\alpha b_\beta / M_0)$. This gives us that $\gtp (ab / M_0) = \gtp (a_\alpha b_\beta / M_0)$.

Now let $\bar{d}$ be an enumeration of $M_0$ and for $\alpha < \lambda$, let $\bc_\alpha := a_\alpha b_\alpha \bar{d}$. Then (\ref{cond-1}) and (\ref{cond-2}) together tell us that the sequence $\seq{\bc_\alpha \mid \alpha < \lambda}$ witnesses the $\mu$-order property of length $\lambda$.
\end{proof}

We conclude that symmetry follows from enough instances of superstability.

\begin{theorem}\label{sym-from-superstab}
  Let $\mu \ge \LS (\K)$. Then there exists $\lambda < \hanf{\mu}$ such that if $\K$ is superstable in every $\chi \in [\mu, \lambda)$, then $\K$ has $\mu$-symmetry.
\end{theorem}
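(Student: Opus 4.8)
The plan is to prove the contrapositive of the main step --- that a failure of $\mu$-symmetry, together with enough superstability, forces the order property at a cardinal where $\K$ is nonetheless stable --- and then to repackage this into the ``$\exists\lambda<\hanf{\mu}$'' formulation. First I would observe that it is enough to establish: \emph{if $\K$ is superstable in every $\chi\in[\mu,\hanf{\mu})$, then $\K$ has $\mu$-symmetry}. Granting this, there are two cases. Either $\K$ is superstable throughout $[\mu,\hanf{\mu})$, in which case $\K$ has $\mu$-symmetry and any $\lambda<\hanf{\mu}$ (say $\lambda=\mu^+$) witnesses the theorem; or there is a cardinal $\chi_0\in[\mu,\hanf{\mu})$ in which $\K$ fails to be superstable, in which case $\lambda:=\chi_0^+$ is still below $\hanf{\mu}=\beth_{(2^\mu)^+}$ (a limit cardinal), and the hypothesis ``$\K$ is superstable in every $\chi\in[\mu,\lambda)$'' is false, so the implication holds vacuously.

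To prove the displayed statement, I would assume $\K$ is superstable in every $\chi\in[\mu,\hanf{\mu})$ and, for a contradiction, that $\K$ does not have $\mu$-symmetry. For each cardinal $\lambda'$ with $\mu<\lambda'<\hanf{\mu}$, the class $\K$ is superstable in every $\chi\in[\mu,\lambda')$, so Lemma \ref{sym-lem} applies and yields the $\mu$-order property of length $\lambda'$. Restricting a witnessing sequence gives the $\mu$-order property of length $\lambda'$ also for every $\lambda'\le\mu$ (using, e.g., the instance $\lambda'=\mu^+<\hanf{\mu}$). Hence $\K$ has the $\mu$-order property of length $\lambda'$ for all $\lambda'<\hanf{\mu}=\hanf{\mu+\LS(\K)}$, and Fact \ref{shelah-hanf} (with $\alpha=\mu$) upgrades this to the full $\mu$-order property.

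Now I would extract the contradiction from stability. Put $\nu:=2^\mu$. Then $\nu\ge\mu\ge\LS(\K)$ and $\nu^\mu=(2^\mu)^\mu=2^\mu=\nu$, so Fact \ref{stab-facts-op} (applied with $\alpha=\mu$, its cardinal being $\nu$) tells us $\K$ is not stable in $\nu$. But $\nu=2^\mu<\beth_{(2^\mu)^+}=\hanf{\mu}$, so $\nu\in[\mu,\hanf{\mu})$ and our hypothesis gives that $\K$ is superstable --- hence stable --- in $\nu$, a contradiction. This forces $\K$ to have $\mu$-symmetry.

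I expect the main obstacle to be purely the cardinal bookkeeping rather than any new idea: one must check that every length $\lambda'<\hanf{\mu}$ comes from an instance of superstability that is itself strictly below $\hanf{\mu}$ (so that the half-open interval $[\mu,\hanf{\mu})$ suffices for Fact \ref{shelah-hanf}), and that there is a cardinal $\nu$ with $\mu\le\nu<\hanf{\mu}$ satisfying $\nu=\nu^\mu$ --- which $\nu=2^\mu$ does, since $\hanf{\mu}=\beth_{(2^\mu)^+}$ lies well above $2^\mu$. All the substantive work (failure of symmetry $\Rightarrow$ order property) is already carried out in Lemma \ref{sym-lem}.
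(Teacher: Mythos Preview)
Your proof is correct and uses exactly the same ingredients as the paper's (Lemma \ref{sym-lem}, Fact \ref{shelah-hanf}, Fact \ref{stab-facts-op}, and the pivot cardinal $2^\mu$), only arranged differently. The paper's argument is slightly more direct: it case-splits on stability in $2^\mu$; in the stable case, Fact \ref{stab-facts-op} rules out the $\mu$-order property, Fact \ref{shelah-hanf} then produces a $\lambda<\hanf{\mu}$ at which the $\mu$-order property of length $\lambda$ already fails, and this $\lambda$ works by the contrapositive of Lemma \ref{sym-lem} --- no preliminary reduction or final contradiction needed.
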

\begin{proof}
  If $\K$ is unstable in $2^{\mu}$, then we can set $\lambda := \left(2^{\mu}\right)^+$ and get a vacuously true statement; so assume that $\K$ is stable in $2^{\mu}$. By Fact \ref{stab-facts-op}, $\K$ does not have the $\mu$-order property. By Fact \ref{shelah-hanf}, there exists $\lambda < \hanf{\mu}$ such that $\K$ does not have the $\mu$-order property of length $\lambda$. By Lemma \ref{sym-lem}, it is as desired.
\end{proof}

\begin{remark}
  How can one obtain many instances of superstability as in the hypothesis of Theorem \ref{sym-from-superstab}? One way is categoricity, see Fact \ref{shelah-villaveces}. Another way is to start with one instance of superstability and transfer it up using tameness, see Fact \ref{ss-transfer}.
\end{remark}

\section{Symmetry and tameness}\label{sym-tame-sec}

Tameness is a locality property for types introduced by Grossberg and VanDieren in \cite{tamenessone} and used to prove Shelah's eventual categoricity conjecture from a successor in \cite{tamenesstwo}. It has also played a key roles in the proof of several other categoricity transfers, for example \cite{tamelc-jsl, ap-universal-v10, categ-universal-2-v3-toappear}. 

\begin{definition}[Tameness]\label{tameness-def}
  Let $\mu \ge \LS (\K)$. $\K$ is \emph{$\mu$-tame} if for every $M \in \K$ and every $p, q \in \gS (M)$, if $p \neq q$, then there exists $M_0 \in \K_{\le \mu}$ with $M_0 \lea M$ such that $p \rest M_0 \neq q \rest M_0$.
\end{definition}

In this section, we study the combination of tameness (and its relatives, see below) with superstability. In Section \ref{sym-categ-sec}, we will combine tameness and categoricity.

\subsection{Weak tameness}

We will start by studying a weaker, more local, variation that appears already in \cite{sh394}. We use the notation in \cite[Definition 11.6]{baldwinbook09}.

\begin{definition}[Weak tameness]
  Let $\chi, \mu$ be cardinals with $\LS (\K) \le \chi \le \mu$. $\K$ is \emph{$(\chi, \mu)$-weakly tame} if for any \emph{saturated} $M \in \K_\mu$, any $p, q \in \gS (M)$, if $p \neq q$, there exists $M_0 \in \K_{\chi}$ with $M_0 \lea M$ and $p \rest M_0 \neq q \rest M_0$.
\end{definition}

Tameness says that types over \emph{any} models are determined by their small restrictions. Weak tameness says that only types over \emph{saturated} models have this property.

While there is no known example of an AEC that is weakly tame but not tame, it is known that weak tameness follows from categoricity in a suitable cardinal (but the corresponding result for non-weak tameness is open, see \cite[Conjecture 1.5]{tamenessthree}): this appears as \cite[Main Claim II.2.3]{sh394} and a simplified argument is in \cite[Theorem 11.15]{baldwinbook09}.

\begin{fact}\label{weak-tameness-from-categ-fact}
  Let $\lambda > \mu \ge \hanf{\LS (\K)}$. Assume that $\K$ is categorical in $\lambda$, and the model of cardinality $\lambda$ is $\mu^+$-saturated. Then there exists $\chi < \hanf{\LS (\K)}$ such that $\K$ is $(\chi, \mu)$-weakly tame.
\end{fact}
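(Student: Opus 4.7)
The plan is to prove this by contradiction, following the general strategy of \cite[Main Claim II.2.3]{sh394}. Suppose that for every $\chi < \hanf{\LS (\K)}$, $\K$ fails to be $(\chi, \mu)$-weakly tame; that is, for each such $\chi$ there is a saturated $M_\chi \in \K_\mu$ and distinct $p_\chi, q_\chi \in \gS (M_\chi)$ that agree on every $M_0 \lea M_\chi$ with $M_0 \in \K_\chi$. The categoricity hypothesis, combined with the $\mu^+$-saturation of the model in $\K_\lambda$, will give us a sufficiently homogeneous model of size $\lambda$ in which these counterexamples can be amalgamated, and then a Hanf-number argument will produce indiscernibles that make all the failures uniform, leading to a contradiction.

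First, I would invoke Shelah's presentation theorem to exhibit $\K$ as a PC class with a blueprint $\Phi$ of size $\LS (\K)$, so that for any linear order $I$ we get $EM_\Phi (I) \in \K$ generated by a $\Phi$-indiscernible sequence indexed by $I$. Since $\K$ is categorical in $\lambda$ and the model of size $\lambda$ is $\mu^+$-saturated, for $|I| = \lambda$ we have $EM_\Phi (I) \cong N$ where $N$ is the (unique) $\mu^+$-saturated model of size $\lambda$. By a standard argument (choosing $I$ with many convex pieces of size $\mu$), every saturated $M_\chi \in \K_\mu$ may be $\K$-embedded into some EM submodel $EM_\Phi (J)$ generated by a ``small'' convex subset $J \subseteq I$, and the realizations of $p_\chi$ and $q_\chi$ can be pulled back to be named by specific elements of $EM_\Phi (I)$.

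Next, for each $\chi$ code the triple $(M_\chi, p_\chi, q_\chi)$ as a combinatorial configuration on an indexed tuple from the $\Phi$-indiscernibles: a set $A_\chi$ of indiscernibles of size $\chi$ generating a saturated submodel, together with two elements $c_\chi, d_\chi$ realizing $p_\chi$ and $q_\chi$ respectively, and such that $c_\chi$ and $d_\chi$ have the same Galois type over every EM submodel generated by a subset of $A_\chi$ of size $<\chi$. Since this is postulated to happen for unboundedly many $\chi < \hanf{\LS (\K)}$, an Erdős–Rado style application of the Hanf number (built into the definition of $\hanf{\LS (\K)} = \beth_{(2^{\LS (\K)})^+}$ as in Fact \ref{shelah-hanf}) yields a single $\Phi$-indiscernible sequence of arbitrary length witnessing the bad configuration. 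Stretching this indiscernible sequence to length $\mu$ produces, inside the categorical model of size $\lambda$, two distinct Galois types over a saturated model in $\K_\mu$ that agree on every submodel of size less than $\mu$, in particular on the whole model itself: the desired contradiction.

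The main obstacle is ensuring that the submodels witnessing the types are genuinely saturated, since saturation is not automatically preserved under taking EM submodels. This is handled by exploiting stability in $\mu$ (which holds under our hypotheses, cf.\ \cite[Claim I.1.7]{sh394}) to show that saturated models of size $\mu$ are abundant inside $EM_\Phi (I)$, and by choosing the index order $I$ to have enough structure (a long chain of pairwise isomorphic convex pieces of size $\mu$) so that such saturated submodels can be parameterized indiscernibly. Carrying this out carefully is exactly the content of Shelah's original proof and Baldwin's exposition in \cite[Theorem 11.15]{baldwinbook09}.
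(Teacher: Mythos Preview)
The paper does not supply its own proof of this fact: it is stated as a \emph{Fact} with citations to \cite[Main Claim II.2.3]{sh394} and \cite[Theorem 11.15]{baldwinbook09}. So there is no in-paper argument to compare against; the relevant comparison is with the Shelah/Baldwin proof you cite.

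Your outline has the right ingredients (presentation theorem, EM models, Hanf-number compactness, stability to locate saturated submodels inside EM hulls), but the contradiction you arrive at in the third paragraph does not go through as written. You say that after stretching you obtain two distinct Galois types over a saturated model of size $\mu$ that ``agree on every submodel of size less than $\mu$, in particular on the whole model itself.'' The whole model has size $\mu$, not size $<\mu$, so the ``in particular'' is false; and agreeing on all $<\mu$-submodels while disagreeing on the full model of size $\mu$ is exactly a failure of $(<\mu,\mu)$-tameness, not a contradiction. To close this gap you would need some form of local character or continuity of Galois types that forces equality from cofinal agreement---but that is precisely the tameness you are trying to establish, so the argument as stated is circular at the last step.

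The actual Shelah/Baldwin argument is not structured as a proof by contradiction of this shape. Rather, it works directly: one shows that any two types over a saturated $M\in\K_\mu$ that agree on all $\chi$-submodels (for a single fixed $\chi<H_1$) must be equal, by realizing the types inside an EM model, using non-splitting over a small piece of the skeleton, and exploiting the homogeneity of the indiscernibles together with stability below the Hanf number. The Hanf number enters to guarantee that the relevant EM configurations and non-splitting bases can be found uniformly in size $<H_1$, not to stretch a witness to failure. If you want to repair your sketch, replace the contradiction strategy with this direct argument, or follow Baldwin's exposition in \cite[\S11]{baldwinbook09} step by step.
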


It was shown in \cite{ss-tame-jsl} (and further improvements in \cite[Section 10]{indep-aec-apal} and \cite{bv-sat-v3}) that tameness can be combined with superstability to build a good frame at a high-enough cardinal. At a meeting in the winter of 2015 in San Antonio, the first author asked whether weak tameness could be used instead. This is not a generalization for the sake of generalization because weak tameness (but not tameness) is known to follow from categoricity. We can answer in the affirmative:

\begin{theorem}\label{good-frame-weak-tameness}
  Let $\lambda > \mu \ge \LS (\K)$. Assume that $\K$ is superstable in every $\chi \in [\mu, \lambda]$ and has $\lambda$-symmetry.
  
  If $\K$ is $(\mu, \lambda)$-weakly tame, then there exists a type-full good $\lambda$-frame with underlying class $\Ksatp{\lambda}_\lambda$.
\end{theorem}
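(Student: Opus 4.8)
The plan is to define the candidate frame $\s = (\Ksatp{\lambda}_\lambda, \nf, \Sbs)$ by taking $\Sbs(M)$ to be all nonalgebraic types over $M$ (so the frame is type-full) and declaring that $p \in \gS(N)$ does not $\s$-fork over $M \lea N$ (both saturated of size $\lambda$) if and only if $p$ does not $\mu$-fork over $M$ in the sense of Definition \ref{mu-forking-def} (equivalently, using weak tameness, if and only if $p$ does not $\mu$-split over some small-enough submodel of $M$). Then I would verify the axioms of a good $\lambda$-frame one at a time, leaning on the machinery already set up in the excerpt and in \cite{ss-tame-toappear-v3}. First, the underlying class: $\Ksatp{\lambda}_\lambda$ is closed under unions of chains — here I would invoke that $\lambda$-symmetry together with superstability in the relevant cardinals gives that chains of $\lambda$-saturated models are $\lambda$-saturated (this is essentially the content of Fact \ref{union-sat-monica} and its iteration via Lemma \ref{chainsat-lim}, after transferring symmetry down using Theorem \ref{transfer symmetry}); together with amalgamation, joint embedding (Remark \ref{jep remark}), and no maximal models, this makes $\Ksatp{\lambda}_\lambda$ an AEC with $\LS = \lambda$.

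Next I would check the forking axioms. Invariance and monotonicity are immediate from the corresponding properties of $\mu$-splitting. Existence/extension is exactly Proposition \ref{mu-forking-props} (the extension property of $\mu$-forking), using superstability in $[\mu,\lambda]$; note this is where we need superstability at $\lambda$ itself and not just below. Uniqueness of non-$\mu$-forking extensions over saturated models is where weak tameness enters: two types over a saturated $N \in \K_\lambda$ that both do not $\mu$-fork over the same small $M_0$ and agree on $M_0$ must agree, because by $(\mu,\lambda)$-weak tameness it suffices to check agreement on $\mu$-sized submodels, and there the uniqueness property of non-$\mu$-splitting extensions (from \cite{vandierennomax}, as used in Lemma \ref{ns-lemma}) applies. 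Local character / "no long forking chains" follows from $\lambda$-superstability (the "no long splitting chains" clause of Definition \ref{ss assm}) combined with Proposition \ref{limit splitting proposition} to handle continuity, plus a density argument to see every basic (= nonalgebraic) type does not fork over a limit submodel. Stability in $\lambda$ and the density of basic types are standard consequences of superstability in $\lambda$. Finally, the symmetry axiom for $\s$ is precisely what needs $\lambda$-symmetry: by Theorem \ref{sym-good-frame-equiv} (once the rest of the frame axioms are in place) it is equivalent to weak non-uniform $\lambda$-symmetry, and $\lambda$-symmetry (Definition \ref{sym defn}) implies the uniform, hence all weaker, versions by Propositions \ref{unif-usual} and \ref{easy-implications}.

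The main obstacle I expect is the uniqueness property, because $\mu$-splitting over a \emph{small} model does not automatically transfer to $\mu$-splitting over the $\lambda$-sized base without a locality hypothesis — this is exactly the gap that weak tameness is designed to close, and one must be careful that weak tameness is only available over \emph{saturated} models, which is why the underlying class of the frame must be restricted to $\Ksatp{\lambda}_\lambda$ rather than all of $\K_\lambda$. A secondary technical point is making sure that "limit over" versus "universal over" discrepancies in the various cited results (Definition \ref{mu-forking-def}, Fact \ref{canon-fact}, \cite[Definition 3.8]{ss-tame-toappear-v3}) do not cause trouble; since over a $\lambda$-saturated model of size $\lambda$ every model is limit iff it is universal (by a back-and-forth, as in the proof of Fact \ref{good-frame-facts}), this should be harmless. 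A clean alternative to re-deriving everything by hand would be to cite the construction of \cite{ss-tame-toappear-v3} (or \cite[Section 10]{indep-aec-v5}) essentially verbatim, observing that every use of $\mu$-tameness there is in fact a use of $(\mu,\lambda)$-weak tameness over the saturated models of size $\lambda$, together with our downward symmetry transfer to guarantee the saturated-chain-union property at $\lambda$; I would present the proof in that form to keep it short.
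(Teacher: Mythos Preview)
Your proposal is correct and follows essentially the same approach as the paper: define $\s$-forking as $\mu$-forking on $\Ksatp{\lambda}_\lambda$, transfer $\lambda$-symmetry down via Theorem \ref{transfer symmetry} to get (through Fact \ref{union-sat-monica} and Lemma \ref{chainsat-lim}) that $\Ksatp{\lambda}$ is an AEC, cite \cite{ss-tame-toappear-v3} for the routine frame axioms, and handle extension, uniqueness, and symmetry separately via Proposition \ref{mu-forking-props}, weak tameness plus \cite[I.4.12]{vandierennomax}, and Theorem \ref{sym-good-frame-equiv} (through Propositions \ref{unif-usual} and \ref{easy-implications}) respectively. Your anticipated obstacles and their resolutions also match the paper's treatment.
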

\begin{proof}
  First observe that limit models in $\K_\lambda$ are unique (by Fact \ref{uq-limit}), hence saturated. By Theorem \ref{transfer symmetry}, $\K$ has $\chi$-symmetry for every $\chi \in [\mu, \lambda]$. By Fact \ref{union-sat-monica}, for every $\chi \in [\mu, \lambda)$, $\Ksatp{\chi^+}$ (the class of $\chi^+$-saturated models in $\K_{\ge \chi^+}$) is an AEC with $\LS (\Ksatp{\chi^+}) = \chi^+$. Therefore by Lemma \ref{chainsat-lim} $\Ksatp{\lambda}$ is an AEC with $\LS (\Ksatp{\lambda}) = \lambda$. By the $\lambda$-superstability assumption, $\Ksatp{\lambda}_\lambda$ is nonempty, has amalgamation, no maximal models, and joint embedding. It is also stable in $\lambda$. We want to define a type-full good $\lambda$-frame $\s$ on $\Ksatp{\lambda}_\lambda$. We define forking in the sense of $\s$ ($\s$-forking) as follows: For $M \lea N$ saturated of size $\lambda$, a non-algebraic $p \in \gS (N)$ does not $\s$-fork over $M$ if and only if there exists $M_0 \in \K_\mu$ such that for all $N_0 \in \K_{\mu}$, if $M_0 \lea N_0 \lea N$, then $p \rest N_0$ does not $\mu$-fork over $M_0$. 

    Now most of the axioms of good frames are verified in Section 4 of \cite{ss-tame-jsl}, the only properties that remain to be checked are extension, uniqueness, and symmetry. Extension is by Proposition \ref{mu-forking-props}, and uniqueness is by uniqueness of splitting in $\mu$ (\cite[I.4.12]{vandierennomax}) and the weak tameness assumption. As for symmetry, we know that $\lambda$-symmetry holds, hence by Proposition \ref{easy-implications}, Proposition \ref{unif-usual}, and Theorem \ref{sym-good-frame-equiv} the symmetry property of good frame follows.
\end{proof}
\begin{remark}
  If $\lambda = \mu^+$ above, then the hypotheses reduce to ``$\K$ is superstable in $\mu$ and $\mu^+$ and $\K$ has $\mu^+$-symmetry''.
\end{remark}

We can combine this construction with the results of Section \ref{sym-op-sec}:

\begin{corollary}\label{good-frame-weak-tameness-2}
  Let $\lambda > \mu \ge \LS (\K)$. Assume that $\K$ is superstable in every $\chi \in [\mu, \hanf{\lambda})$. If $\K$ is $(\mu, \lambda)$-weakly tame, then there exists a type-full good $\lambda$-frame with underlying class $\Ksatp{\lambda}_{\lambda}$.
\end{corollary}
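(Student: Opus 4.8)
The plan is to feed the superstability hypothesis into Theorem \ref{sym-from-superstab} to manufacture $\lambda$-symmetry, and then simply invoke Theorem \ref{good-frame-weak-tameness}. In other words, this corollary should be a direct composition of those two results, with the only work being to check that the Hanf-number intervals line up.

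First I would apply Theorem \ref{sym-from-superstab} with the cardinal called $\mu$ there replaced by $\lambda$ (legitimate since $\lambda > \mu \ge \LS (\K)$). This produces a cardinal $\lambda_0 < \hanf{\lambda}$ with the property that if $\K$ is superstable in every $\chi \in [\lambda, \lambda_0)$, then $\K$ has $\lambda$-symmetry. Now $\mu < \lambda \le \lambda_0 < \hanf{\lambda}$, so $[\lambda, \lambda_0) \subseteq [\mu, \hanf{\lambda})$, and hence our standing hypothesis that $\K$ is superstable in every $\chi \in [\mu, \hanf{\lambda})$ supplies exactly the instances of superstability needed. Thus $\K$ has $\lambda$-symmetry.

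Next I would note that $[\mu, \lambda] \subseteq [\mu, \hanf{\lambda})$ because $\lambda < \hanf{\lambda}$, so $\K$ is superstable in every $\chi \in [\mu, \lambda]$. At this point all the hypotheses of Theorem \ref{good-frame-weak-tameness} are in hand: superstability in every $\chi \in [\mu, \lambda]$, $\lambda$-symmetry, and $(\mu, \lambda)$-weak tameness. Applying that theorem yields a type-full good $\lambda$-frame with underlying class $\Ksatp{\lambda}_\lambda$, which is the desired conclusion.

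The only place requiring a bit of attention is the bookkeeping with Hanf numbers — one must make sure that the upward extent $\lambda_0$ of the superstability interval demanded by Theorem \ref{sym-from-superstab} is still strictly below $\hanf{\lambda}$, which it is by construction of that theorem. Beyond this I do not expect any genuine obstacle: the corollary is purely a matter of stitching together Theorem \ref{sym-from-superstab} and Theorem \ref{good-frame-weak-tameness}.
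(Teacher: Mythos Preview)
Your proposal is correct and matches the paper's proof, which simply says ``Combine Theorem \ref{good-frame-weak-tameness} and Theorem \ref{sym-from-superstab}.'' One tiny quibble: you assert $\lambda \le \lambda_0$, which Theorem \ref{sym-from-superstab} does not guarantee, but this is harmless since if $\lambda_0 < \lambda$ the interval $[\lambda,\lambda_0)$ is empty and the superstability condition is vacuous.
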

\begin{proof}
  Combine Theorem \ref{good-frame-weak-tameness} and Theorem \ref{sym-from-superstab}.
\end{proof}

\subsection{Global tameness}

For the rest of this section, we will work with global non-weak tameness. Superstability has been studied together with amalgamation and tameness in works from the second author \cite{ss-tame-jsl, indep-aec-apal, bv-sat-v3, gv-superstability-v5-toappear}. We will use the following upward transfer of superstability:

\begin{fact}[Proposition 10.10 in \cite{indep-aec-apal}]\label{ss-transfer}
  Assume $\K$ is $\mu$-superstable and $\mu$-tame. Then for all $\mu' \ge \mu$, $\K$ is $\mu'$-superstable. In particular, $\K_{\ge \mu}$ has no maximal models and is stable in all cardinals.
\end{fact}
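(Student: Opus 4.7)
The plan is to verify each clause of Definition \ref{ss assm} at an arbitrary $\mu' > \mu$ (the case $\mu'=\mu$ is given). The condition $\mu' \ge \LS(\K)$ is immediate, and the structural clauses on $\K_{\mu'}$ (nonemptiness, joint embedding, no maximal models) are routine consequences of $\mu$-superstability together with our standing amalgamation hypothesis: joint embedding propagates upward by Remark \ref{jep remark}, and a hypothetical maximal $M \in \K_{\mu'}$ would admit a $\mu$-sized resolution whose successor steps each have proper extensions in $\K_\mu$ (by no maximal models at $\mu$), and amalgamation would weld these into a proper extension of $M$, a contradiction.

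The substantive content is (i) transferring the no long splitting chains property to $\mu'$ and (ii) establishing $\mu'$-stability. Both rest on the observation mentioned in the paper after Proposition \ref{extension for non-splitting} and proved as \cite[Proposition 3.12]{bgkv-v2}, namely that under $\mu$-tameness, $\mu$-splitting and $\mu'$-splitting coincide.

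For (i), given a chain $\seq{M_i \mid i < \alpha}$ in $\K_{\mu'}$ with $\alpha < (\mu')^+$, each $M_{i+1}$ universal over $M_i$, and $p \in \gaS(\bigcup_i M_i)$, I would construct in parallel a $\mu$-sized chain $\seq{N_i \mid i < \alpha}$ with $N_i \lea M_i$ and $N_{i+1}$ universal over $N_i$ in $\K_\mu$, using $\mu$-stability and amalgamation to interleave universal factors inside the given big chain. The no long splitting chains property at $\mu$ applied to $\seq{N_i}$ and $p \rest \bigcup_j N_j$ yields some $i$ at which $p \rest \bigcup_j N_j$ does not $\mu$-split over $N_i$. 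The splitting coincidence lemma then lifts this to non-$\mu$-splitting of $p$ itself over $N_i$, and since any $\mu'$-sized isomorphism fixing $M_i$ also fixes $N_i$, monotonicity gives non-$\mu'$-splitting of $p$ over $M_i$.

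For (ii), let $M \in \K_{\mu'}$. By (i), every $p \in \gaS(M)$ fails to $\mu$-split over some $\mu$-sized $N_p \lea M$; by $\mu$-tameness together with the uniqueness property for non-$\mu$-splitting extensions (\cite[Theorem I.4.12]{vandierennomax}), $p$ is determined by the pair $(N_p, p \rest N_p)$. Since $\mu$-stability bounds $|\gaS(N_p)| \le \mu$ and there are at most $\mu'$ choices of $N_p \lea M$, this yields $|\gaS(M)| \le \mu'$.

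The main obstacle is the coherent construction of the small chain $\seq{N_i}$ in (i): arranging $N_{i+1}$ to be universal over $N_i$ inside $\K_\mu$ while keeping $N_i \lea M_i$ requires interleaving $\mu$-stability with successive downward L\"owenheim-Skolem closures and a back-and-forth absorption of the universality witnesses of $M_{i+1}$ over $M_i$. Once this is set up, the splitting coincidence does the real work of crossing cardinal levels, and the remaining bookkeeping (the counting in (ii) and the structural clauses) is routine.
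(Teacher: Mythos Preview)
This statement is quoted in the paper as a fact from \cite[Proposition 10.10]{indep-aec-v5} with no proof given, so there is no in-paper argument to compare against; I comment only on the soundness of your sketch.

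Your overall plan is reasonable, but step (i) contains a real gap. You apply no long $\mu$-splitting chains to the small chain $\seq{N_j}$ and the \emph{restriction} $p\rest\bigcup_j N_j$, obtaining that this restriction does not $\mu$-split over some $N_i$, and then assert that ``the splitting coincidence lemma lifts this to non-$\mu$-splitting of $p$ itself over $N_i$.'' It does not. The coincidence lemma (\cite[Proposition 3.12]{bgkv-v2}) equates $\mu$-splitting and $\mu'$-splitting of the \emph{same} type over the \emph{same} base; it says nothing about passing from a restriction $p\rest N$ (with $N=\bigcup_j N_j$) to the full type $p\in\gS(M)$. A witness to $\mu$-splitting of $p$ over $N_i$ is a pair of $\mu$-sized submodels of $M$ containing $N_i$, and since $\|N\|=\mu<\mu'=\|M\|$, such witnesses need not lie inside $N$ at all. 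There is also a secondary issue: $\alpha$ ranges up to $(\mu')^+$, so when $\alpha\ge\mu^+$ the $\mu$-level locality hypothesis does not even apply to your chain $\seq{N_j\mid j<\alpha}$. Relatedly, in (ii) you invoke (i) for an arbitrary $M\in\K_{\mu'}$, but (i) as stated only treats $M$ presented as the union of a chain with universal successors.

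What is missing is the local character of $\mu$-splitting that follows from $\mu$-stability alone (\cite[Claim 3.3]{sh394}, invoked in the proof of Proposition \ref{limit splitting proposition}): every $p\in\gS(M)$ with $M\in\K_{\ge\mu}$ fails to $\mu$-split over some $\mu$-sized $N^*\lea M$. This is what actually drives (ii), and for (i) it handles the case $\cf(\alpha)>\mu$ outright (then $N^*\lea M_i$ for some $i$, and the coincidence lemma plus monotonicity finishes). The case $\cf(\alpha)\le\mu$ genuinely needs more: one must combine the global witness $N^*$ with your small-chain construction and invoke weak transitivity (Proposition \ref{weak-trans}) for the full type $p$, not merely its restriction to $N$; arranging the hypotheses of that proposition is where the real work lies, and your sketch does not address it.
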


Recall from Section \ref{sym-props-sec} that $\Ksatp{\lambda}$ denotes the class of $\lambda$-saturated models in $\K_{\ge \lambda}$. We would like to give conditions under which $\Ksatp{\lambda}$ is an AEC -- in particular unions of chains of $\lambda$-saturated models are $\lambda$-saturated. From superstability and tameness, it is known that one eventually obtains this behavior:

\begin{fact}[Theorem 7.1 in \cite{bv-sat-v3}]\label{bv-sat-fact}
  Assume $\K$ is $\mu$-superstable and $\mu$-tame. Then there exists $\lambda_0 < \beth_{(2^{\mu^+})^+}$ such that for any $\lambda \ge \lambda_0$, $\Ksatp{\lambda}$ is an AEC with $\LS (\Ksatp{\lambda}) = \lambda$.
\end{fact}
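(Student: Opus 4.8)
The plan is to bootstrap the global statement from two black boxes proved earlier: the upward transfer of superstability through tameness (Fact~\ref{ss-transfer}) and the deduction of symmetry from superstability (Theorem~\ref{sym-from-superstab}), and then to feed their output into the chain-union machinery for saturated models (Fact~\ref{union-sat-monica} and Lemma~\ref{chainsat-lim}). First I would apply Fact~\ref{ss-transfer} to the hypotheses that $\K$ is $\mu$-superstable and $\mu$-tame, obtaining that $\K$ is $\chi$-superstable for \emph{every} $\chi \ge \mu$. With superstability now available in every cardinal above $\mu$, I would invoke Theorem~\ref{sym-from-superstab} with the parameter $\chi$ in place of $\mu$, for each $\chi \ge \mu$: it produces a bound strictly below $\hanf{\chi}$ and asserts that superstability in the cardinals below that bound implies $\chi$-symmetry, and the required superstability is exactly what the previous step provides. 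Hence $\K$ has $\chi$-symmetry for all $\chi \ge \mu$. (Theorem~\ref{transfer symmetry} is not needed here, precisely because superstability is already available in every relevant cardinal.)

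The second step would be an induction on the cardinal $\lambda$, ranging over $\lambda \ge \mu^+$, with the inductive statement ``$\Ksatp{\lambda}$ is an AEC with $\LS(\Ksatp{\lambda}) = \lambda$''. If $\lambda = \chi^+$ is a successor with $\chi \ge \mu$, then by the first step $\K$ is $\chi$-superstable, $\chi^+$-superstable, and has $\chi^+$-symmetry; by Remark~\ref{sym-chainsat-rmk} the hypotheses of Fact~\ref{union-sat-monica} (applied with $\chi$ in place of $\mu$) are met, so $\Ksatp{\chi^+}$ is an AEC with $\LS(\Ksatp{\chi^+}) = \chi^+$. If $\lambda$ is a limit cardinal, then the induction hypothesis supplies, for every cardinal $\chi \in [\mu^+, \lambda)$, that $\Ksatp{\chi}$ is an AEC with $\LS(\Ksatp{\chi}) = \chi$, and Lemma~\ref{chainsat-lim} (with $\lambda_0 := \mu^+$, which is $< \lambda$ since any limit cardinal $\ge \mu^+$ is strictly above $\mu^+$) then yields the same conclusion for $\lambda$. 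Since $\mu^+ < \beth_{(2^{\mu^+})^+}$, one may take $\lambda_0 := \mu^+$ in the statement of the fact; in fact this argument establishes the sharper conclusion that $\Ksatp{\lambda}$ is an AEC for \emph{all} $\lambda \ge \mu^+$, not merely for $\lambda$ past some threshold.

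I expect essentially no obstacle internal to this argument: the genuinely difficult content lives inside the results being quoted --- above all the order-property analysis behind Theorem~\ref{sym-from-superstab} (through Lemma~\ref{sym-lem}) and the tower arguments behind Fact~\ref{union-sat-monica}. The points that demand care here are bookkeeping ones: routing the two cardinal parameters correctly when re-invoking Theorem~\ref{sym-from-superstab} and Fact~\ref{union-sat-monica} at an arbitrary $\chi \ge \mu$; ensuring the induction is genuinely transfinite, since a limit cardinal cannot be reached from its immediate predecessors and must instead be closed up through Lemma~\ref{chainsat-lim} using the whole induction hypothesis on $[\mu^+, \lambda)$; and verifying the trivial inequality $\mu^+ < \beth_{(2^{\mu^+})^+}$ so that the stated Hanf-number bound is respected. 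It is worth noting that the comparatively large bound $\beth_{(2^{\mu^+})^+}$ in the statement is an artifact of the original proof in \cite{bv-sat-v3}, which predates Theorem~\ref{sym-from-superstab}; with symmetry-from-superstability in hand the threshold collapses to $\mu^+$.
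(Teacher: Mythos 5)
Your argument is correct, but it is worth flagging that the statement you were asked about is a cited black box: it is Theorem 7.1 of \cite{bv-sat-v3}, and the present paper does not reprove it, it quotes it. The actual proof in \cite{bv-sat-v3} runs through a development of averages and stability-theoretic machinery and produces only the weaker eventual threshold $\lambda_0 < \beth_{(2^{\mu^+})^+}$. What you have written instead is, almost verbatim, the proof the paper gives later of the strictly stronger Corollary \ref{chain-sat-1} (which obtains the conclusion for every $\lambda > \mu$): apply Fact \ref{ss-transfer} to get superstability at every $\chi \ge \mu$, apply Theorem \ref{sym-from-superstab} at each such $\chi$ to get $\chi$-symmetry (this is the content of the second proof of Corollary \ref{mu-symmetry-1}), feed this into Fact \ref{union-sat-monica} via Remark \ref{sym-chainsat-rmk} for each successor $\chi^+ \ge \mu^+$, and close up at limit cardinals with Lemma \ref{chainsat-lim}. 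You also correctly route around the one circularity trap: the first proof of Corollary \ref{mu-symmetry-1} invokes Fact \ref{bv-sat-fact} itself, so you must (and do) use the second proof through Theorem \ref{sym-from-superstab}. The paper itself makes exactly the observation you close with, namely that Corollary \ref{chain-sat-1} improves Fact \ref{bv-sat-fact} by collapsing the threshold to $\mu^+$ and that its second proof is independent of \cite{bv-sat-v3}; the paper also cautions that the average-based arguments of \cite{bv-sat-v3} retain independent interest (for example, toward solvability), which is the one thing your argument does not recover. So: correct, non-circular, and a genuine alternative to the proof in the original source --- but it is an alternative the paper already records as Corollary \ref{chain-sat-1}, not new ground.
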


We can use this to show that superstability implies symmetry in tame AECs (obtaining another partial answer to Question \ref{good-frame-sym-q}). We also give another, more self-contained proof that does not rely on Fact \ref{bv-sat-fact}.

\begin{corollary}\label{mu-symmetry-1}
  If $\K$ is $\mu$-superstable and $\mu$-tame, then $\K$ has $\mu$-symmetry.
\end{corollary}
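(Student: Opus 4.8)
The plan is to reduce $\mu$-symmetry to the setting of Theorem~\ref{sym-good-frame} by constructing a good $\mu^+$-frame on the $\mu^+$-saturated models. First I would apply Fact~\ref{ss-transfer}: since $\K$ is $\mu$-superstable and $\mu$-tame, $\K$ is $\chi$-superstable for every $\chi \ge \mu$, and $\K_{\ge \mu}$ has no maximal models and is stable in every cardinal. In particular $\K$ is $\mu^+$-superstable, so the only missing ingredient needed to invoke Theorem~\ref{sym-good-frame} is a good $\mu^+$-frame on $\Ksatp{\lambda}_{\mu^+}$ for some $\lambda \le \mu^+$.

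The natural candidate is $\lambda = \mu^+$ itself, i.e.\ a type-full good $\mu^+$-frame with underlying class the saturated models in $\K_{\mu^+}$. To build it I would invoke Theorem~\ref{good-frame-weak-tameness} with the roles of $(\mu, \lambda)$ there played by $(\mu, \mu^+)$ here. Its hypotheses are: superstability in every $\chi \in [\mu, \mu^+]$ (now available from Fact~\ref{ss-transfer}), $\mu^+$-symmetry, and $(\mu, \mu^+)$-weak tameness. Weak tameness at $\mu^+$ is immediate from $\mu$-tameness (global tameness restricted to saturated models of size $\mu^+$ with small submodels of size $\mu$ is a special case of Definition~\ref{tameness-def}). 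So the one genuine gap is $\mu^+$-symmetry — and here is the apparent circularity that must be broken: I am trying to prove $\mu$-symmetry, and the frame construction seems to want symmetry one cardinal up.

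The way out is to obtain $\mu^+$-symmetry directly from Theorem~\ref{sym-from-superstab} (or rather its version with an explicit Hanf bound): since $\K$ is superstable in every $\chi \ge \mu$ by Fact~\ref{ss-transfer}, in particular it is superstable in every $\chi \in [\mu^+, \hanf{\mu^+})$, so Theorem~\ref{sym-from-superstab} applied at $\mu^+$ yields $\mu^+$-symmetry outright (no tameness needed for this step beyond what gives the superstability transfer). With $\mu^+$-symmetry in hand, Theorem~\ref{good-frame-weak-tameness} produces a type-full good $\mu^+$-frame on $\Ksatp{\mu^+}_{\mu^+}$, and then Theorem~\ref{sym-good-frame} (whose hypotheses are exactly $\mu$-superstability plus the existence of such a good $\mu^+$-frame) delivers $\mu$-symmetry. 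Alternatively, the more self-contained route the remark promises would bypass the frame machinery: use Fact~\ref{ss-transfer} to get superstability in every $\chi \in [\mu, \hanf{\mu})$ and then apply Theorem~\ref{sym-from-superstab} directly at $\mu$ — this is cleaner and I would present it first, reserving the frame-based argument as an alternative.

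The main obstacle is conceptual rather than technical: recognizing that $\mu$-tameness, via the superstability transfer of Fact~\ref{ss-transfer}, converts the single hypothesis ``$\mu$-superstable'' into ``superstable in a whole tail of cardinals,'' which is precisely the hypothesis format that Theorem~\ref{sym-from-superstab} (and Theorem~\ref{good-frame-weak-tameness}) consume. Once that observation is made, each step is a direct citation. I would expect no real calculation; the only thing to be careful about is checking that the Hanf-number bookkeeping in Theorem~\ref{sym-from-superstab} is satisfied by the tail of superstability cardinals furnished by Fact~\ref{ss-transfer}, which it is since that fact gives superstability in \emph{all} $\chi \ge \mu$.
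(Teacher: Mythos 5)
Your preferred route---Fact~\ref{ss-transfer} to upgrade $\mu$-superstability to superstability in every $\chi \ge \mu$, then Theorem~\ref{sym-from-superstab} applied directly at $\mu$---is exactly the paper's second proof of Corollary~\ref{mu-symmetry-1}, and it is correct. Your alternative frame-based argument (Theorem~\ref{sym-from-superstab} at $\mu^+$ to get $\mu^+$-symmetry, then Theorem~\ref{good-frame-weak-tameness} to produce a good $\mu^+$-frame on $\Ksatp{\mu^+}_{\mu^+}$, then Theorem~\ref{sym-good-frame}) also goes through, but it is a strict detour: it still routes through Theorem~\ref{sym-from-superstab}, just one cardinal up, so it adds frame-building overhead without removing the dependence on the no-order-property machinery. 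The paper's own first proof offers a genuinely different alternative that you did not reinvent: it bypasses Theorem~\ref{sym-from-superstab} entirely, instead invoking Fact~\ref{bv-sat-fact} to get $\Ksatp{\lambda_0^+}$ an AEC for some large $\lambda_0$, hence $\lambda_0$-symmetry via Fact~\ref{chainsat-sym}, and then pulling that symmetry down to $\mu$ with the downward transfer Theorem~\ref{transfer symmetry}. That route trades the order-property argument of Section~\ref{sym-op-sec} for the averages-based analysis of chains of saturated models from \cite{bv-sat-v3}, so the two proofs genuinely diversify the dependency graph in a way your alternative does not.
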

\begin{proof}[First proof]
  First observe that by Fact \ref{ss-transfer}, $\K$ is superstable in every $\mu' \ge \mu$. By Fact \ref{bv-sat-fact}, there exists $\lambda_0 \ge \mu$ such that $\Ksatp{\lambda_0^+}$ is an AEC. Therefore the hypotheses of Fact \ref{chainsat-sym} are satisfied, so $\K$ has $\lambda_0$-symmetry. By Theorem \ref{transfer symmetry}, $\K$ has $\mu$-symmetry.
\end{proof}
\begin{proof}[Second proof]
  As in the first proof, $\K$ is superstable in every $\mu' \ge \mu$. By Theorem \ref{sym-from-superstab}, $\K$ has $\mu$-symmetry.
\end{proof}

Thus we obtain an improvement on the Hanf number of Fact \ref{bv-sat-fact}:

\begin{corollary}\label{chain-sat-1}
  Assume $\K$ is $\mu$-superstable and $\mu$-tame. For every $\lambda > \mu$, $\Ksatp{\lambda}$ is an AEC with $\LS (\Ksatp{\lambda}) = \lambda$.
\end{corollary}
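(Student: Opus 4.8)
The plan is to reduce everything to two building blocks that are already available: Fact~\ref{union-sat-monica} (together with Remark~\ref{sym-chainsat-rmk}) for successor cardinals, and Lemma~\ref{chainsat-lim} for limit cardinals. First I would observe that by Fact~\ref{ss-transfer}, $\mu$-superstability and $\mu$-tameness give that $\K$ is $\mu'$-superstable for every $\mu' \ge \mu$; moreover $\mu$-tameness trivially implies $\mu'$-tameness for every $\mu' \ge \mu$, since a witnessing model of size $\le \mu$ is also of size $\le \mu'$. Hence Corollary~\ref{mu-symmetry-1}, instantiated with $\mu'$ in place of $\mu$, yields that $\K$ has $\mu'$-symmetry for every $\mu' \ge \mu$.

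Next I would prove the statement by induction on $\lambda > \mu$. Suppose first that $\lambda = \chi^+$ is a successor cardinal; then $\chi \ge \mu$, so $\K$ is $\chi$-superstable, $\chi^+$-superstable, and has $\chi^+$-symmetry. By Remark~\ref{sym-chainsat-rmk} the hypotheses of Fact~\ref{union-sat-monica} are satisfied (with $\mu$ there taken to be $\chi$), so $\Ksatp{\chi^+} = \Ksatp{\lambda}$ is an AEC with $\LS(\Ksatp{\lambda}) = \chi^+ = \lambda$, as desired.

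If instead $\lambda$ is a limit cardinal, then $\mu^+ < \lambda$, and by the induction hypothesis $\Ksatp{\mu'}$ is an AEC with $\LS(\Ksatp{\mu'}) = \mu'$ for every $\mu' \in [\mu^+, \lambda)$. Applying Lemma~\ref{chainsat-lim} with $\lambda_0 := \mu^+$ then gives that $\Ksatp{\lambda}$ is an AEC with $\LS(\Ksatp{\lambda}) = \lambda$, completing the induction.

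I do not expect a serious obstacle: the real content is in the already-established ingredients — the downward symmetry transfer (Theorem~\ref{transfer symmetry}) and the deduction of symmetry from enough superstability (Theorem~\ref{sym-from-superstab}), which together underlie Corollary~\ref{mu-symmetry-1} — and what remains is just the bookkeeping of feeding the successor case into Fact~\ref{union-sat-monica} and the limit case into Lemma~\ref{chainsat-lim}. The one point that deserves a line of care is verifying, at each successor stage $\chi^+$, that every limit model in $\K_{\chi^+}$ is saturated; but this is precisely what Remark~\ref{sym-chainsat-rmk} packages from the uniqueness of limit models (Fact~\ref{uq-limit}) together with $\chi^+$-symmetry, so it need not be re-derived here.
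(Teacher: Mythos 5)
Your proposal is correct and takes essentially the same route as the paper's own proof: Fact~\ref{ss-transfer} plus Corollary~\ref{mu-symmetry-1} (via upward transfer of tameness) to get $\lambda$-superstability and $\lambda$-symmetry for all $\lambda > \mu$, then Fact~\ref{union-sat-monica} for successor cardinals and Lemma~\ref{chainsat-lim} for limit cardinals. The only difference is presentational: you make explicit the induction on $\lambda$ that the paper leaves implicit when invoking Lemma~\ref{chainsat-lim} at limits of limit cardinals.
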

\begin{proof}
  By Fact \ref{ss-transfer} and Corollary \ref{mu-symmetry-1}, $\K$ is $\lambda$-superstable and has $\lambda$-symmetry for any $\lambda > \mu$. By Fact \ref{union-sat-monica}, $\Ksatp{\mu^+}$ is an AEC with $\LS (\Ksatp{\mu^+}) = \mu^+$. We can replace $\mu^+$ with any successor $\lambda > \mu$. To take care of limit cardinals $\lambda$, use Lemma \ref{chainsat-lim}.
\end{proof}

Note that Corollary \ref{chain-sat-1} is an improvement on Fact \ref{bv-sat-fact} and the second proof of Corollary \ref{mu-symmetry-1} does not rely on Fact \ref{bv-sat-fact}. However beyond Fact \ref{bv-sat-fact}, the arguments of \cite{bv-sat-v3} (in particular the use of averages) have other applications (see for example the proof of solvability in \cite[Theorem 4.9]{gv-superstability-v5-toappear}). 

We can also say more on another result of Boney and the second author: \cite[Lemma 6.9.(2)]{bv-sat-v3} implies that, assuming $\mu$-superstability and $\mu$-tameness, there is a $\lambda_0 \ge \mu$ such that if $\seq{M_i : i < \delta}$ is a chain of $\lambda_0$-saturated models where $\delta \ge \lambda_0$ and $M_{i + 1}$ is universal over $M_i$, then $\bigcup_{i < \delta} M_i$ is saturated. We can improve this too:

\begin{corollary}
 Assume $\K$ is $\mu$-superstable and $\mu$-tame. Let $\delta$ be a limit ordinal and $\seq{M_i : i < \delta}$ is increasing in $\K_{\ge \mu}$ and $M_{i + 1}$ is universal over $M_i$ for all $i < \delta$. Let $M_\delta := \bigcup_{i < \delta} M_i$. If $\|M_\delta\| > \LS (\K)$, then $M_\delta$ is saturated.
\end{corollary}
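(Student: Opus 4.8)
The goal is to show that under $\mu$-superstability and $\mu$-tameness, any union $M_\delta = \bigcup_{i<\delta} M_i$ of an increasing chain with each $M_{i+1}$ universal over $M_i$ is saturated, as soon as $\|M_\delta\| > \LS(\K)$. Set $\lambda := \|M_\delta\|$. First I would reduce to the case where $\delta$ is regular and, by passing to a cofinal subsequence, that $\delta = \cf(\delta) \le \lambda$; note also that by cardinality considerations $\delta < \lambda^+$. The key structural input is Proposition \ref{limit model lemma}: since $\K$ is stable in $\lambda$ (by Fact \ref{ss-transfer}, $\K$ is $\lambda$-superstable, hence stable in $\lambda$ with no maximal models of size $\lambda$), the hypothesis that each $M_{i+1}$ is universal over $M_i$ and $\|M_\delta\| = \lambda$ gives that $M_\delta$ is a $(\lambda,\delta)$-limit model over some model containing $M_0$. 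So the statement reduces to: \emph{limit models of size $\lambda$ are saturated}.

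The plan for that reduction is to invoke the uniqueness of limit models together with the fact that saturated models are limit. By Fact \ref{ss-transfer} and Corollary \ref{mu-symmetry-1}, $\K$ is $\lambda$-superstable and has $\lambda$-symmetry, so by Fact \ref{uq-limit} any two limit models of size $\lambda$ over a common base are isomorphic over that base; in fact a standard argument (as in the proof of Fact \ref{sym-reduced-tower} or the back-and-forth in \cite{bv-sat-v3}) shows all limit models of size $\lambda$ are isomorphic. It therefore suffices to exhibit \emph{one} saturated model of size $\lambda$ that is limit. When $\lambda$ is a successor, say $\lambda = \chi^+$, the saturated model of size $\lambda$ is $(\lambda,\lambda)$-limit by the classical construction (build a $\lambda$-chain realizing all types over size-$\chi$ submodels), so we are done. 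When $\lambda$ is a limit cardinal, I would instead argue directly: by Corollary \ref{chain-sat-1}, $\Ksatp{\chi}$ is an AEC with $\LS(\Ksatp{\chi}) = \chi$ for every $\chi \in (\mu,\lambda)$, and hence by Lemma \ref{chainsat-lim} (applied with $\lambda_0 = \mu^+$) the class $\Ksatp{\lambda}$ is an AEC with $\LS(\Ksatp{\lambda}) = \lambda$ — in particular it is nonempty and closed under chains — and any model of size $\lambda$ that is $\chi^+$-saturated for all $\chi < \lambda$ is $\lambda$-saturated.

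An alternative, cleaner route that handles both cases uniformly: show that $M_\delta$ is $\chi^+$-saturated for every $\chi \in [\LS(\K),\lambda)$. Fix such a $\chi$; then $\lambda \ge \chi^+$, and $M_\delta$ is a $(\lambda,\delta)$-limit model. Using Corollary \ref{chain-sat-1}, $\Ksatp{\chi^+}$ is an AEC closed under chains, and one checks that a $(\lambda,\delta)$-limit model of size $\ge \chi^+$ is $\chi^+$-saturated: inside the witnessing chain $\seq{M_i^\gamma : \gamma < \theta}$ for $M_\delta$, cofinally many of the $M_i^\gamma$ of size $\ge \chi^+$ are already $\chi^+$-saturated because they are themselves limit of size $\ge\chi^+$ and each successor is universal over its predecessor, so every size-$\chi$ type over a submodel of $M_\delta$ is realized. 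Since this holds for all $\chi < \lambda$, $M_\delta$ is $\lambda$-saturated.

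\textbf{Main obstacle.} The delicate point is the limit-cardinal case: I must not circularly assume that $\Ksatp{\lambda}$ behaves well before establishing it, so the order of invocation matters — Corollary \ref{chain-sat-1} must be applied at cardinals strictly below $\lambda$ first, then patched together by Lemma \ref{chainsat-lim}. The other subtlety is verifying that a $(\lambda,\delta)$-limit model is $\chi^+$-saturated for $\chi^+ < \lambda$ when $\delta$ is small (e.g. $\delta = \omega$): here one uses that each $M_{i+1}$ being universal over $M_i$, together with $\|M_\delta\| = \lambda > \chi$, forces cofinally many $M_i$ to have size $> \chi$ and hence (being limit over their predecessors, which are universal extensions) to realize all types over their size-$\chi$ submodels. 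This last step is exactly the content already extracted in Proposition \ref{limit model lemma} and the saturation arguments of \cite{bv-sat-v3}, so it should go through without new ideas.
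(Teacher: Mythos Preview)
Your core plan is correct and is exactly the paper's argument: apply Proposition \ref{limit model lemma} to see that $M_\delta$ is a $(\lambda,\cf(\delta))$-limit model (with $\lambda = \|M_\delta\|$), then invoke Fact \ref{ss-transfer} for $\lambda$-superstability, Corollary \ref{mu-symmetry-1} for $\lambda$-symmetry, and Fact \ref{uq-limit} to conclude that $M_\delta$ is saturated. The paper's proof is literally those four citations; your case split on whether $\lambda$ is a successor or limit, and your detours through Corollary \ref{chain-sat-1} and Lemma \ref{chainsat-lim}, are not needed: once limit models of size $\lambda$ are unique, $M_\delta$ is isomorphic over a common base to a $(\lambda,\lambda)$-limit, and a $(\lambda,\lambda)$-limit is saturated directly (any $N \lea M_\delta$ of size $<\lambda$ lies in some stage, and the next stage, being universal over it, realizes every type over $N$).
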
 
\begin{proof}
  By Proposition \ref{limit model lemma}, $M_\delta$ is a $(\lambda, \cf(\delta))$-limit model, where $\lambda = \|M_\delta\|$. By Fact \ref{ss-transfer}, $\K$ is $\lambda$-superstable. By Corollary \ref{mu-symmetry-1}, $\K$ has $\lambda$-symmetry. By Fact \ref{uq-limit}, $M_\delta$ is saturated.
\end{proof}

One can ask whether Corollary \ref{chain-sat-1} can be improved further by also getting the conclusion for $\lambda = \mu$. If $\mu = \LS (\K)$, it is not clear that $\LS (K)$-saturated models are the right notion so perhaps the right question is to be framed in terms of a superlimit. Recall from \cite[Definition N.2.2.4]{shelahaecbook} that a superlimit model is a universal model $M$ with a proper extension so that if $\seq{M_i : i < \delta}$ is an increasing chain with $M \cong M_i$ for all $i < \delta$, then (if $\delta < \|M\|^+$), $M \cong \bigcup_{i < \delta} M_i$. Note that, assuming $\mu$-superstability and uniqueness of limit models of size $\mu$, it is easy to see that the existence of a superlimit of size $\mu$ is equivalent to the statement that the union of an increasing chain of limit models in $\mu$ (of length less than $\mu^+$) is limit.

\begin{question}
  Assume $\K$ is $\mu$-tame and there is a type-full good $\mu$-frame on $\K_\mu$ (or just that $\K$ is $\mu$-superstable). Is there a superlimit model of size $\mu$?
\end{question}

We now turn to good frames and show that, assuming tameness, the statement of Theorem \ref{good-frame-weak-tameness} can be simplified. Recall that previous work of the second author gives a condition under which good frames can be constructed from tameness:

\begin{fact}[Theorem 10.8 in \cite{indep-aec-apal}]\label{good-frame-construct}
  Assume $\K$ is $\mu$-superstable and $\mu$-tame. If for any $\delta < \mu^+$, any chain of length $\delta$ of saturated models in $\K_{\mu^+}$ has a saturated union, then there is a type-full good $\mu^+$-frame with underlying class $\Ksatp{\mu^+}_{\mu^+}$.
\end{fact}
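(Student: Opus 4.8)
Given the machinery already in place, this statement is essentially the specialization of Theorem~\ref{good-frame-weak-tameness} to $\lambda = \mu^+$; moreover the hypothesis on chains of saturated models turns out to be automatic and could be dropped. So the plan is simply to check that the hypotheses of Theorem~\ref{good-frame-weak-tameness} hold at $\lambda=\mu^+$ and then invoke it.

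\textbf{The reduction.} First transfer superstability upward: by Fact~\ref{ss-transfer}, $\mu$-superstability together with $\mu$-tameness gives that $\K$ is $\chi$-superstable for every $\chi\ge\mu$; in particular $\K$ is superstable in every $\chi\in[\mu,\mu^+]$. Next, $\mu$-tameness trivially implies $\chi$-tameness for every $\chi\ge\mu$ (a small model witnessing a difference of types may always be enlarged, using downward L\"owenheim--Skolem, to have size exactly $\chi$), so $\K$ is $\mu^+$-tame, and for the same reason $\K$ is $(\mu,\mu^+)$-weakly tame. Applying Corollary~\ref{mu-symmetry-1} with $\mu^+$ in place of $\mu$ (legitimate since $\K$ is $\mu^+$-superstable and $\mu^+$-tame and $\mu^+\ge\LS(\K)$) yields that $\K$ has $\mu^+$-symmetry. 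With superstability in $\{\mu,\mu^+\}$, $\mu^+$-symmetry, and $(\mu,\mu^+)$-weak tameness all established, Theorem~\ref{good-frame-weak-tameness} at $\lambda=\mu^+$ produces a type-full good $\mu^+$-frame with underlying class $\Ksatp{\mu^+}_{\mu^+}$, as desired. Note that the assumption that chains of saturated models in $\K_{\mu^+}$ have saturated unions is never used: inside the proof of Theorem~\ref{good-frame-weak-tameness} this is \emph{derived}, since $\mu^+$-symmetry gives uniqueness of limit models in $\K_{\mu^+}$ (Fact~\ref{uq-limit}), hence all limit models there are saturated, hence Fact~\ref{union-sat-monica} applies.

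\textbf{Where the difficulty lies.} The reduction above is soft; the real content is packaged inside Theorem~\ref{good-frame-weak-tameness}, and if one wanted a self-contained proof one would have to reproduce it: namely, verify that the relation ``$p$ does not $\mu$-fork over $M$'', restricted to saturated models of size $\mu^+$, satisfies all the good-frame axioms. Most of these (local character, transitivity, continuity, the $\LS$ and amalgamation requirements, density of basic types) are obtained exactly as in the construction of \cite{ss-tame-toappear-v3}; the three that require attention are extension (Proposition~\ref{mu-forking-props}, using superstability throughout $[\mu,\mu^+]$), uniqueness (uniqueness of non-$\mu$-splitting extensions plus the weak tameness just established), and symmetry. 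The symmetry axiom is the part genuinely needing the new ideas of this paper: by Theorem~\ref{sym-good-frame-equiv} it is equivalent, for such a categorical type-full $\mu^+$-frame, to weak non-uniform $\mu^+$-symmetry of $\K$, which in turn follows from the $\mu^+$-symmetry obtained above via Propositions~\ref{easy-implications} and~\ref{unif-usual}. Thus the only nontrivial ingredient beyond \cite{ss-tame-toappear-v3} is the symmetry transfer/deduction, exactly as one would expect.
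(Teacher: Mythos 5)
This statement is labeled a \emph{Fact} and is cited from Theorem~10.8 of the reference \cite{indep-aec-v5}; the paper gives no proof of it, so there is nothing in the paper to compare your argument against directly. What the paper \emph{does} prove is the strictly stronger Corollary~\ref{chain-sat-2} (same conclusion, but \emph{without} the hypothesis on chains of saturated models), and the paper explicitly presents two proofs of that corollary. Your argument is, in effect, a reconstruction of the ``second proof'' of Corollary~\ref{chain-sat-2}: transfer superstability up with Fact~\ref{ss-transfer}, observe that $\mu$-tameness gives $(\mu,\mu^+)$-weak tameness, obtain $\mu^+$-symmetry (your route is via Corollary~\ref{mu-symmetry-1}, whose own second proof is Theorem~\ref{sym-from-superstab} --- the same ingredient the paper feeds into Corollary~\ref{good-frame-weak-tameness-2}), and then invoke Theorem~\ref{good-frame-weak-tameness} at $\lambda=\mu^+$. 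So the reasoning is correct and matches the paper's own treatment of the surrounding material; you are simply observing, as the paper does, that the cited Fact is subsumed by Corollary~\ref{chain-sat-2}.

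Two small remarks. First, your observation that the chain hypothesis is redundant is exactly the content the paper packages as Corollary~\ref{chain-sat-1} (and, inside Theorem~\ref{good-frame-weak-tameness}, as the combination of Fact~\ref{uq-limit}, Fact~\ref{union-sat-monica}, and Lemma~\ref{chainsat-lim}); it is worth saying explicitly that this uses symmetry at and below $\mu^+$, not just $\mu^+$-superstability. Second, a circularity worry one should dismiss explicitly: the symmetry axiom of the frame is verified in Theorem~\ref{good-frame-weak-tameness} via Theorem~\ref{sym-good-frame-equiv}, whose proof invokes the canonicity theorem Fact~\ref{canon-fact}, cited as Theorem~9.7 of the same reference \cite{indep-aec-v5}. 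Since that result precedes Theorem~10.8 there and concerns only canonicity of forking in a given frame (no frame construction), there is no circularity --- but a careful write-up should say so, since you are ``re-proving'' a theorem of \cite{indep-aec-v5} using another theorem of \cite{indep-aec-v5}.
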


Combining this with Fact \ref{bv-sat-fact} it was proven in \cite{bv-sat-v3} that $\mu$-superstability and $\mu$-tameness implies the existence of a good $\lambda$-frame on the saturated models of size $\lambda$, for some high-enough $\lambda > \mu$. Now we show that we can take $\lambda = \mu^+$. We again give two proofs: one uses Theorem \ref{good-frame-weak-tameness} and the other relies on Fact \ref{good-frame-construct}.

\begin{corollary}\label{chain-sat-2}
  If $\K$ is $\mu$-superstable and $\mu$-tame, then there is a type-full good $\mu^+$-frame with underlying class $\Ksatp{\mu^+}_{\mu^+}$.
\end{corollary}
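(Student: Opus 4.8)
The plan is to give two proofs, mirroring the structure promised in the surrounding text.

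\emph{First proof.} The idea is to apply Theorem \ref{good-frame-weak-tameness} with $\lambda = \mu^+$. First I would invoke Fact \ref{ss-transfer} to upgrade $\mu$-superstability to $\chi$-superstability for every $\chi \ge \mu$, so in particular $\K$ is superstable in every $\chi \in [\mu, \mu^+]$. Next, since $\mu$-tameness trivially implies $\mu^+$-tameness (a witnessing model of size $\le \mu$ is also of size $\le \mu^+$), Corollary \ref{mu-symmetry-1} applied at $\mu^+$ (using $\mu^+$-superstability and $\mu^+$-tameness) gives that $\K$ has $\mu^+$-symmetry. Finally, $\mu$-tameness implies $(\mu, \mu^+)$-weak tameness, since restricting attention to saturated models of size $\mu^+$ only weakens the statement. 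With these facts in hand, Theorem \ref{good-frame-weak-tameness} (in the form of the remark following it, with $\lambda = \mu^+$) yields a type-full good $\mu^+$-frame with underlying class $\Ksatp{\mu^+}_{\mu^+}$.

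\emph{Second proof.} Here the idea is to appeal to Fact \ref{good-frame-construct}, whose standing hypotheses ($\mu$-superstability and $\mu$-tameness) we already have; the only thing left to check is its chain hypothesis, namely that for any $\delta < \mu^+$, the union of an increasing chain of length $\delta$ of saturated models in $\K_{\mu^+}$ is again saturated. But a model of size $\mu^+$ is saturated exactly when it is $\mu^+$-saturated, and by Corollary \ref{chain-sat-1} applied with $\lambda = \mu^+$, the class $\Ksatp{\mu^+}$ of $\mu^+$-saturated models is an AEC, hence closed under unions of chains; since a chain of length $\delta < \mu^+ < \mu^{++}$ of models of size $\mu^+$ has union of size $\mu^+$, that union is a saturated member of $\K_{\mu^+}$. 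So the hypothesis of Fact \ref{good-frame-construct} holds and we conclude.

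Neither proof contains a genuinely hard step: the substantive work has been front-loaded into the upward transfer of superstability (Fact \ref{ss-transfer}), the symmetry transfer theorem (Theorem \ref{transfer symmetry}, used inside Corollary \ref{mu-symmetry-1}), and the chain-of-saturated-models result (Corollary \ref{chain-sat-1}). The only points requiring care are bookkeeping: in the first proof one must verify the weak-tameness and symmetry hypotheses of Theorem \ref{good-frame-weak-tameness} at the level $\mu^+$ rather than $\mu$ (which is precisely why the upward transfers are invoked first), and in the second proof one must identify ``saturated in $\K_{\mu^+}$'' with ``$\mu^+$-saturated of size $\mu^+$'' so that Corollary \ref{chain-sat-1} applies verbatim.
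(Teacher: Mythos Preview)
Your proposal is correct and matches the paper's two proofs, with the labels swapped: your ``second proof'' is exactly the paper's ``first proof'' (Fact~\ref{good-frame-construct} plus Corollary~\ref{chain-sat-1}), and your ``first proof'' is essentially the paper's ``second proof'', except that the paper cites Corollary~\ref{good-frame-weak-tameness-2} (which packages Theorem~\ref{good-frame-weak-tameness} together with Theorem~\ref{sym-from-superstab}) rather than invoking Theorem~\ref{good-frame-weak-tameness} directly and obtaining $\mu^+$-symmetry via Corollary~\ref{mu-symmetry-1}. The difference is purely in which already-proved result is quoted for the symmetry step; the underlying argument is the same.
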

\begin{proof}[First proof]
  Combine Fact \ref{good-frame-construct} and Corollary \ref{chain-sat-1}.
\end{proof}
\begin{proof}[Second proof]
  By Fact \ref{ss-transfer}, $\K$ is superstable in every $\mu' \ge \mu$. Now apply Corollary \ref{good-frame-weak-tameness-2} (with $\lambda$ there standing for $\mu^+$ here).
\end{proof}
\begin{remark}\label{chain-sat-2-rmk}
  To obtain a type-full good $\lambda$-frame for $\lambda > \mu^+$, we can either make a slight change to the second proof of Corollary \ref{chain-sat-2}, or use the upward frame transfer of Boney and the second author \cite{ext-frame-jml, tame-frames-revisited-v6-toappear}.
\end{remark}

\section{Symmetry and categoricity}\label{sym-categ-sec}

Theorem \ref{transfer symmetry} has several applications to categorical AECs. We will use the following result, an adaptation of an argument of Shelah and Villaveces \cite[Theorem 2.2.1]{shvi635}, to settings with amalgamation:

\begin{fact}[The Shelah-Villaveces theorem, see in \cite{shvi-notes-v3-toappear}]\label{shelah-villaveces}
  Assume that $\K$ has no maximal models. Let $\mu \ge \LS (K)$. If $\K$ is categorical in a $\lambda > \mu$, then $\K$ is $\mu$-superstable.
\end{fact}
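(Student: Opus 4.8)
The plan is to check, in turn, the four clauses in the definition of $\mu$-superstability (Definition \ref{ss assm}). Clause~(1), $\mu \ge \LS(\K)$, is part of the hypothesis. For clause~(2): categoricity in $\lambda$ gives a model of size $\lambda$, so by downward L\"owenheim-Skolem $\K_\mu \neq \emptyset$; no model in $\K_\mu$ is maximal because $\K$ has no maximal models; and joint embedding in $\K_\mu$ follows from amalgamation together with categoricity --- given $M_1, M_2 \in \K_\mu$, use ``no maximal models'' to extend each to a model of size $\lambda$, these are isomorphic by categoricity, hence $M_1$ and an isomorphic copy of $M_2$ both $\K$-embed into one model of size $\lambda$, and a $\mu$-sized strong submodel containing both (downward L\"owenheim-Skolem) witnesses joint embedding.

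Clauses~(3) (stability in $\mu$) and~(4) (no long $\mu$-splitting chains) carry the weight of the theorem, and both follow the same strategy, going back to \cite[Theorem 2.2.1]{shvi635}: from a failure, construct a family of pairwise non-isomorphic models of size $\lambda$, contradicting categoricity in $\lambda$. For clause~(3): if $\K$ is not stable in $\mu$, fix $M \in \K_\mu$ with $|\gS(M)| \ge \mu^+$; using amalgamation and a bookkeeping argument along an index set of size $\lambda$ (tree- or stationary-set-based), build models of size $\lambda$ realizing prescribed, genuinely different sets of Galois types over copies of $M$, and check they cannot all be isomorphic. This is classical --- see \cite{sh394} or \cite{baldwinbook09}; with amalgamation one may also argue through the Ehrenfeucht-Mostowski model provided by the presentation theorem, which categoricity forces to be the unique model of size $\lambda$.

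For clause~(4), which I expect to be the main obstacle, suppose it fails: there are a limit ordinal $\alpha < \mu^+$, a chain $\seq{M_i : i < \alpha}$ in $\K_\mu$ with $M_{i+1}$ universal over $M_i$, and $p \in \gS(\Union_{i<\alpha} M_i)$ that $\mu$-splits over every $M_i$. The idea is to convert the splitting into branching: at each stage $i$, the fact that $p \rest M_{i+1}$ splits over $M_i$ supplies two $\K$-embeddings over $M_i$ that disagree on the restriction of $p$, and these are used to form two ``genuinely different'' one-step extensions. Iterating along $\alpha$ produces a tree $\seq{M_\eta : \eta \in {}^{\le\alpha}2}$ of models in $\K_\mu$, and for each branch $\eta \in {}^{\alpha}2$ the union $M_\eta := \Union_{s \subset \eta} M_s$ carries a distinguished Galois type recording $\eta$. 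One then extends each $M_\eta$ to $N_\eta \in \K_\lambda$ (using ``no maximal models'', and stability in the cardinals in $[\mu, \lambda)$ from clause~(3) for control), arranging that the pattern of Galois types over $\mu$-sized strong submodels realized in $N_\eta$ still encodes $\eta$; since there are at least two branches, at least two of the $N_\eta$ are non-isomorphic, contradicting categoricity in $\lambda$. The delicate point --- the heart of the Shelah-Villaveces argument --- is precisely that this encoding survives the passage from size $\mu$ to size $\lambda$ (a larger model tends to realize ever more types over its small submodels, so the extensions must be built carefully, omitting the ``wrong'' types). Carrying this out in the presence of amalgamation is exactly \cite[Theorem 6.3]{gv-superstability-v2}.
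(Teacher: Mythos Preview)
The paper gives no proof of this statement: it is imported verbatim as a \emph{Fact}, with a citation to \cite[Theorem 6.3]{gv-superstability-v2} (itself an adaptation of \cite[Theorem 2.2.1]{shvi635} to the amalgamation setting). So there is nothing in the paper to compare your argument against; you and the paper ultimately do the same thing --- defer to the cited reference --- only you first sketch what that reference does.

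As a sketch your outline is broadly faithful to the Shelah--Villaveces strategy, with two caveats. First, for clause~(3), the route you describe first (many non-isomorphic models from instability) is not the standard one under amalgamation; the clean argument is the one you mention second, via Ehrenfeucht--Mostowski models and Shelah's presentation theorem: the categorical model is an EM model, every type over a small $M$ is realized in it, and a counting argument over the skeleton bounds $|\gS(M)|$ (this is \cite[Claim 1.7]{sh394}). Second, for clause~(4), the phrase ``at least two of the $N_\eta$ are non-isomorphic'' undersells what is needed: one must actually control the type-realization pattern in the big models so that the branch $\eta$ can be recovered from $N_\eta$, and this is exactly the delicate point you flag at the end. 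Your closing sentence --- that carrying this out is exactly \cite[Theorem 6.3]{gv-superstability-v2} --- is the honest summary, and it matches the paper's own choice to simply cite that result.
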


\begin{corollary}\label{categoricity symmetry corollary}
  Assume that $\K$ has no maximal models. Suppose $\lambda$ and $\mu$ are cardinals so that $\lambda>\mu\geq\LS(\K)$ and assume that $\K$ is categorical in $\lambda$. Then $\K$ is $\mu$-superstable and it has $\mu$-symmetry if at least one of the following conditions hold:

  \begin{enumerate}
  \item The model of size $\lambda$ is $\mu^+$-saturated.
  \item $\lambda \ge \hanf{\mu}$.
  \end{enumerate}
\end{corollary}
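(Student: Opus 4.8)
The plan is to derive both conclusions from the facts already assembled in the excerpt, splitting into the two cases according to which hypothesis holds.

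\textbf{Superstability.} In both cases, $\K$ has no maximal models and is categorical in $\lambda > \mu \ge \LS(\K)$, so Fact \ref{shelah-villaveces} (the Shelah-Villaveces theorem) immediately gives that $\K$ is $\mu$-superstable. In fact the same argument gives that $\K$ is $\chi$-superstable for every $\chi \in [\mu, \lambda)$, a stronger statement we will want for the symmetry part in case (2).

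\textbf{Symmetry, case (1).} Suppose the model of size $\lambda$ is $\mu^+$-saturated. First I would upgrade this: since $\K$ is categorical in $\lambda$ and hence (by Fact \ref{shelah-villaveces}) superstable in every $\chi \in [\mu, \lambda)$, I want to apply Theorem \ref{categ-sym}, whose hypothesis is exactly ``$\K$ is superstable in every $\chi \in [\mu, \lambda)$ and all models of size $\lambda$ are $\mu^+$-saturated''. By categoricity in $\lambda$, the unique model of size $\lambda$ being $\mu^+$-saturated means \emph{all} models of size $\lambda$ are $\mu^+$-saturated. Hence Theorem \ref{categ-sym} applies directly and yields $\mu$-symmetry.

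\textbf{Symmetry, case (2).} Now suppose $\lambda \ge \hanf{\mu} = \beth_{(2^\mu)^+}$. The idea is to first locate a cardinal $\chi < \lambda$ below which superstability holds at every level and such that the model of size $\chi$ is $\mu^+$-saturated (or more precisely, to obtain $\mu$-symmetry directly from Theorem \ref{sym-from-superstab}). Concretely: by Fact \ref{shelah-villaveces}, $\K$ is superstable in every $\chi \in [\mu, \lambda)$; since $\lambda \ge \hanf{\mu}$, in particular $\K$ is superstable in every $\chi \in [\mu, \hanf{\mu})$. Theorem \ref{sym-from-superstab} says precisely that there is $\lambda_0 < \hanf{\mu}$ such that superstability in every $\chi \in [\mu, \lambda_0)$ implies $\mu$-symmetry; since we have superstability throughout $[\mu, \hanf{\mu}) \supseteq [\mu, \lambda_0)$, we conclude $\K$ has $\mu$-symmetry. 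This is the cleaner route and avoids having to run the Shelah-style weak tameness / saturation transfer by hand.

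\textbf{Main obstacle.} The routine traps here are bookkeeping ones: making sure the hypothesis ``superstable in every $\chi$ in the relevant interval'' is actually available (it is, from Fact \ref{shelah-villaveces} applied at each $\chi < \lambda$, using $\lambda > \chi$), and that in case (1) categoricity genuinely promotes ``the model of size $\lambda$ is $\mu^+$-saturated'' to ``all models of size $\lambda$ are $\mu^+$-saturated'' — true because there is only one such model up to isomorphism, and $\mu^+$-saturation is isomorphism-invariant. There is no real hard step; the content is all in Theorem \ref{categ-sym} and Theorem \ref{sym-from-superstab}, already proven above. One should double-check that the interval endpoints match: Theorem \ref{categ-sym} and Theorem \ref{sym-from-superstab} both want superstability on a half-open interval not including the top, which is exactly what Fact \ref{shelah-villaveces} provides for $\chi < \lambda$.
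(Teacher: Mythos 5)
Your proof is correct and takes essentially the same route as the paper: obtain superstability on $[\mu,\lambda)$ from Fact \ref{shelah-villaveces}, then invoke Theorem \ref{categ-sym} in case (1) and Theorem \ref{sym-from-superstab} in case (2). The extra bookkeeping you spell out (categoricity promoting ``the model'' to ``all models,'' and the interval containment $[\mu,\lambda_0)\subseteq[\mu,\hanf{\mu})\subseteq[\mu,\lambda)$) is correct and just makes explicit what the paper leaves implicit.
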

\begin{proof}
  By Fact \ref{shelah-villaveces}, $\K$ is $\chi$-superstable in every $\chi \in [\mu, \lambda)$. Now:

    \begin{enumerate}
    \item If the model of size $\lambda$ is $\mu^+$-saturated, then by Theorem \ref{categ-sym}, $\K$ has $\mu$-symmetry.
    \item If $\lambda \ge \hanf{\mu}$, then by Theorem \ref{sym-from-superstab}, $\K$ has $\mu$-symmetry.
    \end{enumerate}
\end{proof}

As announced in the introduction, we can combine Corollary \ref{categoricity symmetry corollary} with Fact \ref{uq-limit} to improve on \cite[Theorem 6.5]{sh394}. The following result also improves on Corollary 18 of \cite{vandieren-chainsat-apal}, by removing the successor assumption in the categoricity cardinal and obtaining uniqueness of limit models in much smaller cardinalities as well.

\begin{corollary}\label{categoricity uniqueness corollary}
  Assume that $\K$ has no maximal models. Suppose $\lambda$ and $\mu$ are cardinals so that $\lambda>\mu\geq\LS(\K)$ and assume that $\K$ is categorical in $\lambda$. If either $\cf(\lambda) > \mu$ or $\lambda \ge \hanf{\mu}$, then $\K$ has uniqueness of limit models of cardinality $\mu$.  That is, if $M_0,M_1,M_2\in\K_\mu$ are such that both $M_1$ and $M_2$ are limit models over $M_0$, then $M_1\cong_{M_0}M_2$.
\end{corollary}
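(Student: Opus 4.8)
The plan is to deduce this from Corollary \ref{categoricity symmetry corollary} together with Fact \ref{uq-limit}. Recall that Corollary \ref{categoricity symmetry corollary} already says that, under the standing hypotheses and categoricity in $\lambda$, the class $\K$ is $\mu$-superstable and has $\mu$-symmetry provided \emph{either} the model of size $\lambda$ is $\mu^+$-saturated \emph{or} $\lambda \ge \hanf{\mu}$; and Fact \ref{uq-limit} says that $\mu$-superstability together with $\mu$-symmetry yields exactly the desired conclusion, namely uniqueness of limit models of size $\mu$ over a fixed base, with the isomorphism fixing $M_0$. So in the case $\lambda \ge \hanf{\mu}$ there is nothing left to do, and it only remains to treat the case $\cf(\lambda) > \mu$: there the task is to verify that the (by categoricity in $\lambda$, unique) model of size $\lambda$ is $\mu^+$-saturated, after which part (1) of Corollary \ref{categoricity symmetry corollary} applies and we again finish with Fact \ref{uq-limit}.

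For that verification I would argue classically. By Fact \ref{shelah-villaveces}, $\K$ is $\chi$-superstable, hence in particular stable in $\chi$, for every $\chi \in [\mu, \lambda)$. Using this together with amalgamation and no maximal models, I would build an increasing continuous chain $\seq{M_i : i < \cf(\lambda)}$ with $\mu < \|M_i\| < \lambda$ for all $i$, $\sup_{i < \cf(\lambda)} \|M_i\| = \lambda$, and with the property that every Galois type over every $N \lea M_i$ with $\|N\| \le \mu$ is realized in $M_{i+1}$. Since $\cf(\lambda) > \mu$, every $N \lea M := \bigcup_{i < \cf(\lambda)} M_i$ with $\|N\| \le \mu$ satisfies $N \lea M_i$ for some $i < \cf(\lambda)$, so every Galois type over $N$ is realized in $M_{i+1} \lea M$; thus $M$ is $\mu^+$-saturated. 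By categoricity in $\lambda$, the model of size $\lambda$ is isomorphic to $M$, hence $\mu^+$-saturated, as needed. (This is essentially contained in \cite[Theorem 6.5]{sh394}; see also the surrounding discussion in \cite{baldwinbook09}.)

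The only step that is more than routine is this last one — that $\cf(\lambda) > \mu$ forces the model of size $\lambda$ to be $\mu^+$-saturated — and within it the only delicate point is keeping each $M_i$ of size strictly below $\lambda$ while still realizing all the relevant types. This is precisely where the cofinality hypothesis and stability in every cardinal of $[\mu, \lambda)$ are used, and it is the reason the cofinality case does not simply collapse into the Hanf-number case. Everything else is a direct appeal to the two results quoted above.
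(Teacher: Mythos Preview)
Your proposal is correct and follows essentially the same route as the paper: reduce to Corollary \ref{categoricity symmetry corollary} and Fact \ref{uq-limit}, handling the cofinality case by first showing the model of size $\lambda$ is $\mu^+$-saturated. The paper's proof is terser---it simply asserts that categoricity in $\lambda$, $\cf(\lambda)>\mu$, and Fact \ref{shelah-villaveces} imply $\mu^+$-saturation---while you spell out the standard chain argument; but the content is the same.
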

\begin{proof}
Categoricity in $\lambda$, the assumption that $\cf(\lambda)>\mu$, and Fact \ref{shelah-villaveces} imply  that the model of cardinality $\lambda$ is $\mu^+$-saturated. We can now apply Corollary \ref{categoricity symmetry corollary}, to get that $\K$ is $\mu$-superstable and has $\mu$-symmetry.  Then Fact \ref{uq-limit} finishes the proof.
\end{proof}

Once we have obtained symmetry from a high-enough categoricity cardinal, we can deduce that the model in the categoricity cardinal has some saturation:

\begin{corollary}\label{cor-mu-sat}
  Let $\mu > \LS (\K)$. Assume that $\K$ is categorical in a $\lambda \ge \sup_{\mu_0 < \mu} \hanf{\mu_0^+}$. Then the model of size $\lambda$ is $\mu$-saturated. 
\end{corollary}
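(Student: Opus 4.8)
The plan is to show that the model of size $\lambda$ is $\mu_0^+$-saturated for every $\mu_0 < \mu$; since $\mu$ is a limit cardinal (being $> \LS(\K)$ is not enough, but the statement of the corollary implicitly treats $\mu$ as a limit — indeed $\sup_{\mu_0 < \mu} \hanf{\mu_0^+}$ stabilizes at $\hanf{(\mu^-)^+}$ if $\mu$ is a successor, so really the interesting case is $\mu$ limit), being $\mu_0^+$-saturated for all $\mu_0 < \mu$ gives $\mu$-saturation. Fix then $\mu_0$ with $\LS(\K) \le \mu_0 < \mu$; I want to see that the categoricity model $M$ of size $\lambda$ is $\mu_0^+$-saturated, i.e.\ realizes all Galois types over submodels of size $\mu_0$.

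First I would apply Corollary \ref{categoricity symmetry corollary} (with $\mu$ there standing for $\mu_0^+$ here): since $\lambda \ge \sup_{\mu_0' < \mu}\hanf{(\mu_0')^+} \ge \hanf{\mu_0^+}$, and $\K$ is categorical in $\lambda > \mu_0^+$, we conclude that $\K$ is $\mu_0^+$-superstable and has $\mu_0^+$-symmetry. (To invoke this, note that by Fact \ref{jep-decomp} combined with Remark \ref{jep remark}, or directly since categoricity in $\lambda$ with amalgamation gives joint embedding below $\lambda$, we may assume $\K$ has no maximal models in the relevant interval; one passes to $\K^\ast$ if necessary, which agrees with $\K$ on models of size $\ge \min(\lambda,\chi)$ for $\chi < \hanf{\LS(\K)}$, and this is harmless since $\lambda$ is large.) By $\mu_0^+$-superstability together with $\mu_0^+$-symmetry, Fact \ref{uq-limit} gives uniqueness of limit models of size $\mu_0^+$; moreover by $\mu_0^+$-superstability the class $\K_{\mu_0^+}$ has a limit model, and by the standard fact (in a superstable AEC, a $(\mu_0^+, \mu_0^{++})$-limit model — equivalently, by uniqueness, any limit model — is saturated, since one can build a resolution realizing all types over size-$\mu_0^+$ submodels cofinally) every limit model in $\K_{\mu_0^+}$ is saturated.

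Next I would argue that $M$, the model of size $\lambda$, contains a $\mu_0^+$-saturated submodel of every size in $[\mu_0^+, \lambda]$, and in fact is itself $\mu_0^+$-saturated. The cleanest route: build inside $M$ an increasing continuous resolution $\seq{M_i : i < \lambda}$ with $M_i \in \K_{|i| + \mu_0^+}$, $M = \bigcup_{i<\lambda} M_i$, arranging at successor stages that $M_{i+1}$ is universal over $M_i$ in the appropriate sense — this is possible by stability (which holds in every cardinal in $[\mu_0^+, \lambda)$ by the superstability gotten from Fact \ref{shelah-villaveces}) together with the fact that $M$, being the unique model of size $\lambda$, is universal over each of its small submodels. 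Then for each $\mu_0$-sized submodel $N \lea M$, pick $i$ with $N \lea M_i$; the chain $\seq{M_j : j \in [i, \lambda)}$ has universal successor steps, so its union $M$ is a limit model over $M_i$, hence (restricting attention to a cofinal subchain living in $\K_{\mu_0^+}$, or directly by Proposition \ref{limit model lemma} and Fact \ref{uq-limit} applied cardinal-by-cardinal) $M$ is $\mu_0^+$-saturated, so in particular every type over $N$ is realized in $M$. Since $\mu_0 < \mu$ was arbitrary, $M$ is $\mu_0^+$-saturated for all $\mu_0 < \mu$, hence $\mu$-saturated.

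The main obstacle I anticipate is the bookkeeping in the last paragraph: one must be careful that "limit model" is being taken in the right cardinality at each stage, and that the passage from "$M$ is limit over small $M_i$'s" to "$M$ realizes all types over $\mu_0$-sized sets" genuinely goes through — the honest way is to note that any limit model (in any cardinality $\ge \mu_0^+$, using superstability and symmetry in that cardinality via Corollary \ref{categoricity symmetry corollary} and Fact \ref{uq-limit}) is saturated, and saturation of a $\lambda$-sized model is exactly $\mu_0^+$-saturation for all $\mu_0^+ \le \lambda$ plus an easy union argument. A secondary subtlety is ensuring the no-maximal-models hypothesis needed by Fact \ref{shelah-villaveces} and Corollary \ref{categoricity symmetry corollary}; this is standard via Fact \ref{jep-decomp} and the observation that $\lambda$ is well above the relevant Hanf bounds, so I would only remark on it rather than belabor it.
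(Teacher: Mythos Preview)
Your reduction (show $\mu_0^+$-saturation for every $\mu_0 \in [\LS(\K), \mu)$) and your invocation of Corollary \ref{categoricity symmetry corollary} to obtain $\mu_0^+$-superstability and $\mu_0^+$-symmetry are correct and match the paper's opening moves. The gap is in your resolution argument: when $\cf(\lambda) \le \mu_0$, a submodel $N \lea M$ of size $\mu_0$ need not be contained in any single $M_i$ of your chain $\seq{M_i : i < \lambda}$, so you cannot ``pick $i$ with $N \lea M_i$.'' This is precisely the case that makes the corollary interesting --- its point is to remove Shelah's hypothesis $\cf(\lambda) > \mu$. Your suggested repairs do not work either: there is no ``cofinal subchain living in $\K_{\mu_0^+}$'' once the $M_i$ outgrow $\mu_0^+$, and ``Fact \ref{uq-limit} applied cardinal-by-cardinal'' would require symmetry (hence saturation of limit models) in every cardinal below $\lambda$, but the hypothesis $\lambda \ge \sup_{\mu_0 < \mu} \hanf{\mu_0^+}$ gives you symmetry only in cardinals below $\mu$, not in the interval $[\mu, \lambda)$.

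The paper's route avoids this entirely. After obtaining $\mu_0$- and $\mu_0^+$-superstability together with $\mu_0^+$-symmetry, it invokes Fact \ref{union-sat-monica} (via Remark \ref{sym-chainsat-rmk}) to conclude that $\Ksatp{\mu_0^+}$ is an AEC with $\LS(\Ksatp{\mu_0^+}) = \mu_0^+$. This class is nonempty and hence has arbitrarily large models, so it contains a model of size $\lambda$ --- which by categoricity is \emph{the} model of size $\lambda$. Your ``easy union argument'' at the end is exactly where closure of $\Ksatp{\mu_0^+}$ under chains is needed, and that is the nontrivial content of Fact \ref{union-sat-monica}; once you invoke it, the resolution bookkeeping becomes unnecessary and the argument collapses to the paper's. (Incidentally, your worry about $\mu$ being a successor is a red herring: if $\mu = \mu_0^+$ then $\mu$-saturation is literally $\mu_0^+$-saturation, and the reduction still applies.)
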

\begin{proof}
  By Fact \ref{jep-decomp}, we can assume without loss of generality that $\K$ has no maximal models. We check that the model of size $\lambda$ is $\mu_0^+$-saturated for every $\mu_0 \in [\LS (\K), \mu)$. Fix such a $\mu_0$. By Corollary \ref{categoricity symmetry corollary}, $\K$ is $\mu_0$-superstable, $\mu_0^+$-superstable, and has $\mu_0^+$-symmetry. By Fact \ref{union-sat-monica} $\Ksatp{\mu_0^+}$, the class of $\mu_0^+$-saturated models in $\K_{\ge \mu_0^+}$, is an AEC with Löwenheim-Skolem number $\mu_0^+$. Since it has arbitrarily large models, it must have a model of size $\lambda$, which is unique by categoricity.
\end{proof}

We conclude that categoricity in a high-enough cardinal implies some amount of weak tameness (a stronger result has been conjectured by Grossberg and the first author, see \cite[Conjecture 1.5]{tamenessthree}). We will use the notation from Chapter 14 of \cite{baldwinbook09}: we write $H_1$ for $\hanf{\LS (\K)}$ (see Definition \ref{hanf-def}) and $H_2$ for $\hanf{H_1} = \hanf{\hanf{\LS (\K)}}$.

\begin{corollary}\label{weak-tameness-from-categ}
  Let $\mu \ge \LS (\K)$. Let $\lambda \ge \hanf{\mu^+}$. If $\K$ is categorical in $\lambda$, then there exists $\chi < H_1$ such that $\K$ is $(\chi, \mu)$-weakly tame.
\end{corollary}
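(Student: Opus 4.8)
The plan is to deduce the result from Fact~\ref{weak-tameness-from-categ-fact}, which already delivers $(\chi,\mu)$-weak tameness with $\chi < \hanf{\LS(\K)} = H_1$ once we know that $\mu \ge \hanf{\LS(\K)}$ and that the model of size $\lambda$ is $\mu^+$-saturated. So the proof will split according to whether $\mu$ is small or large relative to $\hanf{\LS(\K)}$.

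First I would dispose of the case $\LS(\K) \le \mu < \hanf{\LS(\K)}$. Here I would simply take $\chi := \mu$, which is $< H_1$, and note that $(\mu,\mu)$-weak tameness is vacuous: for a saturated $M \in \K_\mu$ and distinct $p,q \in \gS(M)$, the model $M_0 := M$ itself witnesses $p \rest M_0 \ne q \rest M_0$.

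In the main case $\mu \ge \hanf{\LS(\K)}$, the substantive step is to show the model of size $\lambda$ is $\mu^+$-saturated, which I would extract from Corollary~\ref{cor-mu-sat} applied with $\mu^+$ in place of its parameter. The hypothesis $\mu^+ > \LS(\K)$ is automatic, and the required inequality $\lambda \ge \sup_{\mu_0 < \mu^+} \hanf{\mu_0^+}$ simplifies: since $\hanf{\cdot}$ is non-decreasing and the cardinals below $\mu^+$ are precisely those $\le \mu$, the supremum equals $\hanf{\mu^+}$, and $\lambda \ge \hanf{\mu^+}$ is exactly our hypothesis. With $\mu^+$-saturation established, and noting $\lambda \ge \hanf{\mu^+} > \mu \ge \hanf{\LS(\K)}$, I would invoke Fact~\ref{weak-tameness-from-categ-fact} to obtain $\chi < \hanf{\LS(\K)} = H_1$ with $\K$ being $(\chi,\mu)$-weakly tame.

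The only real bookkeeping --- and the closest thing to an obstacle --- is recognizing that the supremum appearing in Corollary~\ref{cor-mu-sat} collapses to $\hanf{\mu^+}$, and remembering to cover the small-$\mu$ regime separately, since Fact~\ref{weak-tameness-from-categ-fact} requires $\mu \ge \hanf{\LS(\K)}$; apart from that, the argument is a chain of citations.
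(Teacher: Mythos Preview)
Your proposal is correct and follows the same route as the paper: apply Corollary~\ref{cor-mu-sat} (with $\mu^+$ in place of $\mu$) to obtain $\mu^+$-saturation of the categorical model, then invoke Fact~\ref{weak-tameness-from-categ-fact}. Your observation that $\sup_{\mu_0 < \mu^+} \hanf{\mu_0^+} = \hanf{\mu^+}$ is exactly the bookkeeping the paper leaves implicit.

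Your case split is in fact a genuine improvement over the paper's written proof. The paper's two-line argument cites Fact~\ref{weak-tameness-from-categ-fact} directly, but that fact requires $\mu \ge \hanf{\LS(\K)} = H_1$, while the corollary as stated only assumes $\mu \ge \LS(\K)$. You correctly noticed this and handled the range $\LS(\K) \le \mu < H_1$ separately via the trivial observation that $(\mu,\mu)$-weak tameness always holds (take $M_0 = M$), so $\chi := \mu < H_1$ works. The paper's applications of this corollary (Corollaries~\ref{downward-categ} and~\ref{good-frame-categ}) all have $\mu \ge H_1$, so the gap is harmless in context, but your version is the one that actually matches the stated hypothesis.
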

\begin{proof}
  By Corollary \ref{cor-mu-sat}, the model of size $\lambda$ is $\mu^+$-saturated. Now apply Fact \ref{weak-tameness-from-categ-fact}.
\end{proof}

We can derive a downward categoricity transfer. We will use the following fact, given by the proof of \cite[Theorem 14.9]{baldwinbook09} (originally \cite[II.1.6]{sh394}):

\begin{fact}\label{downward-categ-fact}
  If $\K$ is categorical in a $\lambda > H_2$, $\K$ is $(\chi, H_2)$-weakly tame for some $\chi < H_1$, and the model of size $\lambda$ is $\chi$-saturated, then $\K$ is categorical in $H_2$.
\end{fact}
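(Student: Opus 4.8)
The plan is to derive this from the classical downward categoricity transfer of Shelah \cite[II.1.6]{sh394} (streamlined in \cite[Chapter 14]{baldwinbook09}); I sketch the argument. The target is to show that \emph{every} model of size $H_2$ is saturated: once this is known, amalgamation together with joint embedding (which we may assume by Fact \ref{jep-decomp}, since $\lambda > H_2$ is well above the Hanf-number bound appearing there) and the uniqueness of saturated models give categoricity in $H_2$. First I would use Fact \ref{jep-decomp} again to reduce to the case that $\K$ has no maximal models (it does not change $\K_{H_2}$, nor categoricity in $\lambda$); then Fact \ref{shelah-villaveces} yields that $\K$ is $H_2$-superstable, so in particular $\K$ is stable in $H_2$ and saturated models of size $H_2$ exist and are unique. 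The work is therefore to prove that an arbitrary $N \in \K_{H_2}$ is $H_2$-saturated.

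For this I would invoke Shelah's presentation theorem, which presents $\K$ as a $\mathrm{PC}$ class and hence, for a suitable Ehrenfeucht--Mostowski template $\Phi$, provides a functor $I \mapsto \mathrm{EM}_\tau(I,\Phi) \in \K$ from linear orders to $\K$ with the usual indiscernibility. The double Hanf number $H_2 = \hanf{\hanf{\LS(\K)}}$ is used twice: at the level $H_1$, to fix a template $\Phi$ whose EM models over well orders omit every Galois type omitted by the categoricity model, and at the level $H_2$, to run the ``no Vaughtian pair'' blow-up uniformly for linear orders of every size below $\lambda$. Concretely: categoricity forces $\mathrm{EM}_\tau(W,\Phi)$ to be isomorphic to the model of size $\lambda$ for each well order $W$ of size $\lambda$, so this EM model is $\chi$-saturated by hypothesis; a standard argument then rules out any pair $N_0 \lea N_1$ in $\K$ with $\|N_0\| = \|N_1\|$ in $[H_1,\lambda)$, together with an $M \lea N_0$ of size $\chi$ and a $p \in \gS(M)$ that is realized in the categoricity model but omitted by every element of $N_1 \setminus N_0$, since stretching such a pair along a long enough well order would produce a model of size $\lambda$ failing $\chi$-saturation, contradicting categoricity. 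Unwinding this, every $N \in \K_{H_2}$ realizes all Galois types over its submodels of size $\chi$.

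The last step upgrades ``$N$ realizes all types over its small submodels'' to ``$N$ is $H_2$-saturated'', and this is precisely where $(\chi,H_2)$-weak tameness enters: given $M_0 \lea N$ with $\|M_0\| < H_2$ and $p \in \gS(M_0)$, I would first use $H_2$-superstability to reduce to the case that $M_0$ is saturated (replace it by a saturated model of the same size containing it inside a suitable extension, and transfer the problem), so that weak tameness makes $p$ determined by its restriction to some $M \lea M_0$ of size $\chi$; that restriction is realized in $N$ by the previous step, hence so is $p$. The hard part will be the Ehrenfeucht--Mostowski ``no Vaughtian pair'' blow-up of the second paragraph: tracking exactly which types EM models over well orders realize, exploiting the $\chi$-saturation of the categoricity model to forbid the bad configuration, and threading weak tameness in at the end. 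In practice this statement is simply quoted from \cite[Theorem 14.9]{baldwinbook09}, and that is what I would ultimately do.
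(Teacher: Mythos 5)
Your ultimate plan --- cite the proof of \cite[Theorem 14.9]{baldwinbook09} (originally \cite[II.1.6]{sh394}) --- is exactly what the paper does: this Fact appears there with no proof of its own, only that attribution. One small caution on your sketch: $(\chi, H_2)$-weak tameness as defined here governs types over \emph{saturated models of size exactly $H_2$}, so your reduction to a saturated $M_0$ of size $<H_2$ would implicitly need weak tameness at those smaller cardinals as well; since you defer to the cited theorem in the end this does not affect the outcome.
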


\begin{corollary}\label{downward-categ}
  If $\K$ is categorical in a $\lambda \ge \hanf{H_2^+}$, then $\K$ is categorical in $H_2$.
\end{corollary}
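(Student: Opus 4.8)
The plan is to assemble the corollary from three earlier results: Corollary \ref{weak-tameness-from-categ} to extract weak tameness, Corollary \ref{cor-mu-sat} to get saturation of the categoricity model, and Fact \ref{downward-categ-fact} to push categoricity down to $H_2$. First, since $\K$ is categorical in $\lambda$ it has joint embedding in $\lambda$, so by Fact \ref{jep-decomp} there are $\chi_0 < H_1$ and an AEC $\K^\ast$ with no maximal models, amalgamation, joint embedding, the same strong substructure relation and $\LS$, and $\K_{\ge \chi_0} = (\K^\ast)_{\ge \chi_0}$. Since $\lambda > H_2 > H_1 > \chi_0$ and $H_1, H_2$ are the same cardinals for $\K$ and $\K^\ast$, it suffices to prove the statement under the extra hypothesis that $\K$ has no maximal models, which I assume from now on.

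Next I would invoke Corollary \ref{weak-tameness-from-categ} with $\mu := H_2$: the hypothesis $\lambda \ge \hanf{H_2^+}$ is precisely what that corollary requires, so it produces some $\chi < H_1$ with $\K$ being $(\chi, H_2)$-weakly tame. Then I would invoke Corollary \ref{cor-mu-sat} with $\mu := H_2^+$ (which is $> \LS(\K)$); here one checks that $\sup_{\mu_0 < H_2^+}\hanf{\mu_0^+} = \hanf{H_2^+} \le \lambda$, the supremum being attained at $\mu_0 = H_2$ by monotonicity of $\hanf{\cdot}$. Hence the corollary applies and gives that the model of size $\lambda$ is $H_2^+$-saturated; in particular it is $\chi$-saturated, since $\chi < H_1 < H_2^+$.

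Finally, because $\lambda \ge \hanf{H_2^+} = \beth_{(2^{H_2^+})^+} > H_2$, all the hypotheses of Fact \ref{downward-categ-fact} are in place: $\K$ is categorical in $\lambda > H_2$, it is $(\chi, H_2)$-weakly tame for some $\chi < H_1$, and the model of size $\lambda$ is $\chi$-saturated. Therefore $\K$ is categorical in $H_2$, which (after undoing the reduction to $\K^\ast$) completes the proof.

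This argument is essentially bookkeeping, so the only point requiring care — what I expect to be the main obstacle — is verifying that the single hypothesis $\lambda \ge \hanf{H_2^+}$ is simultaneously strong enough for all three invocations: it must dominate $\hanf{H_2^+}$ (for Corollary \ref{weak-tameness-from-categ}), the supremum $\sup_{\mu_0 < H_2^+}\hanf{\mu_0^+}$ (for Corollary \ref{cor-mu-sat}), and $H_2$ itself (so that Fact \ref{downward-categ-fact} applies); and one must keep straight that the cardinal $\chi$ produced by weak tameness lies below $H_1$, so that the $H_2^+$-saturation coming out of Corollary \ref{cor-mu-sat} is more than enough to supply the $\chi$-saturation that Fact \ref{downward-categ-fact} demands.
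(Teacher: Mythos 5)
Your argument is correct and is essentially the paper's own proof: both assemble Corollary \ref{weak-tameness-from-categ}, Corollary \ref{cor-mu-sat}, and Fact \ref{downward-categ-fact} in the same order. The only cosmetic differences are that you invoke Corollary \ref{cor-mu-sat} with $\mu = H_2^+$ rather than directly with $\mu = \chi$ (either works, and you carefully verify the $\sup$ computation), and you perform the Fact \ref{jep-decomp} reduction explicitly even though the two cited corollaries already handle it internally.
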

\begin{proof}
  By Corollary \ref{weak-tameness-from-categ}, there exists $\chi < H_1$ such that $\K$ is $(\chi, H_2)$-weakly tame. By Corollary \ref{cor-mu-sat}, the model of size $\lambda$ is $\chi$-saturated. Now apply Fact \ref{downward-categ-fact}.
\end{proof}

We obtain in particular:

\begin{corollary}\label{fixed-point-downward}
  Let $\mu = \beth_\mu > \LS (\K)$. If $\K$ is categorical in some $\lambda > \mu$, then $\K$ is categorical in $\mu$.
\end{corollary}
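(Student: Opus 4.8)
The plan is to reduce Corollary \ref{fixed-point-downward} to Corollary \ref{downward-categ}. Recall that $H_1 = \hanf{\LS(\K)} = \beth_{(2^{\LS(\K)})^+}$ and $H_2 = \hanf{H_1} = \beth_{(2^{H_1})^+}$. The key observation is that if $\mu = \beth_\mu > \LS(\K)$, then $\mu$ is a fixed point of the $\beth$ function above $\LS(\K)$, so in particular $\mu > H_1$ and $\mu > H_2$: indeed, $H_1 = \beth_{(2^{\LS(\K)})^+}$ is obtained by iterating $\beth$ for $(2^{\LS(\K)})^+ < \mu$ steps starting below $\mu$, and since $\mu$ is a $\beth$-fixed point closed under this iteration we get $H_1 < \mu$; applying the same reasoning one level up gives $H_2 < \mu$ as well. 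Moreover, since $\mu$ is a $\beth$-fixed point and $H_2 < \mu$, we also have $\hanf{H_2^+} = \beth_{(2^{H_2^+})^+} < \mu \le \lambda$ (here using $2^{H_2^+} < \mu$ because $\mu$ is a strong limit, being a $\beth$-fixed point, hence also $(2^{H_2^+})^+ < \mu$, and then closure of $\mu$ under $\beth$).

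With these inequalities in hand, the argument runs as follows. First I would apply Corollary \ref{downward-categ}: since $\K$ is categorical in $\lambda \ge \hanf{H_2^+}$ (which holds as $\lambda > \mu > \hanf{H_2^+}$), we conclude that $\K$ is categorical in $H_2$. Now I have categoricity in both $H_2$ and $\lambda$, with $H_2 < \mu < \lambda$. To get categoricity in the intermediate cardinal $\mu$, I would like to again invoke a downward transfer, but now starting the analysis at the base cardinal $H_2$ rather than $\LS(\K)$. The cleanest route is to pass to the sub-AEC given by Fact \ref{jep-decomp} (or simply note that categoricity in $H_2 \ge \LS(\K)$ plus amalgamation gives joint embedding in the relevant interval, hence no maximal models below $\lambda$ via the usual argument), so that we may assume $\K$ has no maximal models, and then reapply the machinery with $H_2$ playing the role of a new Löwenheim--Skolem-type bound.

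More concretely, I would argue as follows: by Corollary \ref{cor-mu-sat} (applied with $\mu$ here and noting $\lambda \ge \sup_{\mu_0 < \mu}\hanf{\mu_0^+}$, which holds since $\mu$ is a $\beth$-fixed point so this supremum is just $\mu \le \lambda$), the model of size $\lambda$ is $\mu$-saturated; in particular it is $\chi$-saturated for the $\chi < H_1$ coming from Corollary \ref{weak-tameness-from-categ}, which tells us $\K$ is $(\chi, \mu)$-weakly tame. Then I would invoke a version of the downward categoricity transfer of Fact \ref{downward-categ-fact}, but with $H_2$ replaced by $\mu$: the proof of \cite[Theorem 14.9]{baldwinbook09} shows that if $\K$ is categorical in some $\lambda > \mu$, is $(\chi, \mu)$-weakly tame for some $\chi < H_1$, and the model of size $\lambda$ is $\chi$-saturated, then $\K$ is categorical in $\mu$ — here we use $\mu = \beth_\mu$ so that $\mu$ satisfies the arithmetic closure needed for the Ehrenfeucht--Mostowski and stability-spectrum arguments in that proof to go through with $\mu$ in the role of $H_2$. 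This yields categoricity in $\mu$, as desired.

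The main obstacle I anticipate is making precise that the proof of Fact \ref{downward-categ-fact} (i.e.\ \cite[Theorem 14.9]{baldwinbook09}, resting on \cite[II.1.6]{sh394}) genuinely relativizes so that $\mu$ (any $\beth$-fixed point above $\LS(\K)$) can play the role of $H_2$. The point is that $H_2$ enters that argument only as a cardinal large enough to (a) exceed $H_1$ and hence support the relevant tameness and EM-model constructions, and (b) be closed under the cardinal arithmetic used (which a $\beth$-fixed point certainly is, being a strong limit). One must check that weak tameness at level $\mu$ together with $\mu$-saturation of the model of size $\lambda$ suffices to run the Shelah-style categoricity-transfer argument verbatim. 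This bookkeeping — confirming no hidden use of $\cf(H_2)$ or of $H_2$ being a successor-of-a-power appears — is the delicate part; everything else is arithmetic of the $\beth$ function and routine invocation of the corollaries already proved.
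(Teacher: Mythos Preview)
Your approach diverges from the paper's, and it carries a concrete gap. You try to push categoricity from $\lambda$ directly down to the single cardinal $\mu$ by relativizing Fact~\ref{downward-categ-fact} so that $\mu$ plays the role of $H_2$. But along the way you invoke Corollary~\ref{weak-tameness-from-categ} to get $(\chi,\mu)$-weak tameness, and that corollary requires $\lambda \ge \hanf{\mu^+}$; here you only have $\lambda > \mu$ (e.g.\ $\lambda = \mu^+$ is allowed), so the hypothesis fails. Falling back to Fact~\ref{weak-tameness-from-categ-fact} does not help either, since that needs the model of size $\lambda$ to be $\mu^+$-saturated, whereas Corollary~\ref{cor-mu-sat} only gives you $\mu$-saturation. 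On top of this, as you yourself flag, the relativization of Fact~\ref{downward-categ-fact} to an arbitrary $\beth$-fixed point is left as an exercise in re-reading Shelah's proof, which is exactly the kind of external dependency the paper is trying to avoid.

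The paper sidesteps both problems with a cleaner device: rather than relativizing the \emph{target} cardinal in Fact~\ref{downward-categ-fact}, it relativizes the \emph{base} of the AEC. For each $\mu_0 < \mu$ it applies Corollary~\ref{downward-categ} verbatim to the AEC $\K_{\ge \mu_0}$ (whose $H_2$, computed from $\LS(\K_{\ge \mu_0}) = \mu_0$, lies strictly below $\mu$ since $\mu = \beth_\mu$, and for which the threshold $\hanf{H_2^+}$ is also below $\mu < \lambda$). This yields categoricity in a cofinal set of cardinals below $\mu$. By Fact~\ref{shelah-villaveces} $\K$ is stable in every cardinal in $[\LS(\K),\mu)$, so the models in those categoricity cardinals are saturated, and hence every model of size $\mu$ is saturated. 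This uses Corollary~\ref{downward-categ} as a black box, never reopens Baldwin's or Shelah's proof, and never needs $(\chi,\mu)$-weak tameness or $\mu^+$-saturation of the $\lambda$-model.
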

\begin{proof}
  Without loss of generality (Fact \ref{jep-decomp}), $\K$ has no maximal models. Applying Corollary \ref{downward-categ} to $\K_{\ge \mu_0}$ for each $\mu_0 < \mu$, we get that $\K$ is categorical in unboundedly many $\mu_0 < \mu$. By (for example) Fact \ref{shelah-villaveces}, $\K$ is stable in every $\mu_0 < \mu$. Now let $M \in \K_\mu$. We show that $M$ is saturated, and this will imply categoricity in $\mu$. Let $N_0 \in \K_{<\mu}$ be such that $N_0 \lea M$. We want to show that every Galois type over $N_0$ is realized in $M$. Fix a categoricity cardinal $\mu_0 < \mu$ such that $\|N_0\| < \mu_0$. Let $N_0' \in \K_{\mu_0}$ be such that $N_0 \lea N_0' \lea M$. Then $N_0'$ is saturated (use stability in $\mu_0$ to build a $\theta^+$-saturated model of size $\mu_0$ for each $\theta < \mu_0$, then use categoricity in $\mu_0$). Thus $N_0'$ (and therefore $M$) realizes all Galois types over $N_0$, as desired.
\end{proof}

We can also build a good frame assuming categoricity in a high-enough cardinal (of arbitrary cofinality).

\begin{corollary}\label{good-frame-categ}
  Let $\mu \ge H_1$. Assume that $\K$ is categorical in a $\lambda > \mu$. If the model of size $\lambda$ is $\mu^+$-saturated (e.g.\ if $\cf (\lambda) > \mu$ or by Corollary \ref{cor-mu-sat} if $\lambda \ge \hanf{\mu^+}$), then there exists a type-full good $\mu$-frame with underlying class $\Ksatp{\mu}_{\mu}$.
\end{corollary}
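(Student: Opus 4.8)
The plan is to reduce this statement to Theorem \ref{good-frame-weak-tameness}, whose hypotheses are: superstability in every $\chi \in [\mu_0, \mu]$ for some $\mu_0$, $\mu$-symmetry (in the largest cardinal), and $(\mu_0, \mu)$-weak tameness. So first I would arrange to have no maximal models: by Fact \ref{jep-decomp} (applied as in the proof of Corollary \ref{cor-mu-sat}), we may pass to a subclass $\K^\ast$ with the same models in all sufficiently large cardinals, amalgamation, joint embedding, and no maximal models; since $\mu \ge H_1 = \hanf{\LS(\K)}$ and $\lambda > \mu$, both $\mu$ and $\lambda$ are in the range where $\K$ and $\K^\ast$ agree, so categoricity in $\lambda$ and the conclusion transfer between them. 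Hence assume $\K$ has no maximal models.

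Next, I would produce the needed instances of superstability and symmetry. Since the model of size $\lambda$ is $\mu^+$-saturated by hypothesis (and the parenthetical remarks recall why this holds when $\cf(\lambda) > \mu$, via Fact \ref{shelah-villaveces}, or when $\lambda \ge \hanf{\mu^+}$, via Corollary \ref{cor-mu-sat}), Fact \ref{shelah-villaveces} gives that $\K$ is $\chi$-superstable for every $\chi \in [\LS(\K), \lambda)$, in particular for every $\chi \in [\LS(\K), \mu]$. For symmetry: apply Corollary \ref{categoricity symmetry corollary} with $\mu$ there standing for $\mu$ here — its hypothesis ``the model of size $\lambda$ is $\mu^+$-saturated'' is exactly what we have — to conclude that $\K$ has $\mu$-symmetry.

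Then I would obtain weak tameness. Since $\mu \ge H_1$, I can apply Corollary \ref{weak-tameness-from-categ} — but note that Corollary requires $\lambda \ge \hanf{\mu^+}$, which we do not have in general (we only assume $\mu^+$-saturation of the model of size $\lambda$). The cleaner route is to invoke Fact \ref{weak-tameness-from-categ-fact} directly: we have $\lambda > \mu$, $\mu \ge H_1 = \hanf{\LS(\K)}$, $\K$ categorical in $\lambda$, and the model of size $\lambda$ is $\mu^+$-saturated, so Fact \ref{weak-tameness-from-categ-fact} yields $\chi < H_1 \le \mu$ with $\K$ $(\chi, \mu)$-weakly tame. (If one wants the frame's underlying class to literally be the saturated models in $\K_\mu$, note a $\chi$-saturated weakly tame situation suffices for the uniqueness/extension arguments in Theorem \ref{good-frame-weak-tameness}, since weak tameness over the larger $\mu_0 = \chi$ is what is used.) Now I would apply Theorem \ref{good-frame-weak-tameness} with $\lambda$ there standing for $\mu$ here, $\mu$ there standing for $\chi$ here: $\K$ is superstable in every $\chi' \in [\chi, \mu]$, has $\mu$-symmetry, and is $(\chi, \mu)$-weakly tame, so there is a type-full good $\mu$-frame with underlying class $\Ksatp{\mu}_\mu$. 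This is exactly the conclusion.

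The main obstacle I anticipate is bookkeeping around the cardinal arithmetic and the precise form of weak tameness required: Theorem \ref{good-frame-weak-tameness} is stated with a parameter $\mu$ for the small models and $\lambda$ for the frame, and one must check that the value $\chi < H_1$ extracted from Fact \ref{weak-tameness-from-categ-fact} is indeed $\ge \LS(\K)$ and that superstability holds throughout $[\chi, \mu]$ — both follow since $\chi \ge \LS(\K)$ automatically and superstability in this whole interval was already established from Fact \ref{shelah-villaveces}. A secondary subtlety is confirming that the reduction to no maximal models via Fact \ref{jep-decomp} does not disturb the saturation hypothesis on the model of size $\lambda$, but since $\K$ and $\K^\ast$ have literally the same models of size $\lambda$ (as $\lambda \ge \min(\lambda,\chi_{\text{Fact \ref{jep-decomp}}})$), this is immediate.
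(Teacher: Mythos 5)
Your proposal matches the paper's proof in all essentials: both pass to a subclass with no maximal models via Fact \ref{jep-decomp}, extract $(\chi,\mu)$-weak tameness directly from Fact \ref{weak-tameness-from-categ-fact} (not Corollary \ref{weak-tameness-from-categ}, exactly for the reason you flag), get superstability and symmetry in the interval $[\chi,\mu]$ via Corollary \ref{categoricity symmetry corollary} (which itself rests on Fact \ref{shelah-villaveces}), and then apply Theorem \ref{good-frame-weak-tameness} with $(\mu,\lambda)$ there standing for $(\chi,\mu)$ here. The only difference is that you unpack the superstability step explicitly via Fact \ref{shelah-villaveces} and observe that only $\mu$-symmetry is needed (Theorem \ref{good-frame-weak-tameness} handles the downward transfer internally), whereas the paper records symmetry at every $\chi'\in[\chi,\mu]$; this is cosmetic.
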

\begin{proof}
  By Fact \ref{jep-decomp}, we can assume without loss of generality that $\K$ has no maximal models. By Fact \ref{weak-tameness-from-categ-fact}, there exists $\chi < H_1$ such that $\K$ is $(\chi, \mu)$-weakly tame. By Corollary \ref{categoricity symmetry corollary}, $\K$ has is $\chi'$-superstable and has $\chi'$-symmetry for every $\chi' \in [\chi, \mu]$. In particular, $\K$ has $\mu$-symmetry. Now apply Theorem \ref{good-frame-weak-tameness} with $(\mu, \lambda)$ there standing for $(\chi, \mu)$-here.
\end{proof}

The Hanf number of Corollary \ref{good-frame-categ} can be improved if we assume that the AEC is tame. We state a more general corollary summing up our results in tame categorical AECs:

\begin{corollary}\label{categ-tameness}
  Assume that $\K$ has no maximal models and is $\LS (\K)$-tame. If $\K$ is categorical in a $\lambda > \LS (\K)$, then:

  \begin{enumerate}
    \item For any $\mu \ge \LS (\K)$, $\K$ has uniqueness of limit models in $\mu$: if $M_0, M_1, M_2 \in \K_{\mu}$ are such that both $M_1$ and $M_2$ are limit over models $M_0$, then $M_1 \cong_{M_0} M_2$.
    \item For any $\mu > \LS (\K)$, $\Ksatp{\mu}$ is an AEC with $\LS (\Ksatp{\mu}) = \mu$ and there exists a type-full good $\mu$-frame with underlying class $\Ksatp{\mu}_{\mu}$.
  \end{enumerate}
\end{corollary}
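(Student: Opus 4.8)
The plan is to reduce everything to the tools already developed in Sections \ref{sym-tame-sec} and \ref{sym-categ-sec}, using that $\LS(\K)$-tameness upgrades "enough instances of superstability" to "superstability in all cardinals". First I would invoke Fact \ref{shelah-villaveces}: since $\K$ has no maximal models and is categorical in $\lambda > \LS(\K)$, it is $\LS(\K)$-superstable. Combined with the hypothesis of $\LS(\K)$-tameness, Fact \ref{ss-transfer} gives that $\K$ is $\mu'$-superstable for every $\mu' \ge \LS(\K)$; in particular $\K_{\ge \LS(\K)}$ is stable in all cardinals and has no maximal models. This is the uniform superstability input needed everywhere below. Note also that $\mu$-tameness for all $\mu \ge \LS(\K)$ follows trivially from $\LS(\K)$-tameness by monotonicity of tameness.

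For part (1), fix $\mu \ge \LS(\K)$. By Corollary \ref{mu-symmetry-1} (using $\mu$-superstability and $\mu$-tameness, both now available), $\K$ has $\mu$-symmetry. Then Fact \ref{uq-limit} gives uniqueness of limit models in $\mu$: whenever $M_0, M_1, M_2 \in \K_\mu$ with $M_1, M_2$ limit over $M_0$, we get $M_1 \cong_{M_0} M_2$. This handles (1) for every $\mu \ge \LS(\K)$, including $\mu = \LS(\K)$.

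For part (2), fix $\mu > \LS(\K)$. That $\Ksatp{\mu}$ is an AEC with $\LS(\Ksatp{\mu}) = \mu$ is exactly Corollary \ref{chain-sat-1} (applied with the role of $\mu$ there played by $\LS(\K)$, and $\lambda$ there played by $\mu$ here): $\K$ is $\LS(\K)$-superstable and $\LS(\K)$-tame, so for every $\lambda > \LS(\K)$, $\Ksatp{\lambda}$ is an AEC with $\LS(\Ksatp{\lambda}) = \lambda$. For the good frame: since $\mu > \LS(\K)$, we may write $\mu = \chi^+$ if $\mu$ is a successor, or handle $\mu$ limit directly. In the successor case $\mu = \chi^+$ with $\chi \ge \LS(\K)$, apply Corollary \ref{chain-sat-2} with its $\mu$ standing for our $\chi$: since $\K$ is $\chi$-superstable and $\chi$-tame, there is a type-full good $\chi^+$-frame with underlying class $\Ksatp{\chi^+}_{\chi^+} = \Ksatp{\mu}_{\mu}$. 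For $\mu$ a limit cardinal, we instead invoke Theorem \ref{good-frame-weak-tameness} with $(\mu, \lambda)$ there standing for $(\LS(\K), \mu)$ here (or any $(\chi, \mu)$ with $\LS(\K) \le \chi < \mu$): $\K$ is superstable in every cardinal in $[\LS(\K), \mu]$, is $(\LS(\K), \mu)$-weakly tame (immediate from $\LS(\K)$-tameness), and has $\mu$-symmetry by Corollary \ref{mu-symmetry-1}, so there is a type-full good $\mu$-frame on $\Ksatp{\mu}_{\mu}$; and Corollary \ref{chain-sat-1} confirms $\Ksatp{\mu}$ is genuinely an AEC so the underlying class makes sense.

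The only mild subtlety — and the closest thing to an obstacle — is bookkeeping the cardinal shifts so that Corollary \ref{chain-sat-2} is applied at $\chi$ with $\mu = \chi^+$, while Corollary \ref{chain-sat-1} and Theorem \ref{good-frame-weak-tameness} cover limit $\mu$; once the uniform superstability from Fact \ref{ss-transfer} is in hand, no genuinely new argument is required, only careful citation of the already-proven results. I would state the proof as: "By Fact \ref{shelah-villaveces} and Fact \ref{ss-transfer}, $\K$ is superstable and tame in every cardinal $\ge \LS(\K)$. Part (1) follows from Corollary \ref{mu-symmetry-1} and Fact \ref{uq-limit}. For part (2), that $\Ksatp{\mu}$ is an AEC is Corollary \ref{chain-sat-1}, and the good $\mu$-frame is given by Corollary \ref{chain-sat-2} when $\mu$ is a successor and by Theorem \ref{good-frame-weak-tameness} (with $\mu$-symmetry from Corollary \ref{mu-symmetry-1}) when $\mu$ is a limit."
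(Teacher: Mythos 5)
Your proposal is correct and follows essentially the same route as the paper's: Fact~\ref{shelah-villaveces} plus Fact~\ref{ss-transfer} to get superstability everywhere, Corollary~\ref{mu-symmetry-1} and Fact~\ref{uq-limit} for part~(1), and Corollary~\ref{chain-sat-1} plus the good-frame machinery for part~(2). The one place you differ slightly is in getting the good $\mu$-frame for non-successor $\mu$: you invoke Theorem~\ref{good-frame-weak-tameness} directly (with $(\mu,\lambda)$ there as $(\LS(\K),\mu)$ here), whereas the paper cites Corollary~\ref{chain-sat-2} together with Remark~\ref{chain-sat-2-rmk}, which in turn unwinds to either the same Theorem~\ref{good-frame-weak-tameness} or the upward frame transfer of Boney--Vasey. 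These are the same argument made explicit rather than by reference, and in fact your case split between successor and limit $\mu$ is unnecessary: since you already have $\mu$-symmetry from Corollary~\ref{mu-symmetry-1}, Theorem~\ref{good-frame-weak-tameness} applies uniformly to all $\mu > \LS(\K)$ without distinguishing the two cases.
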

\begin{proof}
  By Fact \ref{shelah-villaveces}, $\K$ is superstable in $\LS (\K)$. By Fact \ref{ss-transfer}, $\K$ is superstable in every $\mu \ge \LS (\K)$. By Corollary \ref{mu-symmetry-1}, $\K$ has symmetry in every $\mu \ge \LS (\K)$. The first part now follows from Fact \ref{uq-limit} and the second from Corollary \ref{chain-sat-1} and Corollary \ref{chain-sat-2} (together with Remark \ref{chain-sat-2-rmk}).
\end{proof}
\begin{remark}
  By \cite{tame-frames-revisited-v6-toappear}, we can transfer the type-full good $\LS (\K)^+$-frame on $\Ksatp{\LS (\K)^+}_{\LS (\K)^+}$ given by the previous corollary to a type-full good $(\ge \LS (\K)^+)$-frame with underlying class $\Ksatp{\LS (\K)^+}$. That is, the nonforking relation of the frame can be extended to types over all $\LS (\K)^+$-saturated models.
\end{remark}

\bibliographystyle{amsalpha}
\bibliography{transfer-sym-limit}

\end{document}